\documentclass[a4paper,10pt,reqno]{amsart}
\usepackage[utf8]{inputenc}
\usepackage{fullpage}
\usepackage{csquotes}
\usepackage{graphicx}
\usepackage[english]{babel}
\usepackage[T1]{fontenc}
\usepackage[utf8]{inputenc}
\usepackage{amsthm}
\usepackage{amssymb}
\usepackage{amsfonts}
\usepackage{dsfont}
\usepackage{mathrsfs}
\usepackage{xurl}
\usepackage{enumitem}
\usepackage{prettyref}
\usepackage{tikz}
\usepackage{mathtools}
\usepackage{orcidlink}
\usepackage{todonotes}

\usepackage[backend=biber,style=alphabetic,maxbibnames=20,maxalphanames=4,giveninits=true]{biblatex}
\newrefformat{defi}{Definition \ref{#1}}
\newrefformat{rem}{Remark \ref{#1}}
\newrefformat{sect}{Section \ref{#1}}
\newrefformat{app}{Appendix \ref{#1}}
\newrefformat{prop}{Proposition \ref{#1}}
\newrefformat{thm}{Theorem \ref{#1}}
\newrefformat{cor}{Corollary \ref{#1}}
\newrefformat{ex}{Example \ref{#1}}
\newrefformat{prob}{Problem \ref{#1}}

\newcommand{\vertiii}[1]{{\left\vert\kern-0.25ex\left\vert\kern-0.25ex\left\vert #1 
    \right\vert\kern-0.25ex\right\vert\kern-0.25ex\right\vert}}

\newcommand{\N}{\mathds{N}}
\newcommand{\Z}{\mathds{Z}}

\newcommand{\R}{\mathds{R}}
\newcommand{\C}{\mathds{C}}

\newcommand{\topo}{\tau}

\DeclareMathOperator{\re}{Re}

\newcommand{\calF}{\mathcal{F}}
\newcommand{\calL}{\mathcal{L}}
\newcommand{\calS}{\mathcal{S}}

\newcommand{\calK}{\mathcal{K}}
\newcommand{\calN}{\mathcal{N}}
\newcommand{\calQ}{\mathcal{Q}}

\newcommand{\semis}{\mathcal{P}}

\newcommand{\id}{\operatorname{id}}

\providecommand{\norm}[1]{\left\lVert#1\right\rVert}
\providecommand{\differential}{\mathrm{d}}
\renewcommand{\d}{\differential}

\DeclareMathOperator{\TextRe}{Re}

\renewcommand{\d}{\differential}

\newcommand{\euler}{\mathrm{e}}

\newcommand\rlim{
\mathchoice{\vcenter{\hbox{${\scriptstyle{+}}$}}}
{\vcenter{\hbox{$\scriptstyle{+}$}}}
{\vcenter{\hbox{$\scriptscriptstyle{+}$}}}
{\vcenter{\hbox{$\scriptscriptstyle{+}$}}}}

\newtheorem{lemma}{Lemma}[section]
\newtheorem{proposition}[lemma]{Proposition}
\newtheorem{theorem}[lemma]{Theorem}
\newtheorem{corollary}[lemma]{Corollary}

\theoremstyle{definition}
\newtheorem{definition}[lemma]{Definition}
\newtheorem{remark}[lemma]{Remark}

\begin{document}

%opening
\title[Locally equicontinuous families and bi-continuity]{Strongly continuous and locally equicontinuous families of operators and their relation to bi-continuity}
\author[K.\,Kruse]{Karsten Kruse\,\orcidlink{0000-0003-1864-4915}}
\address[Karsten Kruse]{University of Twente \\ Department of Applied Mathematics \\ P.O.~Box 217 \\ 7500 AE Enschede \\ The Netherlands
}
\email{k.kruse@utwente.nl}
\author[C.\,Seifert]{Christian Seifert\,\orcidlink{0000-0001-9182-8687}}
\address[Christian Seifert]{Hamburg University of Technology\\ 
Institute of Mathematics \\ 
Am Schwarzenberg-Campus~3 \\
21073 Hamburg \\
Germany\\
and
University of the Free State\\
Mathematics and Applied Mathematics,
Bloemfontein 9300\\
Republic of South Africa}
\email{christian.seifert@tuhh.de}

%\thanks{\ldots}

\subjclass[2020]{Primary 47D06, Secondary 47A58, 47D09, 47D60, 47D62, 46A70}

\keywords{(sequential) local equicontinuity, semigroups, cosine families, bi-continuity, Saks space, mixed topology}

\date{\today}
\begin{abstract}
We study strongly continuous and locally equicontinuous families of operators on sequentially complete Hausdorff locally convex spaces. In case of Saks spaces, we relate the general notions to bi-continuity as well as equitightness. In this way, we recover and also generalise known results for special classes of operator families such as bi-continuous ($C$-)semigroups and ($C$-)cosine families by well-known results for the corresponding families in Hausdorff locally convex spaces.
\end{abstract}

\maketitle

\section{Introduction}

The aim of this article is to investigate strongly continuous and (locally) equicontinuous families of operators on Hausdorff locally convex spaces and to relate many different variants---particular classes of operators on particular spaces---present in the literature to this situation. In this way, on the one hand, we analyse and structure the various notions available. On the other hand, we are able to recover existing, as well as to obtain new, results in special situations by means of the corresponding results for operator families in Hausdorff locally convex spaces.

Strongly continuous and (locally) equicontinuous families of operators on Hausdorff locally convex spaces are classical objects in functional analysis and operator theory. Such families appear, for example, as fundamental solutions for abstract Cauchy problems (ACPs). Let us mention Stone's theorem for strongly continuous groups of unitary operators on Hilbert spaces \cite{Stone1930, vonNeumann1932} and the celebrated Hille--Yosida theorem for strongly continuous semigroups on Banach spaces \cite{Hille1948, Yosida1948}, for first order ACPs. Furthermore, we also mention the corresponding generation theorems for strongly continuous cosine families in Banach spaces \cite{Sova1966, Kisynski1972} for second order ACPs, as well as the analogs for higher order ACPs in Banach spaces \cite{Neubrander1986}. Standard monographs for semigroups and/or cosine families, also including applications, are e.g.~\cite{HillePhillips1957, Pazy1983, Goldstein1985, EngelNagel2000}.
Since the late 1950s, various generalisations have appeared in the literature. The theory for special classes of operator families on Banach or Hilbert spaces has been generalised to operator families in Hausdorff locally convex spaces, see \cite{Schwartz1958, Komatsu1964, Komura1968, Yosida1968} for strongly continuous and (locally) equicontinuous semigroups on sequentially complete, even locally complete \cite{Teichmann2002}, Hausdorff locally convex spaces, \cite{Fattorini1969a, Fattorini1969b, Konishi1972} for strongly continuous and (locally) equicontinuous cosine families on sequentially complete Hausdorff locally convex spaces. Another generalisation arises from so-called $C$-semigroups and $C$-cosine families, both on Banach spaces \cite{DaviesPang1987, KuoShaw1997a, KuoShaw1997b} as well as on sequentially complete Hausdorff locally convex spaces \cite{XiaoLiang1998}. Further, the adjoints of semigroups \cite{vanNeerven1992} and cosine families \cite{Shaw1986, Bochenek2000, Bochenek2002} on Banach spaces have also been studied.
Moreover, in the 1980s, integrated semigroups and integrated cosine families on Banach spaces have been introduced and studied \cite{Arendt1984, Neubrander1988, KellermanHieber1989} (to name a few). Starting from the 2000s, two further variants have been studied, namely bi-continuous semigroups and cosine families, where the operator families defined on Banach spaces are only strongly continuous with respect to a weaker Hausdorff locally convex topology, see \cite{Kuehnemund2001, Kuehnemund2003, Farkas2003, AlbaneseMangino2004, Farkas2011, BuddeFarkas2019} for semigroups and \cite{DuanSun2006, CangChenSong2014, Budde2024, Budde2025} for cosine families. Moreover, so-called integrable semigroups on norming dual pairs have been introduced by Kunze \cite{Kunze2009}. Recently, also locally equicontinuous semigroup on Hausdorff locally convex spaces which can also be equipped with an auxiliary norm (so the concept is closely related to Saks spaces and bi-continuous semigroups) \cite{Kraaij2016} as well as sequentially locally equicontinuous semigroups \cite{FedericoRosestolato2020} have been considered.

From the point of view of the underlying spaces, the most general situation treated is the (sequentially complete) Hausdorff locally convex case. This gives rise to the question of whether the above-mentioned variants are special situations of this general setup. 
Our aim is to actually show that this is indeed the case. More precisely, we will relate bi-continuous and locally (exponentially) bounded operator families in a one-to-one fashion with strongly continuous and locally (exponentially) equicontinuous families with respect to the mixed topology, as long as the mixed topology is C-sequential. Note that the assumption that a space is \emph{C-sequential} (i.e.~all convex sequentially open subsets are open) is very mild and is fulfilled in all relevant applications treated in the literature. In this way, we can recover as well as generalise (large parts of) the theory of bi-continuous operator families, say, by means of the well-established theory of operator families on locally convex spaces. 
Further, it refutes \cite[Remark 5 (b)]{Kuehnemund2003} (cf.~\cite[p.~1 \& 54]{Kuehnemund2001}) 
where it is said that it seems that the theory of locally equicontinuous semigroups on sequentially complete Hausdorff 
locally convex spaces has only few applications in contrast to the theory of locally bi-continuous semigroups.

We first review equicontinuity and sequential equicontinuity of operator families in \prettyref{sect:seq_equicont}. Indeed, on C-sequential spaces these two notions are equivalent. We then turn to special classes of operator families such as $C$-semigroups or $C$-cosine families as well as strongly continuous and (locally) equicontinuous families of operators in \prettyref{sect:strongly_cont_equicont}. In \prettyref{sect:bi-continuity}, we focus on bi-continuous operator families, in particular semigroups and cosine families. We then have all the necessary objects and tools available to relate the various notions with each other in \prettyref{sect:relations_bi-continuity_equicontinuity}; thus, this section contains our main theoretical results.
The final \prettyref{sect:applications} is devoted to applications where we show that we can recover existing results for, say, bi-continuous operator semigroups by means of their counterparts in Hausdorff locally convex spaces in a standard manner, as well as obtain new results by the same strategy. Here, we focus on (already existing) generation theorems, (existing as well as new) approximation theorems, and (new) representation formulas.

\section{Sequential equicontinuity}
\label{sect:seq_equicont}

For a Hausdorff locally convex space $(X,\topo)$ we denote by $\semis$ a system of 
seminorms generating the topology $\topo$. W.l.o.g.~$\semis$ is directed. If no confusion about the topology $\topo$ seems 
to be likely, we just write $X$ instead of $(X,\topo)$. 
On the other hand, if we want to emphasise the dependency of $\semis$ on $X$ or $\topo$, we write $\semis_{X}$ or $\semis_{\topo}$ 
instead of just $\semis$, respectively. 
We denote by $\calL(X,\topo)$ the space of continuous linear maps from $(X,\topo)$ to $(X,\topo)$. 
If $(X,\norm{\cdot})$ is a normed space, then $\topo_{\norm{\cdot}}$ denotes the topology induced by 
$\norm{\cdot}$ and we write $\calL(X)\coloneq\calL(X,\topo_{\norm{\cdot}})$ which we equip with operator norm 
$\norm{\cdot}_{\calL(X)}$. Further, we denote by $(X,\topo)'$ the topological dual space of $(X,\topo)$.

\begin{definition}[{\cite[pp.~1135, 1166]{FedericoRosestolato2020}}]
    Let $X$ and $Y$ be Hausdorff locally convex spaces and $\calS$ be a set of linear maps 
    from $X$ to $Y$. Then $\calS$ is called
    \begin{enumerate}[label=\upshape(\alph*), leftmargin=*]
        \item 
        \emph{equicontinuous} if for all $p\in\semis_{Y}$ there exist $q\in\semis_{X}$ and $K\geq 0$ such that 
        $\sup_{T\in\calS} p(Tx) \leq Kq(x)$ for all $x\in X$.
        \item
        \emph{sequentially equicontinuous} if for all $p\in\semis_{Y}$ and all sequences $(x_n)$ in $X$ such that $x_n\to 0$ 
        we have that $\sup_{T\in\calS} p(Tx_n)\to 0$.
    \end{enumerate}
\end{definition}

We note that the definition of (sequential) equicontinuity 
does not depend on the choice of the systems 
of seminorms $\semis_{X}$ and $\semis_{Y}$ that generate the Hausdorff locally convex topologies on $X$ and $Y$, respectively.
Clearly, if $\calS$ is equicontinuous, then it is also sequentially equicontinuous. 
We note the following characterisation of sequential equicontinuity whose proof we include for the sake of completeness. 
We recall that a subset $U$ of a topological space $X$ is called \emph{sequentially open} 
if every sequence $(x_{n})$ in $X$ converging to a point $x\in U$ is eventually in $U$, see \cite[p.~108]{Franklin1965}. 
In particular, open sets are sequentially open but the converse might not hold.

\begin{proposition}\label{prop:seq_equicont_equiv}
Let $X$ and $Y$ be Hausdorff locally convex spaces and $\calS$ a set of linear maps from $X$ to $Y$. 
Then the following assertions are equivalent.
\begin{enumerate}[label=\upshape(\alph*), leftmargin=*]
\item\label{it:seq_equicont_1} $\calS$ is sequentially equicontinuous. 
\item\label{it:seq_equicont_2} For all absolutely convex open neighbourhoods $V$ of zero in $Y$ 
there is an absolutely convex sequentially open neighbourhood $U$ of zero in $X$ such that $T(U)\subseteq V$ 
for all $T\in\calS$.
\item\label{it:seq_equicont_3} For all $p\in\semis_{Y}$ there are a sequentially continuous seminorm $q$ on $X$ and $K\geq 0$ such that $\sup_{T\in\calS}p(T(x))\leq Kq(x)$ for all $x\in X$.
\end{enumerate}
\end{proposition}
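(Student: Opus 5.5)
I plan to prove the cycle \ref{it:seq_equicont_1}$\Rightarrow$\ref{it:seq_equicont_2}$\Rightarrow$\ref{it:seq_equicont_3}$\Rightarrow$\ref{it:seq_equicont_1}.

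For \ref{it:seq_equicont_1}$\Rightarrow$\ref{it:seq_equicont_2}, let $V$ be an absolutely convex open neighbourhood of zero in $Y$ and set $U\coloneq\bigcap_{T\in\calS}T^{-1}(V)$. This set contains zero and is absolutely convex, since each $T$ is linear and $V$ is absolutely convex. By construction $T(U)\subseteq V$ for all $T\in\calS$, so the point is to show that $U$ is sequentially open.

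Let $x_n\to x\in U$. I will use two facts.
\begin{enumerate}[label=\upshape(\roman*), leftmargin=*]
\item Since $x\in U$, we have $Tx\in V$ for all $T$. However, I need a uniform margin. Let $p_V$ be the Minkowski functional of $V$, which is continuous, with $V=\{p_V<1\}$ because $V$ is open. I need $\sup_{T}p_V(Tx)<1$, and this need not hold for $x$ merely in $U$.
\item To fix this, I replace $U$ by $U'\coloneq\{x\in X:\sup_{T\in\calS}p_V(Tx)<1\}$. This set is absolutely convex, contains zero, and is contained in $U$.
\end{enumerate}
Now take $x\in U'$ with $s\coloneq\sup_T p_V(Tx)<1$. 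Then
\[
\sup_T p_V(Tx_n)\leq s+\sup_T p_V(T(x_n-x)).
\]
The last term tends to zero by \ref{it:seq_equicont_1}, applied to the continuous seminorm $p_V$ on $Y$ and the null sequence $(x_n-x)$; here $p_V$ is dominated by a multiple of some $p\in\semis_Y$, so the hypothesis applies. Hence $x_n\in U'$ eventually, and $U'$ is sequentially open. This uniform-margin issue is the main subtlety of the proof.

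For \ref{it:seq_equicont_2}$\Rightarrow$\ref{it:seq_equicont_3}, given $p\in\semis_Y$ let $V\coloneq\{p<1\}$ and choose $U$ as in \ref{it:seq_equicont_2}. Let $q$ be the Minkowski functional of $U$; it is a seminorm because $U$ is absolutely convex and absorbing. To see that $U$ is absorbing, take $x\in X$; then $x/n\to0\in U$, so $x/n\in U$ eventually.

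Next, $q$ is sequentially continuous. If $x_n\to x$, then for each $\varepsilon>0$ we have $(x_n-x)/\varepsilon\to0$, so $(x_n-x)/\varepsilon\in U$ eventually. Thus $q(x_n-x)\leq\varepsilon$ eventually, and $|q(x_n)-q(x)|\leq q(x_n-x)$ gives convergence.

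Finally, if $q(x)<t$ then $x/t\in U$, so $p(Tx/t)<1$. Letting $t\downarrow q(x)$ gives $p(Tx)\leq q(x)$ whenever $q(x)>0$. If $q(x)=0$, then $p(Tx)<t$ for every $t>0$, so $p(Tx)=0$. Hence the estimate holds with $K=1$.

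For \ref{it:seq_equicont_3}$\Rightarrow$\ref{it:seq_equicont_1}, let $x_n\to0$. Then $\sup_T p(Tx_n)\leq Kq(x_n)\to0$ by the sequential continuity of $q$ and $q(0)=0$.
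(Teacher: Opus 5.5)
Your proof is correct and follows the paper's route. You use the same cycle (a)$\Rightarrow$(b)$\Rightarrow$(c)$\Rightarrow$(a), your set $U'=\{x : \sup_{T\in\calS}p_V(Tx)<1\}$ is exactly the paper's $\{q<1\}$, and you take the Minkowski functional of $U$ as the seminorm in (b)$\Rightarrow$(c). The only difference is that you check sequential openness of $U'$ and sequential continuity of that Minkowski functional by hand, whereas the paper derives the first from sequential continuity of $q$ and cites Snipes for the second.
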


\begin{proof}
``\ref{it:seq_equicont_1}$\Rightarrow$\ref{it:seq_equicont_2}'' Let $V$ be an absolutely convex open neighbourhood 
of zero in $Y$. Then the Minkowski functional $p_{V}\colon Y\to\R$, $p_{V}(y)\coloneq\inf\{\lambda>0\;|\;y\in\lambda V\}$, 
is continuous and $V=\{y\in Y\;|\;p_{V}(y)<1\}$ by \cite[Lemma 22.3]{MeiseVogt1997}. It follows that 
$q(x)\coloneq \sup_{T\in\calS}p_{V}(Tx)$, $x\in X$, defines a seminorm on $X$ which is sequentially continuous since 
$\calS$ is sequentially equicontinuous by \ref{it:seq_equicont_1}. The set $U\coloneq\{x\in X\;|\;q(x)<1\}$ is an absolutely convex sequentially open neighbourhood of zero in $X$ because $q$ is a sequentially continuous seminorm. 
Furthermore, for $T\in\calS$ and $x\in U$, we have $p_{V}(Tx)\leq q(x)< 1$ and so $T(x)\in V$.

``\ref{it:seq_equicont_2}$\Rightarrow$\ref{it:seq_equicont_3}'' Let $p\in\semis_{Y}$ 
and define the set $V\coloneq\{y\in Y\;|\;p(y)<1\}$, which is an absolutely convex open neighbourhood 
of zero in $Y$. Then there is an absolutely convex sequentially open neighbourhood $U$ of zero in $X$ such that $T(U)\subseteq V$ 
for all $T\in\calS$ by \ref{it:seq_equicont_2}. We define the Minkowski functional $q_{U}\colon X\to\R$ 
by $q_{U}(x)\coloneq\inf\{\lambda>0\;|\;x\in\lambda U\}$ for $x\in X$. By the proof of 
\cite[Theorem 1, (3)$\Rightarrow$(4)]{Snipes1973}, $q_{U}$ is a sequentially continuous seminorm on $X$. 
Let $x\in X$ and $T\in\calS$. Let $\lambda>0$ such that $x\in\lambda U$. Then we have 
\[
Tx\in\lambda T(U)\subseteq \lambda V
\]
and thus $p(Tx)<\lambda$, which implies $p(Tx)\leq q_{U}(x)$ by the definition of $q_{U}$. 

``\ref{it:seq_equicont_3}$\Rightarrow$\ref{it:seq_equicont_1}'' Let $p\in\semis_Y$ and $(x_n)$ in $X$ 
such that $x_n\to 0$. By \ref{it:seq_equicont_3}, there are a sequentially continuous seminorm $q$ on $X$ and $K\geq 0$ such that $\sup_{T\in\calS}p(Tx)\leq Kq(x)$ for all $x\in X$. Since $q$ is sequentially continuous, 
we obtain that $q(x_n)\to 0$, implying $\sup_{T\in\calS}p(Tx_n)\to 0$. 
Therefore, $\calS$ is sequentially equicontinuous. 
\end{proof}

The equivalence ``\ref{it:seq_equicont_1}$\Leftrightarrow$\ref{it:seq_equicont_2}'' of 
\prettyref{prop:seq_equicont_equiv} is mentioned in \cite[p.~419]{Hsiang1988} without a proof. 
Next, we show that sequential equicontinuity and equicontinuity are equivalent if $X$ is C-sequential. 
Recall that a Hausdorff locally convex space $X$ is called \emph{C-sequential} if every convex 
sequentially open subset of $X$ is already open, see \cite[p.~273]{Snipes1973}. 
For instance, every bornological Hausdorff locally convex space is C-sequential by \cite[Theorem 8]{Snipes1973}. 
However, the spaces we are mainly interested in, namely Saks spaces $(X,\norm{\cdot},\topo)$ equipped with their mixed topology 
$\gamma$, see  \prettyref{defi:saks_mixed} and \prettyref{defi:seq_compl_C-seq_Saks}, are not bornological 
by \cite[I.1.15 Proposition]{Cooper1978} unless the mixed topology already coincides with the norm-topology. 
Nevertheless, many Saks spaces are still C-sequential (w.r.t.~the mixed topology), 
see \cite[Remarks 3.19, 3.20]{KruseSchwenninger2022} and \cite[3.5 Corollary \& Section 4]{Kruse2024a}.

\begin{theorem}\label{thm:seq_equicont_equicont_equiv}
Let $X$ and $Y$ be Hausdorff locally convex spaces, $X$ C-sequential and $\calS$ a set of linear maps from $X$ to $Y$. 
Then the following assertions are equivalent.
\begin{enumerate}[label=\upshape(\alph*), leftmargin=*]
\item\label{it:equicont_1} $\calS$ is sequentially equicontinuous. 
\item\label{it:equicont_2} $\calS$ is equicontinuous. 
\end{enumerate}
\end{theorem}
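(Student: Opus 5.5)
The implication from equicontinuity to sequential equicontinuity was already observed right after the definition, so only the converse needs work. For it I will use the characterisation in \prettyref{prop:seq_equicont_equiv}\ref{it:seq_equicont_2} and then the C-sequentiality of $X$.

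Let $\calS$ be sequentially equicontinuous and fix $p\in\semis_{Y}$. Set $V\coloneq\{y\in Y\;|\;p(y)<1\}$, which is an absolutely convex open neighbourhood of zero in $Y$. By \prettyref{prop:seq_equicont_equiv} there is an absolutely convex sequentially open neighbourhood $U$ of zero in $X$ with $T(U)\subseteq V$ for all $T\in\calS$. Since $U$ is convex and sequentially open and $X$ is C-sequential, $U$ is open in $X$. Hence $U$ is an absolutely convex open neighbourhood of zero.

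It follows that the Minkowski functional $q_{U}$ is a continuous seminorm on $X$. As in the proof of \ref{it:seq_equicont_2}$\Rightarrow$\ref{it:seq_equicont_3} of \prettyref{prop:seq_equicont_equiv}, we get $p(Tx)\leq q_{U}(x)$ for all $x\in X$ and $T\in\calS$. Since $\semis_{X}$ is directed and generates the topology, there exist $q\in\semis_{X}$ and $K\geq 0$ with $q_{U}\leq Kq$. Therefore $\sup_{T\in\calS}p(Tx)\leq Kq(x)$ for all $x\in X$, which is equicontinuity.

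There is no real obstacle in this argument. The one point to check is that the neighbourhood $U$ supplied by \prettyref{prop:seq_equicont_equiv} is convex, so that C-sequentiality applies to it. It is, since the proposition yields an absolutely convex set.
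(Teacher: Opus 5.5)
Your proof is correct and takes essentially the paper's route. The paper uses the sequentially continuous seminorm from part (c) of Proposition~\ref{prop:seq_equicont_equiv} and cites Snipes' theorem that such seminorms are continuous on C-sequential spaces. You instead apply the definition of C-sequentiality directly to the absolutely convex sequentially open set $U$ from part (b), which amounts to unpacking that cited result, and then dominate $q_{U}$ by a multiple of some $q\in\semis_{X}$.
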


\begin{proof}
We only need to prove the implication ``\ref{it:equicont_1}$\Rightarrow$\ref{it:equicont_2}''. 
Let $\calS$ be sequentially equicontinuous and $p\in\semis_{Y}$. Due to \prettyref{prop:seq_equicont_equiv} 
there are a sequentially continuous seminorm $q$ on $X$ and $K\geq 0$ such that $\sup_{T\in\calS}p(T(x))\leq Kq(x)$ 
for all $x\in X$. Since $X$ is C-sequential, $q$ is continuous by \cite[Theorem 1]{Snipes1973}. 
Hence, $\calS$ is equicontinuous.
\end{proof}

\section{Strongly continuous and equicontinuous families}
\label{sect:strongly_cont_equicont}

We start this section by recalling some examples of families $\calS\coloneq (S(t))_{t\in I}$ of linear maps to 
which our results from \prettyref{sect:seq_equicont} can be applied to. 

\begin{definition}\label{defi:general_semigroups_cosine}
Let $X$ be a Hausdorff locally convex space, $(S(t))_{t\geq 0}$ a family of linear maps from $X$ to $X$ and 
$C\colon X\to X$ linear.
\begin{enumerate}[label=\upshape(\alph*), leftmargin=*]
\item\label{it:Csg} $(S(t))_{t\geq 0}$ is called a \emph{$C$-semigroup} if $S(0) = C$ and $S(t)S(s) = S(t+s)C$ 
for all $t,s\geq 0$. A $C$-semigroup is called a \emph{semigroup} if $C = \id$ where $\id$ denotes the identity on $X$.
\item\label{it:Ccf} $(S(t))_{t\geq 0}$ is called a \emph{$C$-cosine family} if $S(0) = C$ and $2S(t)S(s) = (S(t+s)+S(t-s))C$
for all $t\geq s\geq 0$. A $C$-cosine family is called a \emph{cosine family} if $C = \id$.
\end{enumerate}
Now, let $X$ be in addition sequentially complete and $(S(t))_{t\geq 0}$ strongly continuous, 
see \prettyref{defi:equicontinouscf} \ref{it:strong_cont0}. 
Further, let $\Gamma$ denote the Gamma function.
\begin{enumerate}[label=\upshape(\alph*), leftmargin=*]\setcounter{enumi}{2}
\item\label{it:alpha_semi} $(S(t))_{t\geq 0}$ is called an \emph{$\alpha$-times integrated $C$-semigroup} for $\alpha>0$ if 
$S(0) = 0$, $CS(t) = S(t)C$ and 
\[
S(t)S(s)x=\frac{1}{\Gamma(\alpha)}\left[\int_{0}^{s+t}-\int_{0}^{t}-\int_{0}^{s}\right](t+s-r)^{\alpha-1}S(r)Cx\,\d r
\]
for all $t,s\geq 0$ and $x\in X$ where the integrals are Riemann integrals w.r.t.~the topology of $X$. 
Further, a $C$-semigroup is also called a \emph{$0$-times integrated $C$-semigroup}.
\item\label{it:alpha_cosine} $(S(t))_{t\geq 0}$ is called an \emph{$\alpha$-times integrated $C$-cosine family} for $\alpha>0$ if 
$S(0) = 0$, $CS(t) = S(t)C$ and 
\begin{align*}
 S(t)S(s)x
&=\frac{1}{\Gamma(\alpha)}\left(\left[\int_{0}^{s+t}-\int_{0}^{t}-\int_{0}^{s}\right](t+s-r)^{\alpha-1}S(r)Cx\,\d r\right.\\
&\phantom{=}+\int_{|t-s|}^{t}(s-t+r)^{\alpha-1}S(r)Cx\,\d r + \int_{|t-s|}^{s}(t-s+r)^{\alpha-1}S(r)Cx\,\d r\\
&\phantom{=}+\left.\int_{0}^{|t-s|}(|t-s|+r)^{\alpha-1}S(r)Cx\,\d r\right)
\end{align*}
for all $t,s\geq 0$ and $x\in X$ where the integrals are (improper) Riemann integrals w.r.t.~the topology of $X$. 
Further, a $C$-cosine family is also called a \emph{$0$-times integrated $C$-cosine family}.
%\cite[Definition 1.1]{Kuo2006}
\end{enumerate}
\end{definition}

We note that the additional assumptions on $X$ and $(S(t))_{t\geq 0}$ in \prettyref{defi:general_semigroups_cosine} 
\ref{it:alpha_semi} and \ref{it:alpha_cosine} guarantee that the Riemann integrals there exist. 
Further, we remark that it is usually assumed in the literature that the operators $S(t)$ and $C$ 
in \prettyref{defi:general_semigroups_cosine} are continuous on $X$. 
Let us comment on the literature for different families of operators as in \prettyref{defi:general_semigroups_cosine}. 
For semigroups on Banach spaces we refer the reader to the monographs \cite{HillePhillips1957,Pazy1983,Goldstein1985,EngelNagel2000} 
and the references therein. 
Semigroups on Hausdorff locally convex spaces are for instance treated in 
\cite{Miyadera1959,Komatsu1964,Komura1968,Yosida1968,Ouchi1973,Babalola1974,Dembart1974,Choe1985,FedericoRosestolato2020,
Kraaij2016,KruseSchwenninger2022} and we also refer to the references therein.
Cosine families on Banach spaces appeared for example in \cite{Sova1966,DaPratoGiusti1967,TravisWebb1978,Goldstein1985,ArendtBHN2011} 
and on locally convex spaces in \cite{Fattorini1969a,Fattorini1969b,Konishi1972}. 
$C$-semigroups on Banach spaces are studied in \cite{DaPrato1966,DaviesPang1987,deLaubenfels1990,deLaubenfels1993}, 
while $C$-cosine families on Banach spaces as well as Hausdorff locally convex spaces are investigated in
\cite{Mandic1992,KuoShaw1997a,KuoShaw1997b}. 
In \cite{KellermanHieber1989,Neubrander1989}, $1$-times and $\alpha$-times integrated semigroups on Banach spaces for $\alpha\in\N$ 
are studied, and \cite{Hieber1991,MiyaderaOkuboTanaka1993} considered $\alpha$-times integrated semigroups for $\alpha\geq 0$, 
again on Banach spaces. 
Moreover, $\alpha$-times integrated $C$-semigroups and $C$-cosine families on Banach spaces for $\alpha\in\N$ are studied 
in \cite{LiShaw2000}, and generalised to families on Hausdorff locally convex spaces in \cite{LiShaw1993,LiShaw1997,ShawLi1995,WangHuang1997}. 
In \cite{KuoShaw2000,LiShaw2003} and \cite{Kuo2006}, $\alpha$-times integrated $C$-semigroups and $C$-cosine families, respectively, 
on Banach spaces for $\alpha\geq 0$ are investigated, and again generalised to families on Hausdorff locally convex spaces 
in \cite{XiaoLiang1998}.
Lastly, \cite{Kostic2012,Kostic2015} considers the case of even more general families on Hausdorff locally convex spaces. 
Furthermore, if $I\coloneq [0,\infty)$ in \prettyref{defi:general_semigroups_cosine} \ref{it:Csg} is replaced by $\R$, 
then $(S(t))_{t\in\R}$ is called a \emph{$C$-group}, see \cite[p.~37]{EngelNagel2000} in the case $C\coloneq\id$. 
We also note that one can always extend the definition of 
a cosine family in \prettyref{defi:general_semigroups_cosine} \ref{it:Ccf} from $I\coloneq [0,\infty)$ to $\R$ 
by setting $S(-t)\coloneq S(t)$ for $t<0$ if the cosine family is strongly continuous on $[0,\infty)$, 
see \prettyref{rem:commutative_cosine}. 
By replacing $I\coloneq [0,\infty)$ in \prettyref{defi:general_semigroups_cosine} \ref{it:Csg} and \ref{it:Ccf} by $\N_{0}$ 
and $\Z$, respectively, we get so-called \emph{discrete $C$-semigroups} and \emph{discrete $C$-cosine families}, respectively, 
see \cite[p.~762]{Gibson1972} and \cite[p.~567]{SchwenningerZwart2015} in the case that $X$ is a Banach space and $C\coloneq\id$.
There are also \emph{local and convoluted} versions of \prettyref{defi:general_semigroups_cosine} \ref{it:alpha_semi} and 
\ref{it:alpha_cosine} where one replaces in the local version $I\coloneq [0,\infty)$ by $[0,t_{0})$ for some $t_{0}>0$ 
and considers $0\leq s,t\leq s+t<t_{0}$ for the defining functional equation, 
and in the convoluted version one replaces the integral kernel $\zeta\mapsto \zeta^{\alpha-1}/\Gamma(\alpha)$ 
by some other locally integrable function, see \cite{LiShaw2007,Kostic2018,Kuo2015}.

\begin{definition}\label{defi:evol_family}
Let $X$ be a Hausdorff locally convex space, $I$ a set and 
$(S(t))_{t\in I}$ a family of linear maps from $X$ to $X$.
\begin{enumerate}[label=\upshape(\alph*), leftmargin=*]
\item\label{it:evol_fam} If $I=\{(t,s)\in[0,\infty)^2\;|\;t\geq s\}$, then $(S(t,s))_{(t,s)\in I}$ is called an 
\emph{evolution family} if $U(t,t)=\id$ for all $t\in [0,\infty)$ and $U(t,s)U(s,r)=U(t,r)$ for all $(t,s), (s,r)\in I$.  
\item\label{it:resolvent} If $I\subseteq\C$, then $(S(t))_{t\in I}$ is called a \emph{pseudo-resolvent} 
if $(t-s)S(t)S(s)=S(t)-S(s)$ for all $t,s\in I$. 
\end{enumerate}
\end{definition}

We note again that it is usually assumed in the literature that the operators $S(t)$ in \prettyref{defi:evol_family} 
are continuous on $X$. Sometimes $[0,\infty)$ in \prettyref{defi:evol_family} \ref{it:evol_fam} is replaced by $\R$, 
see \cite[Definition 1.1]{LatushkinRandolphSchnaubelt1998}.
There is also a \emph{local} version of it where $[0,\infty)$ is replaced by $[0,t_{0}]$ for some $t_{0}>0$, see 
\cite[Definition 2.4]{BuddeSeifert2024}, \cite[Theorem 2]{Kato1953} and \cite[Equation (3.5)]{Wenzel1985}. 
The definition of a pseudo-resolvent with continuous $S(t)$ is for example given in \cite[Definition, p.~215]{Yosida1968}. 
Next, we introduce strong continuity and variations of (sequential) equicontinuity of a family of linear maps 
$(S(t))_{t\in I}$ that take the set $I$ into account.

\begin{definition}
  \label{defi:equicontinouscf}
  Let $(X,\topo)$ be a Hausdorff locally convex space, $I$ a set, $\calF$ a set of functions 
  from $I$ to $[0,\infty)$, and $(S(t))_{t\in I}$ 
  a family of linear maps from $X$ to $X$. Then $(S(t))_{t\in I}$ is called 
  \begin{enumerate}[label=\upshape(\alph*), leftmargin=*]
    \item\label{it:strong_cont0}
      \emph{strongly continuous} if $I$ is a Hausdorff topological space and for all $x\in X$ the map $I\ni t\mapsto S(t)x\in X$ is continuous.
    \item\label{it:equicontinousfam1}
      \emph{(sequentially) locally equicontinuous} if $I$ is a Hausdorff topological space and 
      $\{S(t)\;|\; t\in M\}$ is (sequentially) equicontinuous for all compact sets $M\subseteq I$.
    \item\label{it:equicontinousfam2}
      \emph{(sequentially) $\calF$-equicontinuous} if there is $k\in\calF$ such that $\{k(t)S(t)\;|\; t\in I\}$ 
      is (sequentially) equicontinuous.
  \end{enumerate}
  If we want to emphasise the dependency of strong continuity and equicontinuity on $\topo$, 
  we write $\topo$-strongly continuous and (locally, sequentially, $\calF$-) $\topo$-equicontinuous
  instead of just strongly continuous and (locally, sequentially, $\calF$-) equicontinuous, respectively. 
%  Further, we drop the word ``locally'' if property \ref{it:equicontinoussg1} holds for the set 
%  $\{S(t)\;|\; t\in I\}$.
\end{definition}

We point out that we do not assume that the maps $S(t)$ of a strongly continuous family belong to 
$\calL(X,\topo)$ in contrast to other definitions of, for instance, semigroups and cosine families in the literature, 
see e.g.~\cite[Definition 1.1]{Komura1968}, \cite[p.~76]{Fattorini1969a} and \cite[Definition 2.1]{Konishi1972}. 
For semigroups and cosine families, our definition of (sequential) local equicontinuity coincides with the one given in 
\cite[Definition 1.1]{Komura1968}, \cite[Definition 3.3]{FedericoRosestolato2020} and \cite[p.~446]{Konishi1972}.
Clearly, $(S(t))_{t\in I}$ is (sequentially) equicontinuous if and only if it is (sequentially)
$\calF_{\operatorname{const}}$-equicontinuous where $\calF_{\operatorname{const}}$ denotes the set of all constant strictly positive 
functions on $I$. Further, $(S(t))_{t\in I}$ is (sequentially) locally equicontinuous if and only if 
$(\chi_{M}(t)S(t))_{t\in I}$ is equicontinuous for all compact sets $M\subseteq I$ where $\chi_{M}$ denotes the 
characteristic function of $M$.

\begin{definition}\label{defi:exp_pol_equicont}
Let $X$ be a Hausdorff locally convex space, $I\subseteq\R$ or $I\subseteq\R^2$ and 
$(S(t))_{t\in I}$ a family of linear maps from $X$ to $X$.  
\begin{enumerate}[label=\upshape(\alph*), leftmargin=*]
\item\label{it:exp_pol_equicont_1} If $I=[0,\infty)$, then $(S(t))_{t\in I}$ is called 
\emph{(sequentially) exponentially equicontinuous} if it is (sequentially) $\calF_{\exp}$-equicontinuous for 
\[
\calF_{\exp}\coloneq\{[0,\infty)\ni t\mapsto \euler^{\alpha t}\in\R\;|\;\alpha\in\R\}.
\]
\item\label{it:exp_pol_equicont_2} If $I=[0,\infty)$, then $(S(t))_{t\in I}$ is called 
\emph{(sequentially) polynomially equicontinuous} if it is (sequentially) $\calF_{\operatorname{pol}}$-equicontinuous for
\[ 
\calF_{\operatorname{pol}}\coloneq\{[0,\infty)\ni t\mapsto (1+t^\alpha)^{-1}\in\R\;|\;\alpha>0\}. 
\]
\item\label{it:exp_pol_equicont_3} If $I=\{(t,s)\in[0,\infty)^2\;|\;t\geq s\}$, then $(S(t))_{t\in I}$ is called 
\emph{(sequentially) exponentially equicontinuous} if it is (sequentially) $\calF_{\exp ,2}$-equicontinuous for 
\[
\calF_{\exp ,2}\coloneq\{ I \ni (t,s)\mapsto \euler^{\alpha (t-s)}\in\R\;|\;\alpha\in\R\}.
\]
\end{enumerate}
\end{definition}

Our definitions of exponential equicontinuity coincide for $C$-semigroups, $C$-cosine families and evolution families 
with the ones given in \cite[p.~29--30]{LiShaw1993} and \cite[p.~495]{LatushkinRandolphSchnaubelt1998}, 
in the latter case see \prettyref{defi:bound_lin_norm} as well. 
Exponential equicontinuity is also called \emph{quasi-equicontinuity}, see e.g.~\cite[p.~294]{Choe1985}. 
We note that $\alpha>0$ in the definition of $\calF_{\operatorname{pol}}$ in \prettyref{defi:exp_pol_equicont} \ref{it:exp_pol_equicont_2} need not be in $\N$ even though the word ``polynomial'' would suggest it. 
Moreover, our definition of (sequential) polynomial equicontinuity coincides for strongly continuous semigroups on Banach spaces with 
\emph{polynomial boundedness}, see \cite[p.~438]{EisnerZwart2007} (cf.~\cite[Theorem 1.1]{RozendaalVeraar2018}) 
and \prettyref{defi:bound_lin_norm}.

We make the following observation about sequential exponential equicontinuity of $C$-cosine families which generalises 
\cite[Proposition 3.14.6]{ArendtBHN2011} where cosine families on Banach spaces are considered. 

\begin{proposition}\label{prop:cosine_type}
Let $X$ be a Hausdorff locally convex space, $C\colon X\to X$ linear and $(S(t))_{t\geq 0}$ a $C$-cosine family on $X$. 
\begin{enumerate}[label=\upshape(\alph*), leftmargin=*]
\item\label{it:cosine_type_1} If $x\in X$ is such that $\lim_{t\to\infty}S(t)x=0$ and $\lim_{t\to\infty}S(t)Cx=0$ 
and $(S(t))_{t\geq 0}$ is sequentially equicontinuous, then $C^2x=0$ where $C^2x\coloneq CCx$.
\item\label{it:cosine_type_2} If there is $k\colon [0,\infty)\to (0,\infty)$ such that $\lim_{t\to\infty}k(t)=\infty$, 
$\inf_{t\geq 0} k(t)>0$ and $(k(t)S(t))_{t\geq 0}$ is sequentially equicontinuous, then $C^2x=0$ for all $x\in X$. 
\item\label{it:cosine_type_3} If $X\neq\{0\}$, $C^2$ is injective and there is $\alpha\in\R$ such $(\euler^{-\alpha t}S(t))_{t\geq 0}$ 
is sequentially equicontinuous, then $\alpha\geq 0$.
\end{enumerate}
\end{proposition}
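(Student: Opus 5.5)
The plan is to use the functional equation $2S(t)S(s)=(S(t+s)+S(t-s))C$ with $s$ fixed and $t\to\infty$, and to let sequential equicontinuity pass limits through the operators $S(s)$. This settles \ref{it:cosine_type_1}; parts \ref{it:cosine_type_2} and \ref{it:cosine_type_3} then follow from it.

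For \ref{it:cosine_type_1}, fix $s\geq 0$ and let $t\geq s$ run through a sequence $t_n\to\infty$. By hypothesis $S(t_n)x\to 0$. Sequential equicontinuity of $\{S(r)\;|\;r\geq 0\}$, applied to the null sequence $(S(t_n)x)$, gives $S(s)S(t_n)x\to 0$.

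Here I would like the roles of $s$ and $t$ swapped, since the functional equation only covers $t\geq s$. To avoid needing commutativity, I would instead write the relation with the large argument on the left: for $t\geq s$,
\[
2S(t)S(s)x=S(t+s)Cx+S(t-s)Cx .
\]
Apply it to $y:=S(s)x$? That is not directly useful. The cleaner route is to use $\lim_{t\to\infty}S(t)Cx=0$: the right-hand side tends to $0$ as $t\to\infty$, because both $t\pm s\to\infty$. Hence $S(t)S(s)x\to 0$ for every fixed $s$.

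What remains is to extract $C^2x$. Set $s=0$: then $2S(t)Cx=(S(t)+S(t))Cx$ is trivial, so the choice of $s$ must be smarter.

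The intended trick is to take $t=s$, which gives $2S(t)S(t)x=S(2t)Cx+S(0)Cx=S(2t)Cx+C^2x$. Now let $t=t_n\to\infty$. On the right, $S(2t_n)Cx\to 0$. On the left, $S(t_n)x\to 0$, and sequential equicontinuity of the whole family gives $\sup_{r}p(S(r)S(t_n)x)\to 0$ for every $p\in\semis$; in particular $S(t_n)S(t_n)x\to 0$. Hence $C^2x=0$. This equicontinuity step, the uniformity in $r$ that handles the moving operator $S(t_n)$, is the only real point; the rest is bookkeeping.

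For \ref{it:cosine_type_2}, put $c:=\inf_{t\geq 0}k(t)>0$. Then $\{S(t)\}=\{k(t)^{-1}\cdot k(t)S(t)\}$ is sequentially equicontinuous, since the scalars $k(t)^{-1}\leq c^{-1}$ are bounded. Next fix $x$ and $p\in\semis$. The constant sequence $x$ converges, so $\sup_t p(k(t)S(t)x)$ is finite; this is the easy half of sequential equicontinuity, seen by applying it to $x/n\to 0$. Call the bound $M$. Then $p(S(t)x)\leq M/k(t)\to 0$, so $S(t)x\to 0$, and likewise $S(t)Cx\to 0$ by replacing $x$ with $Cx$. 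Part \ref{it:cosine_type_1} now gives $C^2x=0$.

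For \ref{it:cosine_type_3}, suppose $\alpha<0$ and take $k(t)=\euler^{-\alpha t}$. This $k$ tends to $\infty$ and has infimum $1$, so \ref{it:cosine_type_2} gives $C^2=0$. Since $C^2$ is injective, this forces $X=\{0\}$, a contradiction.
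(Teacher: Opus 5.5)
Your proposal is correct and follows the paper's proof. Part (a) uses the identity $2S(t)^2x=S(2t)Cx+C^2x$ together with sequential equicontinuity applied to the null sequence $S(t_n)x$; (b) reduces to (a), and (c) reduces to (b) via $k(t)=\euler^{-\alpha t}$. The differences are cosmetic: you argue directly from the definition of sequential equicontinuity (including the $x/n$ trick for pointwise boundedness) rather than through the seminorm characterisation in Proposition~\ref{prop:seq_equicont_equiv}, and you use the global infimum of $k$ instead of splitting at some $t_0$.
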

\begin{proof}
    \ref{it:cosine_type_1} We have $2S(t)^2x=S(2t)Cx+C^2x$ for all $t\geq 0$ where $S(t)^2x\coloneq S(t)S(t)x$. 
    Let $p\in\semis_X$. Since $(S(t))_{t\geq 0}$ is sequentially equicontinuous, there are a sequentially continuous seminorm $q$ on $X$ and 
    $K\geq 0$ such that $p(S(t)^2x)\leq Kq(S(t)x)$ for all $t\geq 0$ by \prettyref{prop:seq_equicont_equiv}, 
    which implies that $\lim_{t\to\infty}S(t)^2 x=0$ as $\lim_{t\to\infty}S(t)x=0$.
    Hence, we get 
    \[
    C^2x=\lim_{t\to\infty}S(2t)Cx+C^2x=2\lim_{t\to\infty}S(t)^2 x=0
    \]
    since $\lim_{t\to\infty}S(t)Cx=0$.
    
    \ref{it:cosine_type_2} Since $\lim_{t\to\infty}k(t)=\infty$, there is $t_0\geq 0$ such that $k(t)\geq 1$ for all $t\geq t_0$. 
    Further, we note that $K_1\coloneq\inf_{t\in [0,t_0]}k(t)\geq \inf_{t\geq 0} k(t)>0$. Let $p\in\semis_X$. Then we have 
    \[
    p(S(t)x)= \frac{1}{k(t)}p(k(t)S(t)x)\leq \frac{1}{K_1}p(k(t)S(t)x) \quad (t\in [0,t_0],\,x\in X)
    \]
    and $p(S(t)x)\leq p(k(t)S(t)x)$ for all $t\geq t_0$ and $x\in X$. Thus, the sequential equicontinuity of $(k(t)S(t))_{t\geq 0}$ implies 
    that $(S(t))_{t\geq 0}$ is sequentially equicontinuous. Moreover, by \prettyref{prop:seq_equicont_equiv} and the sequential equicontinuity 
    of $(k(t)S(t))_{t\geq 0}$, there are also a sequentially continuous seminorm $q$ on $X$ and $K\geq 0$ such that 
    \[
    p(S(t)x)\leq \frac{K}{k(t)}q(x)\quad (t\geq 0,\,x\in X),
    \]
    yielding that $\lim_{t\to\infty}S(t)x=0$ for all $x\in X$ because $\lim_{t\to\infty}k(t)=\infty$. 
    In particular, $\lim_{t\to\infty}S(t)Cx=0$ for all $x\in X$ as well. We conclude that $C^2x=0$ for all $x\in X$ by part \ref{it:cosine_type_1}.
    
    \ref{it:cosine_type_3} Suppose there is $\alpha\in\R$ with $\alpha<0$ such that $(\euler^{-\alpha t}S(t))_{t\geq 0}$ is 
    sequentially equicontinuous. Then it follows from part \ref{it:cosine_type_2} applied to $k(t)\coloneq \euler^{-\alpha t}$ for $t\geq 0$ 
    that $C^2x=0$ for all $x\in X$. Since $C^2$ is injective, we obtain that $X=\{0\}$, which is a contradiction.
\end{proof}

\begin{proposition}\label{prop:loc_equicont_sg_cf}
Let $X$ be a Hausdorff locally convex space and $(S(t))_{t\geq 0}$ a semigroup or cosine family on $X$. 
If $(S(t))_{t\geq 0}$ is a cosine family, then we assume in addition that $S(t)S(s)=S(s)S(t)$ for all $t,s\geq 0$.
Then $(S(t))_{t\geq 0}$ is (sequentially) locally equicontinuous if and only if there is $t_0>0$ such that $(S(t))_{t\in [0,t_0]}$ 
is (sequentially) equicontinuous.
\end{proposition}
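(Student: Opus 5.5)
The forward implication is immediate, since $[0,t_0]$ is compact in $[0,\infty)$. For the converse we fix $t_0>0$ with $\{S(t)\;|\;t\in[0,t_0]\}$ (sequentially) equicontinuous. Every compact $M\subseteq[0,\infty)$ lies in some $[0,nt_0]$ with $n\in\N$, so it suffices to show by induction on $n$ that $\{S(t)\;|\;t\in[0,nt_0]\}$ is (sequentially) equicontinuous. We use the seminorm form: in the equicontinuous case, for every $p\in\semis$ there are $q\in\semis$ and $K\geq0$ with $\sup_{t\in[0,t_0]}p(S(t)x)\leq Kq(x)$. In the sequential case we use \prettyref{prop:seq_equicont_equiv}\,\ref{it:seq_equicont_3} with a sequentially continuous seminorm $q$. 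Alternatively, we argue directly with null sequences: we use that $\sup_{t\in A}p(S(t)x_k)\to0$ for null sequences $(x_k)$, and we note that if $(x_k)$ is null, then $(S(s)x_k)$ is null uniformly in $s\in[0,t_0]$ in the sense needed.

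\emph{Semigroup case.} For $t\in[0,(n+1)t_0]$ we write $t=s+r$ with $s\in[0,nt_0]$ and $r\in[0,t_0]$, so that $S(t)=S(r)S(s)$. In the equicontinuous case we get $p(S(t)x)\leq Kq(S(s)x)\leq KK'q'(x)$ by the induction hypothesis applied to $q$. In the sequential case, for a null sequence $(x_k)$ we choose $q$ sequentially continuous with $\sup_{r\le t_0}p(S(r)y)\leq Kq(y)$. Then $\sup_{t\le(n+1)t_0}p(S(t)x_k)\leq K\sup_{s\le nt_0}q(S(s)x_k)$. The only concern is whether the last supremum tends to $0$ for a merely sequentially continuous $q$. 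It does: $q(y)=\sup_{r}p_V$-type seminorms are dominated, and more simply $\sup_{s}q(S(s)x_k)\to0$ fails to be automatic. To avoid this, I would instead iterate the set version \prettyref{prop:seq_equicont_equiv}\,\ref{it:seq_equicont_2}. Given an absolutely convex open $V$, we get a sequentially open absolutely convex $U$ with $S(r)U\subseteq V$. Then we need $W$ with $S(s)W\subseteq U$ for $s\le nt_0$, which requires the induction hypothesis for sequentially open targets. So I would strengthen the induction claim to: for every null sequence $(x_k)$, the set-valued sequences $\{S(s)x_k\;|\;s\in[0,nt_0]\}$ are eventually in every sequentially open absolutely convex $U$. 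To establish this claim, I would argue by contradiction. Choose $s_k$ with $S(s_k)x_k\notin U$. Passing to a subsequence, $s_k\to s$, and we write $S(s_k)x_k=S(r_k)S(\sigma_k)x_k$ with $r_k\in[0,t_0]$. Sequential equicontinuity applied to the null sequence $(S(\sigma_k)x_k)$, which is null by induction together with the seminorm bound, then gives the contradiction. This bookkeeping is the main obstacle. The cleanest route is the null-sequence formulation: show that $y_k\coloneq S(\sigma_k)x_k\to0$ for arbitrary $\sigma_k\in[0,nt_0]$, which is exactly sequential equicontinuity of the level-$n$ family tested on each $p$, and then $p(S(r_k)y_k)\leq\sup_{r\le t_0}p(S(r)y_k)\to0$.

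\emph{Cosine case.} We use $S(t+s)=2S(t)S(s)-S(t-s)$ for $t\geq s$. Every $t\in[nt_0,(n+1)t_0]$ can be written as $t=\tau+r$ with $\tau=nt_0$ (or any $\tau\in[0,nt_0]$ with $\tau\geq r$) and $r\in[0,t_0]$. Then $S(t)=2S(\tau)S(r)-S(\tau-r)$; the commutativity assumption lets us order the product as needed, although here $S(\tau)S(r)$ already applies $S(r)$ first. The term $S(\tau-r)$ with $\tau-r\in[0,nt_0]$ is controlled by the induction hypothesis. The product term $S(\tau)S(r)x_k$ is handled as in the semigroup case: $S(r_k)x_k\to0$ by the base step, and then $S(\tau_k)$ applied to this null sequence tends to $0$ by the induction hypothesis. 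For $n\geq1$ we need $\tau\geq r$, which holds since $\tau=nt_0\geq t_0\geq r$. The base step is the hypothesis itself, and the equicontinuous (non-sequential) case goes through verbatim with composed seminorm estimates $p(S(t)x)\leq 2Kq(S(r)x)+K'q'(x)\leq K''q''(x)$.
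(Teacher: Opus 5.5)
Your argument is correct in its final form (the ``null-sequence formulation''), but it takes a different route from the paper's proof.

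\textbf{The paper's route.} The paper always peels off the \emph{fixed} time $t_0$ on the inside: $S(t+t_0)=S(t)S(t_0)$ for $t\in((n-1)t_0,nt_0]$. The induction hypothesis, in the seminorm form of Proposition~\ref{prop:seq_equicont_equiv}\,\ref{it:seq_equicont_3}, then gives $p(S(t+t_0)x)\leq Kq(S(t_0)x)$. Since $S(t_0)$ is a single sequentially continuous map, $x\mapsto q(S(t_0)x)$ is again a sequentially continuous seminorm. So the difficulty you ran into, feeding a merely sequentially continuous $q$ into a whole family, never arises. For cosine families the paper uses the commutativity assumption only in the step $[0,t_0]\to[0,2t_0]$. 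There it rewrites $2S(t_0)S(t)$ as $2S(t)S(t_0)$, so that the varying operator sits outside.

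\textbf{Your route.} You compose two varying families (semigroup case), or put the fixed $S(nt_0)$ outside and the varying $S(r)$ inside (cosine case). You handle this with the sequential characterisation: for arbitrary $\sigma_k$ and $r_k$, the sequence $S(\sigma_k)x_k$ is null, hence so is $S(r_k)S(\sigma_k)x_k$. This is in effect the lemma that products of sequentially equicontinuous families are sequentially equicontinuous.

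\textbf{What each route buys.} The paper's route needs only the seminorm characterisation, with no composition of two varying families. Yours buys something the paper's does not. Because $nt_0\geq t_0\geq r$, the identity $S(nt_0+r)=2S(nt_0)S(r)-S(nt_0-r)$ is exactly the defining cosine relation. So your cosine step never uses $S(t)S(s)=S(s)S(t)$, and your proof shows this extra hypothesis is superfluous for the proposition.

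\textbf{Clean-up.} Delete the sentence beginning ``It does: \ldots'' and the whole detour through sequentially open sets.
\begin{itemize}
\item That sentence is self-contradictory. In fact $\sup_{s\leq nt_0}q(S(s)x_k)\to0$ \emph{does} hold for every sequentially continuous seminorm $q$, by exactly your final argument: pick near-maximising $s_k$; then $S(s_k)x_k\to0$ in $X$, so $q(S(s_k)x_k)\to0$.
\item The contradiction argument via a convergent subsequence $s_k\to s$ is unnecessary.
\item State explicitly the passage from ``$p(S(t_k)x_k)\to0$ for every choice of $t_k\in[0,(n+1)t_0]$'' to ``$\sup_{t\leq(n+1)t_0}p(S(t)x_k)\to0$''. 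It is the same near-maximiser (or subsequence) argument, but it is the step that actually yields sequential equicontinuity.
\end{itemize}
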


\begin{proof}
We only need to prove the implication ``$\Leftarrow$''. Let there be $t_0>0$ such that $(S(t))_{t\in [0,t_0]}$ is 
(sequentially) equicontinuous. 
We claim that $(S(t))_{t\in [0, nt_{0}]}$ is (sequentially) equicontinuous for all $n\in\N$, which we prove by induction. 
For $n=1$ this is clear.

(i) Let $(S(t))_{t\geq 0}$ be a semigroup. Assume that $n\in\N$ is such that $(S(t))_{t\in [0,n t_{0}]}$ is 
(sequentially) equicontinuous. 
Let $p\in\semis_X$. By the (sequential) equicontinuity of 
$(S(t))_{t\in [0,n t_{0}]}$ and \prettyref{prop:seq_equicont_equiv}, 
there are a (sequentially) continuous seminorm $q$ on $X$ 
and $K\geq 0$ such that for all $t\in ((n-1)t_0,nt_{0}]$ and $x\in X$ it holds that
\[
     p(S(t+t_{0})x)
=    p(S(t)S(t_0)x)
\leq Kq(S(t_0)x),
\]
which proves our claim by the (sequential) continuity of $S(t_0)$.

(ii) Let $(S(t))_{t\geq 0}$ be a cosine family. Let $p\in\semis_X$. 
First, we show that $(S(t))_{t\in [0,2t_{0}]}$ is (sequentially) equicontinuous. 
By the (sequential) equicontinuity of $(S(t))_{t\in [0,t_{0}]}$ and \prettyref{prop:seq_equicont_equiv}, 
there are a (sequentially) continuous seminorm $q$ on $X$ 
and $K\geq 0$ such that for all $t\in (0,t_{0}]$ and $x\in X$ it holds that 
\[
      p(S(t+t_{0})x)
  =   p(2S(t_0)S(t)x-S(t_0-t)x)
  =   p(2S(t)S(t_0)x-S(t_0-t)x)
 \leq 2Kq(S(t_0)x)+Kq(x)
\]
where we used the commutativity assumption in the second equation. 
This proves our claim for $n=2$ by the (sequential) continuity of $S(t_0)$.
Assume that $n\in\N$, $n\geq 2$, is such that $(S(t))_{t\in [0,n t_{0}]}$ is (sequentially) equicontinuous. 
The proof is now quite similar to case (i). 
Again, by the (sequential) equicontinuity of $(S(t))_{t\in [0,n t_{0}]}$ 
and \prettyref{prop:seq_equicont_equiv}, there are a (sequentially) continuous seminorm $q$ on $X$ 
and $K\geq 0$ such that for all $t\in ((n-1)t_0,nt_{0}]$ and $x\in X$ it holds that
\[
      p(S(t+t_{0})x)
  =   p(2S(t)S(t_0)-S(t-t_0)x)
 \leq 2Kq(S(t_0)x)+Kq(x),
\]
which proves our claim by the (sequential) continuity of $S(t_0)$.
\end{proof}

\begin{remark}\label{rem:commutative_cosine}
    Let $X$ be a Hausdorff locally convex space and $(S(t))_{t\geq 0}$ a cosine family on $X$. 
    The commutativity condition on the cosine family $(S(t))_{t\ge 0}$ in \prettyref{prop:loc_equicont_sg_cf} 
    is for instance fulfilled if $(S(t))_{t\ge 0}$ is strongly continuous, see the proof of \cite[Lemma 3.14.3 b)]{ArendtBHN2011} 
    which also works for Hausdorff locally convex spaces instead of just Banach spaces. 
\end{remark}

In the case that $(X,\topo)$ is equentially complete and $(S(t))_{t\geq 0}$ a semigroup in 
$\calL(X,\topo)$ \prettyref{prop:loc_equicont_sg_cf} is stated in \cite[Remarks, p.~128]{Dembart1974} without a proof for 
the locally equicontinuous case. In some cases one can also get local equicontinuity from strong continuity, which we explore next.

We recall that a Hausdorff locally convex space $(X,\topo)$ is said to have the \emph{convex compactness property (ccp)} 
if the closure of the absolutely convex hull of every compact set is compact, see \cite[9-2-8 Definition]{Wilansky1978}. 
By \cite[9-2-10 Example]{Wilansky1978}, every quasi-complete Hausdorff locally convex space has ccp but the converse is not true. 
Using the convex compactness property, we have the following special version of \cite[Lemma 3.8]{Kunze2009} 
for dual pairs $\langle X,Y\rangle$ of linear spaces $X$ and $Y$ (over $\R$ or $\C$). 
For dual pairs, we refer to e.g.~\cite[Chapter 8]{Jarchow1981} and \cite[Section 10.3 \& Chapters 20, 21]{Koethe1969}. 
We denote by $\sigma(X,Y)$ and $\sigma(Y,X)$ the weak topology on $X$ and $Y$, respectively, and by 
$\mu(X,Y)$ and $\mu(Y,X)$ the Mackey topology on $X$ and $Y$, respectively. 
Analysing the proof of \cite[Lemma 3.8]{Kunze2009}, we have the following result, which was in the case of semigroups already observed 
in \cite[Lemma 3.2]{Kraaij2016} and generalises \cite[Proposition 1.1]{Komura1968}.

\begin{proposition}\label{prop:str_M_str_cont_loc_equicont}
Let $\langle X,Y\rangle$ be a dual pair of linear spaces, $I$ a Hausdorff topological space 
and $(S(t))_{t\in I}$ a $\mu(X,Y)$-strongly continuous family in $\calL(X,\sigma(X,Y))$. 
If $(Y,\sigma(Y,X))$ has ccp, then $(S(t))_{t\in I}$ is locally $\mu(X,Y)$-equicontinuous.
\end{proposition}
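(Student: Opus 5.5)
The plan is to describe the Mackey topology by its standard seminorms and reduce local equicontinuity to a single compactness statement in $Y$. Recall that $\mu(X,Y)$ is generated by the seminorms $p_K(x)\coloneq\sup_{y\in K}|\langle x,y\rangle|$, where $K$ ranges over the absolutely convex $\sigma(Y,X)$-compact subsets of $Y$. Since each $S(t)$ lies in $\calL(X,\sigma(X,Y))$, it has a $\sigma(Y,X)$-continuous adjoint $S(t)'\colon Y\to Y$ with $\langle S(t)x,y\rangle=\langle x,S(t)'y\rangle$.

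Fix a compact $M\subseteq I$ and an absolutely convex $\sigma(Y,X)$-compact $K\subseteq Y$. Then
\[
\sup_{t\in M}p_K(S(t)x)=\sup\{|\langle x,z\rangle|\;|\;z\in L\},\qquad L\coloneq\bigcup_{t\in M}S(t)'(K).
\]
Hence it suffices to show that $L$ is contained in some absolutely convex $\sigma(Y,X)$-compact set $\widetilde{K}$. In that case $\sup_{t\in M}p_K(S(t)x)\leq p_{\widetilde K}(x)$, and $p_{\widetilde K}$ is a $\mu(X,Y)$-continuous seminorm, which gives the required equicontinuity of $\{S(t)\;|\;t\in M\}$.

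By ccp of $(Y,\sigma(Y,X))$, the closed absolutely convex hull of any $\sigma(Y,X)$-compact set is $\sigma(Y,X)$-compact. So the key step is to show that $L$ is $\sigma(Y,X)$-compact, or at least relatively compact. I would present $L$ as the image of the compact set $M\times K$ under $\Phi\colon M\times K\to Y$, $\Phi(t,y)\coloneq S(t)'y$, and prove that $\Phi$ is continuous for the product topology on $M\times(K,\sigma(Y,X))$ and $\sigma(Y,X)$ on $Y$.

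Continuity of $\Phi$ is the main obstacle. For fixed $x\in X$, we must show that $(t,y)\mapsto\langle S(t)x,y\rangle$ is jointly continuous. Write
\[
\langle S(t)x,y\rangle-\langle S(t_0)x,y_0\rangle=\langle S(t)x-S(t_0)x,y\rangle+\langle S(t_0)x,y-y_0\rangle.
\]
The second term tends to $0$ as $y\to y_0$ in $\sigma(Y,X)$. The first term is bounded by $p_K(S(t)x-S(t_0)x)$ uniformly in $y\in K$, and this tends to $0$ as $t\to t_0$ because $t\mapsto S(t)x$ is $\mu(X,Y)$-continuous and $p_K$ is a $\mu(X,Y)$-seminorm.

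It is exactly here that the Mackey topology, rather than the weak one, is needed: it provides uniform convergence on $K$. The uniform estimate works with nets as well, so no sequential argument is needed and no metrizability of $I$ or $K$ is required. Since $M\times K$ is compact, $L=\Phi(M\times K)$ is $\sigma(Y,X)$-compact. Taking $\widetilde K$ to be the $\sigma(Y,X)$-closed absolutely convex hull of $L$, which is compact by ccp, finishes the proof.
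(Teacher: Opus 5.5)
Your proof is correct and follows the same route as the paper. Both show that $\{S(t)'y\;|\;t\in M,\,y\in K\}$ is $\sigma(Y,X)$-compact, pass to its $\sigma(Y,X)$-closed absolutely convex hull via ccp, and dominate $\sup_{t\in M}p_K(S(t)x)$ by the Mackey seminorm of that hull.

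The only difference is the compactness step. You prove it yourself via joint continuity of $(t,y)\mapsto S(t)'y$ on $M\times K$, correctly restricting to absolutely convex compact $K$ so that $p_K$ is $\mu(X,Y)$-continuous. The paper instead imports this step from the proof of Kunze's Lemma 3.8.
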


\begin{proof}
Let $M\subseteq I$ be compact and $\calK\subseteq Y$ be $\sigma(Y,X)$-compact. We set 
$\calQ\coloneq \{S(t)'y\;|\;t\in M,y\in \calK\}$ where $S(t)'\colon Y\to Y$ for $t\in M$ is the dual map of 
$S(t)$ defined by $(S(t)'y)(x)\coloneq \langle S(t)x,y\rangle=y(S(t)x)$ for $y\in Y$ and $x\in X$. 
We point out that $S(t)'y\in Y$ due to the assumption $S(t)\in\calL(X,\sigma(X,Y))$, the Mackey--Arens theorem 
and $y\in Y$. By the proof of \cite[Lemma 3.8]{Kunze2009}\footnote{\cite[Lemma 3.8]{Kunze2009} is phrased for norming dual pairs 
$\langle X,Y\rangle$ but that they are norming is not relevant for the proof.}, the set $\calQ$ is $\sigma(Y,X)$-compact. 
Let $\overline{\operatorname{acx}}(\calQ )$ denote the $\sigma(Y,X)$-closure of 
the absolutely convex hull $\operatorname{acx}(\calQ )$ of $\calQ$. 
If $(Y,\sigma(Y,X))$ has ccp, then the absolutely convex set $\overline{\operatorname{acx}}(\calQ )$ 
is $\sigma(Y,X)$-compact. Further, we observe that 
\[
  \sup_{y\in\calK}|\langle S(t)x,y\rangle|
= \sup_{y\in\calK}|\langle x,S(t)'y\rangle|
\leq \sup_{z\in\calQ}|\langle x,z\rangle|
\leq \sup_{z\in\overline{\operatorname{acx}}(\calQ )}|\langle x,z\rangle|
\]
for all $t\in M$ and $x\in X$. Hence, $(S(t))_{t\in M}$ is $\mu(X,Y)$-equicontinuous.
\end{proof}

Apart from the requirement of $(Y,\sigma(Y,X))$ having ccp, we note that the restriction to the Mackey topology and 
the a-priori condition that all $S(t)$ should be continuous w.r.t.~$\sigma(X,Y)$ are also not so desirable for us.

\begin{remark}\label{rem:conditions_str_M_str_cont_loc_equicont}
Let $\langle X,Y\rangle$ be a dual pair of linear spaces. 
\begin{enumerate}[label=\upshape(\alph*), leftmargin=*]
\item\label{it:weak_Mackey_cont} Then $\calL(X,\sigma(X,Y))=\calL(X,\mu(X,Y))$ as linear spaces by 
\cite[Lemmas 23.28, 23.29]{MeiseVogt1997} and the Mackey--Arens theorem. 
If $\topo$ is a Hausdorff locally convex topology on $X$ and $S\in\calL(X,\topo)$, then $S\in\calL(X,\sigma(X,(X,\topo)'))$ 
by \cite[Proposition 23.30 (a)]{MeiseVogt1997}.
\item\label{it:weak_ccp} The following assertions are equivalent.
\begin{enumerate}[label=\upshape(\roman*), leftmargin=*, widest=iii]
\item\label{it:weak_ccp_1} $(Y,\sigma(Y,X))$ has ccp.
\item\label{it:weak_ccp_2} The system of seminorms given by 
\[
p_{\calK}(x)\coloneq\sup_{y\in\calK}|\langle x,y\rangle| \quad(x\in X)
\] 
for $\sigma(Y,X)$-compact $\calK\subseteq Y$ generates the Mackey topology $\mu(X,Y)$.
\item\label{it:weak_ccp_3} Every $\sigma(Y,X)$-compact subset of $Y$ is $\mu(X,Y)$-equicontinuous.
\end{enumerate}
Indeed, the equivalence ``\ref{it:weak_ccp_1}$\Leftrightarrow$\ref{it:weak_ccp_2}'' is \cite[Problems 9-2-4]{Wilansky1978}. 
The implication ``\ref{it:weak_ccp_1}$\Rightarrow$\ref{it:weak_ccp_3}'' follows from the fact that the $\sigma(Y,X)$-closure of 
the absolutely convex hull $\overline{\operatorname{acx}}(\calK)$ is $\sigma(Y,X)$-compact for every $\sigma(Y,X)$-compact 
subset $\calK\subseteq Y$ if $(Y,\sigma(Y,X))$ has ccp, and the estimate 
$|\langle x,y\rangle|\leq p_{\overline{\operatorname{acx}}(\calK)}(x)$ for all $x\in X$ and $y\in \calK$. 
Let us turn to the implication ``\ref{it:weak_ccp_3}$\Rightarrow$\ref{it:weak_ccp_2}''. 
Let $\calK\subseteq Y$ be $\sigma(Y,X)$-compact. By \ref{it:weak_ccp_3}, $\calK$ is $\mu(X,Y)$-equicontinuous, 
implying that $\operatorname{acx}(\calK)$ is $\mu(X,Y)$-equicontinuous. By the Alaoglu--Bourbaki theorem, 
see \cite[Theorem 9-1-10]{Wilansky1978}, $\overline{\operatorname{acx}}(\calK)$ is $\sigma(Y,X)$-compact, 
which implies the statement due to the estimate $p_{\calK}(x)\leq  p_{\overline{\operatorname{acx}}(\calK)}(x)$ for all $x\in X$.
\item\label{it:strong_Mackey} Let $\topo$ be a Hausdorff locally convex topology on $X$ and set $X'\coloneq(X,\topo)'$. 
Then $(X,\topo)$ is called a \emph{strong Mackey space} if all $\sigma(X',X)$-compact subsets 
of $X'$ are $\topo$-equicontinuous, see \cite[p.~317]{Sentilles1972}. 
In particular, strong Mackey spaces are \emph{Mackey spaces}, 
i.e.~their topology $\topo$ coincides with the Mackey topology $\mu(X,X')$, see \cite[\S 21, 4.(1)]{Koethe1969}. 
So, a strong Mackey space $(X,\topo)$ has property \ref{it:weak_ccp_3} of 
\prettyref{rem:conditions_str_M_str_cont_loc_equicont} \ref{it:weak_ccp}
with $Y\coloneq X'=(X,\topo)'$. 
\item\label{it:sufficient_strong_Mackey_1} Let $\topo$ be a Hausdorff locally convex topology on $X$. 
By \cite[Chap.~IV, 3.4]{Schaefer1971}, $(X,\topo)$ is a Mackey space if it is barrelled. 
By \cite[11.1.5 Proposition]{Jarchow1981}, $(X',\sigma(X',X))$ is quasi-complete if and only if $(X,\mu(X,X'))$ is barrelled. 
So, barrelled Hausdorff locally convex spaces are strong Mackey spaces by 
\prettyref{rem:conditions_str_M_str_cont_loc_equicont} \ref{it:weak_ccp}, cf.~\cite[Proposition 3.3]{Kraaij2016}. 

By \cite[24, 5.(4')]{Koethe1969} and \cite[8.2.5 Proposition]{Jarchow1981}, the space
$(X',\sigma(X',X))$ has ccp if and only if the $\sigma(X',X)$-closure (equivalently, $\mu(X',X)$-closure) of the absolutely convex hull 
of any $\sigma(X',X)$-compact subset of $X'$ is $\mu(X',X)$-complete. 
So, a Mackey space $(X,\topo)$ is a strong Mackey space if $(X',\mu(X',X))$ is quasi-complete, see \cite[20, 6.(3)]{Koethe1969}. 
In particular, a sequentially complete \emph{Mackey--Mazur space} $(X,\topo)$, i.e.~$(X,\topo)$ is a Mackey and a Mazur space, 
is a strong Mackey space since $(X',\mu(X',X))$ is complete in this case, see (the proof of) \cite[Proposition 3.3]{Kraaij2016}.
We recall that $(X,\topo)$ is called a \emph{Mazur space} if all linear sequentially 
$\topo$-continuous functionals on $X$ are already $\topo$-continuous. 
Examples of Mazur spaces are C-sequential spaces by \cite[Theorem 7.4]{Wilansky1981}.
\item\label{it:sufficient_strong_Mackey_2} Let $\topo$ be a Hausdorff locally convex topology on $X$ such that $(X,\topo)$ is 
quasi-complete. Then $(X,\mu(X,X'))$ is quasi-complete as $\topo\subseteq\mu(X,X')$ by the Mackey--Arens theorem and as a subset of $X$ 
is $\mu(X,X')$-bounded if and only if it is $\topo$-bounded by \cite[8.3.4 Theorem]{Jarchow1981}. 
Thus, $(X,\sigma(X,X'))$ has ccp by \prettyref{rem:conditions_str_M_str_cont_loc_equicont} \ref{it:sufficient_strong_Mackey_1} 
since $X=(X',\sigma(X',X))'$ as linear spaces. 
Hence, for a Hausdorff topological space $I$, any $\mu(X',X)$-strongly continuous family $(S(t))_{t\in I}$ 
in $\calL(X',\sigma(X',X))$ is locally $\mu(X',X)$-equicontinuous by \prettyref{prop:str_M_str_cont_loc_equicont}, 
cf.~\cite[Corollary 3.10]{Kunze2009}.
\end{enumerate}
\end{remark}

Again (see the comment above \prettyref{thm:seq_equicont_equicont_equiv}), the spaces we are mainly interested in, 
namely Saks spaces equipped with their mixed topology, 
are not barrelled by \cite[I.1.15 Proposition]{Cooper1978} unless the mixed topology already coincides with the norm-topology. 
However, among them are some that are sequentially complete Mackey--Mazur spaces 
(w.r.t.~the mixed topology), see \cite[Remark 3.20 \& Section 4]{KruseSchwenninger2022}.
Modifying the proof of \cite[Theorem 3.9]{Kunze2009}, which is concerned with exponentially bounded semigroups, 
we obtain the following result regarding $\calF$-equiconti\-nuity.

\begin{proposition}\label{prop:equicont_from_loc_eqicont}
Let $\langle X,Y\rangle$ be a dual pair of linear spaces, $I$ a Hausdorff topological space, 
$\calF$ a set of functions from $I$ to $[0,\infty)$ 
and $(S(t))_{t\in I}$ a $\mu(X,Y)$-strongly continuous family in $\calL(X,\sigma(X,Y))$. 
If $(Y,\sigma(Y,X))$ has ccp, $I$ is locally compact and noncompact, and there exists a continuous 
function $k\in\calF$ such that $S_{k}x\coloneq\mu(X,Y)$-$\lim_{t\to\infty}k(t)S(t)x$ exists for all $x\in X$ 
and $S_{k}\in\calL(X,\sigma(X,Y))$, then $(k(t)S(t))_{t\in I}$ is $\mu(X,Y)$-strongly continuous and $\mu(X,Y)$-equicontinuous. 
In particular, $(S(t))_{t\in I}$ is $\calF$-$\mu(X,Y)$-equicontinuous in this case.
\end{proposition}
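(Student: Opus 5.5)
The plan is to reduce the claim to \prettyref{prop:str_M_str_cont_loc_equicont}, applied not on $I$ but on its one-point compactification $\widehat{I}\coloneq I\cup\{\infty\}$. Since $I$ is locally compact, Hausdorff and noncompact, $\widehat{I}$ is a compact Hausdorff space in which $I$ is open and dense. Neighbourhoods of $\infty$ are the complements of compact subsets of $I$, so ``$t\to\infty$'' in the hypothesis is read in exactly this sense.

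First, define a family $(T(t))_{t\in\widehat{I}}$ by $T(t)\coloneq k(t)S(t)$ for $t\in I$ and $T(\infty)\coloneq S_{k}$. Each $T(t)$ lies in $\calL(X,\sigma(X,Y))$. For $t\in I$ this holds because $\calL(X,\sigma(X,Y))$ is a linear space and $S(t)$ belongs to it. For $t=\infty$ it is the hypothesis on $S_{k}$.

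Next, check that $(T(t))_{t\in\widehat{I}}$ is $\mu(X,Y)$-strongly continuous on $\widehat{I}$. At points $t_0\in I$ this follows because $k$ is continuous and scalar valued, $t\mapsto S(t)x$ is $\mu(X,Y)$-continuous, and scalar multiplication $\R\times X\to X$ (or $\C\times X\to X$) is jointly continuous. Hence $t\mapsto k(t)S(t)x$ is continuous on the open set $I$. At the point $\infty$, continuity is precisely the statement that $\mu(X,Y)$-$\lim_{t\to\infty}k(t)S(t)x=S_kx$, which is assumed.

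Then apply \prettyref{prop:str_M_str_cont_loc_equicont} to the dual pair $\langle X,Y\rangle$, the Hausdorff space $\widehat{I}$ and the family $(T(t))_{t\in\widehat{I}}$, using that $(Y,\sigma(Y,X))$ has ccp. This gives local $\mu(X,Y)$-equicontinuity. Taking the compact set $M\coloneq\widehat{I}$ itself, we get that $\{T(t)\;|\;t\in\widehat{I}\}$ is $\mu(X,Y)$-equicontinuous. In particular its subset $\{k(t)S(t)\;|\;t\in I\}$ is $\mu(X,Y)$-equicontinuous. Strong continuity of $(k(t)S(t))_{t\in I}$ was shown in the previous step. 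Since $k\in\calF$, the family $(S(t))_{t\in I}$ is $\calF$-$\mu(X,Y)$-equicontinuous by \prettyref{defi:equicontinouscf}.

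The only delicate point is the use of the compactification. One has to confirm that $\widehat{I}$ is Hausdorff, which needs local compactness of $I$, and that the limit $t\to\infty$ in the hypothesis is meant along the filter of complements of compact sets. If one prefers to avoid compactifications, the alternative is to redo the proof of \prettyref{prop:str_M_str_cont_loc_equicont} directly. In that route the key step is showing that $\{S_k'y\}\cup\{k(t)S(t)'y\;|\;t\in I,\,y\in\calK\}$ is $\sigma(Y,X)$-compact, and this compactness is again exactly the continuity on $\widehat{I}$ established above.
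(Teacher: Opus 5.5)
Your proposal is correct and follows essentially the same route as the paper. The paper also passes to the one-point compactification $I_\infty$ and extends the family by $S_k$ at $\infty$. It then checks that each extended operator lies in $\calL(X,\sigma(X,Y))$ and that the family is $\mu(X,Y)$-strongly continuous on $I_\infty$, and concludes by applying \prettyref{prop:str_M_str_cont_loc_equicont} together with the compactness of $I_\infty$.
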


\begin{proof}
We denote by $I_{\infty}\coloneq I\cup\{\infty\}$ the one-point compactification of the locally compact, 
noncompact Hausdorff topological space $I$. Further, we define for each $x\in X$ the map 
$S_{\infty}(\cdot)x\colon I_{\infty} \to X$ by 
\[
S_{\infty}(t)x
\coloneq 
\begin{cases}
k(t)S(t)x &,\; t\in I,\\
S_{k}x &,\; t=\infty .
\end{cases}
\]
We note that $S_{\infty}(t)\in \calL(X,\sigma(X,Y))$ for all $t\in I_{\infty}$ since $(S(t))_{t\in I}$ and $S_{k}$ 
are in $\calL(X,\sigma(X,Y))$ and $k(t)S(t)=S(t)k(t)$. Moreover, $(S_{\infty}(t))_{t\in I_{\infty}}$ 
is $\mu(X,Y)$-strongly continuous by definition of $S_{k}$ and since $(S(t))_{t\in I}$ is $\mu(X,Y)$-strongly 
continuous and $k$ continuous. 
Finally, our statement follows from \prettyref{prop:str_M_str_cont_loc_equicont} applied 
to $(S_{\infty}(t))_{t\in I_{\infty}}$ and the compactness of $I_{\infty}$. 
\end{proof}

Next, we turn to the relation between (sequential) $\calF$-equicontinuity and (sequential) local equicontinuity. 
Let $I$ be a Hausdorff topological space and $\calF$ a set of functions from $I$ to $[0,\infty)$. 
In some cases (sequential) $\calF$-equicontinuity already implies (sequential) local equicontinuity. 
We say that a function $k\in\calF$ is \emph{locally bounded away from zero} if $\inf_{t\in M}k(t)>0$ 
for all compact sets $M\subseteq I$, see e.g.~\cite[p.~26]{Kruse2023}. 
In particular, if $\calF$ consists of strictly positive continuous functions, 
then all its elements are locally bounded away from zero.

\begin{proposition}\label{prop:loc_bound_away_zero_equicont}
Let $X$ be a Hausdorff locally convex space, $I$ a Hausdorff topological space, $\calF$ a set of functions 
from $I$ to $[0,\infty)$ that are locally bounded away from zero and $(S(t))_{t\in I}$ a family of linear maps from $X$ to $X$. 
If $(S(t))_{t\in I}$ is (sequentially) $\calF$-equicontinuous, then it is (sequentially) locally equicontinuous.
\end{proposition}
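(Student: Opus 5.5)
The plan is to reduce local equicontinuity on a compact set $M\subseteq I$ to the global (sequential) equicontinuity of the weighted family $\{k(t)S(t)\;|\;t\in I\}$. We divide by $k$ on $M$, which is allowed because $k$ is bounded below there. This mirrors the first step in the proof of \prettyref{prop:cosine_type} \ref{it:cosine_type_2}.

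Assume $(S(t))_{t\in I}$ is (sequentially) $\calF$-equicontinuous, and fix $k\in\calF$ such that $\{k(t)S(t)\;|\;t\in I\}$ is (sequentially) equicontinuous. Let $M\subseteq I$ be compact. Since $k$ is locally bounded away from zero, we have $c_M\coloneq\inf_{t\in M}k(t)>0$. In particular $k(t)>0$ for $t\in M$, so for all $p\in\semis_X$, $t\in M$ and $x\in X$,
\[
p(S(t)x)=\frac{1}{k(t)}\,p(k(t)S(t)x)\leq \frac{1}{c_M}\,p(k(t)S(t)x).
\]

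In the equicontinuous case, fix $p\in\semis_X$. There are $q\in\semis_X$ and $K\geq 0$ with $\sup_{t\in I}p(k(t)S(t)x)\leq Kq(x)$. Hence $\sup_{t\in M}p(S(t)x)\leq (K/c_M)q(x)$ for all $x\in X$, so $\{S(t)\;|\;t\in M\}$ is equicontinuous. In the sequential case, let $x_n\to 0$. Then
\[
\sup_{t\in M}p(S(t)x_n)\leq c_M^{-1}\sup_{t\in I}p(k(t)S(t)x_n)\to 0.
\]
Alternatively, one may argue via a sequentially continuous seminorm from \prettyref{prop:seq_equicont_equiv} \ref{it:seq_equicont_3}. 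Since $M$ was an arbitrary compact set, the family is (sequentially) locally equicontinuous.

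There is no real obstacle. The only point needing care is that $k$ may vanish somewhere on $I$; the hypothesis of being locally bounded away from zero ensures $k>0$ on each compact $M$, which is exactly what the division requires.
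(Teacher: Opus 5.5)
Your proposal is correct and is essentially the paper's proof: fix $k\in\calF$ making $(k(t)S(t))_{t\in I}$ (sequentially) equicontinuous and use $p(S(t)x)\leq \bigl(\inf_{s\in M}k(s)\bigr)^{-1}p(k(t)S(t)x)$ for $t\in M$. Your explicit treatment of the sequential case fills in what the paper leaves as ``directly follows''.
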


\begin{proof}
Let $(S(t))_{t\in I}$ be $\calF$-equicontinuous. Then there is $k\in\calF$ such that 
the family $(k(t)S(t))_{t\in I}$ is equicontinuous. 
Let $M\subseteq I$ compact and $p\in\semis$. Our statement directly follows from $k$ being locally bounded away from zero 
and the estimate
\[
 p(S(t)x)
\leq \frac{1}{\inf_{s\in M}k(s)} p(k(t)S(t)x)
\] 
for all $t\in M$ and $x\in X$.  
\end{proof}

Hence, (sequentially) exponentially equicontinuous and (sequentially) polynomially equicontinuous families 
$(S(t))_{t\in I}$ in the sense of \prettyref{defi:exp_pol_equicont} are also (sequentially) locally equicontinuous. 
However, we note that there are strongly continuous locally equicontinuous semigroups that are not exponentially 
equicontinuous, see e.g.~\cite[Remark 2.2 (iii)]{AlbaneseBonetRicker2013} for a case 
where $X$ is a Fr\'echet space. 
For the rest of this section, we now take a closer look at the case when $X$ is a normed space. 

\begin{definition}\label{defi:bound_lin_norm}
Let $(X,\norm{\cdot})$ be a normed space, $I$ a set, $\calF$ a set of functions from $I$ to $[0,\infty)$ 
and $(S(t))_{t\in I}$ a family in $\calL(X)$. Then $(S(t))_{t\in I}$ is called 
\begin{enumerate}[label=\upshape(\alph*), leftmargin=*]
\item \emph{bounded} if $\sup_{t\in I}\norm{S(t)}_{\calL(X)}<\infty$. 
\item \emph{locally bounded} if $I$ is a Hausdorff topological space and $(S(t))_{t\in M}$ is bounded for all compact sets $M\subseteq I$. 
\item $\calF$-\emph{bounded} if there is $k\in\calF$ such that $(k(t)S(t))_{t\in I}$ is bounded.   
\end{enumerate} 
\end{definition}

We note that these notions of (local, $\calF$-) boundedness (w.r.t.~$\norm{\cdot}_{\calL(X)})$ are equivalent 
to (local, $\calF$-) equicontinuity (w.r.t.~$\topo_{\norm{\cdot}}$). We use the terms \emph{exponentially bounded} 
and \emph{polynomially bounded} in the sense of \prettyref{defi:exp_pol_equicont} accordingly. 

\begin{lemma}
\label{lem:equi_cont_tight_norm_bound}
    Let $(X,\norm{\cdot})$ be a normed space, $\topo\subseteq\topo_{\norm{\cdot}}$ a Hausdorff locally convex topology on $X$ 
    such that $\topo$-bounded sets are also $\norm{\cdot}$-bounded, and $(S(t))_{t\in I}$ a family of linear maps from $X$ to $X$. 
    If $(S(t))_{t\in I}$ is sequentially $\topo$-equicontinuous,
    then $S(t)\in\calL(X)$ for all $t\in I$ and $\sup_{t\in I}\norm{S(t)}_{\calL(X)}<\infty$.
\end{lemma}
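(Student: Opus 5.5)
We show that the union $B\coloneq\{S(t)x\;|\;t\in I,\,\norm{x}\leq 1\}$ is $\topo$-bounded. Then $B$ is $\norm{\cdot}$-bounded by assumption, i.e.\ there is $c\geq 0$ with $\norm{S(t)x}\leq c$ for all $t\in I$ and $\norm{x}\leq 1$. This gives $S(t)\in\calL(X)$ and $\sup_{t\in I}\norm{S(t)}_{\calL(X)}\leq c$ in one stroke.

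To see that $B$ is $\topo$-bounded, fix $p\in\semis_{\topo}$. By \prettyref{prop:seq_equicont_equiv}, sequential $\topo$-equicontinuity gives a sequentially $\topo$-continuous seminorm $q$ on $X$ and $K\geq 0$ such that
\[
\sup_{t\in I}p(S(t)x)\leq Kq(x)\qquad (x\in X).
\]
It therefore suffices to show that $q$ is bounded on the closed unit ball of $(X,\norm{\cdot})$.

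Suppose it is not. Then there are $x_n\in X$ with $\norm{x_n}\leq 1$ and $q(x_n)\geq n^2$. Put $y_n\coloneq x_n/n$. Then $\norm{y_n}\leq 1/n\to 0$, so $y_n\to 0$ in $\topo_{\norm{\cdot}}$. Since $\topo\subseteq\topo_{\norm{\cdot}}$, also $y_n\to 0$ in $\topo$. Sequential $\topo$-continuity of $q$ then forces $q(y_n)\to 0$. This contradicts $q(y_n)\geq n$.

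Hence $\sup_{\norm{x}\leq 1}q(x)\eqqcolon C_q<\infty$, and so $\sup_{t\in I,\,\norm{x}\leq 1}p(S(t)x)\leq KC_q$. As $p\in\semis_{\topo}$ was arbitrary, $B$ is $\topo$-bounded, which completes the argument.

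The only non-routine step is the scaling argument showing that a sequentially $\topo$-continuous seminorm is norm-bounded on the unit ball. It uses just two facts: the norm topology is finer than $\topo$, and $\topo_{\norm{\cdot}}$ is metrisable, so sequences suffice. The hypothesis that $\topo$-bounded sets are norm-bounded is used exactly once, at the very end. Note that we never need the individual $S(t)$ to be $\topo$-continuous; norm continuity of each $S(t)$ falls out of the uniform estimate.
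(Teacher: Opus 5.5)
Your proof is correct and follows the same route as the paper: show that $\bigcup_{t\in I}S(t)(B_{\norm{\cdot}})$ is $\topo$-bounded, convert this to norm-boundedness via the hypothesis, and read off both $S(t)\in\calL(X)$ and the uniform bound. The only difference is that you prove the $\topo$-boundedness step directly, using the seminorm from \prettyref{prop:seq_equicont_equiv} and the scaling $x_n/n$. The paper instead cites \cite[Proposition A.1 (iii)]{FedericoRosestolato2020}, which says that sequentially equicontinuous families map $\topo$-bounded sets to $\topo$-bounded sets.
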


\begin{proof}
First, we note that the conditions $\topo\subseteq\topo_{\norm{\cdot}}$ and that $\topo$-bounded sets are also $\norm{\cdot}$-bounded 
already imply that a subset of $X$ is $\topo$-bounded if and only if it is $\norm{\cdot}$-bounded.
Let $(S(t))_{t\in I}$ be sequentially $\topo$-equicontinuous. 
Then $\{S(t)D\;|\;t\in I\}$ is $\topo$-bounded for all $\topo$-bounded sets 
$D\subseteq X$ by \cite[Proposition A.1 (iii)]{FedericoRosestolato2020}. By our first observation, this yields that 
$\{S(t)B_{\norm{\cdot}}\;|\;t\in I\}$ is $\norm{\cdot}$-bounded where $B_{\norm{\cdot}}$ is the $\norm{\cdot}$-closed unit ball. 
Hence, $S(t)\in\calL(X)$ for all $t\in I$ as $(X,\norm{\cdot})$ is bornological, and $\sup_{t\in I}\norm{S(t)}_{\calL(X)}<\infty$.
\end{proof}

\begin{lemma}
\label{lem:loc_bound_from_strong_cont}
    Let $(X,\norm{\cdot})$ be a Banach space, $\topo$ a Hausdorff locally convex topology on $X$ such that $\topo$-bounded 
    sets are also $\norm{\cdot}$-bounded, $I$ a Hausdorff topological space and $(S(t))_{t\in I}$ a family in $\calL(X)$.
    If $(S(t))_{t\in I}$ is $\topo$-strongly continuous, then it is locally bounded.
\end{lemma}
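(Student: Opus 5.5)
The plan is to combine compactness with the uniform boundedness principle. Fix a compact set $M\subseteq I$ and $x\in X$. Since $t\mapsto S(t)x$ is $\topo$-continuous, the image $\{S(t)x\;|\;t\in M\}$ is $\topo$-compact, hence $\topo$-bounded.

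By the hypothesis that $\topo$-bounded sets are $\norm{\cdot}$-bounded, we then get
\[
\sup_{t\in M}\norm{S(t)x}<\infty .
\]
This holds for every $x\in X$, so $\{S(t)\;|\;t\in M\}$ is a pointwise bounded family in $\calL(X)$. Since $(X,\norm{\cdot})$ is a Banach space, the uniform boundedness principle gives $\sup_{t\in M}\norm{S(t)}_{\calL(X)}<\infty$. As $M$ was an arbitrary compact set, $(S(t))_{t\in I}$ is locally bounded.

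The only point needing care is the first step: a continuous image of a compact set is compact in the Hausdorff locally convex space $(X,\topo)$, and compact sets in topological vector spaces are bounded. No comparison between $\topo$ and $\topo_{\norm{\cdot}}$ is needed here beyond the stated bornological inclusion. I do not expect any real obstacle.
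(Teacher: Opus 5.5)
Your proposal is correct and is essentially the paper's own proof: the continuous image $S(M)x$ of the compact set $M$ is $\topo$-compact, hence $\topo$-bounded and therefore $\norm{\cdot}$-bounded, and the uniform boundedness principle on the Banach space $(X,\norm{\cdot})$ then gives $\sup_{t\in M}\norm{S(t)}_{\calL(X)}<\infty$. You are also right that no comparison between $\topo$ and $\topo_{\norm{\cdot}}$ is needed beyond the assumed inclusion of bounded sets.
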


\begin{proof}
     Let $M\subseteq I$ be compact and $x\in X$. Since $S(\cdot)x$ is $\topo$-continuous and $M$ is compact, 
     $\{S(t)x\;|\; t\in M\} = S(M)x$ is $\topo$-compact and hence $\topo$-bounded; 
     thus, by assumption, also $\norm{\cdot}$-bounded. 
     Put differently, $(S(t))_{t\in M}$ is pointwise bounded in the Banach space $(X,\norm{\cdot})$ and 
     so also uniformly bounded by the uniform boundedness principle, 
     i.e.~$\sup_{t\in M} \norm{S(t)}_{\calL(X)}<\infty$.
\end{proof}

Even if $\topo=\topo_{\norm{\cdot}}$, then $\alpha$-times integrated $C$-semigroups and $C$-cosine families for $\alpha\geq 0$ 
are in general not exponentially bounded, see \cite[Example, p.~201]{DaviesPang1987}, 
\cite[p.~203--204]{KuoShaw2000}, \cite[p.~634--635]{KuoShaw1997a} and \cite[p.~145]{Kuo2006}. 
The same is true for evolution families, see \cite[Section VI.9.6]{EngelNagel2000} and \cite[p.~19]{Scirratt2016}. 
This is different for semigroups and cosine families. 

\begin{lemma}
\label{lem:growth_bound}
    Let $(X,\norm{\cdot})$ be a Banach space, $\topo$ a Hausdorff locally convex topology on $X$ such that $\topo$-bounded 
    sets are also $\norm{\cdot}$-bounded, and $(S(t))_{t\geq 0}$ a semigroup or a cosine family in $\calL(X)$. 
    If $(S(t))_{t\geq 0}$ is $\topo$-strongly continuous, then it is exponentially bounded.
\end{lemma}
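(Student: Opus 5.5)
By \prettyref{lem:loc_bound_from_strong_cont}, a $\topo$-strongly continuous family in $\calL(X)$ is locally bounded. The hypotheses of \prettyref{lem:growth_bound} are exactly those of that lemma, with $I=[0,\infty)$. So $M\coloneq\sup_{t\in[0,1]}\norm{S(t)}_{\calL(X)}<\infty$, and in particular $M\geq 1$ because $S(0)=\id$. The remaining task is to propagate this bound from $[0,1]$ to all of $[0,\infty)$ using the algebraic relations, with constants that grow at most geometrically in the integer part of $t$. This is the classical Banach space argument. The only role of $\topo$ is to provide local boundedness, and it enters only through \prettyref{lem:loc_bound_from_strong_cont}.

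\emph{Semigroup case.} For $t\geq 0$ write $t=n+r$ with $n\in\N_0$ and $r\in[0,1)$. Then $S(t)=S(1)^nS(r)$, so
\[
\norm{S(t)}_{\calL(X)}\leq M^{n+1}\leq M\euler^{t\log M}.
\]
Hence $(\euler^{-\omega t}S(t))_{t\geq0}$ is bounded with $\omega\coloneq\log M\geq 0$, which is exponential boundedness in the sense of \prettyref{defi:exp_pol_equicont} combined with \prettyref{defi:bound_lin_norm}.

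\emph{Cosine family case.} This case requires a second-order recursion. From $2S(t)S(s)=S(t+s)+S(t-s)$ with $s=1$ we get $S(t+1)=2S(1)S(t)-S(t-1)$ for $t\geq 1$. Fix $r\in[0,1)$ and set $a_n\coloneq\norm{S(n+r)}_{\calL(X)}$. The relation at $t=1$, $s=r$, namely $S(1+r)=2S(1)S(r)-S(1-r)$, handles the start of the recursion; for larger $n$ we use $S(n+1+r)=2S(1)S(n+r)-S(n-1+r)$. This gives $a_0,a_1\leq 3M^2$ and $a_{n+1}\leq 2Ma_n+a_{n-1}$. By induction, $a_n\leq 3M^2\lambda^n$ with $\lambda\coloneq 2M+1$, since $2M\lambda^n+\lambda^{n-1}\leq\lambda^{n+1}$ holds because $\lambda\geq 1$. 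Therefore
\[
\norm{S(t)}_{\calL(X)}\leq 3M^2\euler^{t\log(2M+1)}\quad(t\geq 0).
\]
Here no commutativity is needed, since only the defining functional equation is used (with $t\geq s$).

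The main point needing care is only the bookkeeping of the cosine recursion, in particular making sure the functional equation is applied with $t\geq s$ in each instance. Everything else follows directly from \prettyref{lem:loc_bound_from_strong_cont}.
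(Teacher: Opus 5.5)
Your proof is correct and follows essentially the paper's route: local boundedness on a compact interval via Lemma~\ref{lem:loc_bound_from_strong_cont}, then propagation through $S(t+1)=S(t)S(1)$ resp.\ $S(t+1)=2S(t)S(1)-S(t-1)$. The paper differs only in bookkeeping: it picks $\omega\geq 0$ with $\norm{S(1)}_{\calL(X)}\euler^{-\omega}\leq 1$ resp.\ $2\norm{S(1)}_{\calL(X)}\euler^{-\omega}+\euler^{-2\omega}\leq 1$, so that an induction over $[0,n]$ closes with the same constant $K=\sup_{t\in[0,2]}\norm{S(t)}_{\calL(X)}$ (starting from $[0,2]$ also avoids your separate first step), whereas you track explicit geometric constants; and one small slip in your cosine case is that for $t\geq s=1$ the functional equation yields $2S(t)S(1)$, not $2S(1)S(t)$ (that order would need commutativity), which is harmless because your estimate only uses submultiplicativity of $\norm{\cdot}_{\calL(X)}$.
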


\begin{proof}
     By the proof of \prettyref{lem:loc_bound_from_strong_cont}, we know that 
     $K\coloneq \sup_{t\in [0,2]} \norm{S(t)}_{\calL(X)}<\infty$.
     
     (i) Let $(S(t))_{t\geq 0}$ be a semigroup. Let $\omega\geq 0$ such that $\norm{S(1)}_{\calL(X)}\euler^{-\omega}\leq 1$. 
     We show that $\norm{S(t)}_{\calL(X)} \leq K\euler^{\omega t}$ for all $t\geq 0$ by induction. 
     The inequality is clearly true for $t\in [0,2]$. 
     Assume that $n\in\N$, $n\geq 2$, is such that the inequality is true for all $t\in [0,n]$. For $t\in (n-1,n]$ we thus observe 
     that
     \begin{align*}
            \norm{S(t+1)}_{\calL(X)} 
        & = \norm{S(t)S(1)}_{\calL(X)} \leq K\euler^{\omega t} \norm{S(1)}_{\calL(X)} 
          =  \norm{S(1)}_{\calL(X)}\euler^{-\omega} K\euler^{\omega (t+1)} \\
        & \leq K\euler^{\omega(t+1)}.
    \end{align*}
     
     (ii) Let $(S(t))_{t\geq 0}$ be a cosine family. The proof is quite similar to case (i). 
     Let $\omega\geq 0$ such that $2\norm{S(1)}_{\calL(X)}\euler^{-\omega} + \euler^{-2\omega} \leq 1$. 
     We show that $\norm{S(t)}_{\calL(X)} \leq K\euler^{\omega t}$ for all $t\geq 0$ by induction. 
     Again, the inequality is clearly true for $t\in [0,2]$. 
     Assume that $n\in\N$, $n\geq 2$, is such that the inequality is true for all $t\in [0,n]$. For $t\in (n-1,n]$ we thus observe 
     that
     \begin{align*}
            \norm{S(t+1)}_{\calL(X)} 
        & = \norm{2S(t)S(1)-S(t-1)}_{\calL(X)} \leq 2K\euler^{\omega t} \norm{S(1)}_{\calL(X)} + K\euler^{\omega(t-1)} \\
        & = (2\norm{S(1)}_{\calL(X)}\euler^{-\omega} + \euler^{-2\omega}) K\euler^{\omega(t+1)} 
          \leq K\euler^{\omega(t+1)}.\qedhere
    \end{align*}
\end{proof}

For $\topo=\topo_{\norm{\cdot}}$ \prettyref{lem:growth_bound} is already known, 
see \cite[Chap.~I, 5.5 Proposition]{EngelNagel2000} and \cite[Lemma 3.14.3 a)]{ArendtBHN2011}, and our proof is an adaptation 
of those proofs.

Let $I$ be Hausdorff topological space and $\calF$ a set of functions from $I$ to $[0,\infty)$. 
We say that $\calF$ has the \emph{rational vanishing at infinity property}, in short \emph{$\calF$ has rV$_{\infty}$}, if 
\[
\forall\;k_{1}\in\calF\;\exists\;k_{2}\in\calF\;\forall\;\varepsilon >0\;\exists\;M\subseteq I\text{ compact }
\forall\;t\in I\setminus M:\; k_{2}(t)\leq\varepsilon k_{1}(t).
\] 
If $k_{1}(t)\neq 0$ for all $t\in I$, then this means that $k_{2}/k_{1}$ vanishes at infinity. 
In the context of families of weight functions for weighted spaces of continuous functions, a condition like this 
with swapped roles of $k_{1}$ and $k_{2}$ appears in \cite[1.3 Bemerkung]{Bierstedt1973} and \cite[p.~34]{Kruse2023}.

\begin{theorem}\label{thm:equicont_from_loc_eqicont_norm}
Let $(X,\norm{\cdot})$ be a normed space, $\topo\subseteq\topo_{\norm{\cdot}}$ a Hausdorff locally convex topology on $X$ 
such that $(X,\topo)$ is a strong Mackey space, $I$ a locally compact Hausdorff topological space, 
$\calF$ a set of continuous functions from $I$ to $[0,\infty)$ and $(S(t))_{t\in I}$ a $\topo$-strongly continuous 
family in $\calL(X,\sigma(X,(X,\topo)'))$. 
If $\calF$ has rV$_{\infty}$ and $(S(t))_{t\in I}$ is $\calF$-bounded, then $(S(t))_{t\in I}$ is $\calF$-$\topo$-equicontinuous.
\end{theorem}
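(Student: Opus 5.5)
Since $\calF$ is $\calF$-bounded, there is $k_{1}\in\calF$ with $L\coloneq\sup_{t\in I}\norm{k_{1}(t)S(t)}_{\calL(X)}<\infty$. By rV$_{\infty}$ choose $k_{2}\in\calF$ such that for every $\varepsilon>0$ there is a compact $M_{\varepsilon}\subseteq I$ with $k_{2}(t)\leq\varepsilon k_{1}(t)$ for $t\in I\setminus M_{\varepsilon}$. The plan is to show that $(k_{2}(t)S(t))_{t\in I}$ is $\topo$-equicontinuous. We use that $(X,\topo)$ is a strong Mackey space: $\topo=\mu(X,X')$ with $X'\coloneq(X,\topo)'$, and $(X',\sigma(X',X))$ has ccp by \prettyref{rem:conditions_str_M_str_cont_loc_equicont} \ref{it:weak_ccp} and \ref{it:strong_Mackey}. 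We then apply \prettyref{prop:equicont_from_loc_eqicont} to the dual pair $\langle X,X'\rangle$, the function $k\coloneq k_{2}$ (continuous by assumption) and $S_{k}\coloneq 0$, which trivially lies in $\calL(X,\sigma(X,X'))$. If $I$ is compact, \prettyref{prop:str_M_str_cont_loc_equicont} already gives local, hence global, $\topo$-equicontinuity of $(S(t))_{t\in I}$, and the continuous $k_{2}$ is bounded on $I$, so that case is immediate. So assume $I$ is noncompact.

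The remaining hypothesis is that $\topo$-$\lim_{t\to\infty}k_{2}(t)S(t)x=0$ for every $x\in X$, i.e.\ convergence along the one-point compactification. For $\varepsilon>0$ and $t\notin M_{\varepsilon}$ we have
\[
\norm{k_{2}(t)S(t)x}\leq \varepsilon k_{1}(t)\norm{S(t)x}\leq \varepsilon L\norm{x}.
\]
Hence $k_{2}(t)S(t)x\to0$ in norm as $t\to\infty$, and therefore also in $\topo$ because $\topo\subseteq\topo_{\norm{\cdot}}$. This is the one step where the $\calF$-boundedness and rV$_{\infty}$ interact, and it is elementary.

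The point that needs care is that $\topo$-convergence is what \prettyref{prop:equicont_from_loc_eqicont} requires, namely $\mu(X,X')$-convergence, and that the strong continuity hypothesis is in $\mu(X,X')=\topo$. Both hold because $(X,\topo)$ is Mackey. Also $S(t)\in\calL(X,\sigma(X,X'))$ is assumed. The proposition then yields that $(k_{2}(t)S(t))_{t\in I}$ is $\mu(X,X')$-, i.e.\ $\topo$-, equicontinuous. Thus $(S(t))_{t\in I}$ is $\calF$-$\topo$-equicontinuous with $k_{2}\in\calF$.
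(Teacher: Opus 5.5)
Your proof is correct and follows essentially the same route as the paper. Both arguments reduce to noncompact $I$ via \prettyref{prop:str_M_str_cont_loc_equicont}, combine $\calF$-boundedness with rV$_{\infty}$ to get $\topo$-$\lim_{t\to\infty}k_{2}(t)S(t)x=0$, and then apply \prettyref{prop:equicont_from_loc_eqicont} with $Y=(X,\topo)'$ and $S_{k}=0$, using that a strong Mackey space is a Mackey space whose weak dual has ccp. One small slip: in your first line it is $(S(t))_{t\in I}$, not $\calF$, that is $\calF$-bounded.
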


\begin{proof}
Due to \prettyref{prop:str_M_str_cont_loc_equicont} and \prettyref{rem:conditions_str_M_str_cont_loc_equicont} \ref{it:strong_Mackey}, we only need to consider the case that $I$ is noncompact.
We note that there is $k_{1}\in\calF$ such that $K_{1}\coloneq\sup_{t\in I}\norm{k_{1}(t)S(t)}_{\calL(X)}<\infty$ 
since $(S(t))_{t\in I}$ is $\calF$-bounded. Let $p\in\semis_{\topo}$. 
It follows from $\topo\subseteq\topo_{\norm{\cdot}}$ that there is $K_{2}> 0$ such that $p(w)\leq K_{2}\norm{w}$ for all $w\in X$.
Let $\varepsilon>0$ and $x\in X$. As $\calF$ has rV$_{\infty}$, there are $k\in\calF$, independent of $\varepsilon$, 
and a compact set $M\subseteq I$ such that $k(t)\leq\frac{\varepsilon}{(K_{1}\norm{x}+1)K_{2}} k_{1}(t)$ for all $t\in I\setminus M$.
This implies that 
\begin{align*}
     p(k(t)S(t)x)
&\leq K_{2}\norm{k(t)S(t)}_{\calL(X)}\norm{x}
 \leq K_{2}\frac{\varepsilon}{(K_{1}\norm{x}+1)K_{2}}\norm{k_{1}(t)S(t)}_{\calL(X)}\norm{x}\\
&=    K_{2}\frac{\varepsilon}{(K_{1}\norm{x}+1)K_{2}}K_{1}\norm{x}
 <    \varepsilon 
\end{align*}
for all $t\in I\setminus M$. Hence, $\topo$-$\lim_{t\to\infty}k(t)S(t)x=0$. Since $x$ is arbitrary, we obtain that $(S(t))_{t\in I}$ is 
$\calF$-equicontinuous by \prettyref{prop:equicont_from_loc_eqicont} with $Y\coloneq (X,\topo)'$ and 
\prettyref{rem:conditions_str_M_str_cont_loc_equicont} \ref{it:strong_Mackey} .
\end{proof}

\prettyref{thm:equicont_from_loc_eqicont_norm} has the restriction that $(X,\topo)$ should be a Mackey space. 
Next, we want to obtain sufficient conditions for $\calF$-$\topo$-equicontinuity without this restriction. 
We start with the following property of $\calF$. Let $I$ be a set and $\calF$ a set of functions from $I$ to $[0,\infty)$. 
We say that $\calF$ has the \emph{rational summability property}, in short \emph{$\calF$ has rsp}, 
if for every $k_{1}\in\calF$ there are a sequence $(M_{n})_{n\in\N}$ of subsets of $I$ such that $\bigcup_{n\in\N}M_{n}=I$, 
and $k_{2}\in\calF$ and a non-negative sequence $(b_{n})_{n\in\N_{0}}$ with $0<\sum_{n=0}^{\infty}b_{n}<\infty$ 
such that $k_{2}(t)\leq b_{0}k_{1}(t)$ for all $t\in M_{1}$ and for every $n\in\N$ it holds that 
$k_{2}(t)\leq b_{n}k_{1}(t)$ for all $t\in M_{n+1}\setminus M_{n}$. 

\begin{remark}\label{rem:rational_summable_weights}
The sets $\calF\coloneq\calF_{\exp}$ and $\calF_{\operatorname{poly}}$ from \prettyref{defi:exp_pol_equicont} 
where $I=[0,\infty)$ have rsp. Indeed, let $M_{n}\coloneq [0,n]$ for $n\in\N$.

In the case $\calF=\calF_{\exp}$ let $\alpha\in\R$, $\lambda<\alpha$ and $n\in\N$. 
Then we have $\euler^{(\lambda-\alpha) t}\leq \euler^{(\lambda-\alpha)n}$ 
for all $t\in (n,n+1]=M_{n+1}\setminus M_{n}$, and $\euler^{(\lambda-\alpha) t}\leq 1$ for all $t\in [0,1]= M_{1}$. 
The choice $b_{0}\coloneq 1$ and $b_{n}\coloneq \euler^{(\lambda-\alpha)n}$ yields that $\calF_{\exp}$ has rsp since 
$\sum_{n=0}^{\infty}b_n=\frac{1}{1-\euler^{\lambda-\alpha}}$.

In the case $\calF=\calF_{\operatorname{poly}}$ let $\alpha>0$ and $n\in\N$. Then we have 
$\frac{1+t^{\alpha}}{1+t^{\alpha+2}}\leq \frac{2t^{\alpha}}{t^{\alpha+2}}=\frac{2}{t^{2}}\leq\frac{2}{n^2}$ 
for all $t\in (n,n+1]$, and $\frac{1+t^{\alpha}}{1+t^{\alpha+2}}\leq 2$ for all $t\in [0,1]$. 
The choice $b_{0}\coloneq 2$ and $b_{n}\coloneq \frac{2}{n^2}$ yields that $\calF_{\operatorname{poly}}$ has rsp since
$\sum_{n=0}^{\infty}b_n=2+\frac{\pi^2}{3}$. 
\end{remark}

In our next result we will combine the rational summability property of $\calF$ with the following property. 
Let $(X,\topo)$ be a Hausdorff locally convex space. A system of seminorms $\semis_{\topo}$ generating the topology $\topo$ 
is called \emph{countably quasi-convex} if for every sequence 
$(p_{n})_{n\in\N}$ of seminorms in $\semis_{\topo}$ and every non-negative sequence $(a_{n})_{n\in\N}$ such that 
$\sum_{n=1}^{\infty}a_{n}=1$ there is $p\in\semis_{\topo}$ with $\sup_{n\in\N}a_{n}p_{n}\leq p$. 

\begin{proposition}\label{prop:functional_equicont_from_loc}
Let $(X,\topo)$ be a Hausdorff locally convex space, $\calF$ a set of functions from a set $I$ to $[0,\infty)$ and 
$(S(t))_{t\in I}$ a family of linear maps from $X$ to $X$. 
If $\calF$ has rsp and there is a system of seminorms $\semis_{\topo}$ generating $\topo$ such that there are $k_{1}\in\calF$ and 
$K\geq 0$ such that for all $p\in\semis_{\topo}$ there is a sequence $(q_{n})_{n\in\N}$ 
in $\semis_{\topo}$ such that for all $n\in\N$
\[
\sup_{t\in M_{n}}p(k_{1}(t)S(t)x)\leq Kq_{n}(x) \quad (x\in X)
\]
with $(M_{n})_{n\in\N}$ from rsp, then there is a non-negative sequence $(a_{n})_{n\in\N}$ such that 
$\sum_{n=1}^{\infty}a_{n}=1$ and 
\[
\sup_{t\in I}p(k_{2}(t)S(t)x)\leq K \sum_{k=0}^{\infty}b_{k} \sup_{n\in\N}a_{n}q_{n}(x) \quad (x\in X)
\]
with $k_2 \in\calF$ and $(b_{n})_{n\in\N_{0}}$ from rsp. 
In particular, if there is $q\in\semis_{\topo}$ such that $\sup_{n\in\N}a_{n}q_{n}\leq q$, then 
$(S(t))_{t\in I}$ is $\calF$-$\topo$-equiconti\-nuous. For instance, this is the case if $\semis_{\topo}$ is 
countably quasi-convex.
\end{proposition}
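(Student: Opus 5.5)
The plan is to split $I$ into the pieces $M_{1}$ and $M_{n+1}\setminus M_{n}$ ($n\in\N$) supplied by rsp for $k_{1}$, estimate $k_{2}S(t)$ on each piece by the hypothesis, and glue the estimates with a summable weight. First I would fix $k_{1}\in\calF$ and $K\geq 0$ as in the hypothesis and apply rsp to this $k_{1}$. This gives $(M_{n})_{n\in\N}$ with $\bigcup_{n}M_{n}=I$, some $k_{2}\in\calF$, and $(b_{n})_{n\in\N_{0}}$ with $0<B\coloneq\sum_{n=0}^{\infty}b_{n}<\infty$. The bounds are $k_{2}\leq b_{0}k_{1}$ on $M_{1}$ and $k_{2}\leq b_{n}k_{1}$ on $M_{n+1}\setminus M_{n}$. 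The pieces $M_{1}$ and $M_{n+1}\setminus M_{n}$ cover $I$: for $t\in I$, either $t\in M_{1}$, or we take the least $m$ with $t\in M_{m}$, which gives $m\geq 2$ and $t\in M_{m}\setminus M_{m-1}$. I would not assume the $M_{n}$ are increasing.

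Next, fix $p\in\semis_{\topo}$ and the corresponding $(q_{n})$, and use nonnegativity of the $k_{i}$. For $t\in M_{1}$,
\[
p(k_{2}(t)S(t)x)\leq b_{0}\,p(k_{1}(t)S(t)x)\leq K b_{0}q_{1}(x).
\]
For $t\in M_{n+1}\setminus M_{n}$,
\[
p(k_{2}(t)S(t)x)\leq b_{n}\,p(k_{1}(t)S(t)x)\leq Kb_{n}q_{n+1}(x).
\]
Now I choose the weights $a_{1}\coloneq b_{0}/B$ and $a_{n+1}\coloneq b_{n}/B$ for $n\in\N$. Then $a_{n}\geq 0$ and $\sum_{n=1}^{\infty}a_{n}=1$. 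Moreover $b_{0}=Ba_{1}$ and $b_{n}=Ba_{n+1}$, so every piece estimate is bounded by $KB\,a_{m}q_{m}(x)\leq KB\sup_{n\in\N}a_{n}q_{n}(x)$. Taking the supremum over $t\in I$ gives the claimed inequality with constant $K\sum_{k=0}^{\infty}b_{k}$.

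For the \enquote{in particular} part: if $\sup_{n}a_{n}q_{n}\leq q$ for some $q\in\semis_{\topo}$, then $\sup_{t\in I}p(k_{2}(t)S(t)x)\leq KBq(x)$. Since $p$ is arbitrary, $\{k_{2}(t)S(t)\mid t\in I\}$ is equicontinuous, so $(S(t))_{t\in I}$ is $\calF$-$\topo$-equicontinuous with witness $k_{2}$. If $\semis_{\topo}$ is countably quasi-convex, such a $q$ exists directly by definition, applied to $(q_{n})$ and $(a_{n})$.

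The main step is the choice of the weights $a_{n}$: it has to absorb both the normalisation by $B$ and the index shift between $b_{n}$ and $q_{n+1}$. The covering argument needs a little care because the $M_{n}$ need not be nested, but it is elementary.
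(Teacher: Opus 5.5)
Your proof is correct and matches the paper's argument: the same decomposition of $I$ into $M_{1}$ and the pieces $M_{n+1}\setminus M_{n}$, the same piecewise estimates, and the same weights $a_{n}=b_{n-1}/\sum_{k=0}^{\infty}b_{k}$. Your pointwise covering argument (taking the least $m$ with $t\in M_{m}$) is a slightly cleaner justification than the paper's ``w.l.o.g.\ $M_{n+1}\setminus M_{n}\neq\varnothing$'' step, but it is not a different route.
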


\begin{proof}
Since $\calF$ has rsp, there are a sequence $(M_{n})_{n\in\N}$ of subsets of $I$ such that $\bigcup_{n\in\N}M_{n}=I$, 
and $k_{2}\in\calF$ and a non-negative sequence $(b_{n})_{n\in\N_{0}}$ with $0<\sum_{n=0}^{\infty}b_{n}<\infty$ 
such that $k_{2}(t)\leq b_{0}k_{1}(t)$ for all $t\in M_{1}$ and for every $n\in\N$ it holds that $k_{2}(t)\leq b_{n}k_{1}(t)$ 
for all $t\in M_{n+1}\setminus M_{n}$. Let $p\in\semis_{\topo}$. Then there is a sequence $(q_{n})_{n\in\N}$ 
in $\semis_{\topo}$ such that 
\[
\sup_{t\in M_{n}}p(k_{1}(t)S(t)x)\leq Kq_{n}(x) \quad (x\in X).
\]
For all $n\in\N$ and $x\in X$ we have that
\[
     \sup_{t\in M_{n+1}\setminus M_{n}}p(k_{2}(t)S(t)x)
\leq \sup_{t\in M_{n+1}\setminus M_{n}}b_{n}p(k_{1}(t)S(t)x)
\leq Kb_{n}q_{n+1}(x)
\]
and 
\[
     \sup_{t\in M_{1}}p(k_{2}(t)S(t)x)
\leq \sup_{t\in M_{1}}b_{0}p(k_{1}(t)S(t)x)
\leq Kb_{0}q_{1}(x).
\]
We set $K_{1}\coloneq \sum_{n=0}^{\infty}b_{n}$ and $a_{n}\coloneq b_{n-1}/K_{1}$ for all $n\in\N$. 
Then $a_{n}\geq 0$ for all $n\in\N$ and $\sum_{n=1}^{\infty}a_{n}=1$. 
Assuming w.l.o.g.~$M_{n+1}\setminus M_{n}\neq \varnothing$ for all $n\in\N$, we obtain that
\begin{align*}
      \sup_{t\in I}p(k_{2}(t)S(t)x)
&\leq \max\Bigl\{\sup_{t\in M_{1}}p(k_{2}(t)S(t)x),\,\sup_{n\in\N}\sup_{t\in M_{n+1}\setminus M_{n}}p(k_{2}(t)S(t)x)\Bigr\}\\
&\leq K\sup_{n\in\N_0}b_{n}q_{n+1}(x)
 \leq K K_{1}\sup_{n\in\N}a_{n}q_{n}(x)
\end{align*}
for all $x\in X$, implying our statement.
\end{proof}

The condition of existence of a system of seminorms $\semis_{\topo}$ having the properties described 
in \prettyref{prop:functional_equicont_from_loc} (apart from countable quasi-convexity) 
is also necessary for a semigroup or (commutative) cosine family to be exponentially $\topo$-equicontinuous. 

\begin{remark}
    Let $(X,\topo)$ be a Hausdorff locally convex space and $(S(t))_{t\geq 0}$ an exponentially $\topo$-equicontinuous 
    semigroup or cosine family on $X$. If $(S(t))_{t\geq 0}$ is a cosine family, then we assume in addition that $S(t)S(s)=S(s)S(t)$ for all $t,s\geq 0$, see \prettyref{rem:commutative_cosine}. 
    By the exponential $\topo$-equicontinuity, there is $\alpha\geq 0$ such that $(\euler^{-\alpha t}S(t))_{t\geq 0}$ 
    is $\topo$-equicontinuous. 
    Let $\semis$ be a system of seminorms that generates the topology $\topo$ on $X$ and for $p\in\semis$ define 
    \[
    \widetilde{p}(x)\coloneq \sup_{t\geq 0}p(\euler^{-\alpha t}S(t)x) \quad (x\in X).
    \]
    Then the system of seminorms $\widetilde{\semis}\coloneq\{\widetilde{p}\;|\; p\in\semis\}$ generates the topology $\topo$ on $X$ 
    and for every $p\in\semis$ it holds that
    \[
    \widetilde{p}(\euler^{-\alpha t}S(t)x)\leq \widetilde{p}(x) \quad (t\geq 0, x\in X).
    \]
    Indeed, for semigroups this follows from \cite[Remark 2.2 (i)]{AlbaneseBonetRicker2013}.\footnote{In \cite[Remark 2.2 (i)]{AlbaneseBonetRicker2013} it is additionally assumed that $(S(t))_{t\geq 0}$ is $\topo$-strongly continuous at $t=0$ but its proof shows that this is not needed.} 
    Let us turn to cosine families. We note that by the $\topo$-equicontinuity of 
    $(\euler^{-\alpha t}S(t))_{t\geq 0}$, for every $p\in\semis$ there are $q\in\semis$ and $K_1\geq 0$ such that
    \[
    p(x)\leq \widetilde{p}(x)\leq K_1 q(x) \quad (x\in X).
    \]
    Further, setting $S(t)\coloneq S(-t)$ for $t<0$, it follows from $S(t)S(s)=S(s)S(t)$ for all $t,s\geq 0$ 
    that $2S(t)S(s)=S(t+s)-S(t-s)$ for all $t,s\geq 0$. Hence, we have for all $p\in\semis$ that 
    \[
    p(\euler^{-\alpha (t+s)}S(t-s)x)
    =
    \begin{cases}
        \euler^{-2\alpha s}p(\euler^{-\alpha (t-s)}S(t-s)x) &,\;t\geq s,\\
        \euler^{-2\alpha t}p(\euler^{-\alpha (s-t)}S(s-t)x) &,\;t< s,\\
    \end{cases}
    \]
    for all $t,s\geq 0$ and $x\in X$, and
    \begin{align*}
      \widetilde{p}(\euler^{-\alpha s}S(s)x)
    &=\sup_{t\geq 0}p(\euler^{-\alpha (t+s)}S(t)S(s)x)
     =\frac{1}{2}\sup_{t\geq 0}p\bigl(\euler^{-\alpha (t+s)}(S(t+s)x-S(t-s)x)\bigr)\\
    &\leq \frac{1}{2}\Bigl(\sup_{t\geq 0}p(\euler^{-\alpha (t+s)}S(t+s)x)
                           +\sup_{t\geq 0}p(\euler^{-\alpha (t+s)}S(t-s)x)\Bigr)
    \leq\widetilde{p}(x) 
    \end{align*}  
     for all $s\geq 0$ and $x\in X$ since $\alpha\geq 0$.

     Now, let $p\in\semis$. We set $k_{1}(t)\coloneq \euler^{-\alpha t}$ for $t\geq 0$, $K\coloneq 1$, $M_n\coloneq [0,n]$ and 
     $q_n\coloneq \widetilde{p}\in\widetilde{\semis}$ for $n\in\N$ and note that 
     \[
     \sup_{t\in M_{n}}\widetilde{p}(k_{1}(t)S(t)x)\leq Kq_{n}(x) \quad (x\in X)
     \]
     for all $n\in\N$, and $\sup_{n\in\N}a_n q_n\leq \widetilde{p}\eqcolon q$ since $0\leq a_n\leq 1$ for all $n\in\N$.
\end{remark}

We will restrict our attention now to the setting where there is also a norm on $X$. 
Let $(X,\norm{\cdot})$ be a normed space and $\topo\subseteq\topo_{\norm{\cdot}}$ a Hausdorff locally convex topology on $X$. 
Then $\norm{\cdot}$-bounded subsets of $X$ are also $\topo$-bounded, and for all $\topo$-continuous seminorms $p$ 
on $X$ there exists $K\geq 0$ such that $p(x)\leq K\norm{x}$ for all $x\in X$. 
We write 
\[
\calN_{\topo}\coloneq\{p\colon X\to [0,\infty)\;|\; p\text{ is a $\topo$-continuous seminorm, } p\leq \norm{\cdot}\}
\]
and note that $\calN_{\topo}$ generates the topology $\topo$.

According to \cite[p.~163]{Kraaij2016}, $\calN_{\topo}$ is called \emph{countably convex} if for every sequence 
$(p_{n})_{n\in\N}$ of seminorms in $\calN_{\topo}$ and every non-negative sequence $(a_{n})_{n\in\N}$ such that 
$\sum_{n=1}^{\infty}a_{n}=1$ it holds that $p\coloneq \sum_{n=1}^{\infty}a_{n}p_{n}\in\calN_{\topo}$. 
In particular, $\calN_{\topo}$ is countably quasi-convex if it is countably convex.

Next, we turn to some sufficient conditions for $\calN_{\topo}$ to be countably convex. 
Let $(X,\topo)$ be a Hausdorff locally convex space and set $X'\coloneq (X,\topo)'$.
We recall that $(X,\topo)$ is \emph{$c_{0}$-barrelled} if 
any $\sigma(X',X)$-null sequence in $X'$ is $\topo$-equicontinuous, see \cite[p.~249]{Jarchow1981}. 
$c_{0}$-barrelled spaces are also called \emph{sequentially barrelled}, see \cite[Definition, p.~353]{Webb1968}. 
The space $(X,\topo)$ is called \emph{transseparable} if for every $\topo$-neighbourhood $U$ of zero in $X$, there exists a 
countable set $A\subseteq X$ such that $X=A+U$, see \cite[Definition 2.5.1]{CarrerasBonet1987}. 
Clearly, $(X,\topo)$ is transseparable if it is separable. 

\begin{remark}\label{rem:count_conv}
Let $(X,\norm{\cdot})$ be a normed space, $\topo\subseteq\topo_{\norm{\cdot}}$ a Hausdorff locally convex topology on $X$ 
and set $X'\coloneq (X,\topo)'$. 
Then $\calN_{\topo}$ is countably convex, thus countably quasi-convex, in each of the following cases.
\begin{enumerate}[label=\upshape(\roman*), leftmargin=*, widest=iii]
\item\label{it:count_conv_1} $(X,\topo)$ is C-sequential.
\item\label{it:count_conv_2} $(X,\topo)$ is a $c_0$-barrelled Mackey space.
\item\label{it:count_conv_3} $(X,\topo)$ is transseparable and $(X',\sigma(X',X))$ sequentially complete.
\end{enumerate}
This follows from \cite[Proposition 4.9]{Kraaij2016}\footnote{In \cite[Proposition 4.9]{Kraaij2016} it is additionally assumed that 
$(X,\topo)$ is sequentially complete and that $\topo$-bounded sets are also $\norm{\cdot}$-bounded but this is not used in its proof.} in combination with \cite[Theorem 7.4]{Wilansky1981} in case \ref{it:count_conv_1}, and 
\cite[Theorem 12.1.4]{Jarchow1981} in case \ref{it:count_conv_2}. 
In particular, if $(X,\topo)$ is barrelled, then \ref{it:count_conv_2} is fulfilled by \cite[Theorem 12.1.4]{Jarchow1981}
and \prettyref{rem:conditions_str_M_str_cont_loc_equicont} \ref{it:sufficient_strong_Mackey_1}.
If $(X,\topo)$ is a Mackey--Mazur space, then \ref{it:count_conv_1} is fulfilled by \cite[Corollary 7.6]{Wilansky1981}. 
If $(X,\topo)$ is a sequentially complete Mackey--Mazur space, then \ref{it:count_conv_2} is also fulfilled 
by \cite[Proposition 4.3]{Webb1968}. 
Concerning \ref{it:count_conv_3}, we note that $(X',\sigma(X',X))$ is sequentially complete by \cite[Proposition 4.4]{Webb1968} 
if $(X,\topo)$ is a $c_0$-barrelled Mazur space.
\end{remark}

At least for locally $\topo$-equicontinuous semigroups and cosine families we can show by 
\prettyref{prop:functional_equicont_from_loc} that they are exponentially $\topo$-equicontinuous 
(under the usual assumptions on $\norm{\cdot}$ and $\topo$, and of countable quasi-convexity of $\calN_{\topo}$). 
In order to do this, we need the next lemma, which is \cite[Corollary 4.7]{Kraaij2016} in the case of semigroups and whose 
proof for cosine families is quite similar to the one of \cite[Corollary 4.7]{Kraaij2016} and \prettyref{lem:growth_bound}. 

\begin{lemma}
\label{lem:growth_bound_lcx}
Let $(X,\norm{\cdot})$ be a normed space, $\topo\subseteq\topo_{\norm{\cdot}}$ a Hausdorff locally convex topology on $X$ 
such that $\topo$-bounded sets are also $\norm{\cdot}$-bounded and $(S(t))_{t\geq 0}$ a semigroup or cosine family on $X$.
If $(S(t))_{t\geq 0}$ is locally $\topo$-equicontinuous, then there exist $K\geq 1$ and $\omega\in\R$ such that 
for all $p\in\calN_{\topo}$ there exists a sequence $(q_{n})_{n\in\N}$ in $\calN_{\topo}$ such that
\[
\sup_{t\in[0,n]} p(\euler^{-\omega t}S(t)x) \leq K q_{n}(x) \quad(x\in X).
\]
\end{lemma}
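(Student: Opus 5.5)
Combining local $\topo$-equicontinuity on $[0,1]$ (or $[0,2]$) with the norm bound from \prettyref{lem:equi_cont_tight_norm_bound}, we choose $\omega$ so that the accumulated constants do not grow along the induction. We then build $q_n$ inductively in the manner of \prettyref{prop:loc_equicont_sg_cf}, with one important change: every seminorm must be renormalised into $\calN_{\topo}$, and the multiplicative constants must be absorbed by $\euler^{-\omega t}$.

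\emph{Step 1 (starting constants).} By local $\topo$-equicontinuity, $\{S(t)\;|\;t\in[0,2]\}$ is $\topo$-equicontinuous, so by \prettyref{lem:equi_cont_tight_norm_bound} we get $S(t)\in\calL(X)$ and $K_0\coloneq\sup_{t\in[0,2]}\norm{S(t)}_{\calL(X)}<\infty$. For $p\in\calN_{\topo}$, equicontinuity gives a $\topo$-continuous seminorm $r$ with $\sup_{t\in[0,2]}p(S(t)x)\leq r(x)$. Since $p\leq\norm{\cdot}$, we also have $\sup_{t\in[0,2]}p(S(t)x)\leq K_0\norm{x}$. Put $K\coloneq\max\{1,K_0\}$ and
\[
q_{\ast}(x)\coloneq \min\Bigl\{\tfrac1K\sup_{t\in[0,2]}p(S(t)x),\;\norm{x}\Bigr\}.
\]
A minimum of seminorms is not a seminorm, so instead we take $q_{\ast}(x)\coloneq \tfrac1K\sup_{t\in[0,2]}p(S(t)x)$ directly. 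This is a $\topo$-continuous seminorm and satisfies $q_{\ast}\leq\norm{\cdot}$, so $q_{\ast}\in\calN_{\topo}$ and $\sup_{t\in[0,2]}p(S(t)x)=Kq_{\ast}(x)$. This settles $n=1,2$ with $q_1=q_2\coloneq q_{\ast}$, for any $\omega\geq0$.

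\emph{Step 2 (induction).} For a semigroup, take $t\in(n,n+1]$ and write $t=s+1$ with $s\in(n-1,n]$. Then
\[
p(\euler^{-\omega t}S(t)x)\leq \euler^{-\omega}\sup_{s\in[0,n]}p(\euler^{-\omega s}S(s)S(1)x)\leq K\euler^{-\omega}q_n(S(1)x).
\]
Define $q_{n+1}(x)\coloneq\max\{q_n(x),\euler^{-\omega}\norm{S(1)}_{\calL(X)}^{-1}q_n(S(1)x)\cdot\euler^{\omega}\norm{S(1)}\euler^{-\omega}\}$; more simply, choose $\omega$ with $\euler^{-\omega}\norm{S(1)}_{\calL(X)}\leq1$ and set $q_{n+1}\coloneq\max\{q_n,\ \euler^{-\omega}q_n(S(1)\,\cdot)\}$. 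This is $\topo$-continuous, since $S(1)$ is $\topo$-continuous by local equicontinuity. It satisfies $q_{n+1}\leq\norm{\cdot}$ because $\euler^{-\omega}q_n(S(1)x)\leq\euler^{-\omega}\norm{S(1)}\norm{x}\leq\norm{x}$. Hence $q_{n+1}\in\calN_{\topo}$ and the bound holds on $[0,n+1]$ with the same $K$.

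For a cosine family (commutativity is not needed here), use $S(t)=2S(s)S(1)-S(s-1)$ for $s=t-1\in(n-1,n]$, $n\geq2$. This gives
\[
p(\euler^{-\omega t}S(t)x)\leq K\bigl(2\euler^{-\omega}q_n(S(1)x)+\euler^{-2\omega}q_n(x)\bigr).
\]
Choose $\omega$ with $2\euler^{-\omega}\norm{S(1)}_{\calL(X)}+\euler^{-2\omega}\leq1$ and set
\[
q_{n+1}\coloneq\max\bigl\{q_n,\ 2\euler^{-\omega}q_n(S(1)\,\cdot)+\euler^{-2\omega}q_n\bigr\}.
\]
Again this is $\topo$-continuous and bounded by $\norm{\cdot}$, and in the product $S(s)S(1)$ the inductive bound is applied to $S(1)x$.

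The main obstacle is keeping each $q_n$ in $\calN_{\topo}$, i.e.\ dominated by $\norm{\cdot}$, while $K$ stays independent of $n$. This is where the norm bound on $S(1)$ from \prettyref{lem:equi_cont_tight_norm_bound} and the choice of $\omega$ (exactly as in \prettyref{lem:growth_bound}) enter. The normalisation in Step 1, which absorbs the equicontinuity constant into $K$ by dividing by $K\geq K_0$, is the other point to check.
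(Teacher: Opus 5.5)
Your argument is correct and is essentially the paper's: a $p$-independent $K\geq 1$ on $[0,2]$, then induction over $[0,n]$ via $S(s+1)=S(s)S(1)$ resp.\ $S(s+1)=2S(s)S(1)-S(s-1)$ (indeed without commutativity), with $\omega$ chosen to absorb the constants. The only difference is that you build the seminorms explicitly, namely $q_\ast=\frac1K\sup_{t\in[0,2]}p(S(t)\,\cdot)$ and $q_{n+1}$ from $q_n(S(1)\,\cdot)$, kept below $\norm{\cdot}$ via $\norm{S(1)}_{\calL(X)}$; the paper instead invokes \cite[Lemma 4.5]{Kraaij2016} to get each $q_{n+1}$ and cites \cite[Corollary 4.7]{Kraaij2016} for semigroups. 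Just state $\omega\geq 0$ explicitly in the semigroup step, since your base case needs it and $\euler^{-\omega}\norm{S(1)}_{\calL(X)}\leq 1$ alone does not force it, and delete the two abandoned formulas.
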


\begin{proof}
First, we note that \cite[Condition B, p.~163]{Kraaij2016} is fulfilled. 
In the case that $(S(t))_{t\geq 0}$ is a semigroup the proof is already given in \cite[Corollary 4.7]{Kraaij2016}. 
So, let $(S(t))_{t\geq 0}$ be a cosine family. 

    (i) 
    By \cite[Lemma 4.5]{Kraaij2016}, there exists $K\geq 1$ such that for every $p\in\calN_{\topo}$ there exists 
    $q\in\calN_{\topo}$ such that 
    \begin{equation}
    \label{eq:equibounded}
        \sup_{t\in [0,2]} p(S(t)x) \leq Kq(x) \quad(x\in X).
    \end{equation}
    Without loss of generality, we may assume that $q\geq p$.
    Let $\omega\geq 0$ such that $2K \euler^{-\omega} + \euler^{-2\omega}\leq 1$.
    
    (ii) 
    Let $p\in\calN_{\topo}$. Then there is $q_{1}\coloneq q\in\calN_{\topo}$ with $q_{1}\geq p$ such that \eqref{eq:equibounded} holds.
    We now construct an increasing sequence of seminorms $(q_n)_{n\in\N}$ in $\calN_{\topo}$ fulfilling 
    $\sup_{t\in [0,2]} q_n(S(t)x) \leq Kq_{n+1}(x)$ for all $x\in X$ and $n\in\N$.
    By (i), there exists $q_2\in\calN_{\topo}$ with $q_2\geq q_{1}$ such that 
    $\sup_{t\in [0,2]} q_{1}(S(t)x) \leq Kq_2(x)$ for all $x\in X$.
    Inductively, by (i), for $n\in\N$, there exists $q_{n+1}\in\calN_{\topo}$ with $q_{n+1}\geq q_n$ 
    such that $\sup_{t\in [0,2]} q_n(S(t)x) \leq Kq_{n+1}(x)$ for all $x\in X$.

    (iii)
    We now show inductively that for all $n\in\N$, we have
    \[
    \sup_{t\in [0,n]}  p(\euler^{-\omega t}S(t)x) \leq K q_n(x) \quad (x\in X).
    \]
    For $n=1$, we obtain
    \[
         \sup_{t\in [0,1]} p(\euler^{-\omega t}S(t)x) 
    \leq \sup_{t\in [0,2]} p(S(t)x) 
    \leq K q_1(x) \quad (x\in X)
    \]
    and for $n=2$, we have
    \[
    \sup_{t\in [0,2]}  p(\euler^{-\omega t}S(t)x) \leq \sup_{t\in [0,2]} q_{1}(S(t)x) \leq K q_2(x) \quad (x\in X)
    \]
    by (ii).
    Assume that $n\in\N$, $n\geq 2$, is such that we have
    \[
    \sup_{t\in [0,n]} \euler^{-\omega t} p(S(t)x) \leq K q_n(x) \quad (x\in X).
    \]
    Then, for $t\in (n-1,n]$ and $x\in X$, we obtain by (ii), $q_{n+1}\geq q_n$ and by the choice of $\omega$ in (i) that
    \begin{align*}
    p(S(t+1)x) & = p((2S(t)S(1)-S(t-1))x) \leq 2p(S(t)S(1)x) + p(S(t-1)x) \\
        & \leq 2K \euler^{\omega t} q_n(S(1)x) + K\euler^{\omega (t-1)} q_n(x) \\
        & \leq 2K \euler^{\omega t} Kq_{n+1}(x) + K\euler^{\omega (t-1)}q_{n+1}(x) \\
        & = (2K\euler^{-\omega} + \euler^{-2\omega})K\euler^{\omega(t+1)}q_{n+1}(x) 
        \leq K\euler^{\omega(t+1)}q_{n+1}(x)
    \end{align*}
    and so
    \begin{align*}
          \sup_{t\in [0,n+1]}p( \euler^{-\omega t}S(t)x) 
    &\leq \max\Bigl\{\sup_{t\in [0,n]} p(\euler^{-\omega t}S(t)x),\,\sup_{t\in (n-1,n]}p(\euler^{-\omega (t+1)}S(t+1)x)\Bigr\}\\
    &\leq K q_{n+1}(x)
    \end{align*}
    for all $x\in X$.
\end{proof}

\begin{remark}\label{rem:type_sg_cosine}
    Let $(X,\norm{\cdot})$ and $\topo\subseteq\topo_{\norm{\cdot}}$ be as in \prettyref{lem:growth_bound_lcx}. 
    If we already know that $(S(t))_{t\geq 0}$ is a semigroup or cosine family on $X$ such that there exist $K\geq 1$ and $\omega\in\R$ 
    such that for all $t_0\geq 0$ and $p\in\calN_{\topo}$ there is $\widetilde{q}_{t_0}\in\calN_{\topo}$ such that
    \[
    \sup_{t\in[0,t_0]} p(\euler^{-\omega t}S(t)x) \leq K \widetilde{q}_{t_0}(x) \quad(x\in X),
    \]
    then we can choose $q_n\coloneq\max\{\widetilde{q}_{i}\;|\;1\leq i\leq n\}\in\calN_{\topo}$ for $n\in\N$.
\end{remark}

\begin{theorem}\label{thm:loc_equicont_exp_sgcs_countconv}
Let $(X,\norm{\cdot})$ be a normed space, $\topo\subseteq\topo_{\norm{\cdot}}$ a Hausdorff locally convex topology on $X$ 
such that $\topo$-bounded sets are also $\norm{\cdot}$-bounded and $(S(t))_{t\geq 0}$ a semigroup or cosine family on $X$.
If $\calN_{\topo}$ is countably quasi-convex and $(S(t))_{t\geq 0}$ locally $\topo$-equicontinuous, 
then $(S(t))_{t\geq 0}$ is exponentially $\topo$-equicontinuous.
\end{theorem}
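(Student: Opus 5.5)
The natural route is to combine \prettyref{lem:growth_bound_lcx} with \prettyref{prop:functional_equicont_from_loc}, applied to the system of seminorms $\semis_{\topo}\coloneq\calN_{\topo}$ and the set $\calF\coloneq\calF_{\exp}$. First, since $(S(t))_{t\geq 0}$ is locally $\topo$-equicontinuous and $\topo$-bounded sets are $\norm{\cdot}$-bounded, \prettyref{lem:growth_bound_lcx} provides $K\geq 1$ and $\omega\in\R$ such that for every $p\in\calN_{\topo}$ there is a sequence $(q_n)_{n\in\N}$ in $\calN_{\topo}$ with
\[
\sup_{t\in[0,n]} p(\euler^{-\omega t}S(t)x)\leq K q_n(x)\quad (x\in X,\,n\in\N).
\]
This is precisely the hypothesis of \prettyref{prop:functional_equicont_from_loc} with $k_1(t)\coloneq\euler^{-\omega t}\in\calF_{\exp}$ and $M_n\coloneq[0,n]$. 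Note that $K$ and $\omega$ do not depend on $p$, which is what the proposition requires.

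Next, I use \prettyref{rem:rational_summable_weights}: $\calF_{\exp}$ has rsp with exactly this choice $M_n=[0,n]$. Given $k_1(t)=\euler^{-\omega t}$ (that is, parameter $\alpha=-\omega$), I pick $\lambda<-\omega$, set $k_2(t)\coloneq\euler^{\lambda t}$, and take $b_0=1$, $b_n=\euler^{(\lambda+\omega)n}$. The sets $M_n$ in rsp and in \prettyref{lem:growth_bound_lcx} must coincide, and they do. \prettyref{prop:functional_equicont_from_loc} then yields a nonnegative sequence $(a_n)$ with $\sum a_n=1$ such that
\[
\sup_{t\geq 0}p(\euler^{\lambda t}S(t)x)\leq K\sum_{k=0}^\infty b_k\,\sup_{n\in\N}a_nq_n(x)\quad (x\in X).
\]

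Finally, countable quasi-convexity of $\calN_{\topo}$ gives $q\in\calN_{\topo}$ with $\sup_n a_nq_n\leq q$. Hence for each $p\in\calN_{\topo}$ there is $q\in\calN_{\topo}$ with $\sup_{t\geq0}p(\euler^{\lambda t}S(t)x)\leq K'q(x)$. Since $\calN_{\topo}$ generates $\topo$, the family $(\euler^{\lambda t}S(t))_{t\geq0}$ is $\topo$-equicontinuous, i.e.\ $(S(t))_{t\geq 0}$ is exponentially $\topo$-equicontinuous.

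The only delicate point is checking that the data line up. The same $k_1$ and $K$ must work for all $p$, which \prettyref{lem:growth_bound_lcx} guarantees. The independence of the definition of equicontinuity from the generating system is needed to pass from $\calN_{\topo}$ to $\topo$, and this was noted earlier in the paper. Everything else is direct bookkeeping, so I do not expect a genuine obstacle.
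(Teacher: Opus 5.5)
Your proposal is correct and follows the paper's proof: the paper likewise obtains the theorem from Lemma~\ref{lem:growth_bound_lcx}, Proposition~\ref{prop:functional_equicont_from_loc} and the rsp of $\calF_{\exp}$ (Remark~\ref{rem:rational_summable_weights}). It uses the same choices $I=[0,\infty)$, $M_n=[0,n]$, $\calF=\calF_{\exp}$ and $k_1(t)=\euler^{-\omega t}$, with countable quasi-convexity of $\calN_{\topo}$ supplying the dominating seminorm $q$.
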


\begin{proof}
Our statement follows from \prettyref{rem:rational_summable_weights}, \prettyref{prop:functional_equicont_from_loc} 
and \prettyref{lem:growth_bound_lcx} with $I\coloneq [0,\infty)$, $M_{n}\coloneq [0,n]$ for $n\in\N$, 
$\calF\coloneq\calF_{\exp}$ and $k_{1}(t)\coloneq \euler^{-\omega t}$ for $t\geq 0$.
\end{proof}

\begin{corollary}\label{cor:type}
    Let $(X,\norm{\cdot})$ be a normed space, $\topo\subseteq\topo_{\norm{\cdot}}$ a Hausdorff locally convex topology on $X$ 
    such that $\topo$-bounded sets are also $\norm{\cdot}$-bounded and $(S(t))_{t\geq 0}$ a semigroup or cosine family on $X$.
    Let $\calN_{\topo}$ be countably quasi-convex, $M\geq 1$ and $\omega\in\R$. Then the following assertions are equivalent.
    \begin{enumerate}[label=\upshape(\alph*), leftmargin=*]
    \item\label{it:type_1} $(S(t))_{t\geq 0}$ is locally $\topo$-equicontinuous and 
    \[
    \norm{S(t)}_{\calL(X)}\leq M\euler^{\omega t}\quad(t\geq 0).\footnote{We note that $S(t)\in\calL(X)$ for all 
    $t\geq 0$ by \prettyref{lem:equi_cont_tight_norm_bound}.}
    \]
    \item\label{it:type_2} For all $t_0\geq 0$ and $p\in\calN_{\topo}$ there exists $q\in\calN_{\topo}$ such that 
    \[
    p(\euler^{-\omega t}S(t)x)\leq Mq(x) \quad(t\in [0,t_0],\, x\in X).
    \]
    \item\label{it:type_3} For all $\alpha>\omega$ and $p\in\calN_{\topo}$ there exists $q\in\calN_{\topo}$ such that 
    \[
    p(\euler^{-\alpha t}S(t)x)\leq Mq(x) \quad(t\geq 0,\, x\in X).
    \]
\end{enumerate}
Furthermore, if the semigroup or cosine family $(S(t))_{t\geq 0}$ is locally $\topo$-equicontinuous, then there exist $M\geq 1$ and $\omega\in\R$ such 
that these assertions hold. 
\end{corollary}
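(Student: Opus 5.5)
I would prove the cycle \ref{it:type_1}$\Rightarrow$\ref{it:type_2}$\Rightarrow$\ref{it:type_3}$\Rightarrow$\ref{it:type_1} and then deduce the final sentence from \prettyref{lem:growth_bound_lcx}.

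For \ref{it:type_1}$\Rightarrow$\ref{it:type_2}, I would follow the proof of \cite[Lemma 4.5]{Kraaij2016}, which is where the constant in \prettyref{lem:growth_bound_lcx} comes from. Fix $t_0\geq 0$ and $p\in\calN_{\topo}$. Local equicontinuity on $[0,t_0]$ gives a $\topo$-continuous seminorm $r$ and $C\geq 0$ with $p(S(t)x)\leq Cr(x)$ for $t\in[0,t_0]$. Set
\[
q(x)\coloneq \sup_{t\in[0,t_0]} M^{-1}p(\euler^{-\omega t}S(t)x).
\]
Then $q\leq M^{-1}C\max\{1,\euler^{-\omega t_0}\}r$, so $q$ is $\topo$-continuous. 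Since $p\leq\norm{\cdot}$, we have $p(\euler^{-\omega t}S(t)x)\leq \euler^{-\omega t}\norm{S(t)}_{\calL(X)}\norm{x}\leq M\norm{x}$, hence $q\leq\norm{\cdot}$. Thus $q\in\calN_{\topo}$, and \ref{it:type_2} holds with exactly the constant $M$.

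For \ref{it:type_2}$\Rightarrow$\ref{it:type_3}, fix $\alpha>\omega$. By \prettyref{rem:type_sg_cosine} the seminorms $q_n\coloneq\max\{\widetilde q_1,\dots,\widetilde q_n\}\in\calN_{\topo}$ satisfy $\sup_{t\in[0,n]}p(\euler^{-\omega t}S(t)x)\leq Mq_n(x)$. I then apply \prettyref{prop:functional_equicont_from_loc} with $I=[0,\infty)$, $M_n=[0,n]$, $k_1(t)=\euler^{-\omega t}$, $K=M$, and the rsp data of \prettyref{rem:rational_summable_weights} for $\lambda\coloneq-\alpha<-\omega$. This yields
\[
\sup_{t\geq0}p(\euler^{-\alpha t}S(t)x)\leq M\sum_k b_k\sup_n a_nq_n(x).
\]
Here lies the main obstacle: the factor $\sum_k b_k$ is larger than $1$, so this bound does not give the constant $M$ directly. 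The fix I would use is to build the weights by hand and use countable convexity directly. For $t\in[n-1,n]$,
\[
p(\euler^{-\alpha t}S(t)x)\leq \euler^{-(\alpha-\omega)(n-1)}Mq_n(x).
\]
Put $c_n\coloneq\euler^{-(\alpha-\omega)(n-1)}$ and $q\coloneq\sup_n c_nq_n$. Then $q\leq\norm{\cdot}$ because $c_n\leq1$ and $q_n\leq\norm{\cdot}$. For $\topo$-continuity, note $c_n\leq Ka_n$ with $a_n\coloneq c_n/\sum_m c_m$ and $K\coloneq\sum_m c_m$, so $q\leq K\sup_n a_nq_n\leq K\widehat q$ for some $\widehat q$ by countable quasi-convexity. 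Hence $q\in\calN_{\topo}$ and $p(\euler^{-\alpha t}S(t)x)\leq Mq(x)$ for all $t\geq 0$.

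For \ref{it:type_3}$\Rightarrow$\ref{it:type_1}, local equicontinuity follows from \prettyref{prop:loc_bound_away_zero_equicont}. For the norm bound, take the supremum over $p\in\calN_{\topo}$. Using $q\leq\norm{\cdot}$ gives $p(S(t)x)\leq M\euler^{\alpha t}\norm{x}$. I then need $\norm{x}=\sup_{p\in\calN_{\topo}}p(x)$, which follows from the standing assumption that $\topo$-bounded sets are norm bounded, as in \cite[Condition B]{Kraaij2016}. If that is unavailable, I would argue via \prettyref{lem:equi_cont_tight_norm_bound} together with a norming argument. Letting $\alpha\downarrow\omega$ gives $\norm{S(t)}_{\calL(X)}\leq M\euler^{\omega t}$.

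For the final statement, \prettyref{lem:growth_bound_lcx} with \prettyref{rem:type_sg_cosine} yields constants $K,\omega$ for which \ref{it:type_2} holds with $M\coloneq K$.
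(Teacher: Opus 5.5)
Your proof is correct. For \ref{it:type_1}$\Rightarrow$\ref{it:type_2}, \ref{it:type_3}$\Rightarrow$\ref{it:type_1} and the addendum it matches the paper. Your explicit seminorm $q=\sup_{t\in[0,t_0]}M^{-1}p(\euler^{-\omega t}S(t)\,\cdot\,)$ is just the content of \cite[Lemma 4.5]{Kraaij2016}, which the paper cites. You also state the local equicontinuity in \ref{it:type_3}$\Rightarrow$\ref{it:type_1} explicitly, which the paper leaves implicit.

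The genuine difference is in \ref{it:type_2}$\Rightarrow$\ref{it:type_3}. The paper uses \prettyref{prop:functional_equicont_from_loc} as a black box. Because of the normalisation $a_n=b_{n-1}/\sum_k b_k$, this only yields $\sup_{t\geq 0}p(\euler^{-\alpha t}S(t)x)\leq \frac{M}{1-\euler^{\omega-\alpha}}q^{\alpha}(x)$. The paper then recovers the constant $M$ by a splitting trick:
\begin{itemize}
\item on $[t_\alpha,\infty)$ it applies this bound with the smaller exponent $\alpha-\varepsilon$, and the extra decay $\euler^{-\varepsilon t_\alpha}\leq 1-\euler^{\omega-(\alpha-\varepsilon)}$ absorbs the factor;
\item on $[0,t_\alpha]$ it uses \ref{it:type_2} directly;
\item it then takes the maximum of the two seminorms.
\end{itemize}

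You bypass the normalisation altogether. Your seminorm $q=\sup_n c_nq_n$ has $c_1=1$ and $c_n\leq 1$, so it lies below the norm automatically. Countable quasi-convexity is needed only for $\topo$-continuity, via $q\leq (\sum_m c_m)\widehat q$, where the size of $\sum_m c_m$ no longer matters. In fact your $q$ is exactly the intermediate bound $K\sup_{n\in\N_0}b_nq_{n+1}$ that appears inside the proof of \prettyref{prop:functional_equicont_from_loc}, taken before it is weakened to $KK_1\sup_n a_nq_n$.

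Your route is shorter and self-contained. The paper's route reuses its general rsp machinery unchanged, at the cost of the extra $\varepsilon$-argument.

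A minor point on the addendum: \prettyref{rem:type_sg_cosine} is not needed there. Given $t_0$, simply take $q\coloneq q_n$ from \prettyref{lem:growth_bound_lcx} with $n\geq t_0$.
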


\begin{proof}
    ``\ref{it:type_1}$\Rightarrow$\ref{it:type_2}'' Since $(S(t))_{t\geq 0}$ is locally $\topo$-equicontinuous by \ref{it:type_1} 
    and $[t\mapsto \euler^{-\omega t}]$ locally bounded on $[0,\infty)$, 
    we get that $(\euler^{-\omega t}S(t))_{t\geq 0}$ is locally $\topo$-equicontinuous. 
    Further, for all $t_0\geq 0$ we have $\norm{\euler^{-\omega t}S(t)}_{\calL(X)}\leq M$ for all $t\in [0,t_0]$. 
    Thus, we obtain that for all $t_0\geq 0$ and $p\in\calN_{\topo}$ there exists $q\in\calN_{\topo}$ such that 
    \[
    p(\euler^{-\omega t}S(t)x)\leq Mq(x) \quad(t\in [0,t_0],\, x\in X)
    \]
    by \cite[Lemma 4.5]{Kraaij2016}.

    ``\ref{it:type_2}$\Rightarrow$\ref{it:type_3}'' 
    Let $\alpha>\omega$. It follows from \ref{it:type_2}, \prettyref{rem:rational_summable_weights}, \prettyref{prop:functional_equicont_from_loc} 
    and \prettyref{rem:type_sg_cosine} with $I\coloneq [0,\infty)$, $M_{n}\coloneq [0,n]$ for $n\in\N$, 
    $\calF\coloneq\calF_{\exp}$, $K\coloneq M$, $k_{1}(t)\coloneq\euler^{-\omega t}$ and $k_{2}(t)\coloneq\euler^{-\alpha t}$ 
    for $t\geq 0$ that for all $p\in\calN_{\topo}$ there is $q^{\alpha}\in\calN_{\topo}$ such that
    \[
    \sup_{t\geq 0}p(\euler^{-\alpha t}S(t)x)\leq \frac{M}{1-\euler^{\omega-\alpha}}q^{\alpha}(x)\quad (x\in X).
    \]
    Fix $\alpha>\omega$ and $\varepsilon>0$ with $\alpha-\varepsilon>\omega$. 
    Let $t_{\alpha}>0$ such that $\euler^{-\varepsilon t_{\alpha}}\leq 1-\euler^{\omega-(\alpha-\varepsilon)}$.
    Then it follows that
    \[
    \sup_{t\geq t_\alpha}p(\euler^{-\alpha t}S(t)x)
    \leq \euler^{-\varepsilon t_\alpha}\sup_{t\geq t_\alpha}p(\euler^{-(\alpha-\varepsilon) t}S(t)x)
    \leq \euler^{-\varepsilon t_\alpha}\frac{M}{1-\euler^{\omega-(\alpha-\varepsilon)}}q^{\alpha-\varepsilon}(x)
    \leq Mq^{\alpha-\varepsilon}(x) \quad (x\in X).
    \]
    Moreover, by \ref{it:type_2}, there exists $q_{t_\alpha}\in\calN_{\topo}$ such that 
    \[
    p(\euler^{-\alpha t}S(t)x)\leq p(\euler^{-\omega t}S(t)x)\leq Mq_{t_\alpha}(x) \quad(t\in [0,t_\alpha],\, x\in X).
    \]
    Hence, we have 
    \begin{align*}
    \sup_{t\geq 0}p(\euler^{-\alpha t}S(t)x)
    & \leq \max\bigl\{\sup_{t\in [0,t_{\alpha}]}p(\euler^{-\alpha t}S(t)x),\, \sup_{t\geq t_{\alpha}}p(\euler^{-\alpha t}S(t)x)\bigr\}
    \leq \max\{Mq_{t_\alpha}(x),\, M q^{\alpha-\varepsilon}(x) \}\\
    & \leq M \widetilde{q}_{\alpha}(x) \quad (x\in X)
    \end{align*}
    where $\widetilde{q}_{\alpha}\coloneq\max\{q_{t_\alpha},\,q^{\alpha-\varepsilon}\}\in\calN_{\topo}$.

    ``\ref{it:type_3}$\Rightarrow$\ref{it:type_1}'' 
    Since $q\in\calN_{\topo}$, so $q(x)\leq \norm{x}$ for all $x\in X$, the assumption \ref{it:type_3} implies that 
    \[
    p(\euler^{-\alpha t}S(t)x)\leq M\norm{x} \quad(t\geq 0,\, x\in X).
    \]
    Next, taking the supremum over all $p\in\calN_{\topo}$ on the left-hand side, we obtain that 
    \[
    \norm{\euler^{-\alpha t}S(t)x}=\sup_{p\in\calN_{\topo}}p(\euler^{-\alpha t}S(t)x)\leq M\norm{x} \quad(t\geq 0,\, x\in X)
    \]
    by \prettyref{rem:Saks_bounded_sets} \ref{it:Saks_bounded_sets_1}. 
    Now, letting $\alpha\to\omega\rlim$, we get that 
    \[
    \norm{\euler^{-\omega t}S(t)x}\leq M\norm{x} \quad(t\geq 0,\, x\in X).
    \]

    The addendum about the existence of $M\geq 1$ and $\omega\in\R$ in the case of a locally $\topo$-equicontinuous semigroup or cosine family $(S(t))_{t\geq 0}$ 
    is a consequence of \prettyref{lem:growth_bound_lcx} and part \ref{it:type_2}.
\end{proof}

We note that $\omega\geq 0$ by \prettyref{prop:cosine_type} \ref{it:cosine_type_3} and \prettyref{cor:type} \ref{it:type_3} 
if $(S(t))_{t\geq 0}$ is a cosine family and $X\neq \{0\}$. 
In the context of bi-continuous semigroups, the tuple $(M,\omega)$ in \prettyref{cor:type} \ref{it:type_1} is sometimes called 
the \emph{type} of the semigroup, see \cite[p.~334]{BuddeFarkas2019}. 
In the terminology for semigroups considered in \cite[Definition 4.8]{Kraaij2016}, 
the tuple $(M,\omega)$ in \prettyref{cor:type} \ref{it:type_2} is called the \emph{type} of a semigroup and 
\prettyref{cor:type} \ref{it:type_3} means that a semigroup is of type $(M,\alpha)^{\ast}$ for all $\alpha>\omega$. 
Assuming in addition to the assumptions of \prettyref{cor:type} 
that $(X,\topo)$ is sequentially complete and $\calN_{\topo}$ countably convex, see \cite[Condition C, p.~165--166]{Kraaij2016}, 
and restricting to $\topo$-strongly continuous semigroups $(S(t))_{t\geq 0}$, it is shown in \cite[Corollary 6.5]{Kraaij2016}\footnote{It seems that the condition that the semigroups considered in \cite[Corollary 6.5]{Kraaij2016} should be $\topo$-strongly continuous is missing. At least, it is indicated in the sketch of the proof given in \cite[p.~176]{Kraaij2016} that $\topo$-strong continuity is used.} 
that if $(S(t))_{t\geq 0}$ is of type $(M,\omega)$ in the sense of part \ref{it:type_2}, then it is of type $(M,\alpha)^{\ast}$ 
for all $\alpha>\omega$. 
Our way to prove \prettyref{cor:type} allows us to drop these additional conditions (and to consider cosine families as well) since we 
circumvent the Hille--Yosida type theorem \cite[Theorem 6.4]{Kraaij2016}, see \prettyref{rem:Hille-Yosida_correction}, where the sequential completeness and 
$\topo$-strong continuity are needed to define the Laplace transform of the semigroup by an improper Riemann integral 
in $(X,\topo)$.

\section{Bi-equicontinuity and bi-continuity}
\label{sect:bi-continuity}

We start this section with a version of sequential equicontinuity on a normed space 
that combines boundedness with respect to the norm and sequential equicontinuity w.r.t.~a weaker Hausdorff locally convex topology.

\begin{definition}\label{defi:bi-equicont}
Let $(X,\norm{\cdot})$ be a normed space, $\topo\subseteq\topo_{\norm{\cdot}}$ a Hausdorff locally convex topology on $X$, 
$I$ a set, $\calF$ a set of functions from $I$ to $[0,\infty)$ and $(S(t))_{t\in I}$ a family of 
linear maps from $X$ to $X$. Then $(S(t))_{t\in I}$ is called 
\begin{enumerate}[label=\upshape(\alph*), leftmargin=*]
\item\label{it:bi-equicont} \emph{bi-equicontinuous} if for every $(x_n)_{n\in\N}$ in $X$ and $x\in X$ with $\sup\limits_{n\in\N}\norm{x_n}<\infty$ and 
$\topo\text{-}\lim\limits_{n\to\infty} x_n = x$ we have
\[
\topo\text{-}\lim_{n\to\infty} S(t) (x_n-x) = 0
\]
uniformly for $t\in I$. 
\item\label{it:loc_bi-equicont} \emph{locally bi-equicontinuous} if $I$ is a Hausdorff topological space and 
$(S(t))_{t\in M}$ is bi-equicontinuous for all compact sets $M\subseteq I$.
\item \emph{$\calF$-bi-equicontinuous} if there is $k\in\calF$ such that $(k(t)S(t))_{t\in I}$ is bi-equicontin\-uous. 
\end{enumerate}
\end{definition}

\prettyref{defi:bi-equicont} \ref{it:bi-equicont} and \ref{it:loc_bi-equicont} are already introduced in 
\cite[Definition 1.2.3]{Farkas2003} for sequentially complete Saks spaces $(X,\norm{\cdot},\topo)$, 
see \prettyref{defi:saks_mixed} and \prettyref{defi:seq_compl_C-seq_Saks}, families $(S(t))_{t\in I}$ 
in $\calL(X)$ and in case \ref{it:loc_bi-equicont} under the assumption $I=[0,\infty)$. 
The origin of the definition of (local) bi-equicontinuity for $I\coloneq [0,\infty)$ is \cite[Definition 1.2]{Kuehnemund2001}, 
see \cite[Definition 2]{Kuehnemund2003} as well.

\begin{definition}[{\cite[I.3.2 Definition]{Cooper1978}}]\label{defi:saks_mixed}
    Let $(X,\norm{\cdot})$ be a normed space 
    %where we write $\topo_{\norm{\cdot}}$ for the induced topology, 
    and $\topo\subseteq\topo_{\norm{\cdot}}$ a Hausdorff locally convex topology on $X$.
 \begin{enumerate}[label=\upshape(\alph*), leftmargin=*]
    \item The triple $(X,\norm{\cdot},\topo)$ is called a \emph{Saks space} if there is a directed system of seminorms 
    $\semis_{\topo}$ that generates $\topo$ and fulfils    
    \begin{equation}\label{eq:saks}
    \norm{x} = \sup_{p\in\semis_{\topo}} p(x) \quad(x\in X).
    \end{equation}
    \item Let $(X,\norm{\cdot},\topo)$ be a Saks space. The \emph{mixed topology} $\gamma\coloneq\gamma(\norm{\cdot},\topo)$ is
	the finest linear topology on $X$ that coincides with $\topo$ on $\norm{\cdot}$-bounded sets and such that 
	$\topo\subseteq\gamma \subseteq\topo_{\norm{\cdot}}$.
\end{enumerate}
\end{definition}

The mixed topology $\gamma$ is actually Hausdorff locally convex and the definition given above is equivalent to the one 
from the literature \cite[Section 2.1]{Wiweger1961} by \cite[2.2.1, 2.2.2 Lemmas]{Wiweger1961}. 
Having Saks spaces at hand, we can now look back at the relation between $\topo$-bounded sets and $\norm{\cdot}$-bounded sets 
in \prettyref{lem:growth_bound_lcx} and \prettyref{thm:loc_equicont_exp_sgcs_countconv}.

\begin{remark}\label{rem:Saks_bounded_sets} 
    Let $(X,\norm{\cdot})$ be a normed space and $\topo\subseteq\topo_{\norm{\cdot}}$ a Hausdorff locally convex topology on $X$.
    \begin{enumerate}[label=\upshape(\alph*), leftmargin=*]
    \item\label{it:Saks_bounded_sets_1} If $\topo$-bounded sets are also $\norm{\cdot}$-bounded, then \cite[Lemma 4.4 (a)$\Rightarrow$(b)]{Kraaij2016}\footnote{In \cite[Lemma 4.4]{Kraaij2016} it is additionally assumed that $(X,\topo)$ is sequentially complete but this is not needed for the implication ``(a)$\Rightarrow$(b)'' there, only for its converse.} and 
    \cite[I.3.1 Lemma]{Cooper1978} imply that $(X,\norm{\cdot},\topo)$ is a Saks space and 
    \[
    \norm{x}=\sup_{p\in\calN_{\topo}}p(x) \quad (x\in X).
    \]
    \item\label{it:Saks_bounded_sets_2}  On the other hand, if $(X,\norm{\cdot},\topo)$ is a Saks space, then the $\gamma$-bounded sets are exactly the 
    $\norm{\cdot}$-bounded sets by \cite[I.1.11 Proposition]{Cooper1978}.
    \end{enumerate}
\end{remark}

Further, we recall the following related topology whose origin is \cite[3.1.1 Theorem]{Wiweger1961}. 

\begin{definition}[{\cite[Definition 3.9]{KruseSchwenninger2022}}]\label{defi:submixed_top}
Let $(X,\norm{\cdot},\topo)$ be a Saks space and $\semis_{\topo}$ a directed system of seminorms 
that generates the topology $\topo$ and fulfils \eqref{eq:saks}. We set 
\[
\mathcal{R}\coloneq \{(p_{n},a_{n})_{n\in\N}\;|\;(p_{n})_{n\in\N}\subseteq\semis_{\topo},\,(a_{n})_{n\in\N}\in c_{0}^{+}\}
\]
where $c_0^{+}$ is the set of all real non-negative null-sequences.
For $(p_{n},a_{n})_{n\in\N}\in\mathcal{R}$ we define the seminorm
\[
 \vertiii{x}_{(p_{n},a_{n})_{n\in\N}}\coloneq\sup_{n\in\N}p_{n}(x)a_{n} \quad (x\in X).
\]
We denote by $\gamma_{\operatorname{s}}\coloneq\gamma_{\operatorname{s}}(\norm{\cdot},\topo)$ 
the Hausdorff locally convex topology that is generated by 
the system of seminorms $(\vertiii{\cdot}_{(p_n,a_n)_{n\in\N}})_{(p_n,a_n)_{n\in\N}\in\mathcal{R}}$ and call it the 
\emph{submixed topology}.
\end{definition}

By \cite[I.1.10 Proposition]{Cooper1978}, \cite[3.1.1 Theorem]{Wiweger1961} and 
\cite[Lemma A.1.2]{Farkas2003}, the mixed and the submixed topology are related in the following way.

\begin{remark}[{\cite[Remark 3.10]{KruseSchwenninger2022}}]\label{rem:mixed=submixed}
Let $(X,\norm{\cdot},\topo)$ be a Saks space, $\semis_{\topo}$ a directed system of seminorms 
that generates the topology $\topo$ and fulfils \eqref{eq:saks}.
\begin{enumerate}[label=\upshape(\alph*), leftmargin=*]
\item We have $\topo\subseteq \gamma_{\operatorname{s}}\subseteq \gamma$ and $\gamma_{\operatorname{s}}$ has the same convergent 
sequences as $\gamma$.
\item If 
 \begin{enumerate}[label=\upshape(\roman*), leftmargin=*, widest=ii]
 \item for every $x\in X$, $\varepsilon>0$ and $p\in\semis_{\topo}$ there are $y,z\in X$ such that $x=y+z$, 
 $p(z)=0$ and $\norm{y}\leq p(x)+\varepsilon$, or 
 \item the $\norm{\cdot}$-closed unit ball $B_{\norm{\cdot}}\coloneq\{x\in X\;|\; \norm{x}\leq 1\}$ is $\topo$-compact,
 \end{enumerate}
then $\gamma=\gamma_{\operatorname{s}}$ holds. 
\end{enumerate}
\end{remark}

Several examples of Saks spaces for which $\gamma=\gamma_{\operatorname{s}}$ holds are given in 
\cite[Example 3.11]{KruseSchwenninger2022} and \cite[3.5 Corollary \& Section 4]{Kruse2024a}.

\begin{definition}[{\cite[Definitions 2.2, 5.7]{KruseSeifert2023a}}]\label{defi:seq_compl_C-seq_Saks}
We call a Saks space $(X,\norm{\cdot},\topo)$ \emph{sequentially complete} if $(X,\gamma)$ is sequentially complete. 
We call a Saks space $(X,\norm{\cdot},\topo)$ \emph{C-sequential} if $(X,\gamma)$ is C-sequential.
\end{definition}

\begin{remark}\label{rem:Saks_seq_complete_Banach}
Let $(X,\norm{\cdot},\topo)$ be a Saks space. 
\begin{enumerate}[label=\upshape(\alph*), leftmargin=*]
    \item\label{it:Saks_seq:compl_Banach_1} If $(X,\norm{\cdot},\topo)$ is sequentially complete, then $(X,\norm{\cdot})$ is a Banach space since 
    the sequential completeness of $(X,\gamma)$ and $\gamma\subseteq\topo_{\norm{\cdot}}$ imply that the normed space 
    $(X,\norm{\cdot})$ is sequentially complete and thus complete.
    \item\label{it:Saks_seq:compl_Banach_2} In \cite{KruseMeichsnerSeifert2021,KruseSchwenninger2022,KruseSeifert2023b,KruseSchwenninger2024} it is for convenience assumed 
    in the definition of a Saks space that $(X,\norm{\cdot})$ is a Banach space. Concerning the results from these references that 
    we use in the present paper, this additional assumption is not needed.  
\end{enumerate}    
\end{remark}

A sufficient condition for a Saks space to be C-sequential is that 
$\topo$ is metrisable on $B_{\norm{\cdot}}$, see \cite[Proposition 5.7]{KruseMeichsnerSeifert2021}. 
Another sufficient condition is that $(X,\gamma)$ is a Mackey--Mazur space, see \prettyref{rem:count_conv} 
(cf.~\cite[Proposition 3.14]{KruseSchwenninger2022}). 
We have the following relation of bi-equicontinuity to the notions of equicontinuity that were introduced before.

\begin{proposition}\label{prop:bi_equicont_mixed_equicont}
Let $(X,\norm{\cdot},\topo)$ be a Saks space, $I$ a set, $\calF$ a set of functions from $I$ to $[0,\infty)$ 
and $(S(t))_{t\in I}$ a family of linear maps from $X$ to $X$. Then the following assertions hold.
\begin{enumerate}[label=\upshape(\alph*), leftmargin=*]
\item\label{it:bi_equicont_mixed_1} $(S(t))_{t\in I}$ is (locally) bi-equicontinuous if and only if it is sequentially (locally) 
$\gamma$-equicontinuous. 
\item\label{it:bi_equicont_mixed_2} $(S(t))_{t\in I}$ is $\calF$-bi-equicontinuous if and only if it is sequentially $\calF$-$\gamma$-equicontin\-uous. 
\end{enumerate}
If in addition $(X,\norm{\cdot},\topo)$ is C-sequential, then the following assertions hold.
\begin{enumerate}[label=\upshape(\alph*), leftmargin=*]\setcounter{enumi}{2}
\item\label{it:bi_equicont_mixed_3} $(S(t))_{t\in I}$ is (locally) bi-equicontinuous if and only if it is (locally) 
$\gamma$-equicontin\-uous. 
\item\label{it:bi_equicont_mixed_4} $(S(t))_{t\in I}$ is $\calF$-bi-equicontinuous if and only if it is $\calF$-$\gamma$-equicontinuous. 
\end{enumerate}
\end{proposition}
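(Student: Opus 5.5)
The core of the argument is part \ref{it:bi_equicont_mixed_1} in the non-local case. Parts \ref{it:bi_equicont_mixed_2}--\ref{it:bi_equicont_mixed_4} then follow formally. The local and $\calF$-variants are obtained by applying the non-local statement to the subfamilies $(S(t))_{t\in M}$ for compact $M\subseteq I$, or to $(k(t)S(t))_{t\in I}$ for $k\in\calF$. Parts \ref{it:bi_equicont_mixed_3} and \ref{it:bi_equicont_mixed_4} follow from \ref{it:bi_equicont_mixed_1} and \ref{it:bi_equicont_mixed_2} by \prettyref{thm:seq_equicont_equicont_equiv}, applied with $X=Y=(X,\gamma)$, which is C-sequential by assumption.

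For the non-local part of \ref{it:bi_equicont_mixed_1} I would use two facts about sequences in the mixed topology. First, a sequence $(x_n)$ is $\gamma$-convergent to $x$ if and only if it is $\norm{\cdot}$-bounded and $\topo$-convergent to $x$. The forward direction holds because $\gamma$-convergent sequences are $\gamma$-bounded, hence $\norm{\cdot}$-bounded by \prettyref{rem:Saks_bounded_sets} \ref{it:Saks_bounded_sets_2}, and $\topo\subseteq\gamma$. The backward direction holds because $\gamma$ and $\topo$ coincide on $\norm{\cdot}$-bounded sets. Second, $\gamma$ and $\topo$ agree on bounded sets, and by \prettyref{rem:mixed=submixed}, $\gamma$ has the same convergent sequences as $\gamma_{\operatorname{s}}$.

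For ``sequentially $\gamma$-equicontinuous $\Rightarrow$ bi-equicontinuous'', take $(x_n)$ $\norm{\cdot}$-bounded with $\topo$-limit $x$. Then $x_n-x\to0$ in $\gamma$, so $\sup_{t}p(S(t)(x_n-x))\to 0$ for every $\gamma$-continuous seminorm $p$. This applies in particular to every $p\in\semis_\topo$, since $\topo\subseteq\gamma$, and that is exactly bi-equicontinuity.

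The converse is the main obstacle. Bi-equicontinuity only controls $\topo$-seminorms uniformly in $t$, whereas sequential $\gamma$-equicontinuity requires control of $\gamma$-seminorms. Let $y_n\coloneq x_n-x\to0$ in $\gamma$. First I would show that $\{S(t)y_n\;|\;t\in I,n\in\N\}$ is $\norm{\cdot}$-bounded. Suppose not. Then one can choose $t_k$, $n_k$ with $\norm{S(t_k)y_{n_k}}\geq k^2$, and by \eqref{eq:saks} seminorms $p_k\in\semis_\topo$ with $p_k(S(t_k)y_{n_k})\geq k^2$. The sequence $z_k\coloneq y_{n_k}/k$ is still a bounded $\topo$-null sequence, so by bi-equicontinuity $\sup_t p(S(t)z_k)\to0$ for each fixed $p$. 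To reach a contradiction one needs a single seminorm, which I would obtain by a gliding-hump argument, or alternatively by noting that $\{S(t)z\}$ is $\topo$-bounded for bounded $\topo$-null families and then using that the $\gamma$-bounded sets are the norm-bounded sets. The cleanest route I see is that bi-equicontinuity implies $(S(t))_{t\in I}$ maps $\gamma$-null sequences to $\gamma$-bounded sets. This would follow the pattern of \cite[Proposition A.1 (iii)]{FedericoRosestolato2020}, via $\topo$-boundedness together with a scaling trick: the sequence $\lambda_k y_{n_k}$ with $\lambda_k\to\infty$ slowly is still $\gamma$-null, because $\topo$-null sequences can be rescaled by slowly growing factors while staying $\topo$-null; boundedness of the rescaled sequence would require care, which is where I expect the difficulty. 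Once uniform norm-boundedness is established, the set $\{S(t)y_n\}$ lies in a norm ball, and by bi-equicontinuity $S(t)y_n\to0$ in $\topo$ uniformly in $t$. Uniform convergence in $\topo$ inside a ball gives uniform convergence in $\gamma$, because for a $\gamma_{\operatorname{s}}$-seminorm $\vertiii{\cdot}_{(p_k,a_k)}$ one splits off finitely many indices using $a_k\to0$ and the norm bound. Hence $\sup_t\vertiii{S(t)y_n}\to0$, which is sequential $\gamma_{\operatorname{s}}$-equicontinuity. Since $\gamma$ and $\gamma_{\operatorname{s}}$ have the same convergent sequences, and a seminorm $q$ defined by $q(y)\coloneq\sup_t p(S(t)y)$ with $p$ $\gamma$-continuous is sequentially continuous as in the proof of \prettyref{prop:seq_equicont_equiv}, this yields sequential $\gamma$-equicontinuity.
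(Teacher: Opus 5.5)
Your proposal breaks down at the step you yourself flag as the main obstacle. That step cannot be completed, because the claim it rests on is false in this generality: bi-equicontinuity does not force $\{S(t)y_n\;|\;t\in I,\,n\in\N\}$ to be $\norm{\cdot}$-bounded for a $\gamma$-null sequence $(y_n)$. In particular, your ``cleanest route'' (bi-equicontinuity maps $\gamma$-null sequences to $\gamma$-bounded sets) fails. Bi-equicontinuity only yields $\topo$-boundedness of this set, and in a Saks space $\topo$-bounded sets need not be $\norm{\cdot}$-bounded. So neither a gliding hump nor a rescaling trick can close the gap.

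Here is a counterexample. Let $X=c_0$ with the sup-norm, and let $\topo$ be the topology of coordinatewise convergence, generated by $p_F(x)\coloneq\max_{k\in F}|x_k|$ for finite $F\subseteq\N$. This is a Saks space, and even C-sequential, since $\topo$ is metrisable on the unit ball. Put $I\coloneq\N$ and $S(t)x\coloneq t\,x_t e_t$, so each $S(t)\in\calL(X)$.

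\begin{itemize}
\item For every $\norm{\cdot}$-bounded $\topo$-null sequence $(y_n)$ one has $\sup_{t\in\N}p_F(S(t)y_n)=\max_{t\in F}t\,|(y_n)_t|\to 0$. Hence the family is bi-equicontinuous.
\item On the other hand, $(e_n)$ is $\gamma$-null. The $\gamma_{\operatorname{s}}$-seminorm $q(x)\coloneq\sup_{k\in\N}k^{-1/2}|x_k|$ is $\gamma$-continuous, because $\gamma_{\operatorname{s}}\subseteq\gamma$.
\item It satisfies $q(S(n)e_n)=\sqrt{n}$. Hence the family is neither sequentially $\gamma$-equicontinuous nor $\gamma$-equicontinuous.
\end{itemize}

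For the local versions, use the compact index set $\{0\}\cup\{1/n\;|\;n\in\N\}$ with $S(1/n)x\coloneq n\,x_n e_n$ and $S(0)\coloneq 0$. This family is even $\topo$-strongly continuous, and it fails in the same way.

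So the implications ``bi-equicontinuous $\Rightarrow$ (sequentially) $\gamma$-equicontinuous'' in (a)--(d) need an extra uniform norm bound. Specifically, $(S(t))_{t\in I}$, resp.\ $(S(t))_{t\in M}$ for compact $M$, resp.\ $(k(t)S(t))_{t\in I}$, must be a bounded family in $\calL(X)$.

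Under that hypothesis the rest of your argument is correct, and it can be shortened. For any choice of $t_n\in I$, the sequence $(S(t_n)y_n)$ is $\norm{\cdot}$-bounded and $\topo$-null, hence $\gamma$-null. This gives $\sup_{t\in I}q(S(t)y_n)\to 0$ for every $\gamma$-continuous seminorm $q$.

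You identified the real issue but then assumed it away with ``once uniform norm-boundedness is established''. The paper's own proof consists only of the appeal to \cite[I.1.10 Proposition]{Cooper1978} to identify the $\gamma$-null input sequences. It does not address the fact that bi-equicontinuity controls only $\topo$-seminorms, not $\gamma$-seminorms, of $S(t)y_n$, which is exactly the point you isolated. Where the local version is applied to bi-continuous families, the missing bound is supplied by the local boundedness built into the definition of bi-continuity.
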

\begin{proof}
Let $k\colon I\to [0,\infty)$. We note that our statements \ref{it:bi_equicont_mixed_1} and \ref{it:bi_equicont_mixed_2} follow 
once we have proved that $(k(t)S(t))_{t\in I}$ is bi-equicontinuous if and only if $(k(t)S(t))_{t\in I}$ is 
sequentially $\gamma$-equicontinuous. Parts \ref{it:bi_equicont_mixed_3} and \ref{it:bi_equicont_mixed_4} 
then follow from \ref{it:bi_equicont_mixed_1} and \ref{it:bi_equicont_mixed_2}, respectively, 
and \prettyref{thm:seq_equicont_equicont_equiv}. 

We observe that by \cite[I.1.10 Proposition]{Cooper1978} a sequence in $X$ is $\gamma$-convergent if and only if 
it is $\norm{\cdot}$-bounded and $\topo$-convergent. This implies that $(k(t)S(t))_{t\in I}$ is bi-equicontinuous if and only if 
$(k(t)S(t))_{t\in I}$ is sequentially $\gamma$-equicontinuous.
\end{proof}

\begin{definition}
  \label{defi:bicontinuoussg}
  Let $(X,\norm{\cdot})$ be a normed space, $\topo\subseteq\topo_{\norm{\cdot}}$ a Hausdorff locally convex topology 
  on $X$, $I$ a Hausdorff topological space and $(S(t))_{t\in I}$ a family in $\calL(X)$. 
  We say that $(S(t))_{t\in I}$ is \emph{(locally) $\topo$-bi-continuous} on $X$ if 
  it is $\topo$-strongly continuous, locally bounded and (locally) bi-equicontinuous.
\end{definition}

We point out that if we drop the word ``locally'' in locally $\topo$-bi-continuous, 
then we mean a $\topo$-strongly continuous, locally bounded and bi-equicontinuous family, so ``locally'' is not dropped 
in the boundedness condition. However, it actually does not make a difference when we consider Saks spaces 
due to \prettyref{lem:equi_cont_tight_norm_bound} and \prettyref{prop:bi_equicont_mixed_equicont} \ref{it:bi_equicont_mixed_1} 
since by \prettyref{rem:Saks_bounded_sets} \ref{it:Saks_bounded_sets_2}, a subset of $X$ is $\gamma$-bounded if and only if it is $\norm{\cdot}$-bounded. 
Our definition of (local) $\topo$-bi-continuity generalises \cite[Definition 1.2.4]{Farkas2003}.
In the literature different types of (locally) bi-continuous families on sequentially complete Saks spaces are considered. 
Locally (exponentially bounded) bi-continuous semigroups are studied for instance in \cite{Kuehnemund2001,Kuehnemund2003,Farkas2003,Es_SarhirFarkas2006,Farkas2011,BuddeFarkas2019}
and locally (exponentially bounded) bi-continuous cosine families in \cite{DuanSun2006,Budde2024,Budde2025}. 
Further, we mention \cite{WangSun2007,LuZhao2009} for locally bounded bi-continuous $C$-semigroups and $C$-cosine families,
\cite{LiSongZhao2010a,ChangLiu2012} for $\alpha$-times integrated exponentially bounded bi-continuous $C$-semigroups and $C$-cosine families 
for $\alpha\in\N$,
\cite{LiSongZhao2010b,LiSongYu2013} for $\alpha$-times integrated exponentially bounded bi-continuous $C$-semigroups and $C$-cosine families 
for $\alpha>0$. 

\begin{remark}
\label{rem:Saks_bi-admissible}
    In the literature, (local) bi-continuity is typically defined for families $(S(t))_{t\geq 0}$ on triples $(X,\norm{\cdot},\topo)$ where $(X,\norm{\cdot})$ is a Banach space and $\topo\subseteq \topo_{\norm{\cdot}}$ is a Hausdorff locally convex topology which is sequentially complete on $\topo$-closed and $\norm{\cdot}$-bounded sets (or, equivalenty, on the $\norm{\cdot}$-closed unit ball) and such that $(X,\topo)'$ is norming for $(X,\norm{\cdot})$, see e.g.~\cite[Assumptions 1]{Kuehnemund2003} or \cite[Assumption 1.2]{Farkas2003}. Due to \prettyref{rem:Saks_seq_complete_Banach} \ref{it:Saks_seq:compl_Banach_1} and 
    \cite[Lemma 2.7]{KruseSeifert2023b}, a triple $(X,\norm{\cdot},\topo)$ fulfils \cite[Assumptions 1]{Kuehnemund2003} if and only if 
    it is a sequentially complete Saks space.
\end{remark}

Let us turn to the relation between $\topo$-strong continuity and $\gamma$-strong continuity on a Saks space. 
We call a Hausdorff topological space $I$ a \emph{$k_{\operatorname{lcs}}$-space} 
if for any Hausdorff locally convex space $X$ and any map $f\colon I\to X$, 
whose restriction to each compact $M\subseteq I$ is continuous, the map is already continuous on 
$I$. Examples of $k_{\operatorname{lcs}}$-spaces are Hausdorff $k_{\R}$-spaces by 
\cite[(2.3.7) Proposition]{Buchwalter1969} and \cite[Proposition 3.27]{Fabian2011}. 
So, recalling from \cite[p.~15]{Kruse2024a}, examples of $k_{\operatorname{lcs}}$-spaces are completely regular Hausdorff 
$k$-spaces by 
\cite[3.3.21 Theorem]{Engelking1989}, metrisable spaces by \cite[Proposition 11.5]{James1999} and 
\cite[3.3.20 Theorem]{Engelking1989}, locally compact Hausdorff topological spaces 
and strong duals of Fr\'echet--Montel spaces by \cite[Proposition 3.27]{Fabian2011} and 
\cite[4.11 Theorem (5)]{Kriegl1997}.

\begin{proposition}\label{prop:tau_gamma_strong}
Let $(X,\norm{\cdot},\topo)$ be a Saks space, $I$ a Hausdorff topological space and $(S(t))_{t\in I}$ a family of linear maps 
from $X$ to $X$. Consider the following assertions.
\begin{enumerate}[label=\upshape(\alph*), leftmargin=*]
\item\label{it:tau_gamma_strong_1} $(S(t))_{t\in I}$ is $\topo$-strongly continuous.
\item\label{it:tau_gamma_strong_2} $(S(t))_{t\in I}$ is $\gamma$-strongly continuous.
\end{enumerate}
Then we have \ref{it:tau_gamma_strong_2}$\Rightarrow$\ref{it:tau_gamma_strong_1}. 
If $I$ is a $k_{\operatorname{lcs}}$-space and $(S(t))_{t\in I}$ a locally bounded family in $\calL(X)$, then 
\ref{it:tau_gamma_strong_1}$\Rightarrow$\ref{it:tau_gamma_strong_2}.
\end{proposition}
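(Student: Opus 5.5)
The implication \ref{it:tau_gamma_strong_2}$\Rightarrow$\ref{it:tau_gamma_strong_1} is immediate: since $\topo\subseteq\gamma$ by \prettyref{defi:saks_mixed}, the identity $(X,\gamma)\to(X,\topo)$ is continuous. Hence, for each $x\in X$, the map $t\mapsto S(t)x$ is $\topo$-continuous whenever it is $\gamma$-continuous.

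For \ref{it:tau_gamma_strong_1}$\Rightarrow$\ref{it:tau_gamma_strong_2} I plan to use the defining property of $k_{\operatorname{lcs}}$-spaces. Fix $x\in X$ and consider $f\colon I\to (X,\gamma)$, $f(t)\coloneq S(t)x$. Because $(X,\gamma)$ is a Hausdorff locally convex space, it suffices to show that the restriction $f|_{M}$ is $\gamma$-continuous for every compact $M\subseteq I$.

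So let $M\subseteq I$ be compact. By local boundedness, $K\coloneq\sup_{t\in M}\norm{S(t)}_{\calL(X)}<\infty$, and therefore $f(M)\subseteq B\coloneq\{y\in X\;|\;\norm{y}\leq K\norm{x}\}$, which is a $\norm{\cdot}$-bounded set. By \prettyref{defi:saks_mixed}, $\gamma$ and $\topo$ coincide on $\norm{\cdot}$-bounded sets, so the subspace topologies induced by $\gamma$ and $\topo$ on $B$ agree. Now $f|_{M}\colon M\to (B,\topo|_{B})$ is continuous by the assumed $\topo$-strong continuity. Hence $f|_{M}\colon M\to(B,\gamma|_{B})$ is continuous, and thus $f|_M$ is continuous as a map into $(X,\gamma)$. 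The $k_{\operatorname{lcs}}$-property then yields $\gamma$-continuity of $f$ on all of $I$.

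The only point needing care is to use the correct form of ``coincides on bounded sets'': it means equality of the relative topologies on each $\norm{\cdot}$-bounded set. This is the content of the definition, or equivalently of \cite[I.1.10 Proposition]{Cooper1978}. Continuity into a subspace is tested with exactly these relative topologies, so no uniformity argument is needed. I therefore expect no real obstacle. The role of the $k_{\operatorname{lcs}}$ hypothesis is solely to pass from compact pieces of $I$ to all of $I$, since a global norm bound on $\{S(t)\;|\;t\in I\}$ is not available.
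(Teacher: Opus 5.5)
Your proposal is correct and follows essentially the same route as the paper: $\topo\subseteq\gamma$ for the easy direction, and for the converse you use local boundedness to place $S(M)x$ in a $\norm{\cdot}$-bounded set, the coincidence of $\gamma$ and $\topo$ on such sets to get $\gamma$-continuity on each compact $M$, and the $k_{\operatorname{lcs}}$-property to pass to all of $I$. Your explicit remark that this coincidence means equality of the relative topologies is a welcome clarification of the step the paper states more briefly.
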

\begin{proof}
``\ref{it:tau_gamma_strong_2}$\Rightarrow$\ref{it:tau_gamma_strong_1}'' 
Let $(S(t))_{t\geq 0}$ be $\gamma$-strongly continuous. Then it is also $\topo$-strongly continuous since 
$\topo\subseteq\gamma$.

``\ref{it:tau_gamma_strong_1}$\Rightarrow$\ref{it:tau_gamma_strong_2}'' 
Let $I$ be a $k_{\operatorname{lcs}}$-space and $(S(t))_{t\in I}$ a $\topo$-strongly continuous locally bounded family 
in $\calL(X)$. Since $(S(t))_{t\in I}$ is locally bounded, the set $S(M)x$ is $\norm{\cdot}$-bounded for all compact $M\subseteq I$ 
and $x\in X$. As $\gamma$ coincides with $\topo$ on $\norm{\cdot}$-bounded sets and $(S(t))_{t\in I}$ is $\topo$-strongly 
continuous, this implies that the map $M\ni t\mapsto S(t)x\in X$ is $\gamma$-continuous for all compact 
$M\subseteq I$ and $x\in X$. Hence, $(S(t))_{t\in I}$ is $\gamma$-strongly continuous because $I$ is a $k_{\operatorname{lcs}}$-space. 
\end{proof}

\begin{remark}\label{rem:gamma_seq_cont_norm_cont}
Let $(X,\norm{\cdot},\topo)$ be a Saks space and $S$ a linear map from $X$ to $X$. 
If $S$ is sequentially $\gamma$-continuous on $X$, then $S\in\calL(X)$. 
Indeed, let $S$ be sequentially $\gamma$-continuous on $X$. Then $S$ maps $\gamma$-bounded subsets of $X$ to 
$\gamma$-bounded subsets of $X$ by \cite[4-4-3 Theorem]{Wilansky1978}. By \prettyref{rem:Saks_bounded_sets} \ref{it:Saks_bounded_sets_2}, 
a subset of $X$ is $\gamma$-bounded if and only if it is $\norm{\cdot}$-bounded. 
This implies that $S$ is $\topo_{\norm{\cdot}}$-continuous since the normed space $(X,\norm{\cdot})$ is bornological.
\end{remark}

For (locally) $\topo$-bi-continuous semigroups $(S(t))_{t\geq 0}$ on sequentially complete Saks spaces, 
the following observation is already contained in \cite[Remark 2.9]{KruseSeifert2023b}.

\begin{proposition}\label{prop:bi_cont_mixed_seq_cont}
Let $(X,\norm{\cdot},\topo)$ be a Saks space, $I$ a Hausdorff topological space and $(S(t))_{t\in I}$ a family of linear maps 
from $X$ to $X$. Consider the following assertions.
\begin{enumerate}[label=\upshape(\alph*), leftmargin=*]
\item\label{it:bi_mixed_1} $(S(t))_{t\in I}$ is (locally) $\topo$-bi-continuous.
\item\label{it:bi_mixed_2} $(S(t))_{t\in I}$ is $\gamma$-strongly continuous, sequentially (locally) $\gamma$-equicontinuous 
and locally bounded.
\end{enumerate}
Then we have \ref{it:bi_mixed_2}$\Rightarrow$\ref{it:bi_mixed_1}. 
If $I$ is a $k_{\operatorname{lcs}}$-space, then 
\ref{it:bi_mixed_1}$\Rightarrow$\ref{it:bi_mixed_2}.
\end{proposition}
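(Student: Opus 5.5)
The proof reduces to the three previous results: \prettyref{prop:bi_equicont_mixed_equicont} \ref{it:bi_equicont_mixed_1}, \prettyref{prop:tau_gamma_strong} and \prettyref{rem:gamma_seq_cont_norm_cont}. I would treat the two implications separately, in each case matching the three ingredients of (local) $\topo$-bi-continuity from \prettyref{defi:bicontinuoussg}: $\topo$-strong continuity, local boundedness, and (local) bi-equicontinuity.

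For \ref{it:bi_mixed_2}$\Rightarrow$\ref{it:bi_mixed_1}, assume $(S(t))_{t\in I}$ is $\gamma$-strongly continuous, sequentially (locally) $\gamma$-equicontinuous and locally bounded.
\begin{enumerate}[label=\upshape(\roman*), leftmargin=*]
\item First check that every $S(t)$ lies in $\calL(X)$, as \prettyref{defi:bicontinuoussg} requires. Local boundedness in the sense of \prettyref{defi:bound_lin_norm} already presupposes this. Independently, every singleton $\{t\}$ is compact, so sequential (local) $\gamma$-equicontinuity makes each $S(t)$ sequentially $\gamma$-continuous. \prettyref{rem:gamma_seq_cont_norm_cont} then gives $S(t)\in\calL(X)$.
\item $\topo$-strong continuity follows from the trivial direction of \prettyref{prop:tau_gamma_strong}, since $\topo\subseteq\gamma$.
\item (Local) bi-equicontinuity follows from \prettyref{prop:bi_equicont_mixed_equicont} \ref{it:bi_equicont_mixed_1}, which identifies it with sequential (local) $\gamma$-equicontinuity.
\item Local boundedness is assumed.
\end{enumerate}

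For \ref{it:bi_mixed_1}$\Rightarrow$\ref{it:bi_mixed_2}, assume $I$ is a $k_{\operatorname{lcs}}$-space and $(S(t))_{t\in I}$ is (locally) $\topo$-bi-continuous. Then $(S(t))_{t\in I}$ is a locally bounded family in $\calL(X)$ that is $\topo$-strongly continuous.
\begin{enumerate}[label=\upshape(\roman*), leftmargin=*]
\item The nontrivial direction of \prettyref{prop:tau_gamma_strong} applies directly and yields $\gamma$-strong continuity.
\item \prettyref{prop:bi_equicont_mixed_equicont} \ref{it:bi_equicont_mixed_1} turns (local) bi-equicontinuity into sequential (local) $\gamma$-equicontinuity.
\item Local boundedness is part of the hypothesis.
\end{enumerate}

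No step is a real obstacle, since all the work was done in the cited results. The only point needing care is the bookkeeping of ``locally''. In the non-local case, bi-continuity still includes local boundedness, as noted after \prettyref{defi:bicontinuoussg}. Both the local and non-local versions of the equivalence are covered by \prettyref{prop:bi_equicont_mixed_equicont} \ref{it:bi_equicont_mixed_1}, so the argument runs the same way in both cases.
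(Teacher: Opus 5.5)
Your plan is the paper's proof almost verbatim: the same three ingredients (Proposition~\ref{prop:bi_equicont_mixed_equicont}~(a), Proposition~\ref{prop:tau_gamma_strong}, Remark~\ref{rem:gamma_seq_cont_norm_cont}), used in the same places. Direction (b)$\Rightarrow$(a) is fine in both versions, and so is (a)$\Rightarrow$(b) in the local version. However, the point you treat as bookkeeping, namely that the non-local version ``runs the same way'', is exactly where the argument fails, and the paper's proof fails there too.

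Turning bi-equicontinuity on an index set $J$ into sequential $\gamma$-equicontinuity on $J$ needs a norm bound on $(S(t))_{t\in J}$. For a $\gamma$-null $(x_n)$ and any $t_n\in J$, bi-equicontinuity only makes $(S(t_n)x_n)$ $\topo$-null. By \cite[I.1.10 Proposition]{Cooper1978}, that sequence is $\gamma$-null only if it is also $\norm{\cdot}$-bounded. For a compact $M$, local boundedness gives $\norm{S(t_n)x_n}\leq\sup_{t\in M}\norm{S(t)}_{\calL(X)}\sup_k\norm{x_k}$. Hence $(S(t_n)x_n)$ is $\gamma$-null for every choice of $t_n\in M$. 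A standard contradiction argument (pick $t_n$ with $q(S(t_n)x_n)>\varepsilon$ along a subsequence) then gives $\sup_{t\in M}q(S(t)x_n)\to 0$ for every $\gamma$-continuous seminorm $q$.

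You should write this out rather than quote Proposition~\ref{prop:bi_equicont_mixed_equicont}~(a). Without a boundedness hypothesis that proposition is false even in its local form: the one-point compactification of the example below, with $S(\infty)=0$, is a counterexample. In the non-local case $J=I$, bi-continuity supplies only local boundedness, so no such bound is available.

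In fact the non-local half of (a)$\Rightarrow$(b) is false as stated. Take $X=\ell^\infty$ with the sup-norm and $\topo$ the topology of coordinatewise convergence, generated by $p_F(x)=\max_{k\in F}|x_k|$ for finite $F\subseteq\N$. This is a sequentially complete Saks space, and it is C-sequential because $\topo$ is metrisable on the unit ball. Let $I=\N$ carry the discrete topology, so it is a $k_{\operatorname{lcs}}$-space. Define $S(t)x= t\,x_t\,e_t$, where $e_t$ is the $t$-th unit vector.

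Then:
\begin{itemize}
\item each $S(t)$ lies in $\calL(X)$ with $\norm{S(t)}_{\calL(X)}=t$;
\item the family is trivially $\topo$-strongly continuous, and it is locally bounded since compact subsets of $I$ are finite;
\item it is bi-equicontinuous, since for every $\topo$-null $(z_n)$, $z_n=(z_{n,k})_k$, we have $\sup_{t\in\N}p_F(S(t)z_n)=\max_{t\in F}t|z_{n,t}|\to 0$.
\end{itemize}
Yet $(e_n)$ is $\gamma$-null, while $\sup_{t\in\N}q(S(t)e_n)=q(ne_n)$. Since $(ne_n)$ is not norm-bounded, it is not $\gamma$-null. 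So the family is not sequentially $\gamma$-equicontinuous, which Lemma~\ref{lem:equi_cont_tight_norm_bound} also shows.

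The non-local (a)$\Rightarrow$(b) therefore needs the extra hypothesis $\sup_{t\in I}\norm{S(t)}_{\calL(X)}<\infty$. Under it, your argument goes through with the norm bound used as above. The same example refutes two claims in the paper: the non-local direction ``bi-equicontinuous $\Rightarrow$ sequentially $\gamma$-equicontinuous'' of Proposition~\ref{prop:bi_equicont_mixed_equicont}~(a), and the remark after Definition~\ref{defi:bicontinuoussg} that dropping ``locally'' makes no difference on Saks spaces. So this gap is inherited from the paper rather than introduced by you.
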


\begin{proof}
First, we observe that by \prettyref{prop:bi_equicont_mixed_equicont} \ref{it:bi_equicont_mixed_1} 
(local) bi-equicontinuity is equivalent to sequential (local) $\gamma$-equicontinuity.

``\ref{it:bi_mixed_2}$\Rightarrow$\ref{it:bi_mixed_1}'' Let $(S(t))_{t\geq 0}$ be $\gamma$-strongly continuous, 
locally bounded and sequentially (locally) $\gamma$-equicontinuous. Due to our first observation and 
\prettyref{rem:gamma_seq_cont_norm_cont}, we only need to show that $(S(t))_{t\geq 0}$ is $\topo$-strongly continuous, 
which follows from \prettyref{prop:tau_gamma_strong}.

``\ref{it:bi_mixed_1}$\Rightarrow$\ref{it:bi_mixed_2}'' Let $(S(t))_{t\geq 0}$ be (locally) $\topo$-bi-continuous and 
$I$ a $k_{\operatorname{lcs}}$-space. Again, due to our first observation we only need to show that 
$(S(t))_{t\geq 0}$ is $\gamma$-strongly continuous, which follows from \prettyref{prop:tau_gamma_strong}.
\end{proof}

The following corollary was already observed in \cite[Remarks, p.~6]{Farkas2003} for bi-continuous semigroups 
on sequentially complete Saks spaces.

\begin{corollary}\label{cor:bi_cont_loc_exp_sg_cosine}
Let $(X,\norm{\cdot},\topo)$ be a Saks space such that $(X,\norm{\cdot})$ is a Banach space 
and $(S(t))_{t\geq 0}$ a semigroup or a cosine family on $X$. Then the following assertions are equivalent.
\begin{enumerate}[label=\upshape(\alph*), leftmargin=*]
  \item\label{it:bi_cont_loc_exp_sg_cosine_1}
   $(S(t))_{t\geq 0}$ is locally $\topo$-bi-continuous. 
  \item\label{it:bi_cont_loc_exp_sg_cosine_2}
    $(S(t))_{t\geq 0}$ is locally $\topo$-bi-continuous and exponentially bounded.  
\end{enumerate}  
\end{corollary}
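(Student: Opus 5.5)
The implication \ref{it:bi_cont_loc_exp_sg_cosine_2}$\Rightarrow$\ref{it:bi_cont_loc_exp_sg_cosine_1} is trivial, so we only need to show that a locally $\topo$-bi-continuous semigroup or cosine family is exponentially bounded. The plan is to reduce this to \prettyref{lem:growth_bound}, whose hypotheses are: $(X,\norm{\cdot})$ is a Banach space, $\topo$-bounded sets are $\norm{\cdot}$-bounded, $(S(t))_{t\geq 0}$ lies in $\calL(X)$, and it is $\topo$-strongly continuous.

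The Banach space assumption is given. By \prettyref{defi:bicontinuoussg}, a locally $\topo$-bi-continuous family is a family in $\calL(X)$ which is $\topo$-strongly continuous, so two more hypotheses are immediate.

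The only point requiring an argument is that $\topo$-bounded sets are $\norm{\cdot}$-bounded. Since $(X,\norm{\cdot},\topo)$ is a Saks space, there is a directed system $\semis_{\topo}$ generating $\topo$ with $\norm{x}=\sup_{p\in\semis_{\topo}}p(x)$ by \eqref{eq:saks}. One might hope this gives the boundedness statement directly, but it does not: $\topo$-boundedness only controls each $p$ separately, not the supremum over all $p$. For instance, the strict topology on $C_b$ has bounded sets that are not norm bounded, so this hypothesis can genuinely fail for a Saks space.

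So I would avoid that hypothesis and instead reprove the argument of \prettyref{lem:growth_bound} with local boundedness in place of \prettyref{lem:loc_bound_from_strong_cont}. Local boundedness is part of the definition of local $\topo$-bi-continuity (\prettyref{defi:bicontinuoussg}), so $K\coloneq\sup_{t\in[0,2]}\norm{S(t)}_{\calL(X)}<\infty$ holds directly. The inductive estimates in parts (i) and (ii) of the proof of \prettyref{lem:growth_bound} then use only the algebraic identities $S(t+1)=S(t)S(1)$, respectively $S(t+1)=2S(t)S(1)-S(t-1)$, together with submultiplicativity of the operator norm. They yield $\norm{S(t)}_{\calL(X)}\leq K\euler^{\omega t}$ for a suitable $\omega\geq 0$ chosen as in that proof.

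The main (mild) obstacle is noticing that the bounded-set hypothesis of \prettyref{lem:growth_bound} is used only to obtain the bound on $[0,2]$. Local boundedness supplies that bound here, so neither the uniform boundedness principle nor the Banach property is actually needed beyond what is assumed.
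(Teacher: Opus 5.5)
Your proof is correct, but it takes a different route from the paper. The paper also avoids $\topo$-boundedness, but does so by passing to the mixed topology:
\begin{itemize}
\item By Proposition~\ref{prop:bi_cont_mixed_seq_cont} (with $[0,\infty)$ a $k_{\operatorname{lcs}}$-space), the family is $\gamma$-strongly continuous.
\item By Remark~\ref{rem:Saks_bounded_sets}~(b), the $\gamma$-bounded sets are exactly the $\norm{\cdot}$-bounded sets.
\item Lemma~\ref{lem:growth_bound} is then applied verbatim with $\gamma$ in place of $\topo$.
\end{itemize}
Inside that lemma, the bound $\sup_{t\in[0,2]}\norm{S(t)}_{\calL(X)}<\infty$ is re-derived from strong continuity via the uniform boundedness principle (Lemma~\ref{lem:loc_bound_from_strong_cont}). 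That is where the Banach assumption is used.

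You instead observe that this bound is already part of Definition~\ref{defi:bicontinuoussg}, and you rerun only the algebraic induction. Your argument is more elementary. It also shows that the Banach and Saks-space hypotheses, and the mixed topology altogether, are superfluous here: the corollary holds for any normed space and any $\topo\subseteq\topo_{\norm{\cdot}}$. The paper's route gains a one-line citation of existing lemmas and fits its overall strategy of transferring to $\gamma$. The cost is that it recovers, through the uniform boundedness principle, the local boundedness that was assumed anyway.

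One side remark in your proposal is wrong, although it plays no role in the proof. For locally compact $\Omega$, the strict topology on $\mathrm{C}_{\operatorname{b}}(\Omega)$ coincides with the mixed topology, and its bounded sets are exactly the norm-bounded ones (Buck; Remark~\ref{rem:Saks_bounded_sets}~(b)). A correct example of a Saks space in which $\topo$-bounded sets need not be norm-bounded is $(\mathrm{C}_{\operatorname{b}}(\R),\norm{\cdot}_{\infty},\topo_{\operatorname{co}})$. There, bumps of height $n$ supported in $[n,n+1]$ form a compact-open bounded set that is not norm-bounded.
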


\begin{proof}
The implication ``\ref{it:bi_cont_loc_exp_sg_cosine_2}$\Rightarrow$\ref{it:bi_cont_loc_exp_sg_cosine_1}'' is clear. 
Let us turn to ``\ref{it:bi_cont_loc_exp_sg_cosine_1}$\Rightarrow$\ref{it:bi_cont_loc_exp_sg_cosine_2}''. 
Let $(S(t))_{t\geq 0}$ be locally $\topo$-bi-continuous. By \prettyref{prop:bi_cont_mixed_seq_cont}, 
$(S(t))_{t\geq 0}$ is $\gamma$-strongly continuous.
We recall again that a subset of $X$ is $\norm{\cdot}$-bounded if and only if it is $\gamma$-bounded 
by \prettyref{rem:Saks_bounded_sets} \ref{it:Saks_bounded_sets_2}. 
Thus, $(S(t))_{t\geq 0}$ is exponentially bounded by \prettyref{lem:growth_bound}, which proves the implication.
\end{proof}

Let $I$ be a Hausdorff topological space and $\calF$ a set of functions from $I$ to $[0,\infty)$. 
We say that a function $k\in\calF$ is \emph{locally bounded} if $\sup_{t\in M}k(t)<\infty$ for all compact sets 
$M\subseteq I$, see e.g.~\cite[p.~53]{Kruse2023}. 
In particular, if $\calF$ consists of continuous functions, then all its elements are locally bounded. 
The proof of the next observation is a modification of the proof of \cite[Proposition 4(b)]{Kuehnemund2003}, 
where the case of locally bi-continuous semigroups and $\calF=\calF_{\exp}$ is treated.

\begin{proposition}\label{prop:loc_glob_bi_equicont}
Let $(X,\norm{\cdot})$ be a normed space, $\topo\subseteq\topo_{\norm{\cdot}}$ a Hausdorff locally convex topology on $X$, 
$I$ a Hausdorff topological space, $\calF$ a set of locally bounded functions from $I$ to $[0,\infty)$ having rV$_{\infty}$ 
and $(S(t))_{t\in I}$ a family in $\calL(X)$.
If $(S(t))_{t\in I}$ is locally bi-equicontinuous and $\calF$-bounded, then 
there is $k\in\calF$ such that $(k(t)S(t))_{t\in I}$ is bi-equicontinuous. 
If in addition all elements of $\calF$ are locally bounded away from zero, then $(k(t)S(t))_{t\in I}$ is bounded.
\end{proposition}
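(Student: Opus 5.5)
Since $(S(t))_{t\in I}$ is $\calF$-bounded, there is $k_{1}\in\calF$ with $K_{1}\coloneq\sup_{t\in I}\norm{k_{1}(t)S(t)}_{\calL(X)}<\infty$. Using rV$_{\infty}$ for this $k_{1}$, I obtain $k\coloneq k_{2}\in\calF$ such that for every $\varepsilon>0$ there is a compact $M_{\varepsilon}\subseteq I$ with $k(t)\leq\varepsilon k_{1}(t)$ for all $t\in I\setminus M_{\varepsilon}$. This $k$ is the candidate, and the task is to show that $(k(t)S(t))_{t\in I}$ is bi-equicontinuous by splitting $I$ into the compact piece $M_{\varepsilon}$ and its complement.

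Let $(x_{n})$ be a $\norm{\cdot}$-bounded sequence, say $\sup_{n}\norm{x_{n}}\le C$, with $\topo$-$\lim x_{n}=x$. Then $y_{n}\coloneq x_{n}-x$ is $\norm{\cdot}$-bounded by $C+\norm{x}\eqqcolon C'$ and $\topo$-null. Fix $p\in\semis_{\topo}$ and $\varepsilon>0$. Because $\topo\subseteq\topo_{\norm{\cdot}}$, there is $K_{2}>0$ with $p\leq K_{2}\norm{\cdot}$. For $t\in I\setminus M_{\delta}$, where $\delta\coloneq\varepsilon/(K_{1}K_{2}C'+1)$, I get
\[
p(k(t)S(t)y_{n})\leq K_{2}\,\delta\,\norm{k_{1}(t)S(t)}_{\calL(X)}\norm{y_{n}}\leq K_{2}\delta K_{1}C'<\varepsilon
\]
uniformly in $n$. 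On the compact set $M_{\delta}$, local bi-equicontinuity gives $\sup_{t\in M_{\delta}}p(S(t)y_{n})\to0$. Since $k$ is locally bounded, $\sup_{t\in M_{\delta}}k(t)<\infty$, so $\sup_{t\in M_{\delta}}p(k(t)S(t)y_{n})\to0$. Combining the two pieces yields $\limsup_{n}\sup_{t\in I}p(k(t)S(t)y_{n})\le\varepsilon$, and since $\varepsilon$ is arbitrary this is the required bi-equicontinuity. The main point of care is that $\delta$, and hence the compact set, must be chosen depending on $\varepsilon$ and the bound $C'$, while $k$ stays fixed; rV$_{\infty}$ provides exactly this order of quantifiers.

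For the addendum, assume all elements of $\calF$ are locally bounded away from zero. Outside $M_{1}$ (take $\varepsilon=1$), $\norm{k(t)S(t)}\le\norm{k_{1}(t)S(t)}\le K_{1}$. On the compact set $M_{1}$, $k_{1}$ is bounded below by some $c>0$, so $\norm{S(t)}\le K_{1}/c$ there. Also, $k$ is bounded above on $M_{1}$ by local boundedness. Hence $\sup_{t\in M_{1}}\norm{k(t)S(t)}<\infty$, and $(k(t)S(t))_{t\in I}$ is bounded. I do not expect a real obstacle; the only delicate step is the quantifier bookkeeping in the second paragraph.
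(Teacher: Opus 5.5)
Your proof is correct and follows essentially the same route as the paper. You fix $k_1$ from $\calF$-boundedness and take $k$ from rV$_{\infty}$, independent of $\varepsilon$. Off a compact set, chosen depending on $\varepsilon$ and the norm bound of the sequence, you estimate $k(t)S(t)(x_n-x)$ via the norm. On that compact set you use local bi-equicontinuity together with local boundedness of $k$. For the addendum you use that $k_1$ is bounded below on the compact set, and taking $M_1$ there directly is a slightly cleaner version of the paper's reuse of the $\varepsilon$-dependent compact set.
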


\begin{proof}
Let $(S(t))_{t\in I}$ be locally bi-equicontinuous and $\calF$-bounded. 
Let $(x_n)$ be a sequence in $X$ and $x\in X$ with $K_1\coloneqq\sup_{n\in\N}\norm{x_n}<\infty$ and $
\topo\text{-}\lim_{n\to\infty} x_n = x$. Let $p\in\semis_{\topo}$. Then there is $K_2>0$ such that 
$p(z)\leq K_{2}\norm{z}$ for all $z\in X$ as $\topo\subseteq\topo_{\norm{\cdot}}$. 
Since $(S(t))_{t\in I}$ is $\calF$-bounded, there is $k_{1}\in\calF$ such that 
$K_{3}\coloneq\sup_{t\in I}\norm{k_{1}(t)S(t)}_{\calL(X)}<\infty$. We set $K_{4}\coloneq K_{2}(K_{3}+1)(K_{1}+\norm{x}+1)>0$. 
Let $\varepsilon>0$. As $\calF$ has rV$_{\infty}$, there are $k\in\calF$, independent of $\varepsilon$, and a compact set 
$M\subseteq I$ such that $k(t)\leq\frac{\varepsilon}{K_{4}} k_{1}(t)$ for all $t\in I\setminus M$. This implies that 
\begin{align*}
     \norm{k(t)S(t)(x_n -x)}
&\leq k(t)\norm{S(t)}_{\calL(X)}\norm{x_n -x}
 \leq \frac{\varepsilon}{K_{4}}\norm{k_{1}(t)S(t)}_{\calL(X)}(K_1+\norm{x})\\
&=    \frac{\varepsilon}{K_{4}}K_{3}(K_1+\norm{x})
 \leq \frac{\varepsilon}{K_2}
\end{align*}
for all $t\in I\setminus M$ and $n\in\N$. Therefore, we have
\begin{equation}\label{eq:loc_glob_1}
     \sup_{t\in I\setminus M}p(k(t)S(t)(x_n -x))
\leq K_2\sup_{t\in I\setminus M}\norm{k(t)S(t)(x_n -x)}
\leq \varepsilon .
\end{equation}
By the local bi-equicontinuity of $(S(t))_{t\in I}$ and the local boundedness of $k$, there is $n_0\in\N$ such that for all $n\geq n_0$ it holds that
\begin{equation}\label{eq:loc_glob_2}
\sup_{t\in M}p(k(t)S(t)(x_n -x))
\leq \sup_{s\in M}k(s)\sup_{t\in M}p(S(t)(x_n -x))
\leq \varepsilon .
\end{equation}
Combining \eqref{eq:loc_glob_1} and \eqref{eq:loc_glob_2}, we obtain that $(k(t)S(t))_{t\in I}$ 
is bi-equicontinuous. 

Furthermore, fix some $x\in X$ and let in addition all elements of $\calF$ be locally bounded away from zero. Then we have
\begin{align*}
     \sup_{t\in I}\norm{k(t)S(t)}_{\calL(X)}
&\leq \max\Bigl\{\sup_{t\in M}\norm{k(t)S(t)}_{\calL(X)},\,\sup_{t\in I\setminus M}\norm{k(t)S(t)}_{\calL(X)}\Bigr\}\\
&\leq \max\Bigl\{\sup_{s\in M}k(s)\frac{1}{\inf_{r\in M}k_{1}(r)}\sup_{t\in M}\norm{k_{1}(t)S(t)}_{\calL(X)},\,\frac{\varepsilon K_3}{K_4}\Bigr\}
< \infty,
\end{align*}
so $(k(t)S(t))_{t\in I}$ is bounded.
\end{proof}

\begin{remark}\label{rem:standard_Fs_fine}
The sets $\calF\coloneq\calF_{\exp}$ and $\calF_{\operatorname{poly}}$ from \prettyref{defi:exp_pol_equicont} 
consist of strictly positive continuous functions 
on the metrisable space $I=[0,\infty)$, so $I$ is a $k_{\operatorname{lcs}}$-space, all the elements of these $\calF$ are locally bounded and locally bounded away from zero, and it is easy to check that these $\calF$ have rV$_{\infty}$.

We note that the set $\calF_{\exp ,2}$ from \prettyref{defi:exp_pol_equicont} does not have rV$_{\infty}$ since 
$\euler^{\alpha (t-s)}=1$ for all $\alpha\in\R$ and $t=s\in\R$.
\end{remark}

%\begin{remark}\label{rem:shift_k1_k}
%   Note that in the proof of \prettyref{prop:loc_glob_bi_equicont} we have to transition from $k_1$ which makes $(k_1(t)S(t))_{t\in I}$ bounded to $k$ which makes $(k(t)S(t))_{t\in I}$ bi-equicontinuous, which we obtained by rV$_\infty$. For the case $I\coloneqq [0,\infty)$ and $\calF\coloneqq\calF_{\exp}$ and $k_1 \coloneq [t\mapsto \euler^{-\omega t}]$ for some $\omega\in \R$ we may take any $k \coloneq [t\mapsto \euler^{-\alpha t}]$ with $\alpha>\omega$.
%\end{remark}

\section{Relations between bi-continuity, strong continuity, equicontinuity and equitightness}
\label{sect:relations_bi-continuity_equicontinuity}

We start with another related notion to equicontinuity on Saks spaces.

\begin{definition}[{\cite[Definitions 3.4, 3.5]{KruseSchwenninger2022}}]\label{defi:equitight}
Let $(X,\norm{\cdot})$ be a normed space, $\topo\subseteq\topo_{\norm{\cdot}}$ a Hausdorff locally convex topology on $X$, 
$\semis_{\topo}$ a directed system of seminorms generating the topology $\topo$, $I$ a set and 
$\calF$ a set functions from $I$ to $[0,\infty)$. A family of linear maps $(S(t))_{t\in I}$ from $X$ to $X$ is called 
\begin{enumerate}[label=\upshape(\alph*), leftmargin=*]
\item\label{it:equitight_1} $(\norm{\cdot},\topo)$\emph{-equitight}
if 
\[
\forall\;\varepsilon>0,\,p\in\semis_{\topo}\;\exists\;q\in\semis_{\topo},\,K\geq 0\;
\forall\;t\in I,\,x\in X:\;p(S(t)x)\leq K q(x)+\varepsilon\norm{x}.
\]
\item\label{it:equitight_2} \emph{locally $(\norm{\cdot},\topo)$-equitight} if $I$ is a Hausdorff topological space and 
$(S(t))_{t\in M}$ is $(\norm{\cdot},\topo)$-equitight for all compact sets $M\subseteq I$. 
\item\label{it:equitight_3} \emph{$\calF$-$(\norm{\cdot},\topo)$-equitight} if there is $k\in\calF$ such that 
$(k(t)S(t))_{t\in I}$ is $(\norm{\cdot},\topo)$-equi\-tight. 
\item\label{it:equitight_4} \emph{exponentially (polynomially)} $(\norm{\cdot},\topo)$-equi\-tight if it is $\calF$-$(\norm{\cdot},\topo)$-equitight 
with $\calF=\calF_{\exp}$ or $\calF_{\exp ,2}$ and $I$ accordingly ($\calF=\calF_{\operatorname{pol}})$, see \prettyref{defi:exp_pol_equicont}.
\end{enumerate}
\end{definition}

We remark that the definition of $(\norm{\cdot},\topo)$-equitightness given here is a bit more general than in 
\cite[Definitions 3.4, 3.5]{KruseSchwenninger2022} since we do not assume that $(X,\norm{\cdot},\topo)$ is a Saks space. 
Exponential $(\norm{\cdot},\topo)$-equi\-tightness is also called \emph{quasi-$(\norm{\cdot},\topo)$-equi\-tightness}. 
Equitight families $(S(t))_{t\in [0,t_{0}]}$ in $\calL(X)$ for $t_{0}\geq 0$ appeared in 
\cite[Definitions 1.2.20, 1.2.21]{Farkas2003} under the name \emph{local} first.
In the case of locally $\topo$-bi-continuous semigroups $(S(t))_{t\geq 0}$ the notion of tightness is used in 
\cite[Definition 1.1]{Es_SarhirFarkas2006}, meaning that $(S(t))_{t\in [0,t_{0}]}$ is equitight for all $t_{0}\geq 0$.  
Local equitightness plays an important role in perturbation results for locally bi-continu\-ous semigroups and 
we refer the reader for more information and related literature to \cite[p.~8--9]{KruseSchwenninger2022}, 
in particular \cite[Remark 3.8]{KruseSchwenninger2022}. Furthermore, for linear functionals on $X$ an estimate like the one 
in \prettyref{defi:equitight} \ref{it:equitight_1} also appears in \cite[3.6 Theorem]{dePauw2026}.
The following result is proved as \prettyref{prop:loc_bound_away_zero_equicont}.

\begin{proposition}\label{prop:loc_bound_away_zero_tight}
Let $(X,\norm{\cdot})$ be a normed space, $\topo\subseteq\topo_{\norm{\cdot}}$ a Hausdorff locally convex topology on $X$, 
$I$ a Hausdorff topological space, $\calF$ a set of functions from $I$ to $[0,\infty)$ that are locally bounded away from zero 
and $(S(t))_{t\in I}$ a family of linear maps from $X$ to $X$. 
If $(S(t))_{t\in I}$ is $\calF$-$(\norm{\cdot},\topo)$-equitight, then it is locally $(\norm{\cdot},\topo)$-equitight.
\end{proposition}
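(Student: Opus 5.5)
The plan is to mirror the proof of \prettyref{prop:loc_bound_away_zero_equicont}: divide by the weight on a compact set and absorb the resulting constant into the quantifiers of the equitightness definition. Assume $(S(t))_{t\in I}$ is $\calF$-$(\norm{\cdot},\topo)$-equitight, so there is $k\in\calF$ such that $(k(t)S(t))_{t\in I}$ is $(\norm{\cdot},\topo)$-equitight. Fix a compact set $M\subseteq I$. Since $k$ is locally bounded away from zero, we have $c\coloneq\inf_{s\in M}k(s)>0$. We must show that $(S(t))_{t\in M}$ is $(\norm{\cdot},\topo)$-equitight.

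Let $\varepsilon>0$ and $p\in\semis_{\topo}$. We apply the equitightness of $(k(t)S(t))_{t\in I}$ with the rescaled tolerance $c\varepsilon$ in place of $\varepsilon$ and the same $p$. This gives $q\in\semis_{\topo}$ and $K\geq 0$ such that
\[
p(k(t)S(t)x)\leq Kq(x)+c\varepsilon\norm{x}\quad(t\in I,\,x\in X).
\]
For $t\in M$ we have $k(t)\geq c>0$, so dividing by $k(t)$ yields
\[
p(S(t)x)=\frac{1}{k(t)}p(k(t)S(t)x)\leq \frac{K}{c}q(x)+\varepsilon\norm{x}\quad(t\in M,\,x\in X).
\]
This is exactly the required estimate, with constant $K/c$ and seminorm $q$.

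There is no real obstacle. The only point to watch is that $\varepsilon$ must be rescaled by $c$ before invoking the hypothesis, rather than afterwards, so that the final coefficient of $\norm{x}$ is exactly $\varepsilon$.
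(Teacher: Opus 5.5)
Your proposal is correct and follows the paper's approach: the paper says the result is proved like \prettyref{prop:loc_bound_away_zero_equicont}, i.e.\ by dividing by $\inf_{s\in M}k(s)>0$ on a compact $M$. You carry this out for the equitightness estimate, and you correctly rescale $\varepsilon$ by $c$ before invoking the hypothesis so that the coefficient of $\norm{x}$ comes out as exactly $\varepsilon$.
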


In particular, \prettyref{prop:loc_bound_away_zero_tight} yields that 
exponential (polynomial) $(\norm{\cdot},\topo)$-equi\-tightness implies local 
$(\norm{\cdot},\topo)$-equi\-tightness. We can now formulate one of the main theorems of this section.

\begin{theorem}\label{thm:bi_cont_loc_equicont_C_seq}
    Let $(X,\norm{\cdot},\topo)$ be a Saks space, $I$ a Hausdorff topological space 
    and $(S(t))_{t\in I}$ a family of linear maps from $X$ to $X$. 
    Consider the following assertions.
    \begin{enumerate}[label=\upshape(\alph*), leftmargin=*]
    \item\label{it:bi_loc_equicont_1}
        $(S(t))_{t\in I}$ is (locally) $\topo$-bi-continuous. 
    \item\label{it:bi_loc_equicont_2}
        $(S(t))_{t\in I}$ is $\gamma$-strongly continuous and (locally) $\gamma$-equicontinuous.
    \item\label{it:bi_loc_equicont_3} 
        $(S(t))_{t\in I}$ is $\gamma$-strongly continuous and (locally) $(\norm{\cdot},\topo)$-equitight. 
    \item\label{it:bi_loc_equicont_4} 
        $(S(t))_{t\in I}$ is $\gamma$-strongly continuous and in $\calL(X,\gamma)$.      
    \end{enumerate}
    Then we have the following.
    \begin{enumerate}[label=\upshape(\roman*), leftmargin=*, widest=iii]
    \item\label{it:bi_loc_equicont_rom1} If $(X,\gamma)$ is C-sequential and $I$ a $k_{\operatorname{lcs}}$-space, 
    then \ref{it:bi_loc_equicont_1}$\Leftrightarrow$\ref{it:bi_loc_equicont_2}. 
    \item\label{it:bi_loc_equicont_rom2} If $\gamma=\gamma_{\operatorname{s}}$, 
    then \ref{it:bi_loc_equicont_2}$\Rightarrow$\ref{it:bi_loc_equicont_3}. 
    If $(X,\norm{\cdot})$ is a Banach space and $(S(t))_{t\in I}$ a family in $\calL(X)$, then the local version of 
    \ref{it:bi_loc_equicont_3}$\Rightarrow$\ref{it:bi_loc_equicont_2} holds. 
    If $(S(t))_{t\in I}$ is a family in $\calL(X)$ and bounded, then the global version of 
    \ref{it:bi_loc_equicont_3}$\Rightarrow$\ref{it:bi_loc_equicont_2} holds. 
    \item\label{it:bi_loc_equicont_rom3} If $(X,\gamma)$ is a strong Mackey space, then the local version of 
    \ref{it:bi_loc_equicont_2}$\Leftrightarrow$\ref{it:bi_loc_equicont_4} holds. 
    \item\label{it:bi_loc_equicont_rom4} If $(X,\gamma)$ is a strong Mackey--Mazur space and 
    $I$ a $k_{\operatorname{lcs}}$-space, then the local version of 
    \ref{it:bi_loc_equicont_1}$\Leftrightarrow$\ref{it:bi_loc_equicont_2}$\Leftrightarrow$\ref{it:bi_loc_equicont_4} holds. 
    \end{enumerate}
\end{theorem}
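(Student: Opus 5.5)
The four parts are proved separately. In each I reduce to earlier results, using repeatedly that $\gamma$-bounded and $\norm{\cdot}$-bounded sets coincide (\prettyref{rem:Saks_bounded_sets} \ref{it:Saks_bounded_sets_2}) and that $\topo\subseteq\gamma\subseteq\topo_{\norm{\cdot}}$.

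For \ref{it:bi_loc_equicont_rom1}, \prettyref{prop:bi_cont_mixed_seq_cont} applies because $I$ is a $k_{\operatorname{lcs}}$-space. It shows that \ref{it:bi_loc_equicont_1} is equivalent to: $\gamma$-strong continuity, sequential (local) $\gamma$-equicontinuity and local boundedness. Since $(X,\gamma)$ is C-sequential, \prettyref{thm:seq_equicont_equicont_equiv} upgrades sequential (local) $\gamma$-equicontinuity to (local) $\gamma$-equicontinuity, and conversely. It remains to see that local boundedness is automatic under \ref{it:bi_loc_equicont_2}. For this I apply \prettyref{lem:equi_cont_tight_norm_bound} with $\gamma$ in place of $\topo$ to $(S(t))_{t\in M}$ for each compact $M$. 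Its hypotheses hold since $\gamma\subseteq\topo_{\norm{\cdot}}$ and $\gamma$-bounded sets are norm-bounded. This also gives $S(t)\in\calL(X)$.

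For \ref{it:bi_loc_equicont_rom2}, assume $\gamma=\gamma_{\operatorname{s}}$ and $\gamma$-equicontinuity on a set $M$ (either $M=I$ or $M$ compact). Then for each $p\in\semis_{\topo}$ there are $K\geq0$ and $(p_n,a_n)_{n\in\N}\in\mathcal{R}$ with $\sup_{t\in M}p(S(t)x)\leq K\sup_n a_np_n(x)$. Fix $\varepsilon>0$ and choose $N$ with $Ka_n\leq\varepsilon$ for $n>N$. Since $p_n\leq\norm{\cdot}$ by \eqref{eq:saks}, the tail is bounded by $\varepsilon\norm{x}$. For the head, directedness gives $q\in\semis_{\topo}$ dominating $p_1,\dots,p_N$ up to a constant, so the head is at most $K'q(x)$. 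This is exactly equitightness.

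For the converse, take $M=I$ with $(S(t))$ bounded, or $M$ compact. In the compact case, local boundedness first follows from \prettyref{lem:loc_bound_from_strong_cont}, which needs $X$ Banach, since $\gamma$-strong continuity and $\gamma$-bounded sets being norm-bounded give the hypotheses. Then set $r(x)\coloneq\sup_{t\in M}p(S(t)x)$. This is a seminorm with $r\leq C\norm{\cdot}$. On the unit ball equitightness gives $r(x-y)\leq Kq(x-y)+2\varepsilon$, so $r$ is $\topo$-uniformly continuous on $B_{\norm{\cdot}}$. The main point is to conclude that $r$ is $\gamma$-continuous. Here I would invoke the standard characterisation of the mixed topology (Cooper/Wiweger): a seminorm is $\gamma$-continuous iff its restriction to $B_{\norm{\cdot}}$ is $\topo$-continuous. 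This yields (local) $\gamma$-equicontinuity.

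For \ref{it:bi_loc_equicont_rom3}, the direction \ref{it:bi_loc_equicont_2}$\Rightarrow$\ref{it:bi_loc_equicont_4} is trivial, since local equicontinuity on singletons gives $S(t)\in\calL(X,\gamma)$. For the other direction, set $Y\coloneq(X,\gamma)'$. By \prettyref{rem:conditions_str_M_str_cont_loc_equicont} \ref{it:weak_Mackey_cont}, $S(t)\in\calL(X,\sigma(X,Y))$. A strong Mackey space has $\gamma=\mu(X,Y)$, and $(Y,\sigma(Y,X))$ has ccp by \ref{it:strong_Mackey} and \ref{it:weak_ccp}. Hence \prettyref{prop:str_M_str_cont_loc_equicont} gives local $\gamma$-equicontinuity.

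For \ref{it:bi_loc_equicont_rom4}, a Mackey--Mazur space is C-sequential by \prettyref{rem:count_conv} (via \cite[Corollary 7.6]{Wilansky1981}). Combining \ref{it:bi_loc_equicont_rom1} and \ref{it:bi_loc_equicont_rom3} then gives the claimed equivalences.
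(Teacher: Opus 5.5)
Parts (i), (iii) and (iv) follow the paper's reductions. Your direct proof of (b)$\Rightarrow$(c) in (ii) is a correct replacement for the paper's citation of Kruse--Schwenninger, Proposition 3.16 (a)$\Rightarrow$(c). It splits $\sup_n a_np_n$ into a head dominated by one $q\in\semis_{\topo}$ and a tail bounded by $\varepsilon\norm{\cdot}$.

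\textbf{The gap is in (c)$\Rightarrow$(b).} Equitightness only controls $\topo$-seminorms, so your $r(x)=\sup_{t\in M}p(S(t)x)$ is formed only for $p\in\semis_{\topo}$. Showing that each such $r$ is $\gamma$-continuous proves equicontinuity of $(S(t))_{t\in M}$ as maps $(X,\gamma)\to(X,\topo)$. But $\gamma$-equicontinuity means $(X,\gamma)\to(X,\gamma)$: you need $\sup_{t\in M}P(S(t)\cdot)$ to be $\gamma$-continuous for every $\gamma$-continuous seminorm $P$, and such $P$ are in general not dominated by $\topo$-seminorms.

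A telltale sign is that your argument never really uses the norm bound $C\coloneq\sup_{t\in M}\norm{S(t)}_{\calL(X)}$; it appears only cosmetically in $r\le C\norm{\cdot}$. Yet the bound cannot be dropped. On $\mathrm{C}_{\operatorname{b}}(\R)$ with the compact-open topology, take $S(n)f\coloneq\varphi_nf$ with $\varphi_n(x)\coloneq\min\{n,\max\{0,|x|-n\}\}$. This family is $\topo$-equicontinuous over $n\in\N$, hence equitight and $\gamma$-to-$\topo$ equicontinuous. However $\norm{S(n)}_{\calL(X)}=n$, so it is not $\gamma$-equicontinuous by \prettyref{lem:equi_cont_tight_norm_bound}.

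\textbf{The fix, which is the step the paper imports as (f)$\Rightarrow$(g) of that Proposition 3.16, is as follows.}
Let $P$ be $\gamma$-continuous and $\varepsilon>0$.
\begin{enumerate}
\item Since $\gamma$ and $\topo$ agree on $2CB_{\norm{\cdot}}$, there are $p\in\semis_{\topo}$ and $\delta>0$ with $P(z)<\varepsilon$ whenever $z\in 2CB_{\norm{\cdot}}$ and $p(z)<\delta$.
\item Equitightness for this $p$ and $\delta/4$ gives $q\in\semis_{\topo}$ and $K>0$ with $p(S(t)(x-y))\le Kq(x-y)+\delta/2$ for all $x,y\in B_{\norm{\cdot}}$ and $t\in M$.
\item Hence $q(x-y)<\delta/(2K)$ forces $p(S(t)(x-y))<\delta$. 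Since $S(t)(x-y)\in 2CB_{\norm{\cdot}}$, this gives $P(S(t)(x-y))<\varepsilon$ uniformly in $t\in M$.
\end{enumerate}
So $\sup_{t\in M}P(S(t)\cdot)$ is uniformly $\topo$-continuous on $B_{\norm{\cdot}}$, and the Cooper/Wiweger characterisation you invoke then makes it $\gamma$-continuous. This is exactly where (local) boundedness, and in the local case the Banach assumption via \prettyref{lem:loc_bound_from_strong_cont}, enters the proof.
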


\begin{proof}
    ``\ref{it:bi_loc_equicont_1}$\Rightarrow$\ref{it:bi_loc_equicont_2}'' 
    Let $(X,\gamma)$ be C-sequential, $I$ a $k_{\operatorname{lcs}}$-space and $(S(t))_{t\in I}$ (locally) $\topo$-bi-continuous. 
    Due to \prettyref{prop:bi_cont_mixed_seq_cont}, this implies that $(S(t))_{t\in I}$ is $\gamma$-strongly continuous 
    and (locally) sequentially $\gamma$-equicontinuous. 
    We deduce that $(S(t))_{t\in I}$ is (locally) $\gamma$-equicontinuous 
    by \prettyref{prop:bi_equicont_mixed_equicont} \ref{it:bi_equicont_mixed_3}.
    
    ``\ref{it:bi_loc_equicont_2}$\Rightarrow$\ref{it:bi_loc_equicont_1}'' We remark that a subset of $X$ is 
    $\norm{\cdot}$-bounded if and only if it is $\gamma$-bounded by \prettyref{rem:Saks_bounded_sets} \ref{it:Saks_bounded_sets_2}. 
    Therefore, the (local) $\gamma$-equicontinuity of $(S(t))_{t\in I}$ implies that $(S(t))_{t\in I}$ is a family in 
    $\calL(X)$ and (locally) bounded by \prettyref{lem:equi_cont_tight_norm_bound}. 
    This proves the implication by \prettyref{prop:bi_cont_mixed_seq_cont}.
  
    If $\gamma=\gamma_{\operatorname{s}}$, then ``\ref{it:bi_loc_equicont_2}$\Rightarrow$\ref{it:bi_loc_equicont_3}'' 
    follows from \cite[Proposition 3.16, (a)$\Rightarrow$(c)]{KruseSchwenninger2022}. 
    If $(X,\norm{\cdot})$ is a Banach space and $(S(t))_{t\in I}$ a $\gamma$-strongly continuous family in $\calL(X)$, 
    then it is locally bounded by \prettyref{lem:loc_bound_from_strong_cont}. So, the local and the global versions of 
    ``\ref{it:bi_loc_equicont_3}$\Rightarrow$\ref{it:bi_loc_equicont_2}'' 
    follow from \cite[Proposition 3.16, (c)$\Rightarrow$(f)$\Rightarrow$(g)]{KruseSchwenninger2022}.
    
    ``\ref{it:bi_loc_equicont_2}$\Leftrightarrow$\ref{it:bi_loc_equicont_4} (locally)'' 
    We only need to prove the local version of ``\ref{it:bi_loc_equicont_4}$\Rightarrow$\ref{it:bi_loc_equicont_2}'' 
    which follows from \prettyref{prop:str_M_str_cont_loc_equicont} and 
    \prettyref{rem:conditions_str_M_str_cont_loc_equicont} \ref{it:weak_Mackey_cont} and \ref{it:strong_Mackey} 
    if $(X,\gamma)$ is a strong Mackey space. 
    
     ``\ref{it:bi_loc_equicont_1}$\Leftrightarrow$\ref{it:bi_loc_equicont_4} (locally)'' 
     This follows from parts \ref{it:bi_loc_equicont_rom1} and \ref{it:bi_loc_equicont_rom3}, and 
     \prettyref{rem:count_conv} if $(X,\gamma)$ is a strong Mackey--Mazur space. 
\end{proof}

\begin{theorem}\label{thm:bi_cont_quasi_equicont_C_seq}
    Let $(X,\norm{\cdot},\topo)$ be a Saks space, $I$ a Hausdorff topological space, 
    $\calF$ a set functions from $I$ to $[0,\infty)$ and $(S(t))_{t\in I}$ a family of linear maps from $X$ to $X$. 
    Consider the following assertions.
    \begin{enumerate}[label=\upshape(\alph*), leftmargin=*]
    \item\label{it:bi_quasi_equicont_1}
        $(S(t))_{t\in I}$ is locally $\topo$-bi-continuous and $\calF$-bounded. 
    \item\label{it:bi_quasi_equicont_2}
        $(S(t))_{t\in I}$ is $\gamma$-strongly continuous and $\calF$-$\gamma$-equicontinuous.
    \item\label{it:bi_quasi_equicont_3} 
        $(S(t))_{t\in I}$ is $\gamma$-strongly continuous and $\calF$-$(\norm{\cdot},\topo)$-equitight.   
    \item\label{it:bi_quasi_equicont_4} 
        $(S(t))_{t\in I}$ is $\gamma$-strongly continuous, in $\calL(X,\gamma)$ and $\calF$-bounded.
    \end{enumerate}
    Then we have the following.
    \begin{enumerate}[label=\upshape(\roman*), leftmargin=*, widest=iii]
    \item\label{it:bi_quasi_equicont_rom1} If $(X,\gamma)$ is C-sequential, $I$ a $k_{\operatorname{lcs}}$-space and 
    $\calF$ consists of locally bounded functions and has rV$_{\infty}$, 
    then \ref{it:bi_loc_equicont_1}$\Rightarrow$\ref{it:bi_loc_equicont_2}. 
    If all elements of $\calF$ are locally bounded away from zero, 
    then \ref{it:bi_loc_equicont_2}$\Rightarrow$\ref{it:bi_loc_equicont_1}.
    \item\label{it:bi_quasi_equicont_rom2} If $\gamma=\gamma_{\operatorname{s}}$, 
    then \ref{it:bi_loc_equicont_2}$\Rightarrow$\ref{it:bi_loc_equicont_3}. 
    If $(S(t))_{t\in I}$ is a family in $\calL(X)$ and $\calF$-bounded, 
    then \ref{it:bi_loc_equicont_3}$\Rightarrow$\ref{it:bi_loc_equicont_2}.
    \item\label{it:bi_quasi_equicont_rom3} If $(X,\gamma)$ is a strong Mackey space, $I$ is locally compact, 
    $\calF$ consists of strictly positive continuous functions and has rV$_{\infty}$, 
    then \ref{it:bi_loc_equicont_2}$\Leftrightarrow$\ref{it:bi_loc_equicont_4}.
    \item\label{it:bi_quasi_equicont_rom4} If $(X,\gamma)$ is a strong Mackey--Mazur space, $I$ is locally compact, 
    $\calF$ consists of strictly positive continuous functions and has rV$_{\infty}$, 
    then \ref{it:bi_loc_equicont_1}$\Leftrightarrow$\ref{it:bi_loc_equicont_2}$\Leftrightarrow$\ref{it:bi_loc_equicont_4}.
    \end{enumerate}
\end{theorem}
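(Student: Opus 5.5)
The plan is to mirror the proof of \prettyref{thm:bi_cont_loc_equicont_C_seq}, replacing the local results by their $\calF$-versions: \prettyref{prop:loc_glob_bi_equicont}, \prettyref{prop:equicont_from_loc_eqicont} and \prettyref{thm:equicont_from_loc_eqicont_norm}.

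For \ref{it:bi_quasi_equicont_rom1}, ``(a)$\Rightarrow$(b)'': let $(S(t))_{t\in I}$ be locally $\topo$-bi-continuous and $\calF$-bounded. By \prettyref{prop:bi_cont_mixed_seq_cont} (using that $I$ is a $k_{\operatorname{lcs}}$-space), $(S(t))_{t\in I}$ is $\gamma$-strongly continuous. Since $\calF$ consists of locally bounded functions and has rV$_{\infty}$, \prettyref{prop:loc_glob_bi_equicont} yields $k\in\calF$ such that $(k(t)S(t))_{t\in I}$ is bi-equicontinuous. By \prettyref{prop:bi_equicont_mixed_equicont} \ref{it:bi_equicont_mixed_4} (C-sequentiality), this family is $\gamma$-equicontinuous, so $(S(t))_{t\in I}$ is $\calF$-$\gamma$-equicontinuous.

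For the converse ``(b)$\Rightarrow$(a)'': \prettyref{prop:loc_bound_away_zero_equicont} gives local $\gamma$-equicontinuity. Then \ref{it:bi_loc_equicont_2}$\Rightarrow$\ref{it:bi_loc_equicont_1} of \prettyref{thm:bi_cont_loc_equicont_C_seq} shows that $(S(t))_{t\in I}$ is a family in $\calL(X)$ that is locally $\topo$-bi-continuous; this direction only uses \prettyref{lem:equi_cont_tight_norm_bound} and \prettyref{prop:bi_cont_mixed_seq_cont} and needs neither C-sequentiality nor the $k_{\operatorname{lcs}}$ property. Finally, \prettyref{lem:equi_cont_tight_norm_bound} applied to $(k(t)S(t))_{t\in I}$, together with \prettyref{rem:Saks_bounded_sets} \ref{it:Saks_bounded_sets_2}, gives $\calF$-boundedness. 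One point to check is that $k(t)S(t)\in\calL(X)$ with a uniform bound implies $S(t)\in\calL(X)$ when $k(t)=0$; this is covered since local equicontinuity already gives $S(t)\in\calL(X)$.

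For \ref{it:bi_quasi_equicont_rom2}, apply \cite[Proposition 3.16]{KruseSchwenninger2022} to the family $(k(t)S(t))_{t\in I}$. If $\gamma=\gamma_{\operatorname{s}}$, then $\gamma$-equicontinuity of $(k(t)S(t))$ implies $(\norm{\cdot},\topo)$-equitightness. For the converse, choose $k$ witnessing equitightness. The main obstacle is that $\calF$-boundedness may be witnessed by a different $k_1\in\calF$. I would try to handle this by noting that equitightness of $(k(t)S(t))$ together with $p\le C\norm{\cdot}$ should already give norm boundedness of $(k(t)S(t))$. Indeed, taking $\varepsilon=1$ yields $p(k(t)S(t)x)\le Kq(x)+\norm{x}\le (KC_q+1)\norm{x}$, but this bound is not uniform in $p$. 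So I would instead rely on the paper's intended reading, namely that the same $k$ is used, and invoke (c)$\Rightarrow$(f)$\Rightarrow$(g) of \cite[Proposition 3.16]{KruseSchwenninger2022} for the bounded family $(k(t)S(t))$. Strong continuity is unaffected throughout.

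For \ref{it:bi_quasi_equicont_rom3}, ``(b)$\Rightarrow$(d)'': each $S(t)=k(t)^{-1}k(t)S(t)$ is $\gamma$-continuous since $k>0$, and $\calF$-boundedness follows from \prettyref{lem:equi_cont_tight_norm_bound}. For ``(d)$\Rightarrow$(b)'': apply \prettyref{thm:equicont_from_loc_eqicont_norm} with $\topo$ replaced by $\gamma$. Here $\gamma\subseteq\topo_{\norm{\cdot}}$, $(X,\gamma)$ is strong Mackey, and $S(t)\in\calL(X,\gamma)\subseteq\calL(X,\sigma(X,(X,\gamma)'))$ by \prettyref{rem:conditions_str_M_str_cont_loc_equicont} \ref{it:weak_Mackey_cont}.

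For \ref{it:bi_quasi_equicont_rom4}, combine \ref{it:bi_quasi_equicont_rom1} and \ref{it:bi_quasi_equicont_rom3}. A strong Mackey--Mazur space is C-sequential by \prettyref{rem:count_conv}, a locally compact $I$ is a $k_{\operatorname{lcs}}$-space, and strictly positive continuous functions are locally bounded and locally bounded away from zero.
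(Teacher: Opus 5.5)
Your proposal is correct and follows the paper's proof essentially step for step:
\begin{itemize}
\item (i) uses \prettyref{prop:loc_glob_bi_equicont}, \prettyref{prop:bi_equicont_mixed_equicont} and \prettyref{prop:bi_cont_mixed_seq_cont}, and in the converse direction \prettyref{prop:loc_bound_away_zero_equicont} and Lemma~\ref{lem:equi_cont_tight_norm_bound};
\item (ii) uses \cite[Proposition 3.16]{KruseSchwenninger2022};
\item (iii) uses \prettyref{thm:equicont_from_loc_eqicont_norm} with $\gamma$ in place of $\topo$;
\item (iv) combines (i) and (iii).
\end{itemize}
The issue you raise in the second claim of (ii) is real, and the paper's proof does not address it either. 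It tacitly applies \cite[Proposition 3.16]{KruseSchwenninger2022} to $(k(t)S(t))_{t\in I}$ with one $k$ witnessing both equitightness and boundedness.

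Your doubt that equitightness yields norm boundedness is justified. Take $X=\ell^\infty$ with $\topo$ the topology of pointwise convergence, generated by the seminorms $p_j(x)=\max_{i\leq j}|x_i|$. Define $T_nx=nx_ne_n$.
\begin{itemize}
\item Since $p_j(T_nx)\leq j\,p_j(x)$, the family $(T_n)_{n\in\N}$ is even $\topo$-equicontinuous, hence equitight.
\item However, $\norm{T_n}_{\calL(X)}=n$.
\item It is not sequentially $\gamma$-equicontinuous: $e_m\to 0$ w.r.t.~$\gamma$, while $T_me_m=me_m$ is not norm bounded and hence not $\gamma$-null.
\end{itemize}
Now let $I=\N$ be discrete and set $S(2n)=T_n$ and $S(2n-1)x=x_ne_1$. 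Let $\calF=\{k,k_1\}$ with $k(2n)=1$, $k(2n-1)=1/n$, $k_1(2n)=1/n$ and $k_1(2n-1)=1$. Then:
\begin{itemize}
\item $(S(t))_{t\in I}$ lies in $\calL(X)$ and is trivially $\gamma$-strongly continuous;
\item it is $\calF$-bounded via $k_1$ and $\calF$-equitight via $k$;
\item $(k(t)S(t))_{t\in I}$ is not $\gamma$-equicontinuous, because of the even indices;
\item $(k_1(t)S(t))_{t\in I}$ is not $\gamma$-equicontinuous either, since on the odd indices $S(2m-1)e_m=e_1$.
\end{itemize}
So the literal second claim of (ii) is false. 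It needs your same-$k$ reading, or a compatibility condition on $\calF$ such as: for all $k,k_1\in\calF$ there are $k_2\in\calF$ and $c>0$ with $k_2\leq c\min\{k,k_1\}$. This condition holds for $\calF_{\exp}$ and $\calF_{\operatorname{pol}}$, the cases used later in the paper. Under it, $(k_2(t)S(t))_{t\in I}$ is simultaneously equitight and bounded, so \cite[Proposition 3.16]{KruseSchwenninger2022} applies.

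The same $(T_n)_{n\in\N}$ is bi-equicontinuous without being sequentially $\gamma$-equicontinuous. So your appeal to \prettyref{prop:bi_equicont_mixed_equicont} in (i) works only because $(k(t)S(t))_{t\in I}$ is norm bounded there. This boundedness comes from local boundedness of $k$ and $(S(t))_{t\in I}$ on the compact set given by rV$_{\infty}$, together with $k\leq k_1$ off that set. It is exactly what upgrades uniform $\topo$-convergence to uniform $\gamma$-convergence.
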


\begin{proof}
    ``\ref{it:bi_quasi_equicont_1}$\Rightarrow$\ref{it:bi_quasi_equicont_2}'' 
    Let $(X,\gamma)$ be C-sequential, $I$ a $k_{\operatorname{lcs}}$-space, $\calF$ consist of locally bounded functions 
    and have rV$_{\infty}$, and 
    $(S(t))_{t\geq 0}$ locally $\topo$-bi-continuous and $\calF$-bounded. 
    By \prettyref{prop:loc_glob_bi_equicont}, there is $k\in \calF$ such that the set 
    $(k(t)S(t))_{t\geq 0}$ is $\topo$-bi-continuous on $X$. 
    Due to \prettyref{prop:bi_equicont_mixed_equicont} \ref{it:bi_equicont_mixed_4} and \prettyref{prop:bi_cont_mixed_seq_cont},
    this implies that $(k(t)S(t))_{t\in I}$ is $\gamma$-strongly continuous and $\calF$-$\gamma$-equicontinuous.
    
    ``\ref{it:bi_quasi_equicont_2}$\Rightarrow$\ref{it:bi_quasi_equicont_1}'' 
    Let all elements of $\calF$ be locally bounded away from zero and $(S(t))_{t\in I}$ $\gamma$-strongly continuous 
    and $\calF$-$\gamma$-equicontinuous. Again, we remark that a subset of $X$ is 
    $\norm{\cdot}$-bounded if and only if it is $\gamma$-bounded by \prettyref{rem:Saks_bounded_sets} \ref{it:Saks_bounded_sets_2}. 
    Therefore, the $\calF$-$\gamma$-equicontinuity of $(S(t))_{t\in I}$ implies that there is $k\in\calF$ such that 
    $(k(t)S(t))_{t\geq 0}$ is a family in 
    $\calL(X)$ and bounded by \prettyref{lem:equi_cont_tight_norm_bound}. 
    This proves the implication by \prettyref{prop:loc_bound_away_zero_equicont} and \prettyref{prop:bi_cont_mixed_seq_cont}.
       
    If $\gamma=\gamma_{\operatorname{s}}$, then ``\ref{it:bi_quasi_equicont_2}$\Rightarrow$\ref{it:bi_quasi_equicont_3}'' 
    follows from \cite[Proposition 3.16, (a)$\Rightarrow$(c)]{KruseSchwenninger2022}. 
    If $(S(t))_{t\in I}$ is a family in $\calL(X)$ and $\calF$-bounded, then 
    ``\ref{it:bi_quasi_equicont_3}$\Rightarrow$\ref{it:bi_quasi_equicont_2}'' follows from 
    \cite[Proposition 3.16, (c)$\Rightarrow$(f)$\Rightarrow$(g)]{KruseSchwenninger2022}.
    
    ``\ref{it:bi_quasi_equicont_2}$\Leftrightarrow$\ref{it:bi_quasi_equicont_4}'' First, let us consider
    ``\ref{it:bi_quasi_equicont_2}$\Rightarrow$\ref{it:bi_quasi_equicont_4}''. 
    If $(S(t))_{t\in I}$ is $\gamma$-strongly continuous and $\calF$-$\gamma$-equi\-con\-tin\-u\-ous, then it follows as above that 
    there is $k\in\calF$ such that $(k(t)S(t))_{t\geq 0}$ is a family in $\calL(X)$ and bounded. 
    By assumption, $k(t)>0$ for all $t\in I$, 
    and so $S(t)\in\calL(X,\gamma)$ for all $t\in I$ by the $\calF$-$\gamma$-equicontinuity. 
    Next, let us turn to ``\ref{it:bi_quasi_equicont_4}$\Rightarrow$\ref{it:bi_quasi_equicont_2}''. 
    Let $(S(t))_{t\in I}$ be a $\gamma$-strongly continuous family in $\calL(X,\gamma)$, so in particular in $\calL(X)$ 
    by \prettyref{rem:gamma_seq_cont_norm_cont}, and $\calF$-bounded. The implication follows from 
    \prettyref{thm:equicont_from_loc_eqicont_norm} in combination with 
    \prettyref{rem:conditions_str_M_str_cont_loc_equicont} \ref{it:weak_Mackey_cont} and \ref{it:strong_Mackey}. 
    
    ``\ref{it:bi_quasi_equicont_1}$\Leftrightarrow$\ref{it:bi_quasi_equicont_4}'' 
    This follows from parts \ref{it:bi_quasi_equicont_rom1} and \ref{it:bi_quasi_equicont_rom3}, the observation 
    that strong Mackey--Mazur spaces are C-sequential strong Mackey spaces by \prettyref{rem:count_conv}, 
    and the facts that the Hausdorff locally compact space $I$ is a $k_{\operatorname{lcs}}$-space 
    and that strictly positive continuous functions are locally bounded and locally bounded away from zero.
\end{proof}

Finally, we turn our attention to semigroups and cosine families. Let $(X,\topo)$ be a Hausdorff locally convex space.
We say that $(A,D(A))$ is a linear operator on $X$ if $D(A)$ is a linear subspace of $X$ and $A\colon D(A)\to X$ 
a linear operator. For a linear operator $(A,D(A))$ on $X$ and $\lambda\in\C$ we set $\lambda-A\coloneq \lambda\id-A$ and write 
\[
\rho_{\topo}(A)\coloneq\{\lambda\in\C\;|\;\lambda -A\text{ is bijective and }(\lambda-A)^{-1}\in\calL(X,\topo)\}
\]
for the \emph{resolvent set} of $A$ and $R(\lambda,A)\coloneqq (\lambda-A)^{-1}$ for the \emph{resolvent} of $A$ for $\lambda\in\rho_{\topo}(A)$, see \cite[p.~258]{AlbaneseBonetRicker2013}. If $\topo=\topo_{\norm{\cdot}}$ for a 
normed space $(X,\norm{\cdot})$, we just write $\rho_{\norm{\cdot}}(A)\coloneqq\rho_{\topo_{\norm{\cdot}}}(A)$. 

The \emph{generator} $A$ of a $\topo$-strongly continuous semigroup $(S(t))_{t\geq 0}$ on $X$ is the linear operator defined by 
\begin{align*}
  D(A)& \coloneq \bigl\{x\in X \;|\; \lim_{t\to 0\rlim} \tfrac{1}{t}(S(t)x-x)\text{ exists} \bigr\},\\
  Ax & \coloneq \lim_{t\to 0\rlim} \tfrac{1}{t}(S(t)x-x) \quad(x\in D(A)).
\end{align*}
The \emph{generator} $A$ of a $\topo$-strongly continuous cosine family $(S(t))_{t\geq 0}$ on $X$ is the linear operator 
defined by 
\begin{align*}
  D(A)& \coloneq \bigl\{x\in X \;|\; \lim_{t\to 0\rlim} \tfrac{2}{t^2} (S(t)x-x)\text{ exists} \bigr\},\\
  Ax & \coloneq \lim_{t\to 0\rlim} \tfrac{2}{t^2} (S(t)x-x) \quad(x\in D(A)).
\end{align*}
In the case that $(S(t))_{t\geq 0}$ is additionally assumed to be in $\calL(X,\topo)$, our definition of the generator 
coincides with the ones given in \cite[p.~260]{Komura1968}, \cite[p.~89]{Fattorini1969a} and \cite[p.~446]{Konishi1972}. 

Now, we relate the concepts introduced so far to the so-called integrable semigroups (and cosine families) from \cite{Kunze2009}, 
which are similar to weakly integrable semigroups studied in \cite{Jeffries1986,Jeffries1987} and 
should not be confused with integrated semigroups (and cosine families). 
Let $(X,\norm{\cdot}_{X})$ and $(Y,\norm{\cdot}_{Y})$ be Banach spaces which form a dual pair $\langle X,Y\rangle$. 
By \cite[Definition 1.1]{Kunze2009}, $\langle X, Y\rangle$ is called a \emph{norming dual pair} if 
\[
\norm{x}_{X}=\sup\{|\langle x,y\rangle|\;|\;y\in Y,\,\norm{y}_{Y}\leq 1\} \quad (x\in X)
\]
and 
\[
\norm{y}_{Y}=\sup\{|\langle x,y\rangle|\;|\;x\in X,\,\norm{x}_{X}\leq 1\} \quad (y\in Y).
\]
Let $\langle X, Y\rangle$ be a norming dual pair. 
By \cite[Definition 2.1]{Kunze2009}, a family $(S(t))_{t\geq 0}$ in $\calL(X,\sigma(X,Y))$ is called 
an \emph{integrable semigroup} on $\langle X, Y\rangle$ if $(S(t))_{t\geq 0}$ is an exponentially bounded semigroup 
and there is some $\omega\in\R$ such that for all $\lambda\in\C$ with $\TextRe(\lambda)>\omega$, there exists a map 
$R(\lambda)\in\calL(X,\sigma(X,Y))$ such that
\[
\langle R(\lambda)x,y\rangle=\int_{0}^{\infty}\euler^{-\lambda t}\langle S(t)x,y\rangle\d t\quad (x\in X,\,y\in Y).
\]
In particular, it is assumed that all the integrals on the right-hand side exist.
In the same style we define integrable cosine families on norming dual pairs. 
We call a family $(S(t))_{t\geq 0}$ in $\calL(X,\sigma(X,Y))$ an \emph{integrable cosine family} 
on $\langle X, Y\rangle$ if $(S(t))_{t\geq 0}$ is an exponentially bounded cosine family
and there is some $\omega\geq 0$ such that for all $\lambda\in\C$ with $\TextRe(\lambda)>\omega$, there exists a map 
$R(\lambda^2)\in\calL(X,\sigma(X,Y))$ such that
\[
\langle \lambda R(\lambda^2)x,y\rangle=\int_{0}^{\infty}\euler^{-\lambda t}\langle S(t)x,y\rangle\d t\quad (x\in X,\,y\in Y).
\]
In particular, it is again assumed that all the integrals on the right-hand side exist. 

Let $(X,\norm{\cdot},\topo)$ be a Saks space such that $(X,\norm{\cdot})$ is a Banach space. 
By \cite[I.1.18 Proposition]{Cooper1978}, $(X,\gamma)'$ is a closed subspace 
of $X_{\operatorname{n}}'\coloneq(X,\norm{\cdot})'$ w.r.t.~to the dual norm $\norm{\cdot}_{X_{\operatorname{n}}'}$. 
Thus, $\langle X,(X,\gamma)'\rangle$ is a dual pair of Banach spaces 
in a canonical way and $(X,\norm{\cdot})$ is norming for $(X,\gamma)'$. 
On the other hand, $((X,\gamma)',\norm{\cdot}_{X_{\operatorname{n}}'})$ is norming for $(X,\norm{\cdot})$ by 
\cite[Lemma 5.5 (a)]{KruseMeichsnerSeifert2021}. Hence, $\langle X,(X,\gamma)'\rangle$ is a norming dual pair.

\begin{corollary}\label{cor:bi_cont_quasi_equicont_C_seq_sg_cosine} 
    Let $(X,\norm{\cdot},\topo)$ be a Saks space and $(S(t))_{t\geq 0}$ a semigroup or a cosine family on $X$. 
    Consider the following assertions.
    \begin{enumerate}[label=\upshape(\alph*), leftmargin=*]
    \item\label{it:bi_cont_sg_cosine_1}
        $(S(t))_{t\geq 0}$ is locally $\topo$-bi-continuous. 
    \item\label{it:bi_cont_sg_cosine_2}
        $(S(t))_{t\geq 0}$ is locally $\topo$-bi-continuous and exponentially bounded.     
    \item\label{it:bi_cont_sg_cosine_3}
        $(S(t))_{t\geq 0}$ is $\gamma$-strongly continuous and locally $\gamma$-equicontinuous.
    \item\label{it:bi_cont_sg_cosine_4}
        $(S(t))_{t\geq 0}$ is $\gamma$-strongly continuous and exponentially $\gamma$-equicontinuous.   
    \item\label{it:bi_cont_sg_cosine_5} 
        $(S(t))_{t\geq 0}$ is $\gamma$-strongly continuous, locally $(\norm{\cdot},\topo)$-equitight and in $\calL(X)$.
    \item\label{it:bi_cont_sg_cosine_6}
        $(S(t))_{t\geq 0}$ is $\gamma$-strongly continuous, exponentially $(\norm{\cdot},\topo)$-equitight and in $\calL(X)$. 
    \item\label{it:bi_cont_sg_cosine_7} 
        $(S(t))_{t\geq 0}$ is $\gamma$-strongly continuous and in $\calL(X,\gamma)$.   
    \item\label{it:bi_cont_sg_cosine_8}  
    $(S(t))_{t\geq 0}$ is $\gamma$-strongly continuous and integrable on $\langle X,(X,\gamma)'\rangle$.  
    \end{enumerate}
    Then we have the following.
    \begin{enumerate}[label=\upshape(\roman*), leftmargin=*, widest=iii]
    \item\label{it:bi_cont_sg_cosine_rom1} If $(X,\norm{\cdot})$ is a Banach space and $(X,\gamma)$ C-sequential, then \ref{it:bi_cont_sg_cosine_1}$\Leftrightarrow$\ref{it:bi_cont_sg_cosine_2}$\Leftrightarrow$\ref{it:bi_cont_sg_cosine_3}$\Leftrightarrow$\ref{it:bi_cont_sg_cosine_4}.
    \item\label{it:bi_cont_sg_cosine_rom2} If $\calN_{\gamma}$ is countably quasi-convex, then 
    \ref{it:bi_cont_sg_cosine_3}$\Leftrightarrow$\ref{it:bi_cont_sg_cosine_4}. 
    In particular, this holds if $(X,\gamma)$ is a Mackey space.
    \item\label{it:bi_cont_sg_cosine_rom3} If $\gamma=\gamma_{\operatorname{s}}$, then \ref{it:bi_cont_sg_cosine_3}$\Rightarrow$\ref{it:bi_cont_sg_cosine_5} and 
    \ref{it:bi_cont_sg_cosine_4}$\Rightarrow$\ref{it:bi_cont_sg_cosine_6}.
    If $(X,\norm{\cdot})$ is a Banach space, then \ref{it:bi_cont_sg_cosine_5}$\Rightarrow$\ref{it:bi_cont_sg_cosine_3} and 
    \ref{it:bi_cont_sg_cosine_6}$\Rightarrow$\ref{it:bi_cont_sg_cosine_4}. 
    \item\label{it:bi_cont_sg_cosine_rom4} If $(X,\gamma)$ is sequentially complete, then \ref{it:bi_cont_sg_cosine_4}$\Rightarrow$\ref{it:bi_cont_sg_cosine_8}. 
    \item\label{it:bi_cont_sg_cosine_rom5} If $(X,\norm{\cdot})$ is a Banach space and $(X,\gamma)$ a Mackey space, then 
    \ref{it:bi_cont_sg_cosine_8}$\Rightarrow$\ref{it:bi_cont_sg_cosine_7}.
    \item\label{it:bi_cont_sg_cosine_rom6} If $(X,\gamma)$ is a strong Mackey space, then \ref{it:bi_cont_sg_cosine_3}$\Leftrightarrow$\ref{it:bi_cont_sg_cosine_4}$\Leftrightarrow$\ref{it:bi_cont_sg_cosine_7}. If $(X,\gamma)$ is in addition sequentially complete, 
    then \ref{it:bi_cont_sg_cosine_3}$\Leftrightarrow$\ref{it:bi_cont_sg_cosine_4}$\Leftrightarrow$\ref{it:bi_cont_sg_cosine_7}$\Leftrightarrow$\ref{it:bi_cont_sg_cosine_8}.
    \item\label{it:bi_cont_sg_cosine_rom7} If $(X,\norm{\cdot})$ is a Banach space and $(X,\gamma)$ a strong Mackey--Mazur space, 
    then \ref{it:bi_cont_sg_cosine_1}$\Leftrightarrow$\ref{it:bi_cont_sg_cosine_2}$\Leftrightarrow$\ref{it:bi_cont_sg_cosine_3}$\Leftrightarrow$\ref{it:bi_cont_sg_cosine_4}$\Leftrightarrow$\ref{it:bi_cont_sg_cosine_7}. 
    If $(X,\gamma)$ is a sequentially complete Mackey--Mazur space, then \ref{it:bi_cont_sg_cosine_1}$\Leftrightarrow$\ref{it:bi_cont_sg_cosine_2}$\Leftrightarrow$\ref{it:bi_cont_sg_cosine_3}$\Leftrightarrow$\ref{it:bi_cont_sg_cosine_4}$\Leftrightarrow$\ref{it:bi_cont_sg_cosine_7}$\Leftrightarrow$\ref{it:bi_cont_sg_cosine_8}. 
    \end{enumerate}
\end{corollary}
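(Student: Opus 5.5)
The plan is to assemble the corollary from the preceding results. Most of the equivalences are then just \prettyref{thm:bi_cont_loc_equicont_C_seq} and \prettyref{thm:bi_cont_quasi_equicont_C_seq} specialised to $I=[0,\infty)$ and $\calF=\calF_{\exp}$, combined with \prettyref{cor:bi_cont_loc_exp_sg_cosine} and \prettyref{thm:loc_equicont_exp_sgcs_countconv}. The only genuinely new parts are (iv) and (v), which concern the integrable semigroups and cosine families of \cite{Kunze2009}.

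\textbf{Part (i).} By \prettyref{cor:bi_cont_loc_exp_sg_cosine}, (a)$\Leftrightarrow$(b). By \prettyref{thm:bi_cont_loc_equicont_C_seq} \ref{it:bi_loc_equicont_rom1}, using that $[0,\infty)$ is a $k_{\operatorname{lcs}}$-space, we get (a)$\Leftrightarrow$(c). By \prettyref{thm:bi_cont_quasi_equicont_C_seq} \ref{it:bi_quasi_equicont_rom1}, using \prettyref{rem:standard_Fs_fine}, we get (b)$\Leftrightarrow$(d).

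\textbf{Part (ii).} The implication (d)$\Rightarrow$(c) is \prettyref{prop:loc_bound_away_zero_equicont}. The implication (c)$\Rightarrow$(d) is \prettyref{thm:loc_equicont_exp_sgcs_countconv} applied with $\topo\coloneq\gamma$; this is allowed because $\gamma$-bounded sets are $\norm{\cdot}$-bounded by \prettyref{rem:Saks_bounded_sets}. For the addendum, note that $\calN_\gamma$ generates $\gamma$ and consider a Mackey space $(X,\gamma)$. If it is also Mazur, countable convexity follows from \prettyref{rem:count_conv}. Otherwise I would argue as follows: a countable convex combination $\sum a_n p_n$ of elements of $\calN_\gamma$ is a seminorm dominated by $\norm{\cdot}$, and it is $\topo$-continuous on bounded sets, hence $\gamma$-continuous on bounded sets. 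I then need to check that such a seminorm is $\gamma$-continuous using the Mackey property. I expect this step to be delicate; the fallback is to cite the $c_0$-barrelledness of $\gamma$ (\cite{Cooper1978}) and use \prettyref{rem:count_conv} \ref{it:count_conv_2}.

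\textbf{Part (iii).} These implications follow from \prettyref{thm:bi_cont_loc_equicont_C_seq} \ref{it:bi_loc_equicont_rom2} and \prettyref{thm:bi_cont_quasi_equicont_C_seq} \ref{it:bi_quasi_equicont_rom2}. For (f)$\Rightarrow$(d) we need $\calF_{\exp}$-boundedness. I would obtain it from \prettyref{lem:growth_bound} together with the $\gamma$-strong continuity and the Banach property. Membership in $\calL(X)$ for (e),(f) comes from \prettyref{lem:equi_cont_tight_norm_bound}.

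\textbf{Part (iv).} Assume (d), so $(\euler^{-\omega t}S(t))_{t\ge0}$ is $\gamma$-equicontinuous. For $\TextRe\lambda>\omega$, define $R(\lambda)x$ as the improper $\gamma$-Riemann integral of $\euler^{-\lambda t}S(t)x$. The integral exists by sequential completeness of $(X,\gamma)$: on each $[0,n]$ the integrand is continuous, and the tails are Cauchy by the exponential bound. In the cosine case the integral defines $\lambda R(\lambda^2)x$ instead. Pairing with $y\in(X,\gamma)'$ gives the required scalar identity. The family is exponentially bounded by \prettyref{lem:equi_cont_tight_norm_bound}, since $\gamma$-equicontinuous families are norm-bounded. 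For continuity, equicontinuity gives $p(R(\lambda)x)\le \frac{K}{\TextRe\lambda-\omega}q(x)$, so $R(\lambda)\in\calL(X,\gamma)\subseteq\calL(X,\sigma(X,(X,\gamma)'))$ by \prettyref{rem:conditions_str_M_str_cont_loc_equicont} \ref{it:weak_Mackey_cont}. The same remark shows $S(t)\in\calL(X,\sigma(X,(X,\gamma)'))$.

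\textbf{Part (v).} By definition, integrable families lie in $\calL(X,\sigma(X,Y))$ with $Y=(X,\gamma)'$. Since $(X,\gamma)$ is Mackey, $\gamma=\mu(X,Y)$, so \prettyref{rem:conditions_str_M_str_cont_loc_equicont} \ref{it:weak_Mackey_cont} gives $S(t)\in\calL(X,\gamma)$. This is (g).

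\textbf{Parts (vi) and (vii).} For (vi), \prettyref{thm:bi_cont_loc_equicont_C_seq} \ref{it:bi_loc_equicont_rom3} gives (c)$\Leftrightarrow$(g) and part (ii) gives (c)$\Leftrightarrow$(d). When $(X,\gamma)$ is sequentially complete, $(X,\norm{\cdot})$ is Banach by \prettyref{rem:Saks_seq_complete_Banach}, so (iv) and (v) close the cycle. For (vii), a strong Mackey--Mazur space is C-sequential by \prettyref{rem:count_conv}, so (i) and (vi) combine. A sequentially complete Mackey--Mazur space is strong Mackey by \prettyref{rem:conditions_str_M_str_cont_loc_equicont} \ref{it:sufficient_strong_Mackey_1}, and the same combination applies.
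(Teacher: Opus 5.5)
\textbf{Overall.} For (i), (iii), (v), (vi), (vii) and the main part of (ii), your assembly is the paper's proof. It uses the same specialisations of Theorems~\ref{thm:bi_cont_loc_equicont_C_seq} and~\ref{thm:bi_cont_quasi_equicont_C_seq} to $I=[0,\infty)$ and $\calF=\calF_{\exp}$, Lemma~\ref{lem:growth_bound} for (f)$\Rightarrow$(d), and Theorem~\ref{thm:loc_equicont_exp_sgcs_countconv} with $\gamma$ in place of $\topo$.

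\textbf{Part (iv): a more elementary route.} The paper identifies the Laplace transform with the resolvent of the generator via \cite[Lemma 4.4]{AlbaneseBonetRicker2013} and \cite[Proposition 2]{LiShaw1993}. You instead define $R(\lambda)$ directly, and in the cosine case $R(\lambda^2)x=\lambda^{-1}\int_0^\infty\euler^{-\lambda t}S(t)x\,\d t$, and check $\gamma$-continuity by hand. Integrability only asks for some such $R(\lambda)\in\calL(X,\sigma(X,(X,\gamma)'))$, so this suffices and avoids generator theory. The paper's route additionally gives $R(\lambda)=R(\lambda,A)$, which is not needed here. You should still state two things:
\begin{itemize}
\item $(X,\norm{\cdot})$ is Banach by Remark~\ref{rem:Saks_seq_complete_Banach}, so $\langle X,(X,\gamma)'\rangle$ is a norming dual pair;
\item in the cosine case you take $\omega\geq 0$.
\end{itemize}

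\textbf{The weak point: the Mackey addendum of (ii), on which (vi) also rests.} Your fallback is false as a general statement: $(X,\gamma)$ need not be $c_0$-barrelled. Take $X=L^\infty[0,1]$ and $\topo=\sigma(L^\infty,L^1)$. Then $\gamma$ is the topology of uniform convergence on norm compact subsets of $L^1$. The Rademacher functions are $\sigma(L^1,L^\infty)$-null but not relatively norm compact, hence not $\gamma$-equicontinuous. The paper gets $c_0$-barrelledness only from the Mackey hypothesis, via \cite[Corollary A.11 (a)]{KruseSchwenninger2025}, and then applies Remark~\ref{rem:count_conv}~\ref{it:count_conv_2}.

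\textbf{Your direct argument closes without Mackey.} Let $q=\sum_n a_np_n$ with $p_n\in\calN_\gamma$.
\begin{itemize}
\item Clearly $q\leq\norm{\cdot}$.
\item On every norm-bounded set $D$, $q$ is $\topo$-continuous. It is the uniform limit there of the partial sums, which are $\topo$-continuous on $D$ because $\gamma$ and $\topo$ agree on $D$.
\item Hence the locally convex topology generated by $\semis_\topo\cup\{q\}$ lies between $\topo$ and $\topo_{\norm{\cdot}}$. It coincides with $\topo$ on norm-bounded sets: apply continuity of $q$ on $D-D$ to $x_i-x$.
\item By the maximality in Definition~\ref{defi:saks_mixed}, this topology is coarser than $\gamma$, so $q\in\calN_\gamma$.
\end{itemize}
Thus $\calN_\gamma$ is countably convex for every Saks space. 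This closes the addendum and in fact makes the hypothesis of (ii) automatic.
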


\begin{proof}
\ref{it:bi_cont_sg_cosine_rom1} This statement follows from \prettyref{cor:bi_cont_loc_exp_sg_cosine}, 
\prettyref{rem:standard_Fs_fine}, \prettyref{thm:bi_cont_loc_equicont_C_seq} \ref{it:bi_loc_equicont_rom1} and \prettyref{thm:bi_cont_quasi_equicont_C_seq} \ref{it:bi_quasi_equicont_rom1} 
with $I\coloneq [0,\infty)$ and $\calF\coloneq\calF_{\exp}$.

\ref{it:bi_cont_sg_cosine_rom2} The first part of the statement follows from \prettyref{prop:loc_bound_away_zero_equicont} with 
$I$ and $\calF$ as above, \prettyref{thm:loc_equicont_exp_sgcs_countconv} and \prettyref{rem:Saks_bounded_sets} \ref{it:Saks_bounded_sets_2}. 
For the second part, let $(X,\gamma)$ be a Mackey space. Then it is $c_0$-barrelled by \cite[Corollary A.11 (a)]{KruseSchwenninger2025}. 
Thus, $\calN_{\gamma}$ is countably (quasi-)convex by \prettyref{rem:count_conv} \ref{it:count_conv_2}, which 
yields the second part of the statement by the first part.

\ref{it:bi_cont_sg_cosine_rom3} The local version follows from \prettyref{thm:bi_cont_loc_equicont_C_seq} \ref{it:bi_loc_equicont_rom2}. 
The exponential version follows from \prettyref{lem:growth_bound}, \prettyref{rem:Saks_bounded_sets} \ref{it:Saks_bounded_sets_2} 
and \prettyref{thm:bi_cont_quasi_equicont_C_seq} \ref{it:bi_quasi_equicont_rom2}.

\ref{it:bi_cont_sg_cosine_rom4} Let $(X,\gamma)$ be sequentially complete and 
$(S(t))_{t\geq 0}$ $\gamma$-strongly continuous and exponentially $\gamma$-equicontinuous. 
Then $(X,\norm{\cdot})$ is a Banach space by \prettyref{rem:Saks_seq_complete_Banach} \ref{it:Saks_seq:compl_Banach_1}. 
Let us consider the case that $(S(t))_{t\geq 0}$ is a semigroup first.
By (the proof of) \cite[Lemma 4.4]{AlbaneseBonetRicker2013} and the exponential $\gamma$-equicontinu\-ity, 
there is some $\omega\in\R$ such that $\{\lambda\in\C\;|\;\TextRe(\lambda)>\omega\}\subseteq\rho_{\gamma}(A)$ and for all $\lambda\in\C$ 
with $\TextRe(\lambda)>\omega$, we have that
\[
R(\lambda,A)x=\int_{0}^{\infty}\euler^{-\lambda t}S(t)x\d t
\]
for all $x\in X$ where $A$ is the generator of the semigroup $(S(t))_{t\geq 0}$ and the integral above is an improper 
Riemann integral in the sequentially complete space $(X,\gamma)$. 
In particular, $R(\lambda,A)\in\calL(X,\gamma)$ for all $\lambda\in\C$ with $\TextRe(\lambda)>\omega$. 
In combination with \prettyref{rem:conditions_str_M_str_cont_loc_equicont} \ref{it:weak_Mackey_cont} this implies that 
$(S(t))_{t\geq 0}$ is integrable w.r.t.~the norming dual pair $\langle X,(X,\gamma)'\rangle$.

Now, let $(S(t))_{t\geq 0}$ be a cosine family. 
By \cite[Proposition 2]{LiShaw1993} and the exponential $\gamma$-equicontinuity, 
there is some $\omega\geq 0$ such that $(\omega^2,\infty)\subseteq\rho_{\gamma}(A)$ and for all $\lambda\in\C$ 
with $\TextRe(\lambda)>\omega$, we have that
\begin{equation}\label{eq:resol_laplace_cosine}
\lambda R(\lambda^2,A)x=\int_{0}^{\infty}\euler^{-\lambda t}S(t)x\d t
\end{equation}
for all $x\in X$ where is $A$ is the generator of the cosine family $(S(t))_{t\geq 0}$ and the integral above is an improper Riemann integral in the sequentially complete space $(X,\gamma)$ again. As in the case of semigroups, this implies our statement.

\ref{it:bi_cont_sg_cosine_rom5} This implication follows 
from \prettyref{rem:conditions_str_M_str_cont_loc_equicont} \ref{it:weak_Mackey_cont}. 

\ref{it:bi_cont_sg_cosine_rom6} The equivalence ``\ref{it:bi_cont_sg_cosine_3}$\Leftrightarrow$\ref{it:bi_cont_sg_cosine_7}'' 
is a consequence of \prettyref{thm:bi_cont_loc_equicont_C_seq} \ref{it:bi_loc_equicont_rom3}. 
The rest follows from parts \ref{it:bi_cont_sg_cosine_rom2}, \ref{it:bi_cont_sg_cosine_rom4} and \ref{it:bi_cont_sg_cosine_rom5}, and 
\prettyref{rem:Saks_seq_complete_Banach} \ref{it:Saks_seq:compl_Banach_1}.

\ref{it:bi_cont_sg_cosine_rom7} This statement follows from parts \ref{it:bi_cont_sg_cosine_rom1} and 
\ref{it:bi_cont_sg_cosine_rom6}, and the fact that Mackey--Mazur spaces are C-sequential, 
and if they are in addition sequentially complete, strong Mackey spaces 
by \prettyref{rem:conditions_str_M_str_cont_loc_equicont} \ref{it:sufficient_strong_Mackey_1} and \prettyref{rem:count_conv}.
\end{proof}

If $(S(t))_{t\geq 0}$ is a semigroup, then \prettyref{cor:bi_cont_quasi_equicont_C_seq_sg_cosine} \ref{it:bi_cont_sg_cosine_rom1} 
and \ref{it:bi_cont_sg_cosine_rom3} imply and 
generalise \cite[Theorem 7.4]{Kraaij2016} and \cite[Theorem 3.17]{KruseSchwenninger2022}.

\begin{remark}
    \prettyref{cor:bi_cont_quasi_equicont_C_seq_sg_cosine} \ref{it:bi_cont_sg_cosine_rom1}  states that for C-sequential Saks spaces 
    $(X,\norm{\cdot},\topo)$ such that $(X,\norm{\cdot})$ is a Banach space, the locally $\topo$-bi-continuous semigroups are exactly the $\gamma$-strongly continuous and exponentially $\gamma$-equicontinuous semigroups.

    If we drop the assumption that the Saks space is C-sequential, there do exist examples of locally $\topo$-bi-continuous semigroups which are not exponentially $\gamma$-equicontinuous, see \cite[Example 4.1]{Farkas2011} and 
    \cite[Example 3.7]{KruseSchwenninger2022}.    
    Namely, let $\omega_{1}$ be the first uncountable ordinal and $\Omega\coloneq [0,\omega_{1})$ equipped with the order topology. We denote by 
    $\mathrm{C}_{\operatorname{b}}(\Omega)$ the space of bounded $\R$-valued continuous functions on $\Omega$, by $\norm{\cdot}_{\infty}$ the supremum norm 
    and by $\topo_{\operatorname{co}}$ the compact-open topology on this space, respectively. 
    The tuple $(\mathrm{C}_{\operatorname{b}}(\Omega),\norm{\cdot}_{\infty},\topo_{\operatorname{co}})$ is a (sequentially) complete Saks space 
    by \cite[Example D), p.~65--66]{Wiweger1961} and \cite[II.1.9 Corollary]{Cooper1978} since $\Omega$ is a locally compact Hausdorff topological space.
    Further, $(\mathrm{C}_{\operatorname{b}}(\Omega),\gamma)$ is not a Mackey space by \cite[Theorem 5]{Conway1966} and \cite[Theorem 2.4]{Sentilles1972}. 
    Let $\beta\Omega$ be the Stone--\v{C}ech compactification of $\Omega$, i.e.~$\beta\Omega=[0,\omega_{1}]$, and set $x\coloneq \omega_{1}$. 
    Define the operator $A\colon\mathrm{C}_{\operatorname{b}}(\Omega)\to \mathrm{C}_{\operatorname{b}}(\Omega)$ by $Af(z)\coloneq f(x)$, 
    $z\in\Omega$, for $f\in\mathrm{C}_{\operatorname{b}}(\Omega)$ with $f$ continuously extended to $\beta\Omega$. 
    Then $A\in\calL(\mathrm{C}_{\operatorname{b}}(\Omega))$ and it generates the locally $\topo_{\operatorname{co}}$-bi-continuous semigroup on 
    $\mathrm{C}_{\operatorname{b}}(\Omega)$ given by 
    \[
    S(t)\coloneq\id-A+\euler^{t}A \quad (t\geq 0)
    \]
    and $S(t)\notin\calL(\mathrm{C}_{\operatorname{b}}(\Omega),\gamma)$ for $t>0$ by \cite[Example 4.1]{Farkas2011} and 
    \cite[Proposition 3.12]{KruseSchwenninger2022}. In particular, $(S(t))_{t\geq 0}$ is $\gamma$-strongly continuous 
    by \prettyref{prop:tau_gamma_strong} but not locally $\gamma$-equicontinuous, and 
    $(\mathrm{C}_{\operatorname{b}}(\Omega),\gamma)$ is not C-sequential by \prettyref{thm:seq_equicont_equicont_equiv} 
    and \prettyref{prop:bi_equicont_mixed_equicont} \ref{it:bi_equicont_mixed_3}.

    Now, we will show that $(S(t))_{t\geq 0}$ is also not integrable on the norming dual pair 
    $\langle \mathrm{C}_{\operatorname{b}}(\Omega),(\mathrm{C}_{\operatorname{b}}(\Omega),\gamma)'\rangle$, which was to the 
    best of our knowledge not observed in the literature before. Suppose that $(S(t))_{t\geq 0}$ belongs to 
    $\calL(\mathrm{C}_{\operatorname{b}}(\Omega),\sigma(\mathrm{C}_{\operatorname{b}}(\Omega),(\mathrm{C}_{\operatorname{b}}(\Omega),\gamma)'))$. We denote by $\mathds{1}$ the function that is constant $1$ on $\Omega$. 
    Fix $t>0$ and $y\in (\mathrm{C}_{\operatorname{b}}(\Omega),\gamma)'$. By \cite[Lemma 23.28]{MeiseVogt1997}, there is $y_{t}\in (\mathrm{C}_{\operatorname{b}}(\Omega),\gamma)'$ such that 
    \[
     y_{t}(f)
    =y(S(t)f)
    =y(f)-y(f(x)\mathds{1})+y(f(x)\euler^{t}\mathds{1}) 
    =y(f)-f(x)y(\mathds{1})+f(x)\euler^{t}y(\mathds{1})  \quad (f\in \mathrm{C}_{\operatorname{b}}(\Omega)),
    \]
    implying that $[f\mapsto -f(x)y(\mathds{1})+f(x)\euler^{t}y(\mathds{1})]\in(\mathrm{C}_{\operatorname{b}}(\Omega),\gamma)'$. 
    Due to \cite[Theorems 2.3 (b), 2.4]{Sentilles1972} and the local compactness of $\Omega$, the topology $\gamma$ is induced by the system of seminorms given by  
    \[
    |f|_{\nu}\coloneq\sup_{z\in\Omega}|f(z)|\nu(z) \quad (f\in\mathrm{C}_{\operatorname{b}}(\Omega))
    \]
    for non-negative $\nu\in\mathrm{C}_{0}(\Omega)$ where $\mathrm{C}_{0}(\Omega)$ is the space of real-valued continuous 
    functions on $\Omega$ that vanish at infinity. Hence, there are $\nu\in\mathrm{C}_{0}(\Omega)$ and $C\geq 0$ such that 
    \[
    |-f(x)y(\mathds{1})+f(x)\euler^{t}y(\mathds{1})|\leq C|f|_{\nu} \quad (f\in\mathrm{C}_{\operatorname{b}}(\Omega)).
    \]
    As $[0,\omega_{1}]$ is a completely regular Hausdorff space by \cite[Part II, 43.~Closed ordinal space {$[0,\Omega]$}; 4, p.~69]{SteenSeebach1978}, for every compact $K\subseteq\Omega$ there exists a continuous function 
    $f_{K}\colon [0,\omega_{1}]\to [0,1]$ such that $f_{K}=0$ on $K$ and $f_{K}(x)=1$ by \cite[(2.1.5) Proposition]{Buchwalter1969}. 
    Let $\varepsilon>0$. Since $\nu\in\mathrm{C}_{0}(\Omega)$, there is a compact set $K_{\varepsilon}\subseteq\Omega$ such that 
    $\nu(z)<\varepsilon$ for all $z\in\Omega\setminus K_{\varepsilon}$. It follows that 
    \[
      |-1+\euler^{t}||y(\mathds{1})|
     =|-y(\mathds{1})+\euler^{t}y(\mathds{1})|
     =|-f_{K_{\varepsilon}}(x)y(\mathds{1})+f_{K_{\varepsilon}}(x)\euler^{t}y(\mathds{1})|
     \leq |f_{K_{\varepsilon}}|_{\nu}<\varepsilon ,
    \]
    yielding that $y(\mathds{1})=0$ because $\varepsilon>0$ is arbitrary and $t>0$. 
    It follows that $y(S(t)f)=y(f)$ for all $t>0$ and $f\in\mathrm{C}_{\operatorname{b}}(\Omega)$, so $S(t)=\id$ for all $t>0$ by the Hahn--Banach theorem, which is a contradiction. Therefore, $(S(t))_{t\geq 0}$ is not integrable.
    
    This example may be interesting from a theoretical point of view; however, in all relevant applications (known so far) 
    the Saks space is C-sequential, see \cite[Section 4]{KruseSchwenninger2022}.
\end{remark}

\section{Applications and Consequences}
\label{sect:applications}

In this section, we show that we can recover statements for special classes of (bi-continuous) semigroups or cosine families available in the literature from the corresponding statements for semigroups or cosine families on Hausdorff locally convex spaces. 
We recall some notions first. 

\begin{definition}[{\cite[Definition 6]{AlbaneseKuehnemund2002}, \cite[Definitions 3.1, 3.5]{AlbaneseJornet2016}}]
Let $(X,\topo)$ be a Hausdorff locally convex space, $(A,D(A))$ a linear operator on $X$ and $D$ a linear subspace of $D(A)$. 
\begin{enumerate}[label=\upshape(\alph*), leftmargin=*]
\item $A$ is called \emph{(sequentially) closed} if for each net $(x_{i})_{i\in I}\subseteq D(A)$ (sequence $(x_{i})_{i\in\N}\subseteq D(A)$)
satisfying $x_{i}\to x$ and $Ax_{i}\to y$ w.r.t.~$\topo$ for some $x,y\in X$, we have $x\in D(A)$ and $Ax=y$. 
\item A linear operator $(B,D(B))$ on $X$ is called an \emph{extension} of $(A,D(A))$ if $D(A)\subseteq D(B)$ and $B_{\mid D(A)}=A$.  
The operator $A$ is called \emph{(sequentially) closable} if it admits a (sequentially) closed extension. 
The smallest (sequentially) closed extension of a (sequentially) closable operator $A$ is called the \emph{(sequential) closure} 
of $A$ and denoted by $\overline{A}$ ($\overline{A}^{\operatorname{seq}}$).
\item $A$ is called \emph{(sequentially) densely defined} if $D(A)$ is (sequentially) dense in $(X,\topo)$.
\item $D$ is called a \emph{(sequential) core} for $A$ if $D$ is (sequentially) dense in $D(A)$ w.r.t.~the topology induced by the system of seminorms given by 
\[
p_{A}(x)\coloneq p(x)+p(Ax)\quad (x\in D(A))
\]
for $p\in\semis_{\topo}$.
\end{enumerate}
If we want to emphasise the dependency of these notions on $\topo$, we write (sequentially) $\topo$-closed (closable), (sequential) $\topo$-closure (core), 
$\overline{A}^{\topo}$ ($\overline{A}^{\topo\text{-}\operatorname{seq}}$) and (sequentially) $\topo$-densely defined instead of just 
(sequentially) closed (closable), (sequential) closure (core), $\overline{A}$ ($\overline{A}^{\operatorname{seq}}$)
and (sequentially) densely defined, respectively.
\end{definition}

We note that every sequentially dense set is already dense and so a sequentially densely defined linear operator 
is densely defined, and a sequential core is a core. On Saks spaces the following variants of the preceding notions exist.

\begin{definition}[{\cite[Definition 3.4]{AlbaneseMangino2004}, \cite[Definitions 1.10, 1.20, Proposition 1.16 (c)]{Kuehnemund2001}}]
Let $(X,\norm{\cdot},\topo)$ be a Saks space, $(A,D(A))$ a linear operator on $X$ and $D$ a linear subspace of $D(A)$. 
\begin{enumerate}[label=\upshape(\alph*), leftmargin=*]
\item $A$ is called \emph{bi-closed} if for each sequence $(x_i)_{i\in\N}$ in $D(A)$ with $(x_i)$, $(Ax_i)$ $\norm{\cdot}$-bounded 
and $x_i\to x$ and $Ax_i \to y$ w.r.t.~$\topo$ for some $x,y\in X$, we have $x\in D(A)$ and $Ax=y$.
\item $A$ is called \emph{bi-closable} if it admits a bi-closed extension. 
The smallest bi-closed extension of a bi-closable operator $A$ is called the \emph{bi-closure} 
of $A$ and denoted by $\overline{A}^{\operatorname{bi}}$.
\item $A$ is called \emph{bi-densely defined} if $D(A)$ is bi-dense in $X$. A set $M\subseteq X$ is called \emph{bi-dense} if 
for all $x\in X$ there is a $\norm{\cdot}$-bounded sequence $(x_{i})_{i\in\N}\subseteq M$ such that $x_{i}\to x$ w.r.t.~$\topo$.
\item $D$ is called a \emph{bi-core} for $A$ if for all $x\in D(A)$ there is a sequence $(x_{i})_{i\in\N}\subseteq D$ such that 
$(x_i)$, $(Ax_i)$ are $\norm{\cdot}$-bounded and $x_i\to x$ w.r.t.~the topology induced by the system of seminorms given by 
\[
p_{A}(x)\coloneq p(x)+p(Ax)\quad (x\in D(A))
\]
for $p\in\semis_{\topo}$.
\end{enumerate}
\end{definition}

Using the mixed topology $\gamma$, these notions can be rephrased as the sequential notions introduced before 
(cf.~\cite[Remark 3.2]{KruseSeifert2023b}).

\begin{remark}\label{rem:bi_notions_operators}
Let $(X,\norm{\cdot},\topo)$ be a Saks space, $(A,D(A))$ a linear operator on $X$ and $D$ a linear subspace of $D(A)$. 
Due to \cite[I.1.10 Proposition]{Cooper1978}, 
a sequence in $X$ is $\gamma$-convergent if and only if it is $\norm{\cdot}$-bounded and $\topo$-convergent. 
Hence, $A$ is bi-closed (bi-closable, bi-densely defined) if and only if it is sequentially $\gamma$-closed ($\gamma$-closable, $\gamma$-densely defined). 
Moreover, $\overline{A}^{\operatorname{bi}}=\overline{A}^{\gamma\text{-}\operatorname{seq}}$ if $A$ is bi-closable.
Further, a set $M\subseteq X$ is bi-dense if and only if it is sequentially $\gamma$-dense. In combination with $\gamma\subseteq\topo$ this implies that 
$D$ is a bi-core for $A$ if and only if it is a sequential $\gamma$-core.
\end{remark}

\subsection{Generation Theorems}
We will demonstrate how to obtain generation theorems for locally bi-continuous semigroups and cosine families from the well-known Hille--Yosida theorem for Hausdorff locally convex spaces, first observed in \cite[Theorem (Hille--Yosida), p.~126]{Schwartz1958} and \cite[Proposition 6.1]{Komatsu1964}. 
We cite here the version of \cite{Choe1985}.

\begin{proposition}[{\cite[Corollary 4.5]{Choe1985}}]
\label{prop:Hille--Yosida_lcx}
    Let $(X,\topo)$ be a sequentially complete Hausdorff locally convex space and $(A,D(A))$ a linear operator on $X$. Then the following assertions are equivalent.
    \begin{enumerate}[label=\upshape(\alph*), leftmargin=*]
        \item $A$ is the generator of a strongly continuous and exponentially equicontinuous semigroup on $X$.
        \item\label{it:Hille--Yosida_lcx_2} $A$ is densely defined and there is $\alpha\in\R$ such that $(\alpha,\infty)\subseteq\rho_{\topo}(A)$ and $((\lambda-\alpha)^mR(\lambda,A)^m)_{\lambda>\alpha, m\in\N }$ is equicontinuous.
    \end{enumerate}
\end{proposition}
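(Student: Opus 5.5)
The plan is to reduce, by rescaling, to an equicontinuous semigroup and then run the classical Hille--Yosida argument for sequentially complete locally convex spaces.

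\emph{Direction (a)$\Rightarrow$(b).} Let $(S(t))_{t\geq 0}$ be strongly continuous with $(\euler^{-\alpha t}S(t))_{t\geq 0}$ equicontinuous. Then $T(t)\coloneq\euler^{-\alpha t}S(t)$ is an equicontinuous strongly continuous semigroup generated by $A-\alpha$. So it suffices to treat $\alpha=0$.
\begin{enumerate}[label=\upshape(\roman*), leftmargin=*]
\item For $\lambda>0$, sequential completeness gives the improper Riemann integral $R(\lambda)x\coloneq\int_0^\infty\euler^{-\lambda t}T(t)x\,\d t$. Equicontinuity yields $p(R(\lambda)x)\leq \lambda^{-1}Kq(x)$.
\item The standard computation with difference quotients shows that $R(\lambda)=R(\lambda,A)$ and that $\lambda\in\rho_\topo(A)$.
\item Density of $D(A)$ follows because $\frac1t\int_0^t T(s)x\,\d s\in D(A)$ converges to $x$.
\item Iterating the Laplace transform gives
\[
R(\lambda,A)^m x=\frac{1}{(m-1)!}\int_0^\infty t^{m-1}\euler^{-\lambda t}T(t)x\,\d t.
\]
Hence $p(\lambda^m R(\lambda,A)^m x)\leq Kq(x)$ uniformly in $\lambda$ and $m$, which is the asserted equicontinuity.
\end{enumerate}

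\emph{Direction (b)$\Rightarrow$(a).} Again shift so that $\alpha=0$.
\begin{enumerate}[label=\upshape(\roman*), leftmargin=*]
\item Form the Yosida approximants $A_\lambda\coloneq\lambda AR(\lambda,A)=\lambda^2R(\lambda,A)-\lambda$, which lie in $\calL(X,\topo)$.
\item Define $\euler^{tA_\lambda}\coloneq\euler^{-\lambda t}\sum_m\frac{(\lambda^2t)^m}{m!}R(\lambda,A)^m$. The equicontinuity of $\{\lambda^mR(\lambda,A)^m\}$ makes the series converge in the sequentially complete space, and it makes $\{\euler^{tA_\lambda}\}_{t\geq 0,\lambda>0}$ equicontinuous.
\item Show that $\lambda R(\lambda,A)x\to x$ as $\lambda\to\infty$. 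This holds first on $D(A)$ and then on all of $X$ by density plus equicontinuity. Consequently $A_\lambda x\to Ax$ for $x\in D(A)$.
\item Use the commutation of the $A_\lambda$ and the identity
\[
\euler^{tA_\lambda}x-\euler^{tA_\mu}x=\int_0^t \euler^{(t-s)A_\lambda}\euler^{sA_\mu}(A_\lambda-A_\mu)x\,\d s.
\]
This shows that $\euler^{tA_\lambda}x$ is Cauchy, locally uniformly in $t$, for $x\in D(A)$. By equicontinuity and density the same holds for all $x$.
\item Sequential completeness then yields the limit $T(t)x$. This limit is an equicontinuous semigroup, and it is strongly continuous because the convergence is locally uniform in $t$.
\item Finally, identify its generator $B$. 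Show $A\subseteq B$. Then, since $\lambda-A$ is bijective and $\lambda-B$ is injective by part (a), conclude $B=A$.
\end{enumerate}

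The main obstacle is step (iv) together with the density extension in (v). Uniformity in $t$ on compacts and the passage from $D(A)$ to all of $X$ rely essentially on equicontinuity rather than on norm estimates. The ambient space is only sequentially complete, so limits have to be taken along sequences $\lambda_n\to\infty$. Alternatively one could simply cite \cite{Choe1985} or \cite{Komura1968}.
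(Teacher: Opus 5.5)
Your proposal is correct. The paper gives no proof of its own here and simply cites Choe's Corollary~4.5; your argument is the classical Yosida--Komura proof underlying that citation: rescale to the equicontinuous case, use the Laplace transform representation of $R(\lambda,A)^m$ for necessity, and use the Yosida approximants $\euler^{tA_\lambda}$ together with sequential completeness for sufficiency. The one step worth writing out is the formula for $R(\lambda,A)^m$, which you can justify via $\frac{\d^{m}}{\d\lambda^{m}}R(\lambda,A)=(-1)^m m!\,R(\lambda,A)^{m+1}$ (as the paper does in the proof of Theorem~\ref{thm:Hille_Yosida_sg}) or by testing against continuous functionals.
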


The $\alpha$ in \ref{it:Hille--Yosida_lcx_2} is such that $(\euler^{-\alpha t} S(t))_{t\geq 0}$ is $\topo$-equicontinuous where $(S(t))_{t\geq 0}$ is the semigroup generated by $A$. 
It is known that the generator of a strongly continuous semigroup or cosine family on a sequentially complete space 
is densely defined. However, the known proofs actually reveal that they are even sequentially densely defined.

\begin{proposition}\label{prop:seq_densely_defined}
    Let $(X,\topo)$ be a sequentially complete Hausdorff locally convex space and $(S(t))_{t\geq 0}$ a $\topo$-strongly continuous 
    semigroup or cosine family in $\calL(X,\topo)$ with generator $A$. Then $A$ is sequentially $\topo$-densely defined. 
\end{proposition}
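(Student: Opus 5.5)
We follow the standard density argument via averaged orbits, checking that it produces approximating \emph{sequences} rather than nets. Fix $x\in X$. For $t>0$ set
\[
x_{t}\coloneq \frac{1}{t}\int_{0}^{t}S(s)x\,\d s
\]
in the semigroup case, and
\[
x_{t}\coloneq \frac{2}{t^{2}}\int_{0}^{t}(t-s)S(s)x\,\d s=\frac{2}{t^{2}}\int_{0}^{t}\int_{0}^{r}S(s)x\,\d s\,\d r
\]
in the cosine case. The integrals are Riemann integrals in $(X,\topo)$. They exist because $s\mapsto S(s)x$ is $\topo$-continuous on the compact interval $[0,t]$ and $(X,\topo)$ is sequentially complete: Riemann sums along a sequence of partitions with mesh tending to zero form a Cauchy sequence, by uniform continuity of the orbit on $[0,t]$ for each seminorm $p\in\semis_{\topo}$. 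We then take $t=1/n$ and aim to show that $x_{1/n}\in D(A)$ and $x_{1/n}\to x$. This exhibits $x$ as the $\topo$-limit of a sequence in $D(A)$, which is exactly sequential $\topo$-density.

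For convergence, fix $p\in\semis_{\topo}$. In the semigroup case,
\[
p(x_{t}-x)\leq \sup_{s\in[0,t]}p(S(s)x-x)\to 0 \quad (t\to 0\rlim)
\]
by strong continuity at $0$ and $S(0)=\id$. In the cosine case the weight $\frac{2}{t^{2}}(t-s)$ is a probability density on $[0,t]$, so the same estimate holds. Only continuity of the orbit is used here, not any equicontinuity.

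For $x_{t}\in D(A)$ in the semigroup case, we use that $S(h)\in\calL(X,\topo)$ commutes with the Riemann integral (apply it to the Riemann sums). This gives
\[
\frac{1}{h}(S(h)-\id)\int_{0}^{t}S(s)x\,\d s=\frac{1}{h}\int_{t}^{t+h}S(s)x\,\d s-\frac{1}{h}\int_{0}^{h}S(s)x\,\d s,
\]
which tends to $S(t)x-x$ as $h\to 0\rlim$, by the same continuity estimate. In the cosine case we use the identity $S(h)S(s)=\tfrac12(S(s+h)+S(s-h))$, with $S(-r)\coloneq S(r)$. Integrating twice, $\frac{2}{h^{2}}(S(h)-\id)\int_{0}^{t}(t-s)S(s)x\,\d s$ becomes a second symmetric difference quotient of the $C^{2}$-function $r\mapsto \int_{0}^{r}(r-s)S(s)x\,\d s$, extended evenly. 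It therefore converges to $S(t)x-x$, which is the familiar formula $A\int_{0}^{t}(t-s)S(s)x\,\d s=S(t)x-x$ from the Banach space case (cf.~\cite{ArendtBHN2011}). Its derivation only uses linearity, continuity of each $S(h)$, the functional equation, and the fundamental theorem of calculus for continuous $X$-valued functions. The commutativity from \prettyref{rem:commutative_cosine} is available if we need to rearrange terms.

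The main obstacle is the cosine-family domain computation. Care is needed to justify the change of variables and the second-order difference limit in a merely sequentially complete locally convex space. The plan is to do this seminorm by seminorm, using uniform continuity of $s\mapsto S(s)x$ on compact intervals, just as in the scalar Taylor argument.
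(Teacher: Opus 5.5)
Your argument is correct and is essentially the paper's proof: the paper only cites the density proofs of Komura (semigroups) and Fattorini (cosine families), and ``inspecting'' them amounts to what you do, namely observing that the explicit orbit averages $x_{t}$ lie in $D(A)$ and converge to $x$ along $t=1/n$. Two minor precisions in the cosine case: the identity $2S(h)S(s)=S(s+h)+S(|s-h|)$ for $s>h$ genuinely needs the commutativity of Remark~\ref{rem:commutative_cosine}, since the defining equation only gives $2S(s)S(h)$ there. Also, writing $G(r)$ for $\int_{0}^{|r|}(|r|-s)S(s)x\,\d s$, one has
\[
\frac{2}{h^{2}}(S(h)-\id)G(t)=\frac{G(t+h)-2G(t)+G(t-h)}{h^{2}}-\frac{G(h)-2G(0)+G(-h)}{h^{2}},
\]
that is, a difference of second symmetric difference quotients at $t$ and at $0$ rather than a single one, which is why the limit is $S(t)x-x$.
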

\begin{proof}
This follows from inspection of the proofs of \cite[Proposition 1.3]{Komura1968} if $(S(t))_{t\geq 0}$ is a semigroup, and 
\cite[5.4 Lemma]{Fattorini1969a} if $(S(t))_{t\geq 0}$ is a cosine family.
\end{proof}

Our developed theory gives rise to slightly generalise the Hille--Yosida theorem of Kraiij \cite[Theorem 6.4]{Kraaij2016} 
(and \cite[Corollary 6.5]{Kraaij2016}) from countably convex to countably quasi-convex $\calN_{\topo}$ and we also provide a different (correct) proof, 
see \prettyref{rem:Hille-Yosida_correction}.

\begin{theorem}\label{thm:Hille_Yosida_sg}  
    Let $(X,\norm{\cdot})$ be a normed space, $\topo\subseteq\topo_{\norm{\cdot}}$ a Hausdorff locally convex topology on $X$ 
    such that $(X,\topo)$ is sequentially complete, $\topo$-bounded sets are also $\norm{\cdot}$-bounded and $\calN_{\topo}$ 
    is countably quasi-convex. Let $(A,D(A))$ be a linear operator on $X$, $\omega\in\R$ and $M\geq 1$. Then the following assertions 
    are equivalent.
    \begin{enumerate}[label=\upshape(\alph*), leftmargin=*]
        \item\label{it:Hil_Yos_sg_1} $A$ is the generator of a $\topo$-strongly continuous semigroup $(S(t))_{t\geq 0}$ on $X$ such that for all $t_0\geq 0$ and $p\in\calN_{\topo}$ there exists $q\in\calN_{\topo}$ such that 
        \[
        p(\euler^{-\omega t}S(t)x)\leq Mq(x) \quad(t\in [0,t_0],\, x\in X).
        \]
        \item\label{it:Hil_Yos_sg_2} $A$ is the generator of a $\topo$-strongly continuous semigroup $(S(t))_{t\geq 0}$ on $X$ such that for all $\alpha>\omega$ and $p\in\calN_{\topo}$ there exists $q\in\calN_{\topo}$ such that 
        \[
        p(\euler^{-\alpha t}S(t)x)\leq Mq(x) \quad(t\geq 0,\, x\in X).
        \]
        \item\label{it:Hil_Yos_sg_3} $A$ is $\topo$-densely defined, $(\omega,\infty)\subseteq\rho_{\topo}(A)$ and for all $\alpha>\omega$ and all $p\in\calN_{\topo}$ there exists $q\in\calN_{\topo}$ such that
        \begin{equation}\label{eq:res_estimate_HY_sg}
        p((\lambda-\alpha)^mR(\lambda,A)^mx) \leq Mq(x) \quad(\lambda>\alpha,\, m\in\N,\, x\in X).
        \end{equation}
    \end{enumerate}
\end{theorem}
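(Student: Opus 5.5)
The equivalence \ref{it:Hil_Yos_sg_1}$\Leftrightarrow$\ref{it:Hil_Yos_sg_2} will be read off directly from \prettyref{cor:type}. Its hypotheses are exactly those of the present theorem, minus sequential completeness. Assertion \ref{it:Hil_Yos_sg_1} is \prettyref{cor:type} \ref{it:type_2} and assertion \ref{it:Hil_Yos_sg_2} is \prettyref{cor:type} \ref{it:type_3}, both for the same $\topo$-strongly continuous semigroup with generator $A$. So nothing beyond citing \prettyref{cor:type} is needed here.

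For \ref{it:Hil_Yos_sg_2}$\Rightarrow$\ref{it:Hil_Yos_sg_3}, I first check that $S(t)\in\calL(X,\topo)$ for all $t\geq 0$. This is immediate from the estimate in \ref{it:Hil_Yos_sg_2}, since $\calN_{\topo}$ generates $\topo$. Then $(S(t))_{t\geq 0}$ is $\topo$-strongly continuous and exponentially $\topo$-equicontinuous on the sequentially complete space $(X,\topo)$. Hence \prettyref{prop:Hille--Yosida_lcx} (or \prettyref{prop:seq_densely_defined}) gives that $A$ is $\topo$-densely defined. Next, fix $\alpha>\omega$ and $\lambda>\alpha$. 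As in the proof of \prettyref{cor:bi_cont_quasi_equicont_C_seq_sg_cosine} \ref{it:bi_cont_sg_cosine_rom4}, via \cite[Lemma 4.4]{AlbaneseBonetRicker2013}, we have $\lambda\in\rho_{\topo}(A)$ and $R(\lambda,A)x=\int_0^\infty \euler^{-\lambda t}S(t)x\,\d t$ as an improper Riemann integral in $(X,\topo)$. By the standard argument, $R(\lambda,A)^m x=\frac{1}{(m-1)!}\int_0^\infty t^{m-1}\euler^{-\lambda t}S(t)x\,\d t$. Applying $p\in\calN_{\topo}$ under the integral and using \ref{it:Hil_Yos_sg_2} with this $\alpha$, I get
\[
p((\lambda-\alpha)^mR(\lambda,A)^mx)\leq \frac{(\lambda-\alpha)^m}{(m-1)!}\int_0^\infty t^{m-1}\euler^{-(\lambda-\alpha)t}\,\d t\; Mq(x)=Mq(x).
\]
Since $\alpha>\omega$ is arbitrary, this also yields $(\omega,\infty)\subseteq\rho_{\topo}(A)$.

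For \ref{it:Hil_Yos_sg_3}$\Rightarrow$\ref{it:Hil_Yos_sg_2}, fix $\alpha>\omega$. The estimate \eqref{eq:res_estimate_HY_sg} implies that $((\lambda-\alpha)^mR(\lambda,A)^m)_{\lambda>\alpha,m\in\N}$ is $\topo$-equicontinuous. So \prettyref{prop:Hille--Yosida_lcx} provides a $\topo$-strongly continuous, exponentially equicontinuous semigroup $(S(t))_{t\geq 0}$ generated by $A$; by uniqueness of the generated semigroup it does not depend on $\alpha$. The semigroup is given by the Euler formula $S(t)x=\lim_{n\to\infty}(\frac{n}{t}R(\frac{n}{t},A))^n x$ in $(X,\topo)$; one may instead use Yosida approximations, but Euler is cleanest. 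For $n/t>\alpha$, I rewrite $\frac{n}{t}R(\frac nt,A)=\frac{n/t}{n/t-\alpha}\,(\frac nt-\alpha)R(\frac nt,A)$. Then \eqref{eq:res_estimate_HY_sg} gives
\[
p\Bigl(\bigl(\tfrac nt R(\tfrac nt,A)\bigr)^n x\Bigr)\leq \Bigl(1-\tfrac{\alpha t}{n}\Bigr)^{-n}Mq(x),
\]
and letting $n\to\infty$ yields $p(S(t)x)\leq \euler^{\alpha t}Mq(x)$. Hence $p(\euler^{-\alpha t}S(t)x)\leq Mq(x)$ with the same $q$, which is \ref{it:Hil_Yos_sg_2}.

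The main obstacle is securing the Euler formula, or an equivalent Yosida-approximation limit, in the locally convex setting. The limit must be taken in $\topo$, with seminorms from $\calN_{\topo}$ and the constant $M$ preserved exactly. I would try to extract this from the proof of \prettyref{prop:Hille--Yosida_lcx} in \cite{Choe1985}. Failing that, I would reprove it via the Post--Widder inversion of the Laplace transform representation of $R(\lambda,A)^m$, using the exponential equicontinuity already available to justify dominated convergence of the Riemann integrals. Countable quasi-convexity enters only through \prettyref{cor:type}, in the step \ref{it:Hil_Yos_sg_1}$\Leftrightarrow$\ref{it:Hil_Yos_sg_2}.
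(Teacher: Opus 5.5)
Your proposal is correct and has the same overall structure as the paper's proof, though you handle general $\alpha$ more directly and you leave the Euler formula unproved. The common structure is as follows. \prettyref{cor:type} links \ref{it:Hil_Yos_sg_1} and \ref{it:Hil_Yos_sg_2}. The resolvent estimate in \ref{it:Hil_Yos_sg_3} comes from the Laplace representation of $R(\lambda,A)^m$ under the global bound $p(\euler^{-\alpha t}S(t)x)\le Mq(x)$. For the converse, \prettyref{prop:Hille--Yosida_lcx} produces the semigroup, and an Euler-type exponential formula then transfers the resolvent bound to it.

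The difference lies in the step you identify as the main obstacle. The paper does not import the Euler formula; it proves it. It first treats $\alpha=0$, where \prettyref{prop:Hille--Yosida_lcx} gives genuine equicontinuity of $(S(t))_{t\ge0}$. It then shows $(I-\frac{t}{m}A)^{-m}x\to S(t)x$ by applying the fundamental theorem of calculus to $s\mapsto S(t-s)(I-\frac{s}{m}A)^{-m}x$ for $x\in D(A)$. It extends this to all $x$ by density and equicontinuity, and finally reaches general $\alpha>\omega$ by shifting to $A-\alpha$ and rescaling.

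Your route writes $\frac{n}{t}R(\frac nt,A)=\frac{n/t}{n/t-\alpha}(\frac nt-\alpha)R(\frac nt,A)$ and uses $(1-\frac{\alpha t}{n})^{-n}\to\euler^{\alpha t}$. This removes the need for the shift, but it requires the Euler formula for a semigroup that is only exponentially equicontinuous. Both of your suggested ways to obtain it work:
\begin{itemize}
\item Post--Widder: your own identity $R(\lambda,A)^{n+1}x=\frac1{n!}\int_0^\infty s^n\euler^{-\lambda s}S(s)x\,\d s$ turns $(\frac nt R(\frac nt,A))^{n+1}x$ into an average of $S(s)x$ against a probability kernel concentrating at $s=t$. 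Exponential equicontinuity controls the tails, and getting the $(n+1)$-st power instead of the $n$-th is harmless.
\item The paper's fundamental-theorem-of-calculus argument also runs on $[0,t]$ with only local equicontinuity, once $m$ is large enough that $m/t>\alpha$ and $(1-\frac{\alpha s}{m})^{-m}$ is uniformly bounded for $s\in[0,t]$.
\end{itemize}
Since this is where the real content of \ref{it:Hil_Yos_sg_3}$\Rightarrow$\ref{it:Hil_Yos_sg_2} lies, you need to carry out one of these arguments rather than defer it. The case $t=0$ then follows by letting $t\to0\rlim$.
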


\begin{proof}
    ``\ref{it:Hil_Yos_sg_1}$\Rightarrow$\ref{it:Hil_Yos_sg_3}'' 
     Note that $(\euler^{-\omega t}S(t))_{t\geq 0}$ is locally $\topo$-equicontinuous with a uniform constant $M$.   
     By \prettyref{cor:type}, for all $\alpha>\omega$ we have that $(\euler^{-\alpha t}S(t))_{t\geq 0}$ is $\topo$-equicontinuous with the same uniform constant $M$.
     Hence, \prettyref{prop:Hille--Yosida_lcx} yields that $A$ is $\topo$-densely defined and $(\alpha,\infty)\subseteq \rho_{\topo}(A)$ for all $\alpha>\omega$. 
     Moreover, for all $\alpha>\omega$, the family
     $((\lambda-\alpha)^mR(\lambda,A)^m)_{\lambda>\alpha, m\in\N}$ is $\topo$-equicontinuous.
     For all $\alpha>\omega$, $\lambda>\alpha$ and $x\in X$ we have
     \[
     R(\lambda,A)x = \int_0^\infty \euler^{-\lambda t} S(t)x\,\d t
     \]
     by (the proof of) \cite[Lemma 4.4]{AlbaneseBonetRicker2013} where the integral is an improper Riemann integral in the sequentially complete space $(X,\topo)$. 
     It also follows from \cite[Lemma 4.4]{AlbaneseBonetRicker2013} that $R(\cdot,A)$ is holomorphic from
     $\C_{\alpha}\coloneq\{\mu\in\C\;|\;\re(\mu)>\alpha\}\subseteq \rho_{\topo}(A)$ to $\calL_{\operatorname{b}}(X,\topo)$ where $\calL_{\operatorname{b}}(X,\topo)$ denotes the space $\calL(X,\topo)$ equipped 
     with the topology of uniform convergence on the $\topo$-bounded subsets of $X$. Furthermore, $\frac{\d^m}{\d \lambda^m} R(\lambda,A)x = (-1)^m m! R(\lambda,A)^{m+1}x$ 
     for all $m\in\N$, $\lambda\in\C_{\alpha}$ and $x\in X$ by \cite[Proposition 3.4 (i)+(iii), Remark 3.5 (iv), Lemma 4.4]{AlbaneseBonetRicker2013}. 
     Now, we have 
     \[
       \frac{\d^m}{\d \lambda^m} R(\lambda,A)x 
     = \int_0^{\infty} (-t)^m\euler^{-\lambda t}S(t)x\,\d t
     =\int_0^{\infty} (-t)^m\euler^{-(\lambda-\alpha) t}\euler^{-\alpha t}S(t)x\,\d t
     \]
     for all $\lambda\in\C_{\alpha}$, $m\in\N_{0}$ and $x\in X$.
     Let $p\in\calN_{\topo}$. By \prettyref{cor:type}, there exists $\widetilde{q}_{\alpha}\in\calN_{\topo}$ such that
     \[
          p\left(\frac{\d^m}{\d \lambda^m} R(\lambda,A)x\right) 
     \leq M\int_0^{\infty} t^m\euler^{-(\lambda-\alpha) t} \,\d t\, \widetilde{q}_{\alpha}(x)
     \]
     for all $\lambda\in\C_{\alpha}$, $m\in\N_{0}$ and $x\in X$.
     Since
     \[
       \int_0^{\infty} t^m\euler^{-(\lambda-\alpha) t} \,\d t  
     = \frac{m!}{(\lambda-\alpha)^{m+1}}
     \]
     for all $\lambda\in\C_{\alpha}$ and $m\in\N_{0}$, we obtain
     \[
      p((\lambda-\alpha)^mR(\lambda,A)^mx) 
    = p\left((-1)^{m-1}\frac{(\lambda-\alpha)^m}{(m-1)!} \frac{\d^{m-1}}{\d \lambda^{m-1}} R(\lambda,A)x\right) 
    \leq M\widetilde{q}_{\alpha}(x)
     \]
     for all $\lambda\in\C_{\alpha}$, $m\in\N$ and $x\in X$.
     
    ``\ref{it:Hil_Yos_sg_3}$\Rightarrow$\ref{it:Hil_Yos_sg_2}''
    By \prettyref{prop:Hille--Yosida_lcx}, $A$ generates a $\topo$-strongly continuous exponentially $\topo$-equicontinuous semigroup $(S(t))_{t\geq 0}$ on $X$. Moreover, $(\euler^{-\alpha t}S(t))_{t\geq 0}$ is $\topo$-equicontinuous for all $\alpha>\omega$.
 
    Let $\alpha>\omega$. First, we consider the case $\alpha=0$ and modify the proof of \cite[Theorem 2.12]{Isem18}.
    For $t>0$, $m\in\N$ and $x\in X$ we have
    \begin{equation}\label{eq:Hil_Yos_eq1}
      \left(I-\frac{t}{m}A\right)^{-m}x 
    = \left(\frac{m}{t}\right)^m\left(\frac{m}{t} - A\right)^{-m}x 
    = \left(\frac{m}{t}\right)^m R\left(\frac{m}{t},A\right)^mx 
    \end{equation}
    where $I\coloneq \id$ is the identity on $X$. Moreover, for $0<s<t$, $m\in\N$ and $x\in D(A)$ we have
    \begin{align*}
        \frac{\d}{\d s} S(t-s)\left(I-\frac{s}{m}A\right)^{-m}x & = S(t-s)\left(-A+A\left(I-\frac{s}{m}A\right)^{-1}\right)\left(I-\frac{s}{m}A\right)^{-m}x \\
        & = S(t-s)\left(I-\frac{s}{m}A\right)^{-m}\left(-I+\left(I-\frac{s}{m}A\right)^{-1}\right)Ax.
    \end{align*}
    Now, let $p\in\calN_{\topo}$. By equicontinuity of $(S(t))_{t\geq 0}$, there exist $q_1\in\semis_{\topo}$ and $K_1\geq 0$ such that $p(S(r)y) \leq K_1q_1(y)$ for all $r\geq 0$ and $y\in X$.
    By assumption and \prettyref{eq:Hil_Yos_eq1}, there exists $q_2\in\semis_{\topo}$ such that
    \[
    q_1\left(\left(I-\frac{r}{m}A\right)^{-m}y\right) \leq M q_2 (y) \quad(r>0,\, m\in\N,\, y\in X).
    \]    
    We note that this estimate is also valid for $r=0$.
    For $t\geq 0$, $m\in\N$ and $x\in D(A)$ we thus have by the fundamental theorem of calculus 
    (see e.g.~\cite[Proposition 11]{AlbaneseBonetRicker2012}) that
    \begin{align*}
            p\left(\left(I-\frac{t}{m}A\right)^{-m}x - S(t)x\right) 
        & = p\left(\int_0^t S(t-s)\left(I-\frac{s}{m}A\right)^{-m}\left(-I+\left(I-\frac{s}{m}A\right)^{-1}\right)Ax\,\d s\right) \\
        & \leq \int_0^{t} p\left(S(t-s)\left(I-\frac{s}{m}A\right)^{-m}\left(-I+\left(I-\frac{s}{m}A\right)^{-1}\right)Ax\right)\,\d s\\
        & \leq  K_1 M \int_0^{t} q_2\left(\left(-I+\left(I-\frac{s}{m}A\right)^{-1}\right) Ax\right)\,\d s.
    \end{align*}
    Now, we remark that $\left(I-\frac{s}{m}A\right)^{-1}Ax \to Ax$ as $m\to\infty$ uniformly for $s\in [0,t]$ for all $t\geq 0$. Indeed, for $y\in D(A)$ we have
    \[
      \left(I-\frac{s}{m}A\right)^{-1}y - y 
    = \frac{s}{m}\left(I-\frac{s}{m}A\right)^{-1}Ay = R\left(\frac{m}{s},A\right)Ay
    \]
    and hence, by assumption and \prettyref{eq:Hil_Yos_eq1}, for $\widetilde{p}\in\semis_\topo$ there exists 
    $\widetilde{q}\in\semis_\topo$ such that
    \[
      \widetilde{p}\left(\left(I-\frac{s}{m}A\right)^{-1}y - y\right) 
    = \widetilde{p}\left(R\left(\frac{m}{s},A\right)Ay\right) 
    \leq  \frac{s}{m} M \widetilde{q}\left(Ay\right) \quad(s>0,\, m\in\N)
    \]
    We observe that this estimate is also valid for $s=0$.
    Thus,
    \[
         \sup_{s\in [0,t]} \widetilde{p}\left(\left(I-\frac{s}{m}A\right)^{-1}y - y\right) 
    \leq \frac{t}{m} M \widetilde{q}\left(Ay\right) \to 0.
    \]
    As $((I-\frac{s}{m}A)^{-1} - I)_{s\geq 0, m\in\N}$ is $\topo$-equicontinuous by assumption and \prettyref{eq:Hil_Yos_eq1}, and $D(A)$ is $\topo$-dense, we obtain for all $y\in X$ that $(I-\frac{s}{m}A)^{-1}y \to y$ uniformly for $s\in [0,t]$.
    
    Hence, we arrive at $(I-\frac{t}{m}A)^{-m}x \to S(t)x$ as $m\to\infty$.
    Since $((I-\frac{t}{m}A)^{-m})_{t\geq 0, m\in\N}$ is $\topo$-equicontinuous by assumption and \prettyref{eq:Hil_Yos_eq1}, $S(t)$ is $\topo$-continuous and $D(A)$ is $\topo$-dense, we observe that
    $(I-\frac{t}{m}A)^{-m}x \to S(t)x$ for all $t\geq 0$ and $x\in X$.
    
    By assumption and \prettyref{eq:Hil_Yos_eq1}, for $p\in \calN_{\topo}$ there exists $q\in \calN_{\topo}$ such that
    \[
    p\left(\left(I-\frac{t}{m}A\right)^{-m}x\right)\leq Mq(x) \quad(t \geq 0,\, m\in\N,\, x\in X).
    \]
    Therefore, $p(S(t)x)\leq M q(x)$ for all $t\geq 0$ and $x\in X$.

    For general $\alpha>\omega$ we shift and consider $\widetilde{A}\coloneq A-\omega$. By assumption, for all $\alpha>\omega$ and $p\in\calN_\topo$ there exists $q\in\calN_\topo$ such that, with 
    $\mu\coloneq\lambda+\alpha>\alpha$ for $\lambda>0$,
    \[
      p(\lambda^m R(\lambda,\widetilde{A})^m x) 
    = p(\lambda^m R(\lambda+\alpha,A)^m x)
    = p((\mu-\alpha)^m R(\mu,A)^m x) 
    \leq Mq(x)
    \]
    for all $\lambda>0$, $m\in\N$ and $x\in X$. Hence, by the first part, $\widetilde{A}$ generates a $\topo$-strongly continuous semigroup $(\widetilde{S}(t))_{t\geq0}$ on $X$ such that for all $p\in \calN_\topo$ there exists $q\in\calN_\topo$ such that 
    \[
    p(\widetilde{S}(t)x) \leq Mq(x) \quad(t\geq 0,\, x\in X).
    \]
    Therefore, by rescaling, $A = \widetilde{A}+\alpha$ also generates the $\topo$-strongly continuous semigroup $(S(t))_{t\geq0} \coloneq (\euler^{\alpha t}\widetilde{S}(t))_{t\geq 0}$ on $X$ such that for all $p\in \calN_\topo$ 
    there exists $q\in\calN_\topo$ such that 
    \[
    p(\euler^{-\alpha t}S(t)x) = p(\widetilde{S}(t)x) \leq Mq(x) \quad(t\geq 0,\, x\in X).
    \]
    
    ``\ref{it:Hil_Yos_sg_2}$\Rightarrow$\ref{it:Hil_Yos_sg_1}'' This follows from \prettyref{cor:type}.     
\end{proof}

\begin{remark}\label{rem:Hille-Yosida_correction}
     Comparing \prettyref{thm:Hille_Yosida_sg} with \cite[Theorem 6.4]{Kraaij2016}, we note that instead of \eqref{eq:res_estimate_HY_sg} 
     the latter has that for all $\alpha>\omega$ and $p\in\calN_{\topo}$ there exists $q\in\calN_{\topo}$ such that
     \[
     p(m(\lambda-\omega)^m R(m\lambda,A)^mx) \leq Mq(x) \quad(\lambda\geq \alpha,\, m\in\N,\, x\in X).
     \]
     First, we note that in (the proof of) \cite[Theorem 6.4]{Kraaij2016} the rescaling argument is not worked out correctly and the inequality 
     above should actually be 
     \[
     p((m\lambda-\omega)^m R(m\lambda,A)^mx) \leq Mq(x) \quad(\lambda\geq \alpha, m\in\N, x\in X).
     \]
     Furthermore, even this corrected inequality is problematic if $\omega<0$. If $\lambda\in\R$ is such that $\omega<\alpha\leq\lambda<0$, then 
     it might happen that there is $m\in\N$ such that $m\lambda\notin\rho_{\topo}(A)$, 
     see e.g.~\cite[Chap.~I, 4.2 Proposition (iv), 4.3 Definition]{EngelNagel2000}. 
     Looking at the proof of \cite[Theorem 6.4]{Kraaij2016}, it seems that (even with the correct rescaling), 
     the statements of \cite[Theorem 6.4]{Kraaij2016} are still only valid for $\omega=0$. 
     For this reason, we provide \prettyref{thm:Hille_Yosida_sg} as a corrected version of it and its consequence 
     \cite[Corollary 6.5]{Kraaij2016}. 
     Moreover, \cite[Proposition 4.8 (c)]{KruseSchwenninger2024} and \cite[Theorem 3.17 (a)]{KruseSchwenninger2022} use 
     \cite[Theorem 6.4]{Kraaij2016} and \cite[Corollary 6.5]{Kraaij2016}, respectively, in their proofs and 
     they remain valid by replacing the latter results by \prettyref{thm:Hille_Yosida_sg}.
\end{remark}

We can also recover the Hille--Yosida theorem for bi-continuous semigroups, see \cite[Theorem 1.28]{Kuehnemund2001}, 
\cite[Theorem 16]{Kuehnemund2003} and \cite[Theorem 1.2.8]{Farkas2003}, in the case where $(X,\gamma)$ is C-sequential. 
Let $(X,\norm{\cdot},\topo)$ be a Saks space and $(S(t))_{t\geq 0}$ a locally $\topo$-bi-continuous semigroup 
on $X$. According to \cite[Definition 1.2.6]{Farkas2003}, its \emph{bi-generator} is the linear operator defined by 
    \begin{align*}
    D(A_{\norm{\cdot},\topo})&\coloneq\bigl\{x\in X \;|\; \topo\text{-}\lim_{t\to 0\rlim} \tfrac{1}{t} (S(t)x-x)\text{ exists and }\sup_{t\in(0,1]}\tfrac{1}{t}\norm{S(t)x-x}<\infty \bigr\},\\
    A_{\norm{\cdot},\topo}x &\coloneq \topo\text{-}\lim_{t\to 0\rlim} \tfrac{1}{t} (S(t)x-x) \quad(x\in  D(A_{\norm{\cdot},\topo})). 
    \end{align*}
    Due to \prettyref{prop:tau_gamma_strong}, $(S(t))_{t\geq 0}$ is also $\gamma$-strongly continuous. Let $A$ with domain $D(A)$ 
    denote its generator when considered as a $\gamma$-strongly continuous semigroup. Then we have
    \begin{equation}\label{eq:bi_gen_sg}
     D(A_{\norm{\cdot},\topo})=D(A)\quad\text{and}\quad  A_{\norm{\cdot},\topo}=A
    \end{equation}
    by \cite[I.1.10 Proposition]{Cooper1978} (cf.~\cite[5.1 Proposition]{Kruse2024b}).

\begin{corollary}[{cf.~\cite[Theorem 1.28]{Kuehnemund2001}}]
\label{cor:Hille--Yosida_bi-continuous}
    Let $(X,\norm{\cdot},\topo)$ be a sequentially complete C-sequential Saks space.
    Let $(A,D(A))$ be a linear operator on $X$, $\omega\in\R$ and $M\geq 1$. Then the following assertions 
    are equivalent.
    \begin{enumerate}[label=\upshape(\alph*), leftmargin=*]
        \item\label{it:Hil_Yos_bicont_1} $A$ is the bi-generator of a locally $\topo$-bi-continuous and exponentially bounded semigroup $(S(t))_{t\geq 0}$ on $X$ such that 
        \[
        \norm{S(t)}_{\calL(X)}\leq M\euler^{\omega t}\quad(t\geq 0).
        \]
        \item\label{it:Hil_Yos_bicont_2} $A$ is bi-closed, bi-densely defined, $(\omega,\infty)\subseteq\rho_{\norm{\cdot}}(A)$ with
            \[
            \sup_{\lambda>\omega, m\in\N} \norm{(\lambda-\omega)^m R(\lambda,A)^m}_{\calL(X)} \leq M
            \]
            and the family $\bigl((s-\alpha)^m R(s,A)^m\bigr)_{s> \alpha,\, m\in\N}$ 
            is bi-equicontinuous for all $\alpha>\omega$.
    \end{enumerate}
\end{corollary}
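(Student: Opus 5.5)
The plan is to translate both assertions into statements about the sequentially complete C-sequential space $(X,\gamma)$ and then invoke \prettyref{thm:Hille_Yosida_sg} with $\topo$ replaced by $\gamma$. First we check that the hypotheses of \prettyref{thm:Hille_Yosida_sg} hold for $\gamma$:
\begin{enumerate}[label=\upshape(\roman*), leftmargin=*]
\item $\gamma\subseteq\topo_{\norm{\cdot}}$ holds by definition of the mixed topology.
\item $(X,\gamma)$ is sequentially complete by assumption.
\item $\gamma$-bounded sets are $\norm{\cdot}$-bounded by \prettyref{rem:Saks_bounded_sets} \ref{it:Saks_bounded_sets_2}.
\item $\calN_{\gamma}$ is countably (quasi-)convex by \prettyref{rem:count_conv} \ref{it:count_conv_1}, since $(X,\gamma)$ is C-sequential.
\end{enumerate}
Also $(X,\norm{\cdot})$ is Banach by \prettyref{rem:Saks_seq_complete_Banach}.

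\textbf{\ref{it:Hil_Yos_bicont_1}$\Rightarrow$\ref{it:Hil_Yos_bicont_2}.} By \prettyref{cor:bi_cont_quasi_equicont_C_seq_sg_cosine} \ref{it:bi_cont_sg_cosine_rom1}, the semigroup is $\gamma$-strongly continuous and locally (indeed exponentially) $\gamma$-equicontinuous. By \eqref{eq:bi_gen_sg}, the bi-generator coincides with the $\gamma$-generator. Together with the norm bound $\norm{S(t)}\le M\euler^{\omega t}$, \prettyref{cor:type} (\ref{it:type_1}$\Rightarrow$\ref{it:type_2}) gives condition \ref{it:Hil_Yos_sg_1} of \prettyref{thm:Hille_Yosida_sg} for $\gamma$. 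Hence condition \ref{it:Hil_Yos_sg_3} holds: $A$ is $\gamma$-densely defined, $(\omega,\infty)\subseteq\rho_\gamma(A)$, and \eqref{eq:res_estimate_HY_sg} holds with seminorms in $\calN_\gamma$.

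From \eqref{eq:res_estimate_HY_sg} we extract the required properties.
\begin{enumerate}[label=\upshape(\roman*), leftmargin=*]
\item \emph{Norm estimate.} Take the supremum over $p\in\calN_\gamma$ and use $q\le\norm{\cdot}$ together with \prettyref{rem:Saks_bounded_sets} \ref{it:Saks_bounded_sets_1}. This gives $\norm{(\lambda-\alpha)^mR(\lambda,A)^m}\le M$ for all $\lambda>\alpha>\omega$. Letting $\alpha\to\omega\rlim$ for fixed $\lambda$ yields the claimed bound. It also shows $R(\lambda,A)\in\calL(X)$, so $\rho_\gamma(A)\subseteq\rho_{\norm{\cdot}}(A)$.
\item \emph{Bi-equicontinuity.} For $\alpha>\omega$ the family $((s-\alpha)^mR(s,A)^m)$ is $\gamma$-equicontinuous by \eqref{eq:res_estimate_HY_sg}. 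By \prettyref{prop:bi_equicont_mixed_equicont} \ref{it:bi_equicont_mixed_3}, this is the same as bi-equicontinuity.
\item \emph{Bi-density.} Sequential $\gamma$-density of $D(A)$ follows from \prettyref{prop:seq_densely_defined}, which by \prettyref{rem:bi_notions_operators} is bi-density.
\item \emph{Bi-closedness.} The $\gamma$-generator has nonempty resolvent set $\rho_\gamma(A)$, so it is $\gamma$-closed and in particular sequentially $\gamma$-closed, i.e.\ bi-closed.
\end{enumerate}

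\textbf{\ref{it:Hil_Yos_bicont_2}$\Rightarrow$\ref{it:Hil_Yos_bicont_1}.} We aim to verify condition \ref{it:Hil_Yos_sg_3} of \prettyref{thm:Hille_Yosida_sg} for $\gamma$.
\begin{enumerate}[label=\upshape(\roman*), leftmargin=*]
\item \emph{Density.} Bi-density means sequential $\gamma$-density, hence $\gamma$-density.
\item \emph{Resolvent set.} Bi-equicontinuity of the family with $m=1$ and C-sequentiality give, via \prettyref{prop:bi_equicont_mixed_equicont} \ref{it:bi_equicont_mixed_3}, that $R(\lambda,A)\in\calL(X,\gamma)$ for each $\lambda>\omega$. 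Hence $(\omega,\infty)\subseteq\rho_\gamma(A)$.
\item \emph{The uniform constant $M$ (main obstacle).} The same proposition gives $\gamma$-equicontinuity of $((s-\alpha)^mR(s,A)^m)_{s>\alpha,m}$, but with an unspecified constant. We need \eqref{eq:res_estimate_HY_sg} with the exact constant $M$ and $q\in\calN_\gamma$, where the proof of \prettyref{cor:type} uses \cite[Lemma 4.5]{Kraaij2016}. Our approach is to apply \cite[Lemma 4.5]{Kraaij2016} to the $\gamma$-equicontinuous family $\calS_\alpha=((s-\alpha)^mR(s,A)^m)_{s>\alpha,m}$. This needs its operator norms bounded by $M$, which follows from the norm hypothesis since $(s-\alpha)^m\le(s-\omega)^m$. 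The lemma then produces, for every $p\in\calN_\gamma$, some $q\in\calN_\gamma$ with $\sup_{T\in\calS_\alpha}p(Tx)\le Mq(x)$. This is exactly \eqref{eq:res_estimate_HY_sg}.
\end{enumerate}

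\prettyref{thm:Hille_Yosida_sg} now yields a $\gamma$-strongly continuous semigroup generated by $A$ satisfying condition \ref{it:Hil_Yos_sg_2}. By \prettyref{cor:type} (\ref{it:type_3}$\Rightarrow$\ref{it:type_1}), it is locally $\gamma$-equicontinuous with $\norm{S(t)}\le M\euler^{\omega t}$. Then \prettyref{cor:bi_cont_quasi_equicont_C_seq_sg_cosine} \ref{it:bi_cont_sg_cosine_rom1} shows it is locally $\topo$-bi-continuous and exponentially bounded. Finally, \eqref{eq:bi_gen_sg} identifies $A$ as its bi-generator.
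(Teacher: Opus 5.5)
Your proposal is correct and follows essentially the same route as the paper. Both arguments transfer the statement to $(X,\gamma)$ via the corollary on semigroups on C-sequential Saks spaces, the type corollary and the locally convex Hille--Yosida theorem with $\calN_{\gamma}$. Both recover the norm bounds by taking suprema over $\calN_{\gamma}$ and letting $\alpha\to\omega\rlim$, and both get the sharp constant $M$ in the converse direction from \cite[Lemma 4.5]{Kraaij2016}.

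The only deviations are cosmetic. You obtain $\gamma$-closedness of $A$ from $\rho_{\gamma}(A)\neq\varnothing$, whereas the paper cites Komura's result. You obtain $\rho_{\gamma}(A)\subseteq\rho_{\norm{\cdot}}(A)$ directly from the norm estimate, whereas the paper uses the fact that sequentially $\gamma$-continuous operators are norm-bounded.
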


\begin{proof}
    We recall that $(X,\norm{\cdot},\topo)$ being a sequentially complete C-sequential Saks space implies that 
    $(X,\gamma)$ is sequentially complete and C-sequential (see \prettyref{defi:seq_compl_C-seq_Saks}).

    ``\ref{it:Hil_Yos_bicont_1}$\Rightarrow$\ref{it:Hil_Yos_bicont_2}''
    Since $(X,\gamma)$ is sequentially complete and C-sequential, the locally $\topo$-bi-continuous and exponentially bounded semigroup 
    $(S(t))_{t\geq 0}$ generated by $A$ is $\gamma$-strongly continuous and exponentially $\gamma$-equi\-con\-tin\-u\-ous
    by \prettyref{rem:Saks_seq_complete_Banach} \ref{it:Saks_seq:compl_Banach_1} and \prettyref{cor:bi_cont_quasi_equicont_C_seq_sg_cosine} \ref{it:bi_cont_sg_cosine_rom1}. 
    It follows from \eqref{eq:bi_gen_sg}, \prettyref{prop:seq_densely_defined} and \cite[Proposition 1.4]{Komura1968} that $A$ is sequentially 
    $\gamma$-densely defined and $\gamma$-closed, implying that $A$ is bi-densely defined and bi-closed 
    by \prettyref{rem:bi_notions_operators}.
    Further, the local $\gamma$-equicontinuity of $(S(t))_{t\geq 0}$ and $\norm{S(t)}_{\calL(X)}\leq M\euler^{\omega t}$ for all $t\geq 0$ 
    yield that for all $t_0\geq 0$ and $p\in\calN_{\gamma}$ there exists $q\in\calN_{\gamma}$ such that 
    \[
    p(\euler^{-\omega t}S(t)x)\leq Mq(x) \quad(t\in [0,t_0],\, x\in X)
    \]
    by \prettyref{rem:count_conv} \ref{it:count_conv_1}, \prettyref{cor:type} and \prettyref{rem:Saks_bounded_sets} \ref{it:Saks_bounded_sets_2}.
    Due to \prettyref{rem:count_conv} \ref{it:count_conv_1}, \prettyref{rem:Saks_bounded_sets} \ref{it:Saks_bounded_sets_2} 
    and \prettyref{thm:Hille_Yosida_sg}, we have that $(\omega,\infty)\subseteq\rho_{\gamma}(A)$ and for all $\alpha>\omega$ 
    and all $p\in\calN_{\gamma}$ there exists $q\in\calN_{\gamma}$ such that
    \[
    p((s-\alpha)^m R(s,A)^m x) \leq Mq(x) \quad(s>\alpha,\, m\in\N,\, x\in X).
    \] 
    In particular, this means that $\bigl((s-\alpha)^m R(s,A)^m\bigr)_{s> \alpha,\, m\in\N}$  
    is $\gamma$-equicontinuous and therefore bi-equiconti\-nuous by \prettyref{prop:bi_equicont_mixed_equicont} \ref{it:bi_equicont_mixed_3}. Moreover, $\rho_{\gamma}(A)\subseteq \rho_{\norm{\cdot}}(A)$ 
    by \prettyref{rem:gamma_seq_cont_norm_cont} and so $(\omega,\infty)\subseteq\rho_{\norm{\cdot}}(A)$.
    Furthermore, using that $q(x)\leq\norm{x}$ for all $x\in X$, and taking the supremum over all $p\in\calN_{\gamma}$, 
    we obtain
    \[
    \norm{(s-\alpha)^m R(s,A)^m x} \leq M\norm{x} \quad(s>\alpha,\, m\in\N,\, x\in X)
    \]
    for all $\alpha>\omega$ by \prettyref{rem:Saks_bounded_sets} \ref{it:Saks_bounded_sets_1}. 
    Now, letting $\alpha\to\omega\rlim$, we get that 
    \[
    \norm{(s-\omega)^m R(s,A)^m x} \leq M\norm{x} \quad(s>\omega,\, m\in\N,\, x\in X), 
    \]
    implying 
    \[
    \sup_{\lambda>\omega, m\in\N} \norm{(\lambda-\omega)^m R(\lambda,A)^m}_{\calL(X)} \leq M.
    \]

    ``\ref{it:Hil_Yos_bicont_2}$\Rightarrow$\ref{it:Hil_Yos_bicont_1}''
    Since $A$ bi-densely defined, it is sequentially $\gamma$-densely defined by \prettyref{rem:bi_notions_operators} 
    and so $\gamma$-densely defined. 
    By \prettyref{prop:bi_equicont_mixed_equicont} \ref{it:bi_equicont_mixed_3}, for all $\alpha>\omega$ 
    we have that $((s-\alpha)^m R(s,A)^m)_{s> \alpha,\, m\in\N}$ is $\gamma$-equicontinuous. 
    In particular, it holds that $(\omega,\infty)\subseteq\rho_{\gamma}(A)$. Noting that $0<s-\alpha<s-\omega$ for all 
    $s>\alpha$, we have 
    \[
         \sup_{s> \alpha,\, m\in\N}\norm{(s-\alpha)^m R(s,A)^m}_{\calL(X)}
    \leq \sup_{s> \omega,\, m\in\N}\norm{(s-\omega)^m R(s,A)^m }_{\calL(X)}\leq M
    \]
    for all $\alpha>\omega$. Hence, by \cite[Lemma 4.5]{Kraaij2016} and \prettyref{rem:Saks_bounded_sets} \ref{it:Saks_bounded_sets_2}, for all $\alpha>\omega$ and all $p\in\calN_{\gamma}$ there exists $q\in\calN_{\gamma}$ such that
    \[
    p((\lambda-\alpha)^m R(\lambda,A)^m x) \leq Mq(x) \quad(\lambda>\alpha,\, m\in\N,\, x\in X).
    \]  
    By \prettyref{rem:count_conv} \ref{it:count_conv_1}, \prettyref{rem:Saks_bounded_sets} \ref{it:Saks_bounded_sets_2} 
    and \prettyref{thm:Hille_Yosida_sg}, this implies that $A$ is the generator of a $\gamma$-strongly continuous semigroup $(S(t))_{t\geq 0}$ on $X$ such that for all $t_0\geq 0$ and $p\in\calN_{\gamma}$ there exists $q\in\calN_{\gamma}$ such that 
    \[
    p(\euler^{-\omega t}S(t)x)\leq Mq(x) \quad(t\in [0,t_0],\, x\in X).
    \]
    In particular, $(S(t))_{t\geq 0}$ is locally $\gamma$-equicontinuous, so locally bi-equicontinuous by \prettyref{prop:bi_equicont_mixed_equicont} \ref{it:bi_equicont_mixed_3}, and $\norm{S(t)}_{\calL(X)}\leq M\euler^{\omega t}$ for all $t\geq 0$ by 
    \prettyref{rem:count_conv} \ref{it:count_conv_1}, \prettyref{cor:type} and \prettyref{rem:Saks_bounded_sets} \ref{it:Saks_bounded_sets_2}. 
    We conclude that $(S(t))_{t\geq 0}$ is locally $\topo$-bi-continuous and exponentially bounded. 
\end{proof}

\begin{remark}\label{rem:additional_assump}
    Due to \prettyref{rem:Saks_bi-admissible}, the only additional assumption 
    in \prettyref{cor:Hille--Yosida_bi-continuous} compared to \cite[Theorem 1.28]{Kuehnemund2001} is 
    that $(X,\norm{\cdot},\topo)$ is C-sequential. This additional assumption is mild and fulfilled in all the interesting cases where bi-continuous semigroups appeared in applications, see \cite[Section 4]{KruseSchwenninger2022}.
\end{remark}

We can also recover the generation theorem for bi-continuous cosine families, see \cite[Theorem, p.~58]{DuanSun2006} and \cite[Theorem 4.6]{Budde2024}, from the corresponding one in 
Hausdorff locally convex spaces, in a similar manner as for semigroups.

\begin{proposition}[{\cite[Chapter 1, Theorem 4.4]{XiaoLiang1998}}]
\label{prop:generation_cosine_lcx}
    Let $(X,\topo)$ be a sequentially complete Hausdorff locally convex space, $(A,D(A))$ a linear operator on $X$ and $\omega\geq 0$. 
    Then the following assertions are equivalent.
    \begin{enumerate}[label=\upshape(\alph*), leftmargin=*]
        \item $A$ is the generator of a strongly continuous cosine family $(S(t))_{t\geq 0}$ on $X$ such that $(\euler^{-\omega t}S(t))_{t\geq 0}$ is equicontinuous.
        \item $A$ is densely defined and closed, $(\omega^2,\infty)\in\rho_{\topo}(A)$ and $\left(\frac{(\lambda-\omega)^{m+1}}{m!} \frac{\d^m}{\d\lambda^m} \lambda R(\lambda^2,A)\right)_{\lambda>\omega, m\in\N_0}$ is equicontinuous.
    \end{enumerate}
\end{proposition}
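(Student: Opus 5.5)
Since this proposition is a generation theorem for cosine families cited from \cite[Chapter 1, Theorem 4.4]{XiaoLiang1998}, I would prove it in the classical way for exponentially bounded cosine families on Banach spaces, replacing norms by seminorms throughout. The main tool is the Laplace transform representation: if $(S(t))_{t\geq 0}$ is strongly continuous and $(\euler^{-\omega t}S(t))_{t\geq 0}$ is equicontinuous, then for $\lambda>\omega$ the improper Riemann integral $\int_0^\infty \euler^{-\lambda t}S(t)x\,\d t$ exists in the sequentially complete space $X$. It equals $\lambda R(\lambda^2,A)x$, as in \eqref{eq:resol_laplace_cosine} via \cite[Proposition 2]{LiShaw1993}.

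For (a)$\Rightarrow$(b), I would start with density and closedness of $A$. Density follows from \prettyref{prop:seq_densely_defined} (taken from \cite[5.4 Lemma]{Fattorini1969a}). Closedness follows from the standard identities $S(t)x-x=\int_0^t\int_0^s S(r)Ax\,\d r\,\d s$ for $x\in D(A)$, together with $\int_0^t\int_0^s S(r)x\,\d r\,\d s\in D(A)$ for all $x$. Next I would verify that $\lambda^2-A$ is bijective with inverse $\lambda^{-1}\int_0^\infty \euler^{-\lambda t}S(t)\,\d t$ for $\lambda>\omega$; this is a direct computation using the cosine functional equation. Then I would differentiate under the integral to get
\[
\frac{\d^m}{\d\lambda^m}\lambda R(\lambda^2,A)x=\int_0^\infty(-t)^m\euler^{-(\lambda-\omega)t}\euler^{-\omega t}S(t)x\,\d t .
\]
Equicontinuity gives, for every $p$, some $q$ and $K$ with $p(\cdot)\leq K q(x)\,m!/(\lambda-\omega)^{m+1}$, which is exactly the claimed equicontinuity. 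The computation is identical to the one in the proof of \prettyref{thm:Hille_Yosida_sg}.

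For (b)$\Rightarrow$(a), the cleanest route is reduction to semigroups. On $\mathcal{X}\coloneq X\times X$ I would consider the operator matrix $\mathcal{A}=\begin{pmatrix}0&I\\A&0\end{pmatrix}$, but on the phase space $D(A)\times X$ this requires a graph-type topology on $D(A)$, which is awkward in the locally convex setting. So I would instead follow the direct Widder-type approach used by Xiao--Liang. The hypothesis says that $r(\lambda)\coloneq\lambda R(\lambda^2,A)$ satisfies Widder estimates in every continuous seminorm, uniformly in $m$, with $C^\infty$ dependence on $\lambda$. A vector-valued Post--Widder inversion,
\[
S(t)x=\lim_{m\to\infty}\frac{(-1)^m}{m!}\Bigl(\frac{m}{t}\Bigr)^{m+1} r^{(m)}\Bigl(\frac{m}{t}\Bigr)x ,
\]
then defines a candidate family. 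Concretely, I would first prove convergence for $x\in D(A^2)$ (or $D(A)$), using the resolvent identity to show the sequence is $\topo$-Cauchy, with rates controlled by $q(Ax)$ and $q(A^2x)$. I would then extend to all $x$ by density plus the equicontinuity of the approximants. This yields strong continuity and the bound that $(\euler^{-\omega t}S(t))_{t\geq 0}$ is equicontinuous, with the same constants passing to the limit in each seminorm. Uniqueness of the Laplace transform, tested with functionals in $(X,\topo)'$ and combined with the scalar Widder theorem, identifies $\int_0^\infty \euler^{-\lambda t}S(t)x\,\d t=r(\lambda)x$. The cosine functional equation then follows from the resolvent identity for $R(\cdot^2,A)$ by a double Laplace transform argument, and the generator is recovered as $A$ from the Laplace representation and closedness.

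The main obstacle is this inversion step. One has to show that the Post--Widder approximants converge in $\topo$ without any norm, and that the limit satisfies the cosine equation. I would handle it scalarly: for each $x'\in(X,\topo)'$ apply the scalar Widder theorem to $\langle r(\lambda)x,x'\rangle$. I would then lift to $X$ using the uniform seminorm bounds, sequential completeness, and density of $D(A)$, as in the proof of \prettyref{thm:Hille_Yosida_sg}.
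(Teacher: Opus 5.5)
Your direction (a)$\Rightarrow$(b) is the standard Laplace-transform argument and is fine. It is the same computation the paper uses to obtain \eqref{eq:resolv_estimate_cosine}.

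The gap is in (b)$\Rightarrow$(a), exactly at the step you call the main obstacle. You never give a mechanism that makes the Post--Widder approximants $P_{m,t}x=\frac{(-1)^m}{m!}(\frac{m}{t})^{m+1}r^{(m)}(\frac{m}{t})x$ converge in $\topo$. With $\lambda=m/t$, hypothesis (b) only gives $p(P_{m,t}x)\leq Kq(x)(1-\omega t/m)^{-(m+1)}$ for $m>\omega t$. That is boundedness, not convergence. The resolvent identity, or the relation $r(\lambda)x=\lambda^{-1}x+\lambda^{-2}r(\lambda)Ax$ on $D(A)$, only rearranges these bounded quantities and does not by itself produce a Cauchy estimate.

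Pointing to the proof of \prettyref{thm:Hille_Yosida_sg} is circular. There the semigroup is first obtained from \prettyref{prop:Hille--Yosida_lcx}, and the approximants are then compared with it by differentiating $s\mapsto S(t-s)(I-\frac{s}{m}A)^{-m}x$. Here no cosine family exists yet.

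The scalar route does not lift by itself either. Widder's theorem applied to $\langle r(\cdot)x,x'\rangle$ gives, separately for each $x'$, an $L^\infty$-type determining function defined only almost everywhere. Such functions cannot in general be assembled into an $X$-valued function: the vector-valued Widder theorem already fails in Banach spaces without the Radon--Nikod\'ym property. Moreover, for non-densely defined $A$ the resolvent condition alone yields only a once-integrated cosine family.

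What closes the gap is a quantitative, uniform-over-polars estimate.
\begin{itemize}
\item[(1)] Let $x\in D(A)$ and let $x'$ lie in the polar $U_p^\circ$ of $\{p\leq 1\}$. The determining function of $\langle r(\cdot)x,x'\rangle$ is $G_{x'}(t)=\langle x,x'\rangle+\int_0^t(t-s)h_{x'}(s)\,\d s$, where $|h_{x'}(s)|\leq Kq(Ax)\euler^{\omega s}$ comes from scalar Widder applied to $\langle r(\cdot)Ax,x'\rangle$.
\item[(2)] So $G_{x'}$ is $C^1$ with $|G_{x'}'|\leq KT\euler^{\omega T}q(Ax)$ on $[0,T]$, uniformly in $x'\in U_p^\circ$.
\item[(3)] The quantity $\langle P_{m,t}x,x'\rangle$ is the expectation of $G_{x'}$ at a Gamma-distributed time with mean $(m+1)t/m$ and variance $(m+1)t^2/m^2$. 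Handling the exponential weight by Cauchy--Schwarz, $|\langle P_{m,t}x,x'\rangle-G_{x'}(t)|=O(m^{-1/2})$, locally uniformly in $t$ and uniformly in $x'\in U_p^\circ$.
\item[(4)] Hence $(P_{m,t}x)_m$ is $p$-Cauchy for every $p$. Sequential completeness gives a limit $S(t)x$, and $S(\cdot)x$ is Lipschitz continuous.
\item[(5)] The bound on $p(P_{m,t}\,\cdot)$ then extends convergence, the exponential bound and strong continuity to all of $X$ by density of $D(A)$.
\end{itemize}

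This is what the cited proof packages in the Widder-type theorem \cite[Chapter 1, Theorem 2.1]{XiaoLiang1998}. It yields an $X$-valued Lipschitz family $F(\cdot)x$ with $R(\lambda^2,A)x=\int_0^\infty\euler^{-\lambda t}F(t)x\,\d t$ for $\lambda>\omega$. The paper's remark is precisely that this works with $a=\omega$. Density of $D(A)$ then shows that $F(\cdot)x$ is differentiable, and the cosine family is $F'(\cdot)x$, as in the $\operatorname{Sin}$ argument in the proof of \prettyref{prop:generation_cosine_bi-continuous}.

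Once $S$ is available, your remaining steps are fine: the functional equation via a double Laplace transform, and identification of the generator through the (a)$\Rightarrow$(b) computation.
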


\begin{remark}
Note that \cite[Chapter 1, Theorem 4.4]{XiaoLiang1998} states
\begin{enumerate}
    \item[(b')] $A$ is densely defined and closed, and there is $a\geq\omega$ such that $(a^2,\infty)\in\rho_{\topo}(A)$ and\\ $\left(\frac{(\lambda-\omega)^{m+1}}{m!} \frac{\d^m}{\d\lambda^m} \lambda R(\lambda^2,A)\right)_{\lambda>a, m\in\N_0}$ is equicontinuous.
\end{enumerate}
instead of (b). Inspecting the proof, it relies on a version of Post--Widder's theorem \cite[Chapter 1, Theorem 2.1]{XiaoLiang1998} in which $a=\omega$ can be chosen. See also \cite[Theorem 3.1]{Fattorini1969b} for the case of $(X,\topo)$ being complete and barrelled.
\end{remark}

Let $(X,\norm{\cdot},\topo)$ be a Saks space and $(S(t))_{t\geq 0}$ a locally $\topo$-bi-continuous cosine family on $X$.
According to \cite[Definition 4]{CangChenSong2014}, its \emph{bi-generator} is the linear operator defined by 
    \begin{align*}
    D(A_{\norm{\cdot},\topo})&\coloneq\bigl\{x\in X \;|\; \topo\text{-}\lim_{t\to 0\rlim} \tfrac{2}{t^2} (S(t)x-x)\text{ exists and }\sup_{t\in(0,1]}\tfrac{2}{t^2}\norm{S(t)x-x}<\infty \bigr\},\\
    A_{\norm{\cdot},\topo}x &\coloneq \topo\text{-}\lim_{t\to 0\rlim} \tfrac{2}{t^2} (S(t)x-x) \quad(x\in  D(A_{\norm{\cdot},\topo})). 
    \end{align*}
Due to \prettyref{prop:tau_gamma_strong}, $(S(t))_{t\geq 0}$ is also $\gamma$-strongly continuous. Let $A$ with domain $D(A)$ 
denote its generator when considered as a $\gamma$-strongly continuous cosine family. Then we have
    \begin{equation}\label{eq:bi_gen_cs}
     D(A_{\norm{\cdot},\topo})=D(A)\quad\text{and}\quad  A_{\norm{\cdot},\topo}=A
    \end{equation}
by \cite[I.1.10 Proposition]{Cooper1978} as in the case of semigroups.

\begin{proposition}[{cf.~\cite[Theorem, p.~58]{DuanSun2006}}]
\label{prop:generation_cosine_bi-continuous}
Let $(X,\norm{\cdot},\topo)$ be a sequentially complete C-sequential Saks space.
    Let  $(A,D(A))$ be a linear operator on $X$, $\omega\geq 0$ and  $M\geq 1$. Then the following assertions are equivalent.
    \begin{enumerate}[label=\upshape(\alph*), leftmargin=*]
        \item\label{it:Hil_Yos_cos_bicont_1} $A$ is the bi-generator of a locally $\topo$-bi-continuous and exponentially bounded cosine family $(S(t))_{t\geq 0}$ on $X$ such that
        \[
        \norm{S(t)}_{\calL(X)} \leq M\euler^{\omega t} \quad (t\geq 0).
        \]
        \item\label{it:Hil_Yos_cos_bicont_2}
        $A$ is bi-densely defined, bi-closed, $(\omega^2,\infty)\in\rho_{\norm{\cdot}}(A)$ and 
            \begin{equation}\label{eq:generation_cosine_bi-cont}
            \sup_{\lambda>\omega, m\in\N_0} \norm{\frac{(\lambda-\omega)^{m+1}}{m!} \frac{\d^m}{\d\lambda^m} \lambda R(\lambda^2,A)}_{\calL(X)} \leq M
            \end{equation}
            and the family $\bigl(\frac{(\lambda-\alpha)^{m+1}}{m!} \frac{\d^m}{\d\lambda^m} \lambda R(\lambda^2,A)\bigr)_{\lambda> \alpha,\, m\in\N_0}$ is bi-equicontinuous for all $\alpha>\omega$. 
    \end{enumerate}
\end{proposition}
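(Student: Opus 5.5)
The proof will mirror \prettyref{cor:Hille--Yosida_bi-continuous}, with \prettyref{thm:Hille_Yosida_sg} replaced by a cosine version built on \prettyref{prop:generation_cosine_lcx}. Throughout, $(X,\gamma)$ is sequentially complete and C-sequential. Hence $\calN_{\gamma}$ is countably quasi-convex by \prettyref{rem:count_conv} \ref{it:count_conv_1}, and $\gamma$-bounded sets coincide with $\norm{\cdot}$-bounded sets by \prettyref{rem:Saks_bounded_sets} \ref{it:Saks_bounded_sets_2}. We abbreviate
\[
T_{\alpha}(\lambda,m)\coloneq\frac{(\lambda-\alpha)^{m+1}}{m!}\frac{\d^m}{\d\lambda^m}\lambda R(\lambda^2,A).
\]

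``\ref{it:Hil_Yos_cos_bicont_1}$\Rightarrow$\ref{it:Hil_Yos_cos_bicont_2}''. By \prettyref{cor:bi_cont_quasi_equicont_C_seq_sg_cosine} \ref{it:bi_cont_sg_cosine_rom1}, the cosine family is $\gamma$-strongly continuous and exponentially $\gamma$-equicontinuous. By \eqref{eq:bi_gen_cs}, its bi-generator is its $\gamma$-generator. \prettyref{prop:seq_densely_defined} together with \prettyref{prop:generation_cosine_lcx} (closedness) and \prettyref{rem:bi_notions_operators} then give that $A$ is bi-densely defined and bi-closed.

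Next, \prettyref{cor:type} gives, for every $\alpha>\omega$ and $p\in\calN_{\gamma}$, some $q\in\calN_{\gamma}$ with $p(\euler^{-\alpha t}S(t)x)\leq Mq(x)$. By \eqref{eq:resol_laplace_cosine}, derivatives of the Laplace transform can be taken under the improper $\gamma$-Riemann integral:
\[
\frac{\d^m}{\d\lambda^m}\lambda R(\lambda^2,A)x=\int_0^\infty(-t)^m\euler^{-\lambda t}S(t)x\,\d t .
\]
Writing $\euler^{-\lambda t}=\euler^{-(\lambda-\alpha)t}\euler^{-\alpha t}$ and using $\int_0^\infty t^m\euler^{-(\lambda-\alpha)t}\d t=m!/(\lambda-\alpha)^{m+1}$, we obtain $p(T_{\alpha}(\lambda,m)x)\leq Mq(x)$ for $\lambda>\alpha$ and $m\in\N_0$. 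The holomorphy and differentiation justification is the same as in the semigroup case, via \cite[Lemma 4.4]{AlbaneseBonetRicker2013}-type arguments or \cite{XiaoLiang1998}.

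This estimate gives $\gamma$-equicontinuity of $(T_{\alpha}(\lambda,m))_{\lambda>\alpha,m\in\N_0}$, hence bi-equicontinuity by \prettyref{prop:bi_equicont_mixed_equicont}. Taking the supremum over $p$ (\prettyref{rem:Saks_bounded_sets} \ref{it:Saks_bounded_sets_1}) yields the norm bound $M$ for each $\alpha>\omega$. Letting $\alpha\to\omega\rlim$ gives \eqref{eq:generation_cosine_bi-cont}. Finally, $\rho_{\gamma}(A)\subseteq\rho_{\norm{\cdot}}(A)$ by \prettyref{rem:gamma_seq_cont_norm_cont}.

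``\ref{it:Hil_Yos_cos_bicont_2}$\Rightarrow$\ref{it:Hil_Yos_cos_bicont_1}''. Bi-equicontinuity upgrades to $\gamma$-equicontinuity by \prettyref{prop:bi_equicont_mixed_equicont} \ref{it:bi_equicont_mixed_3}. Bi-closedness gives sequential $\gamma$-closedness, which equals $\gamma$-closedness on a C-sequential space; otherwise we argue via $\gamma$-continuity of the resolvent, which follows from the $m=0$ case $(\lambda-\alpha)\lambda R(\lambda^2,A)$ being $\gamma$-equicontinuous, giving $(\alpha^2,\infty)\subseteq\rho_{\gamma}(A)$ and so closedness. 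Bi-density gives $\gamma$-density. Then \prettyref{prop:generation_cosine_lcx}, applied with any $\alpha>\omega$, yields a $\gamma$-strongly continuous cosine family with $(\euler^{-\alpha t}S(t))_{t\geq0}$ $\gamma$-equicontinuous.

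The main obstacle is the sharp constant $M$ in the type estimate. Since $0<\lambda-\alpha<\lambda-\omega$, the norm bound gives $\norm{T_{\alpha}(\lambda,m)}_{\calL(X)}\leq M$ for all $\alpha>\omega$. With \cite[Lemma 4.5]{Kraaij2016} this yields the seminorm version: for each $p\in\calN_{\gamma}$ some $q\in\calN_{\gamma}$ satisfies $p(T_{\alpha}(\lambda,m)x)\leq Mq(x)$. The task is to transfer this to $p(\euler^{-\alpha t}S(t)x)\leq Mq(x)$.

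For this I would use the Post--Widder inversion of \cite[Chapter 1, Theorem 2.1]{XiaoLiang1998}, which underlies \prettyref{prop:generation_cosine_lcx}. Taking $\lambda=\alpha+m/t$, we have $S(t)x=\lim_{m\to\infty}\frac{(-1)^m}{m!}(\frac mt)^{m+1}F^{(m)}(m/t)$ for the Laplace transform $F$ of $\euler^{-\alpha\cdot}S(\cdot)x$. Here $F^{(m)}(\mu)$ equals $\frac{\d^m}{\d\lambda^m}\lambda R(\lambda^2,A)x$ at $\lambda=\alpha+\mu$, so the approximants are exactly $T_{\alpha}(\alpha+m/t,m)x$ up to sign. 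Hence $p(\euler^{-\alpha t}S(t)x)\leq Mq(x)$ passes to the limit. Taking the supremum over $p$ gives $\norm{S(t)}_{\calL(X)}\leq M\euler^{\alpha t}$, and letting $\alpha\to\omega\rlim$ gives $\norm{S(t)}_{\calL(X)}\leq M\euler^{\omega t}$. Exponential $\gamma$-equicontinuity combined with \prettyref{cor:bi_cont_quasi_equicont_C_seq_sg_cosine} \ref{it:bi_cont_sg_cosine_rom1} and \eqref{eq:bi_gen_cs} then gives local $\topo$-bi-continuity, and $A$ is the bi-generator.
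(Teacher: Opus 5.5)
Your argument is correct. Direction (a)$\Rightarrow$(b) and the skeleton of (b)$\Rightarrow$(a) coincide with the paper's proof. The genuine difference is in how you obtain the sharp bound $\norm{S(t)}_{\calL(X)}\leq M\euler^{\omega t}$.

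The paper applies the Post--Widder \emph{representation} theorem to $f_x(\lambda)=\lambda R(\lambda^2,A)x$ and obtains a Lipschitz function $F_{1,x}$. It identifies $F_{1,x}$ with $\int_0^t S(r)x\,\d r$ via integration by parts and uniqueness of the Laplace transform, and then differentiates the Lipschitz estimate.

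You instead use that \prettyref{prop:generation_cosine_lcx} and \eqref{eq:resol_laplace_cosine} already give $S$ together with its Laplace representation. The \emph{inversion} formula then exhibits $\euler^{-\alpha t}S(t)x$ (not $S(t)x$, as you wrote) as the $\gamma$-limit of $(-1)^m T_{\alpha}(\alpha+m/t,m)x$, so the bound passes to the limit at once.

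What your route buys:
\begin{itemize}
\item It is shorter and avoids both the sine function and the uniqueness theorem.
\item It is exactly the cosine counterpart of the Euler-formula argument in the proof of \prettyref{thm:Hille_Yosida_sg}.
\item It makes \cite[Lemma 4.5]{Kraaij2016} unnecessary. Since $\norm{\cdot}=\sup_{p\in\calN_{\gamma}}p$ is $\gamma$-lower semicontinuous, $\norm{T_{\alpha}(\lambda,m)}_{\calL(X)}\leq M$ transfers directly to $\norm{\euler^{-\alpha t}S(t)}_{\calL(X)}\leq M$.
\end{itemize}

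The price is that you must justify the inversion formula in $(X,\gamma)$ yourself. The paper uses \cite[Chapter 1, Theorem 2.1]{XiaoLiang1998} as a representation theorem, not as an inversion formula. The standard approximate-identity proof does work here, seminorm by seminorm: differentiate under the improper integral, and use that the densities $\frac{1}{m!}(\frac{m}{t})^{m+1}s^m\euler^{-ms/t}$ are probability densities concentrating at $s=t$. This applies because $\euler^{-\alpha\cdot}S(\cdot)x$ is $\gamma$-continuous and $\gamma$-bounded.

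Finally, your first justification of $\gamma$-closedness of $A$ does not follow from C-sequentiality. That property concerns \emph{convex} sequentially open sets, and the complement of a graph is not convex. Your fallback is the right argument: the $m=0$ member of the family gives $R(\lambda^2,A)\in\calL(X,\gamma)$ for $\lambda>\omega$, hence $A$ is $\gamma$-closed. This fills a step the paper's proof leaves implicit.
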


\begin{proof}
    By \prettyref{defi:seq_compl_C-seq_Saks}, $(X,\gamma)$ is sequentially complete and C-sequential.
    
    ``\ref{it:Hil_Yos_cos_bicont_1}$\Rightarrow$\ref{it:Hil_Yos_cos_bicont_2}''
    %Let $(S(t))_{t\geq 0}$ be the locally $\topo$-bi-continuous cosine family generated by $A$ and $\omega\in\R$ such that $(\euler^{-\omega t} S(t))_{t\geq 0}$ is bounded. 
    Note that $(S(t))_{t\geq 0}$ is $\gamma$-strongly continuous and $(S(t))_{t\geq 0}$ is exponentially $\gamma$-equicontinuous 
    by \prettyref{rem:Saks_seq_complete_Banach} \ref{it:Saks_seq:compl_Banach_1} and \prettyref{cor:bi_cont_quasi_equicont_C_seq_sg_cosine} \ref{it:bi_cont_sg_cosine_rom1}. 
    By \prettyref{rem:count_conv} \ref{it:count_conv_1}, \prettyref{cor:type}, \prettyref{rem:Saks_bounded_sets} \ref{it:Saks_bounded_sets_2}, 
    the local $\gamma$-equicontinuity of $(S(t))_{t\geq 0}$ and the inequality $\norm{S(t)}_{\calL(X)} \leq M\euler^{\omega t}$ for all $t\geq 0$, 
    we have that for all $\alpha>\omega$ and $p\in\calN_{\gamma}$ there exists $\widetilde{q}_{\alpha}\in\calN_{\gamma}$ such that 
    \[
    p(\euler^{-\alpha t}S(t)x)\leq M\widetilde{q}_{\alpha}(x) \quad(t\geq 0,\, x\in X).
    \]    
    By \eqref{eq:bi_gen_cs}, \prettyref{prop:seq_densely_defined} and \prettyref{prop:generation_cosine_lcx}, we obtain that 
    $A$ is $\gamma$-closed and sequentially $\gamma$-densely defined, 
    so $A$ is bi-closed and bi-dense by \prettyref{rem:bi_notions_operators}.
    Moreover, for all $\alpha>\omega$ we have $(\alpha^2,\infty)\subseteq \rho_{\gamma}(A)$ and therefore $(\omega^2,\infty)\subseteq\rho_{\gamma}(A)\subseteq\rho_{\norm{\cdot}}(A)$ where the second inclusion follows from \prettyref{rem:gamma_seq_cont_norm_cont}. 
    Further, for all $\alpha>\omega$ we have that $\bigl(\frac{(\lambda-\alpha)^{m+1}}{m!} \frac{\d^m}{\d\lambda^m} \lambda R(\lambda^2,A)\bigr)_{\lambda> \alpha,\, m\in\N_0}$ is $\gamma$-equicontinuous by \prettyref{prop:generation_cosine_lcx}, thus also bi-equicontinuous by \prettyref{prop:bi_equicont_mixed_equicont}\ref{it:bi_equicont_mixed_3}. 
    It remains to show the boundedness of $\bigl(\frac{(\lambda-\omega)^{m+1}}{m!} \frac{\d^m}{\d\lambda^m} \lambda R(\lambda^2,A)\bigr)_{\lambda> \omega,\, m\in\N_0}$ 
    by $M$ in $\norm{\cdot}_{\calL(X)}$. Let $p\in\calN_{\gamma}$. Using \eqref{eq:resol_laplace_cosine}, we obtain as in the proof of \prettyref{thm:Hille_Yosida_sg} \ref{it:Hil_Yos_sg_1}$\Rightarrow$\ref{it:Hil_Yos_sg_3} that
     \[
          p\left(\frac{\d^m}{\d \lambda^m} \lambda R(\lambda^2,A)x\right) 
     \leq M\int_0^{\infty} t^m\euler^{-(\lambda-\alpha) t} \,\d t\, \widetilde{q}_{\alpha}(x)
     \]
     for all $\lambda\in\C_{\alpha}$, $m\in\N_{0}$ and $x\in X$ and
     \begin{equation}\label{eq:resolv_estimate_cosine}
     p\left(\frac{(\lambda-\alpha)^{m+1}}{m!} \frac{\d^{m}}{\d \lambda^{m}} \lambda R(\lambda^2,A)x\right) 
    \leq M\widetilde{q}_{\alpha}(x)
     \end{equation}
     for all $\lambda\in\C_{\alpha}$, $m\in\N_0$ and $x\in X$. From here we get as in the proof of 
     \prettyref{cor:Hille--Yosida_bi-continuous} \ref{it:Hil_Yos_bicont_1}$\Rightarrow$\ref{it:Hil_Yos_bicont_2} that 
     \[
        \sup_{\lambda>\omega, m\in\N_0} \norm{\frac{(\lambda-\omega)^{m+1}}{m!} \frac{\d^m}{\d\lambda^m} \lambda R(\lambda^2,A)}_{\calL(X)} \leq M.
     \]
     
    ``\ref{it:Hil_Yos_cos_bicont_2}$\Rightarrow$\ref{it:Hil_Yos_cos_bicont_1}''
    Since $A$ is bi-densely defined, $A$ is sequentially $\gamma$-densely defined by \prettyref{rem:bi_notions_operators}, 
    so $\gamma$-densely defined, too. For all $\alpha>\omega$ we have that $\bigl(\frac{(\lambda-\alpha)^{m+1}}{m!} \frac{\d^m}{\d\lambda^m} \lambda R(\lambda^2,A)\bigr)_{\lambda> \alpha,\, m\in\N_0}$ is $\gamma$-equicontinuous by \prettyref{prop:bi_equicont_mixed_equicont}\ref{it:bi_equicont_mixed_3}. 
    Thus, \prettyref{prop:generation_cosine_lcx} yields that $A$ generates an exponentially $\gamma$-equicontinuous $\gamma$-strongly continuous cosine family $(S(t))_{t\geq 0}$. By \prettyref{rem:Saks_seq_complete_Banach} \ref{it:Saks_seq:compl_Banach_1} and 
    \prettyref{cor:bi_cont_quasi_equicont_C_seq_sg_cosine} \ref{it:bi_cont_sg_cosine_rom1}, $(S(t))_{t\geq 0}$ is locally $\topo$-bi-continuous and exponentially bounded with bi-generator $A$ by \eqref{eq:bi_gen_cs}. 
    It remains to show that $(\euler^{-\omega t} S(t))_{t\geq 0}$ is bounded by $M$ in $\norm{\cdot}_{\calL(X)}$. 
    From \eqref{eq:generation_cosine_bi-cont} it follows as in the proof of \prettyref{cor:Hille--Yosida_bi-continuous} \ref{it:Hil_Yos_bicont_2}$\Rightarrow$\ref{it:Hil_Yos_bicont_1} that for all $\alpha>\omega$ and all $p\in\calN_{\gamma}$ there exists $q_{p}\in\calN_{\gamma}$ such that
    \[
     p\left(\frac{(\lambda-\alpha)^{m+1}}{m!} \frac{\d^{m}}{\d \lambda^{m}} \lambda R(\lambda^2,A)x\right) 
    \leq Mq_{p}(x) \quad (\lambda>\alpha,\,m\in\N_0 ,\,x\in X)
    \]  
    For $x\in X$ let $f_{x}\colon (\alpha,\infty)\to (X,\gamma)$, $f_x(\lambda)\coloneq \lambda R(\lambda^2,A)x$, and $M_p\coloneq Mq_{p}(x)$. 
    Since $f_x$ is infinitely differentiable, it follows from Post--Widder's theorem \cite[Chapter 1, Theorem 2.1]{XiaoLiang1998} that there is 
    $F_{1,x}\colon [0,\infty)\to X$ such that $F_{1,x}(0)=0$ and 
    \[
    \lambda R(\lambda^2,A)x=f_{x}(\lambda)=\lambda\int_{0}^{\infty}\euler^{-\lambda t}F_{1,x}(t) \,\d t \quad (\lambda>\alpha ),
    \]
    where the integral above is an improper Riemann integral in $(X,\gamma)$, as well as 
    \begin{equation}\label{eq:Post_Widder_Lipschitz}
         p(F_{1,x}(t+h)-F_{1,x}(t))
    \leq M_p \euler^{\alpha t} \max\{\euler^{\alpha h},\,1\}h
    = Mq_{p}(x)\euler^{\alpha (t+h)}h \quad (t, h\geq 0).
    \end{equation}
    For $t\geq 0$ let $\operatorname{Sin}(t)\colon X\to X$, $\operatorname{Sin}(t)x\coloneq \int_{0}^{t}S(r)x\,\d r$, where the integral is a 
    Riemann integral in $(X,\gamma)$. Then $\operatorname{Sin}(\cdot)x\colon [0,\infty)\to (X,\gamma)$ is differentiable with 
    $\frac{\d}{\d t}\operatorname{Sin}(t)x=S(t)x$ for all $t\geq 0$ and $x\in X$ and 
    \[
      \int_{0}^{\infty}\euler^{-\lambda t}\operatorname{Sin}(t)x \,\d t
    = \frac{1}{\lambda}\int_{0}^{\infty}\euler^{-\lambda t}S(t)x \,\d t 
    = R(\lambda^2,A)x \quad (\lambda>\alpha,\,x\in X)
    \]
    by integration by parts. Hence, we obtain
    \[
      \int_{0}^{\infty}\euler^{-\lambda t}\operatorname{Sin}(t)x \,\d t
    = \int_{0}^{\infty}\euler^{-\lambda t}F_{1,x}(t) \,\d t \quad (\lambda>\alpha,\,x\in X),
    \]
    which implies $\operatorname{Sin}(t)x=F_{1,x}(t)$ for all $t\geq 0$ and $x\in X$ by the uniqueness of the Laplace transform, 
    see \cite[Chapter 1, Theorem 1.6]{XiaoLiang1998}. Due to \eqref{eq:Post_Widder_Lipschitz}, we thus have 
    \[
         p\Bigl(\tfrac{1}{h}\bigl(\operatorname{Sin}(t+h)x-\operatorname{Sin}(t)x\bigr)\Bigr)
    \leq Mq_{p}(x)\euler^{\alpha (t+h)} \quad (t\geq 0,\,h>0,\,x\in X)
    \]
    Letting $h\to 0\rlim$ and using $\frac{\d}{\d t}\operatorname{Sin}(t)x=S(t)x$ for all $t\geq 0$ and $x\in X$, we get 
    \[
         p(S(t)x)
    \leq Mq_{p}(x)\euler^{\alpha t} \quad (t\geq 0,\,x\in X)
    \]
    By \prettyref{rem:count_conv} \ref{it:count_conv_1}, \prettyref{cor:type} and 
    \prettyref{rem:Saks_bounded_sets} \ref{it:Saks_bounded_sets_2}, this yields that 
    $\norm{S(t)}_{\calL(X)} \leq M\euler^{\omega t}$ for all $t\geq 0$.
\end{proof}

\begin{remark}
    We note that the technique to derive the norm bound for the family $(S(t))_{t\geq 0}$ in \prettyref{prop:generation_cosine_bi-continuous} by means of the Post--Widder theorem can also be used in case of semigroups as in \prettyref{cor:Hille--Yosida_bi-continuous}. However, since we already have \prettyref{thm:Hille_Yosida_sg} for semigroups at our disposal, our given proof seems to be easier. Of course, this raises the question whether a theorem analogous to \prettyref{thm:Hille_Yosida_sg} for cosine families instead of semigroups can be shown. We postpone this question to future work.
\end{remark}

Comparing \prettyref{prop:generation_cosine_bi-continuous} with \cite[Theorem, p.~58]{DuanSun2006} and \cite[Theorem 4.6]{Budde2024}, 
we remark again that the only additional assumption in \prettyref{prop:generation_cosine_bi-continuous} is that $(X,\norm{\cdot},\topo)$ is C-sequential, 
see \prettyref{rem:additional_assump}. This additional condition is fulfilled for the example in \cite[p.~56]{DuanSun2006} and \cite[Example 1.7]{Budde2024} 
(which is the same example) by \cite[Remark 3.19 (a)]{KruseSchwenninger2022}. 

\begin{remark}
    Let $(X,\norm{\cdot},\topo)$ be a Saks space. 
    We note that in the definition of the domain $D(A_{\norm{\cdot},\topo})$ of the bi-generator $A_{\norm{\cdot},\topo}$ of a locally 
    $\topo$-bi-continuous cosine family $(S(t))_{t\geq 0}$ on $X$ given in \cite[Defi\-nition 4]{DuanSun2006}, \cite[Definition 2.1]{Budde2024} and \cite[Definition 2]{Budde2025}, the condition 
    $\sup_{t\in(0,1]}\frac{2}{t^2}\norm{S(t)x-x}<\infty$ is missing. 
    However, this condition is needed (and implicitly used), for example, to prove that 
    $S(t_0)Ax=\topo\text{-}\lim_{t\to 0\rlim} \frac{2}{t^2}S(t_0)(S(t)x-x)$ for all $t_0\geq 0$ and 
    $x\in D(A_{\norm{\cdot},\topo})$ in the proof of \cite[Lemma 2.2]{Budde2024}, 
    which follows from local bi-equicontinuity if $\sup_{t\in(0,1]}\frac{2}{t^2}\norm{S(t)x-x}<\infty$. 
    Such a mistake is also present in the literature on locally $\topo$-bi-continuous semigroups, see \cite[Equation (1.11)]{Kuehnemund2001} and \cite[Equation (9)]{Kuehnemund2003}, and was corrected in \cite[Definition 1.2.6]{Farkas2003} in this case. 
    This correction is needed to prove e.g.~\cite[Proposition 1.16 (a)]{Kuehnemund2001} which is the counterpart of 
    \cite[Lemma 2.2]{Budde2024} for semigroups. 
\end{remark}

\subsection{Approximation Theorems}

We now focus on Trotter--Kato approximation theorems for semigroups and cosine families.
We start with semigroups and then we proceed to cosine families but first we recall the following notions.

\begin{definition}\label{defi:unif_loc_equicont}
Let $X$ be a Hausdorff locally convex space, $I$ a Hausdorff topological space 
and $(S_{n}(t))_{n\in\N, t\in I}$ a family of linear maps $S_n(t)\colon X \to X$. 
Then $(S_{n}(t))_{n\in\N, t\in I}$  is called \emph{(sequentially) uniformly locally equicontinuous} if 
$(S_{n}(t))_{(n,t)\in\N\times M}$ is (sequentially) equicontinuous for all compact sets $M\subseteq I$.
\end{definition}

\prettyref{defi:unif_loc_equicont} generalises \cite[Definition 11]{AlbaneseKuehnemund2002} where $I=[0,\infty)$ and the $S_n(t)$ are 
locally equicontinuous semigroups. 

\begin{definition}\label{defi:unif_loc_bi-equicont}
Let $(X,\norm{\cdot})$ be a normed space, $\topo\subseteq\topo_{\norm{\cdot}}$ a Hausdorff locally convex topology on $X$, 
$I$ a Hausdorff topological space and $(S_{n}(t))_{n\in\N, t\in I}$ a family of linear maps $S_n(t)\colon X \to X$. 
Then $(S_{n}(t))_{n\in\N, t\in I}$ is called \emph{uniformly locally bi-equicontinuous} if 
$(S_{n}(t))_{(n,t)\in\N\times M}$ is bi-equicontinuous for all compact sets $M\subseteq I$.
\end{definition}

\prettyref{defi:unif_loc_bi-equicont} generalises \cite[Definition 2.1 (ii)]{Kuehnemund2001} and 
\cite[Definition 3.1 (2)]{AlbaneseMangino2004} where $I=[0,\infty)$ and the $S_n(t)$ 
are locally $\topo$-bi-continuous semigroups. Further, it corrects \cite[Definition 3 (ii)]{Budde2025} in the case of cosine families.

\begin{remark}\label{rem:unif_loc_bi-equicont_gamma_seq}
    Let $(X,\norm{\cdot})$ be a normed space, $\topo\subseteq\topo_{\norm{\cdot}}$ a Hausdorff locally convex topology on $X$, 
    $I$ a Hausdorff topological space and $(S_{n}(t))_{n\in\N, t\in I}$ a family of linear maps $S_n(t)\colon X \to X$. 
    Due to \cite[I.1.10 Proposition]{Cooper1978}, $(S_{n}(t))_{n\in\N, t\in I}$ is uniformly locally bi-equicontinuous 
    if and only if it is sequentially uniformly locally $\gamma$-equicontinuous.
\end{remark}

We note the following special version of the Trotter--Kato theorem \cite[Theorem 16, Remark 18]{AlbaneseKuehnemund2002}.

\begin{proposition}
\label{prop:Trotter-Kato_sg_lkx}
    Let $(X,\topo)$ be a sequentially complete Hausdorff locally convex space and 
    $(S_n(t))_{t\geq 0}$ a strongly continuous semigroup on $X$ with generator $A_n$ for each $n\in\N$ such that
    $(S_n(t))_{n\in\N, t\geq 0}$ is uniformly locally equicontinuous. Let there be $\alpha\in\R$ such that 
    $(\euler^{-\alpha t}S_n(t))_{t\geq 0}$ is equicontinuous for each $n\in\N$ and let $\lambda>\alpha$.
    Consider the following assertions.
    \begin{enumerate}[label=\upshape(\alph*), leftmargin=*]
        \item\label{it:Trotter-Kato_sg_lkx1} There exists a densely defined linear operator $(A,D(A))$ and a core $D\subseteq \bigcup_{k=1}^{\infty}\bigcap_{n=k}^{\infty} D(A_n)$ for $A$ such that $A_nx\to Ax$ for all $x\in D$ and $(\lambda-A)$ has dense range.
        \item\label{it:Trotter-Kato_sg_lkx2} There exists $R\in\calL(X,\topo)$ such that $R(\lambda,A_n)x\to Rx$ for all $x\in X$ and $R$ is injective and has dense range.
        \item\label{it:Trotter-Kato_sg_lkx3} There exists a strongly continuous semigroup $(S(t))_{t\geq 0}$ on $X$ such that 
        $(\euler^{-\alpha t}S(t))_{t\geq 0}$ is equicontinuous and for all $x\in X$ we have $S_n(\cdot)x\to S(\cdot)x$ 
        uniformly on compact subsets of $[0,\infty)$.
    \end{enumerate}
    Then \ref{it:Trotter-Kato_sg_lkx1}$\Rightarrow$\ref{it:Trotter-Kato_sg_lkx2}$\Leftrightarrow$\ref{it:Trotter-Kato_sg_lkx3}. 
    In particular, if \ref{it:Trotter-Kato_sg_lkx2} holds, then the generator $B$ of $(S(t))_{t\geq 0}$ fulfils $R(\lambda,B) = R$.
    If \ref{it:Trotter-Kato_sg_lkx1} holds, then $A$ is closable and $B=\overline{A}$.
\end{proposition}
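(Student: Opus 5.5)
The plan is to obtain the statement directly from the Trotter--Kato theorem of Albanese and K\"uhnemund \cite[Theorem 16, Remark 18]{AlbaneseKuehnemund2002}. Most of the work consists in checking that our hypotheses fit their setting, and then rescaling so that the assumptions take the form used there.

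First, I would check the standing assumptions of \cite{AlbaneseKuehnemund2002}. Their semigroups consist of operators in $\calL(X,\topo)$, are strongly continuous and locally equicontinuous, and act on a sequentially complete space. Since $(S_n(t))_{n\in\N,t\geq 0}$ is uniformly locally equicontinuous, each single $S_n(t)$ is $\topo$-continuous; for instance, take the compact set $M\coloneq\{t\}$ in \prettyref{defi:unif_loc_equicont}. Each $(S_n(t))_{t\geq0}$ is therefore a strongly continuous, locally equicontinuous semigroup in $\calL(X,\topo)$. Our notion of generator coincides with theirs on such semigroups, as remarked after the definition of the generator above. By \prettyref{prop:seq_densely_defined}, each $A_n$ is densely defined. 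By (the proof of) \cite[Lemma 4.4]{AlbaneseBonetRicker2013}, the exponential equicontinuity of $(\euler^{-\alpha t}S_n(t))_{t\geq0}$ gives $(\alpha,\infty)\subseteq\rho_{\topo}(A_n)$ and
\[
R(\lambda,A_n)x=\int_0^\infty \euler^{-\lambda t}S_n(t)x\,\d t \quad (x\in X),
\]
so the resolvent in (b) is defined for the fixed $\lambda>\alpha$.

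Second, I would reduce to the case $\alpha=0$ by the rescaling already used in the proof of \prettyref{thm:Hille_Yosida_sg}. Set $T_n(t)\coloneq\euler^{-\alpha t}S_n(t)$, with generators $A_n-\alpha$. These are still uniformly locally equicontinuous, since $\euler^{-\alpha t}$ is bounded on compact sets, and each of them is equicontinuous. Moreover $R(\lambda,A_n)=R(\lambda-\alpha,A_n-\alpha)$. Each of (a), (b) and (c) is invariant under this shift: a core for $A$ is a core for $A-\alpha$, $\lambda-A=(\lambda-\alpha)-(A-\alpha)$, and uniform convergence on compacts is preserved under multiplication by $\euler^{-\alpha t}$. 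The rescaled statement is then exactly the form of \cite[Theorem 16]{AlbaneseKuehnemund2002}, with the resolvent variant in \cite[Remark 18]{AlbaneseKuehnemund2002}. From there I would read off (a)$\Rightarrow$(b)$\Leftrightarrow$(c), the identification $R(\lambda,B)=R$, and, under (a), that $A$ is closable with $B=\overline{A}$.

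The main obstacle is matching the growth hypotheses. The implication (c)$\Rightarrow$(b) via the Laplace transform requires dominating the tails $\int_T^\infty\euler^{-\lambda t}S_n(t)x\,\d t$ uniformly in $n$. For this one would like a uniform (in $n$) exponential equicontinuity, whereas we only assume a common $\alpha$ for each $n$ separately together with uniform local equicontinuity. I would first check in \cite{AlbaneseKuehnemund2002} exactly which uniformity their Theorem 16 and Remark 18 require, and whether their argument (via resolvent convergence on the dense range and equicontinuity of the resolvents) only uses the per-$n$ bound. If it needs more, I would try to recover the required uniform equicontinuity of $(R(\lambda,A_n))_{n\in\N}$ from the assumptions; in (b) this might follow from the pointwise convergence $R(\lambda,A_n)x\to Rx$ together with a Banach--Steinhaus-type argument. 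Failing that, I would restrict the statement accordingly.
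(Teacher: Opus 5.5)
Your route is the paper's: show $\lambda\in\rho_{\topo}(A_n)$ with $R(\lambda,A_n)x=\int_0^\infty\euler^{-\lambda t}S_n(t)x\,\d t$ via (the proof of) \cite[Lemma 4.4]{AlbaneseBonetRicker2013}, then invoke \cite[Theorem 16, Remark 18]{AlbaneseKuehnemund2002} with $a=\infty$. Your rescaling to $\alpha=0$ is harmless but not needed. Two things are missing, however.

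First, (c) asserts that the limit semigroup makes $(\euler^{-\alpha t}S(t))_{t\geq0}$ equicontinuous, and \cite[Theorem 16]{AlbaneseKuehnemund2002} does not deliver this. The paper gets it by replacing \cite[Theorem 5]{AlbaneseKuehnemund2002} with \prettyref{prop:Hille--Yosida_lcx} in the step (b)$\Rightarrow$(c). Your claim that the rescaled statement is ``exactly'' their theorem skips this.

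Second, and more seriously, your proposal stops at the decisive point. You leave open whether per-$n$ bounds suffice and consider restricting the statement, so what you have is a plan rather than a proof.

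The obstacle you flagged is genuine. Neither a closer reading of \cite{AlbaneseKuehnemund2002} nor a Banach--Steinhaus argument at the single point $\lambda$ removes it: if $(\euler^{-\alpha t}S_n(t))_{t\geq0}$ is equicontinuous only for each $n$ separately, the claimed implications fail.

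Take $X=\C^2$, $N=\bigl(\begin{smallmatrix}0&1\\0&0\end{smallmatrix}\bigr)$, $A_n\coloneq N-\tfrac1n\id$, so that $S_n(t)=\euler^{-t/n}(\id+tN)$, and take $\alpha=0$, $\lambda=1$. Then:
\begin{itemize}
\item $\norm{S_n(t)}\leq 1+t$, which gives uniform local equicontinuity;
\item $\sup_{t\geq0}\norm{S_n(t)}\leq n$, which gives equicontinuity for each $n$;
\item (a) holds with $A\coloneq N$ and $D\coloneq X$;
\item (b) holds with $R\coloneq(\id-N)^{-1}$, and the convergence is even in operator norm;
\item the only possible limit is $S(t)=\id+tN$, which is unbounded, so (c) fails.
\end{itemize}
Similarly, on the $\ell^2$-sum of the spaces $\C^k$, $k\in\N$, let $S_n(t)$ be the identity except on the $n$-th block, where it equals $\euler^{-t/n^2}\euler^{tJ_n}$ with $J_n$ the nilpotent Jordan block. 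Then (a) and (c) hold with $A=0$ and $S(t)=\id$. But $\norm{R(\tfrac12,A_n)}\gtrsim 2^n$, so $R(\tfrac12,A_n)x$ diverges for a suitable $x$.

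So the paper's one-line reduction cannot work with the hypothesis read literally either. Taking $a=\infty$, and passing the Hille--Yosida estimates from the $A_n$ to the limit, both require $(\euler^{-\alpha t}S_n(t))_{n\in\N,t\geq0}$ to be equicontinuous uniformly in $n$; uniform local equicontinuity then becomes redundant. Under that hypothesis your plan closes:
\begin{itemize}
\item For (c)$\Rightarrow$(b), split the Laplace integral at $T$. The tail satisfies $p\bigl(\int_T^\infty\euler^{-\lambda t}S_n(t)x\,\d t\bigr)\leq\tfrac{K}{\lambda-\alpha}\euler^{-(\lambda-\alpha)T}q(x)$ uniformly in $n$, and uniform convergence on $[0,T]$ handles the rest. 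This gives $R=R(\lambda,B)$.
\item In (b)$\Rightarrow$(c), the estimate $p(\euler^{-\alpha t}S(t)x)\leq\sup_{n}p(\euler^{-\alpha t}S_n(t)x)\leq Kq(x)$ yields the exponential equicontinuity of the limit directly.
\end{itemize}
The remaining steps are those of \cite{AlbaneseKuehnemund2002}.
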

\begin{proof}
    Since $\lambda>\alpha$, we have that $\lambda\in\rho_{\topo}(A_{n})$ for all $n\in\N$ and 
    \[
    R(\lambda,A_n)x = \int_0^\infty \euler^{-\lambda t} S_n(t)x\,\d t \quad (n\in\N,\,x\in X)
    \]
    (see the proof of \prettyref{thm:Hille_Yosida_sg}). Hence, our statement follows from the Trotter--Kato theorem 
    \cite[Theorem 16, Remark 18]{AlbaneseKuehnemund2002} with $a=\infty$ where one replaces in the proof of 
    \cite[Theorem 16 (b)$\Rightarrow$(c)]{AlbaneseKuehnemund2002} the Hille--Yosida theorem \cite[Theorem 5]{AlbaneseKuehnemund2002} 
    by \prettyref{prop:Hille--Yosida_lcx} to obtain that $(\euler^{-\alpha t}S(t))_{t\geq 0}$ is equicontinuous.
\end{proof}

\begin{remark}\label{rem:core_cond}
    Comparing \prettyref{prop:Trotter-Kato_sg_lkx} \ref{it:Trotter-Kato_sg_lkx1} with \cite[Theorem 16 (a)]{AlbaneseKuehnemund2002}, we remark that we corrected an assumption from the latter. 
    In \cite[Theorem 16 (a)]{AlbaneseKuehnemund2002} it is merely assumed that $D$ is a core for $A$ but that does not guarantee that the limit 
    $\lim_{n\to\infty} A_n x$ is well-defined for $x\in D$ since $x$ need not be in $D(A_n)$ for any $n\in\N$. Therefore, we assume that 
    $D\subseteq \bigcup_{k=1}^{\infty}\bigcap_{n=k}^{\infty} D(A_n)$ which implies that for every $x\in D$ there is some $k\in\N$ such that $x\in D(A_n)$ for all $n\geq k$. 
    Adjusting the proof of \cite[Theorem 16 (a)$\Rightarrow$(b)]{AlbaneseKuehnemund2002} in the obvious way, the conclusion of \cite[Theorem 16]{AlbaneseKuehnemund2002} 
    remains valid after this correction. The origin of this correction is \cite[Proposition 4.1]{Konishi1972} which is the counterpart for cosine families.
\end{remark}

We can recover the bi-continuous analogue, see also \cite[Theorem 2.3, Remark 2.4, Theorem 2.6]{Kuehnemund2001}. 

\begin{corollary}[{cf.~\cite[Theorem 3.6]{AlbaneseMangino2004}}]
\label{cor:Trotter-Kato_sg_bicont}
    Let $(X,\norm{\cdot},\topo)$ be a sequentially complete C-sequential Saks space and
    $(S_n(t))_{t\geq 0}$ a locally $\topo$-bi-continuous semigroup on $X$ with bi-generator $A_n$ for each $n\in\N$ such that $(S_n(t))_{n\in\N, t\geq 0}$ is uniformly locally bi-equicontinuous. Let $\omega\in\R$ an $M\geq 1$ such that 
    $\norm{S_n(t)}_{\calL(X)}\leq M \euler^{\omega t}$ for all $t\geq 0$ and $n\in\N$ and let $\lambda>\omega$.
    Consider the following assertions.
    \begin{enumerate}[label=\upshape(\alph*), leftmargin=*]
        \item\label{it:Trotter-Kato_sg_bi1} There exists a bi-densely defined linear operator $(A,D(A))$ and a bi-core $D\subseteq \bigcup_{k=1}^{\infty}\bigcap_{n=k}^{\infty} D(A_n)$ for $A$ such that $(A_nx)_{n\in\N}$ is $\norm{\cdot}$-bounded and 
        $A_nx\to Ax$ w.r.t.~$\topo$ for all $x\in D$ and $(\lambda-A)$ has bi-dense range.
        \item\label{it:Trotter-Kato_sg_bi1_1} There exists a $\gamma$-densely defined linear operator $(A,D(A))$ and a 
        $\gamma$-core $D\subseteq \bigcup_{k=1}^{\infty}\bigcap_{n=k}^{\infty} D(A_n)$ for $A$ such that $A_nx\to Ax$  w.r.t.~$\gamma$ 
        for all $x\in D$ and $(\lambda-A)$ has $\gamma$-dense range.
        \item\label{it:Trotter-Kato_sg_bi2} There exists $R\in\calL(X)$ such that $R(\lambda,A_n)x\to Rx$ w.r.t.~$\topo$ for all $x\in X$ and $R$ is injective 
        and has bi-dense range.
        \item\label{it:Trotter-Kato_sg_bi2_1} There exists $R\in\calL(X,\gamma)$ such that $R(\lambda,A_n)x\to Rx$ w.r.t.~$\gamma$ for all $x\in X$ and $R$ is injective 
        and has $\gamma$-dense range.
        \item\label{it:Trotter-Kato_sg_bi3} There exists a locally $\topo$-bi-continuous semigroup $(S(t))_{t\geq 0}$ on $X$ 
        such that $\norm{S(t)}_{\calL(X)}\leq M \euler^{\omega t}$ for all $t\geq 0$, and for all $x\in X$ we have $S_n(\cdot)x\to S(\cdot)x$ w.r.t.~$\topo$ uniformly on compact subsets of $[0,\infty)$.
    \end{enumerate}
    Then \ref{it:Trotter-Kato_sg_bi1}$\Rightarrow$\ref{it:Trotter-Kato_sg_bi1_1}$\Rightarrow$\ref{it:Trotter-Kato_sg_bi2}$\Leftrightarrow$\ref{it:Trotter-Kato_sg_bi2_1}$\Leftrightarrow$\ref{it:Trotter-Kato_sg_bi3}
    In particular, if \ref{it:Trotter-Kato_sg_bi2} holds, then the bi-generator $B$ of $(S(t))_{t\geq 0}$ fulfils $R(\lambda,B) = R$. 
    If \ref{it:Trotter-Kato_sg_bi1_1} holds, then $A$ is $\gamma$-closable and $B=\overline{A}^{\gamma}$. 
    If \ref{it:Trotter-Kato_sg_bi1} holds, then $B=\overline{A}^{\operatorname{bi}}$.
\end{corollary}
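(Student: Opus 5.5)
The plan is to transfer everything to the mixed topology $\gamma$ and then apply \prettyref{prop:Trotter-Kato_sg_lkx} in the sequentially complete space $(X,\gamma)$.

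\textbf{Hypotheses in $\gamma$.} By \prettyref{rem:Saks_seq_complete_Banach} \ref{it:Saks_seq:compl_Banach_1} and \prettyref{cor:bi_cont_quasi_equicont_C_seq_sg_cosine} \ref{it:bi_cont_sg_cosine_rom1}, each $(S_n(t))_{t\geq 0}$ is $\gamma$-strongly continuous. By \eqref{eq:bi_gen_sg}, its $\gamma$-generator is $A_n$. Next, \prettyref{rem:unif_loc_bi-equicont_gamma_seq} together with \prettyref{thm:seq_equicont_equicont_equiv} (using that $(X,\gamma)$ is C-sequential) turns uniform local bi-equicontinuity into uniform local $\gamma$-equicontinuity. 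For the exponential bound, fix $\alpha\in(\omega,\lambda)$. Applying \cite[Lemma 4.5]{Kraaij2016} to the whole family $(\euler^{-\omega t}S_n(t))_{n\in\N,t\in[0,t_0]}$, which is uniformly locally $\gamma$-equicontinuous and norm bounded by $M$, yields seminorms in $\calN_\gamma$ with constant $M$, uniformly in $n$. Countable quasi-convexity of $\calN_\gamma$ (\prettyref{rem:count_conv} \ref{it:count_conv_1}) and the argument of \prettyref{cor:type} \ref{it:type_2}$\Rightarrow$\ref{it:type_3} then show that $(\euler^{-\alpha t}S_n(t))_{t\geq 0}$ is $\gamma$-equicontinuous, even uniformly in $n$. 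This uniform version of \prettyref{cor:type} is the step needing the most care. The fallback is to rerun \prettyref{prop:functional_equicont_from_loc} with $S(t)$ replaced by the family indexed by $(n,t)$; only the $t$-summability is used there, so this should work.

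\textbf{Translating the assertions.} Via \prettyref{rem:bi_notions_operators} and \cite[I.1.10 Proposition]{Cooper1978}, the implication \ref{it:Trotter-Kato_sg_bi1}$\Rightarrow$\ref{it:Trotter-Kato_sg_bi1_1} follows because norm-bounded $\topo$-convergence equals $\gamma$-convergence of sequences, bi-cores are sequential $\gamma$-cores and hence $\gamma$-cores, and bi-dense sets are $\gamma$-dense. For \ref{it:Trotter-Kato_sg_bi2}$\Leftrightarrow$\ref{it:Trotter-Kato_sg_bi2_1}, first note that $\norm{R(\lambda,A_n)}\leq M/(\lambda-\omega)$ by the Laplace representation of the resolvent. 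Hence $\topo$-convergence of $R(\lambda,A_n)x$ upgrades to $\gamma$-convergence, and $R$ is norm bounded. Further, $R$ is the pointwise $\gamma$-limit of the $\gamma$-equicontinuous family $(R(\lambda,A_n))_n$, which is equicontinuous because $R(\lambda,A_n)=\int_0^\infty \euler^{-\lambda t}S_n(t)\,\d t$ and the $\euler^{-\alpha t}S_n(t)$ are uniformly equicontinuous; therefore $R\in\calL(X,\gamma)$. For the range conditions, bi-dense range implies $\gamma$-dense range. For the converse I would argue $R=R(\lambda,B)$ via \prettyref{prop:Trotter-Kato_sg_lkx}, so the range is $D(B)$, which is bi-dense by \prettyref{prop:seq_densely_defined} and \prettyref{rem:bi_notions_operators}. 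The implication \ref{it:Trotter-Kato_sg_bi1_1}$\Rightarrow$\ref{it:Trotter-Kato_sg_bi2_1} is then \prettyref{prop:Trotter-Kato_sg_lkx} \ref{it:Trotter-Kato_sg_lkx1}$\Rightarrow$\ref{it:Trotter-Kato_sg_lkx2}, and \ref{it:Trotter-Kato_sg_bi2_1}$\Leftrightarrow$\ref{it:Trotter-Kato_sg_lkx3} also comes from that proposition.

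\textbf{Connecting \ref{it:Trotter-Kato_sg_bi3}.} Given \ref{it:Trotter-Kato_sg_lkx3} in $\gamma$, the limit semigroup $S$ is $\gamma$-strongly continuous and exponentially $\gamma$-equicontinuous, so it is locally $\topo$-bi-continuous by \prettyref{cor:bi_cont_quasi_equicont_C_seq_sg_cosine} \ref{it:bi_cont_sg_cosine_rom1}. The norm bound $\norm{S(t)x}\leq M\euler^{\omega t}\norm{x}$ follows from lower semicontinuity of the norm under $\topo$-limits (\prettyref{rem:Saks_bounded_sets} \ref{it:Saks_bounded_sets_1}), and $\gamma$-convergence implies $\topo$-convergence. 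Conversely, given \ref{it:Trotter-Kato_sg_bi3}, the convergence $S_n(t)x\to S(t)x$ w.r.t.~$\topo$, uniformly on compacts, is norm bounded there. On norm-bounded sets $\gamma$ coincides with $\topo$, so the convergence holds in $\gamma$, uniformly on compacts; to check this I would use the submixed seminorms, or argue by contradiction with sequences $t_n$ together with \cite[I.1.10 Proposition]{Cooper1978}. Also $S$ is exponentially $\gamma$-equicontinuous as above, which gives \ref{it:Trotter-Kato_sg_lkx3} in $\gamma$.

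\textbf{Generator identifications.} These follow from \prettyref{prop:Trotter-Kato_sg_lkx} and \eqref{eq:bi_gen_sg}. The identity $B=\overline{A}^{\operatorname{bi}}$ should come from $B=\overline{A}^{\gamma}$ together with the fact that the sequential $\gamma$-closure sits between $A$ and $B$ and is closed under the relevant sequences from the bi-core. This last point is the remaining delicate step.
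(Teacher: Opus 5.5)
Your overall route is sound, but the last addendum is not proved. You leave the inclusion $D(B)\subseteq D(\overline{A}^{\operatorname{bi}})$ open, and the mechanism you hint at does not supply it. The identity $B=\overline{A}^{\gamma}$ only gives approximating \emph{nets}. The bi-core $D$ only approximates elements of $D(A)$, not of $D(B)$. So neither yields the single bi-convergent sequence you need for an element of $D(B)\setminus D(A)$.

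The missing ingredient is the bi-dense range of $\lambda-A$ from \ref{it:Trotter-Kato_sg_bi1}, which you never use here; the bi-core plays no role in this step. The argument is short:
\begin{itemize}
\item We have $A\subseteq\overline{A}^{\gamma}=B$, and $B$ is $\gamma$-closed, hence bi-closed. So $\overline{A}^{\operatorname{bi}}\subseteq B$.
\item Conversely, for $y\in D(B)$ put $z\coloneq(\lambda-B)y$. Choose $(x_k)$ in $D(A)$ with $(\lambda-A)x_k\to z$ w.r.t.~$\gamma$ (bi-dense range and \cite[I.1.10 Proposition]{Cooper1978}).
\item Since $A\subseteq B$ and $R(\lambda,B)=R\in\calL(X,\gamma)$, you get $x_k=R(\lambda,B)(\lambda-A)x_k\to y$ and $Ax_k=\lambda x_k-(\lambda-A)x_k\to\lambda y-z=By$ w.r.t.~$\gamma$.
\item Hence both sequences are $\norm{\cdot}$-bounded and $\topo$-convergent. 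Bi-closedness of $\overline{A}^{\operatorname{bi}}$ gives $y\in D(\overline{A}^{\operatorname{bi}})$ with $\overline{A}^{\operatorname{bi}}y=By$.
\end{itemize}
The paper itself only refers to \cite[p.~45]{Kuehnemund2001} for this inclusion.

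Everything else follows the paper's strategy, with two deviations worth keeping.

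First, you make the exponential estimate uniform in $n$. You apply \cite[Lemma 4.5]{Kraaij2016} to the whole family $(\euler^{-\omega t}S_n(t))_{n\in\N,\,t\in[0,t_0]}$ and run the argument of \prettyref{prop:functional_equicont_from_loc} over the index set $\N\times[0,\infty)$; this does go through verbatim. You then get $R\in\calL(X,\gamma)$ as the pointwise limit of the $\gamma$-equicontinuous sequence $(R(\lambda,A_n))_{n\in\N}$. This uniformity is exactly what the paper's step \ref{it:Trotter-Kato_sg_bi2}$\Rightarrow$\ref{it:Trotter-Kato_sg_bi2_1} needs. There, \eqref{eq:Hille-Yosida_sg_estimate_1} is applied with an index $N$ depending on $x$, while $\widetilde q$ is only obtained for each fixed $n$. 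Your treatment is the more careful one.

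Second, for \ref{it:Trotter-Kato_sg_bi3} you upgrade norm-bounded $\topo$-convergence, uniform on compacts, to $\gamma$-convergence uniform on compacts. Your contradiction argument with $y_k\coloneq S_{n_k}(t_k)x-S(t_k)x$ works. You then use \prettyref{prop:Trotter-Kato_sg_lkx} as a black box. The paper instead obtains \ref{it:Trotter-Kato_sg_bi3}$\Rightarrow$\ref{it:Trotter-Kato_sg_bi2} by inspecting the proof in \cite{AlbaneseKuehnemund2002}.
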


\begin{proof}
    We start with some observations.
    Again, we note that $(X,\gamma)$ is sequentially complete and C-sequential by \prettyref{defi:seq_compl_C-seq_Saks}.
    Due to \prettyref{rem:Saks_seq_complete_Banach} \ref{it:Saks_seq:compl_Banach_1} and \prettyref{cor:bi_cont_quasi_equicont_C_seq_sg_cosine} \ref{it:bi_cont_sg_cosine_rom1}, $(S_n(t))_{t\geq 0}$ is $\gamma$-strongly continuous and exponentially $\gamma$-equicontinuous and its bi-generator 
    $A_n$ coincides with its generator w.r.t.~$\gamma$ for all $n\in\N$. 
    Furthermore, $(S_n(t))_{n\in\N, t\geq 0}$ is uniformly locally $\gamma$-equicontinuous 
    by \prettyref{prop:bi_equicont_mixed_equicont} \ref{it:bi_equicont_mixed_3} since it is uniformly locally bi-equicontinuous and $(X,\gamma)$ C-sequential. 
    By \prettyref{rem:count_conv} \ref{it:count_conv_1}, \prettyref{cor:type}, \prettyref{rem:Saks_bounded_sets} \ref{it:Saks_bounded_sets_2}, the local $\gamma$-equicontinuity of $(S_n(t))_{t\geq 0}$ 
    and the inequality $\norm{S_n(t)}_{\calL(X)} \leq M\euler^{\omega t}$ for all $t\geq 0$ and $n\in\N$, we have that  
    for all $n\in\N$, $\alpha>\omega$ and $p\in\calN_{\gamma}$ there exists $q\in\calN_{\gamma}$ such that 
    \[
    p(\euler^{-\alpha t}S_n(t)x)\leq Mq(x) \quad(t\geq 0,\, x\in X).
    \]
    By (the proof of) \prettyref{thm:Hille_Yosida_sg}, this implies that $\lambda\in\rho_{\gamma}(A_n)\subseteq\rho_{\norm{\cdot}}(A_n)$ for all $n\in\N$ and 
    \[
    R(\lambda,A_n)x = \int_0^\infty \euler^{-\lambda t} S_n(t)x\,\d t \quad (n\in\N,\,x\in X)
    \]
    where the integral is an improper Riemann integral in $(X,\gamma)$. 
    Furthermore, for all $n\in\N$, $\alpha\in\R$ such that $\lambda>\alpha>\omega$ and $p\in\calN_{\gamma}$ there exists $\widetilde{q}\in\calN_{\gamma}$ such 
    that 
    \begin{equation}\label{eq:Hille-Yosida_sg_estimate_1}
    p(R(\lambda,A_n)x) \leq \frac{M}{|\lambda-\alpha|}\widetilde{q}(x)\quad (x\in X)
    \end{equation}
    and 
    \begin{equation}\label{eq:Hille-Yosida_sg_estimate_2}
    \norm{R(\lambda,A_n) x} \leq \frac{M}{|\lambda-\omega|}\norm{x} \quad (x\in X)
    \end{equation}
    by \prettyref{rem:count_conv} \ref{it:count_conv_1}, \prettyref{rem:Saks_bounded_sets} \ref{it:Saks_bounded_sets_2}, \prettyref{thm:Hille_Yosida_sg} and \prettyref{cor:Hille--Yosida_bi-continuous} with $m\coloneq 1$.
    
    ``\ref{it:Trotter-Kato_sg_bi1}$\Rightarrow$\ref{it:Trotter-Kato_sg_bi1_1}''
     Due to \prettyref{rem:bi_notions_operators}, the bi-densely defined linear operator $(A,D(A))$ is sequentially $\gamma$-dense, so $\gamma$-dense, 
     the bi-core $D$ is a sequential $\gamma$-core, so a $\gamma$-core, and  the bi-dense range of $(\lambda-A)$ is sequentially $\gamma$-dense, so $\gamma$-dense. 
     Moreover, that $(A_nx)_{n\in\N}$ is $\norm{\cdot}$-bounded and $A_nx\to Ax$ w.r.t.~$\topo$ for all $x\in D$ is equivalent to $A_nx\to Ax$ w.r.t.~$\gamma$ 
     for all $x\in D$ by \cite[I.1.10 Proposition]{Cooper1978}.
     
     ``\ref{it:Trotter-Kato_sg_bi1_1}$\Rightarrow$\ref{it:Trotter-Kato_sg_bi2_1}'' 
     This follows directly from \prettyref{prop:Trotter-Kato_sg_lkx} \ref{it:Trotter-Kato_sg_lkx1}$\Rightarrow$\ref{it:Trotter-Kato_sg_lkx2} 
     and our observations above since there is some $\alpha\in\R$ such that $\lambda>\alpha>\omega$.

    ``\ref{it:Trotter-Kato_sg_bi2_1}$\Rightarrow$\ref{it:Trotter-Kato_sg_bi3}''
    By our observations above and \prettyref{prop:Trotter-Kato_sg_lkx} \ref{it:Trotter-Kato_sg_lkx2}$\Rightarrow$\ref{it:Trotter-Kato_sg_lkx3}, 
    there exists a $\gamma$-strongly continuous semigroup $(S(t))_{t\geq 0}$ on $X$ such that  
    $(\euler^{-\alpha t}S(t))_{t\geq 0}$ is $\gamma$-equicontinuous for all $\alpha\in\R$ such that $\lambda>\alpha>\omega$, and for all $x\in X$ 
    we have that $S_n(\cdot)x\to S(\cdot)x$ w.r.t.~$\gamma$ uniformly on compact subsets of $[0,\infty)$. Since $\topo\subseteq\gamma$, 
    this implies that $S_n(\cdot)x\to S(\cdot)x$ w.r.t.~$\topo$ uniformly on compact subsets of $[0,\infty)$.    
    Let $t_0\geq 0$, $\varepsilon >0$, $p\in\calN_{\gamma}$ and $x\in X$. Since $S_n(\cdot)x\to S(\cdot)x$ w.r.t.~$\gamma$ uniformly on compact subsets of 
    $[0,\infty)$, there is $N\in\N$ such that for all $t\in [0,t_0]$ 
    \[
         p(\euler^{-\omega t}S(t)x)
    \leq p(\euler^{-\omega t}S_N(t)x)+p(\euler^{-\omega t}(S(t)-S_N(t))x)
    \leq \norm{\euler^{-\omega t}S_N(t)x}+\euler^{|\omega|t_0}\varepsilon
    \leq M\norm{x}+\euler^{|\omega|t_0}\varepsilon .
    \]
    Since $\varepsilon >0$ is arbitrary, this implies that 
    \[
    p(\euler^{-\omega t}S(t)x)\leq M\norm{x} \quad (t\in [0,t_0],\, x\in X)
    \]
    for all $p\in\calN_{\gamma}$ and $t_0\geq 0$. By \prettyref{rem:Saks_bounded_sets}, we obtain that 
    $\norm{S(t)}_{\calL(X)}\leq M \euler^{\omega t}$ for all $t\geq 0$. 
    Moreover, $(S(t))_{t\geq 0}$ is locally $\topo$-bi-continuous by \prettyref{cor:bi_cont_quasi_equicont_C_seq_sg_cosine} \ref{it:bi_cont_sg_cosine_rom1}. 

    ``\ref{it:Trotter-Kato_sg_bi3}$\Rightarrow$\ref{it:Trotter-Kato_sg_bi2}'' 
    Due to \prettyref{rem:Saks_seq_complete_Banach} \ref{it:Saks_seq:compl_Banach_1} and \prettyref{cor:bi_cont_quasi_equicont_C_seq_sg_cosine} \ref{it:bi_cont_sg_cosine_rom1}, $(S(t))_{t\geq 0}$ is $\gamma$-strongly continuous and exponentially $\gamma$-equicontinuous and its bi-generator 
    $B$ coincides with its generator w.r.t.~$\gamma$. 
    By \prettyref{rem:count_conv} \ref{it:count_conv_1}, \prettyref{cor:type}, \prettyref{rem:Saks_bounded_sets} \ref{it:Saks_bounded_sets_2}, the local $\gamma$-equicontinuity of $(S(t))_{t\geq 0}$ 
    and the inequality $\norm{S(t)}_{\calL(X)} \leq M\euler^{\omega t}$ for all $t\geq 0$, we have that $\lambda\in\rho_{\gamma}(B)$ 
    by \prettyref{thm:Hille_Yosida_sg}. We define $R\coloneq R(\lambda,B)\in\calL(X,\gamma)$ and note that $R\in\calL(X)$ by 
    \prettyref{rem:gamma_seq_cont_norm_cont}, $R$ is injective, and $R(X)=D(B)$ which is sequentially $\gamma$-dense in $X$ by 
    \prettyref{prop:seq_densely_defined}. In particular, $R(X)$ is sequentially bi-dense by \prettyref{rem:bi_notions_operators}. 
    Looking into the proof of \prettyref{prop:Trotter-Kato_sg_lkx} \ref{it:Trotter-Kato_sg_lkx3}$\Rightarrow$\ref{it:Trotter-Kato_sg_lkx2}, 
    see \cite[Theorem 14 (b)$\Rightarrow$(a)]{AlbaneseKuehnemund2002}, it holds that $R(\lambda,A_n)x\to Rx$ w.r.t.~$\gamma$ for all $x\in X$, so also 
    w.r.t.~the weaker topology $\topo$. 
        
    ``\ref{it:Trotter-Kato_sg_bi2}$\Rightarrow$\ref{it:Trotter-Kato_sg_bi2_1}''  
    First, we remark that $R\in\calL(X)$ has bi-dense range, so sequentially $\gamma$-dense range by \prettyref{rem:bi_notions_operators}, 
    and thus $\gamma$-dense range. Due to \eqref{eq:Hille-Yosida_sg_estimate_2}, 
    the sequence $(R(\lambda,A_n) x)_{n\in\N}$ is $\norm{\cdot}$-bounded for each $x\in X$. 
    In combination with $R(\lambda,A_n)x\to Rx$ w.r.t.~$\topo$ for all $x\in X$ this yields that $R(\lambda,A_n)x\to Rx$ w.r.t.~$\gamma$ for all $x\in X$ 
    by \cite[I.1.10 Proposition]{Cooper1978}. It remains to show that $R\in\calL(X,\gamma)$. 
    Let $\varepsilon >0$, $p\in\calN_{\gamma}$, $x\in X$ and $\alpha\in\R$ such that $\lambda>\alpha>\omega$. 
    Since $R(\lambda,A_n)x\to Rx$ w.r.t.~$\gamma$ for all $x\in X$ and using \eqref{eq:Hille-Yosida_sg_estimate_1}, 
    there are $N\in\N$ and $\widetilde{q}\in\calN_{\gamma}$ such that
    \[
         p(Rx)
    \leq p(R(\lambda,A_N)x)+p(Rx-R(\lambda,A_N)x)
    \leq p(R(\lambda,A_N)x)+\varepsilon
    \leq \frac{M}{|\lambda-\alpha|}\widetilde{q}(x)+\varepsilon .
    \]
    Since $\varepsilon >0$ is arbitrary, this implies that $R\in\calL(X,\gamma)$.

    From the addenda we only need to show that $B=\overline{A}^{\operatorname{bi}}$ if \ref{it:Trotter-Kato_sg_bi1} holds. 
    The others follow from \prettyref{prop:Trotter-Kato_sg_lkx}. So, let \ref{it:Trotter-Kato_sg_bi1} hold. Then \ref{it:Trotter-Kato_sg_bi1_1} holds 
    and $A$ is $\gamma$-closable with $B=\overline{A}^{\gamma}$. In particular, $A$ is sequentially $\gamma$-closable, so bi-closable 
    by \prettyref{rem:bi_notions_operators}, $D(\overline{A}^{\operatorname{bi}})\subseteq D(B)$ and $\overline{A}^{\operatorname{bi}}=B$ on 
    $D(\overline{A}^{\operatorname{bi}})$. The converse inclusion follows analogously to \cite[p.~45]{Kuehnemund2001}. 
\end{proof}

Comparing \prettyref{cor:Trotter-Kato_sg_bicont} with \cite[Theorem 3.6]{AlbaneseMangino2004}, we note that the only additional assumption 
in \prettyref{cor:Trotter-Kato_sg_bicont} is that $(X,\norm{\cdot},\topo)$ is C-sequential, see \prettyref{rem:additional_assump}, which also allows 
us to add parts \ref{it:Trotter-Kato_sg_bi1_1} and \ref{it:Trotter-Kato_sg_bi2_1}. In the examples of \cite[5.~Examples]{AlbaneseMangino2004} the spaces 
$(X,\norm{\cdot},\topo)$ are C-sequential by \cite[Remark 3.19 (a)]{KruseSchwenninger2022}. 
Moreover, as in \prettyref{rem:core_cond} a correction for the assumptions on the bi-core $D$ in \cite[Theorem 3.6 (a)]{AlbaneseMangino2004} 
is needed, which we did in part \ref{it:Trotter-Kato_sg_bi1} of \prettyref{cor:Trotter-Kato_sg_bicont}.

Similarly to the semigroups, we can recover the Trotter--Kato approximation theorem for locally bi-continuous cosine families from its 
version on sequentially complete Hausdorff locally convex spaces. Such a result can be found in \cite[Theorems C, C', D, Proposition 4.1]{Konishi1972} by Konishi. However, Konishi works with generalised resolvents in order to cope with the fact the cosine families may only be locally and not exponentially equicontinuous and thus the Laplace 
transform may only be performed in a distributional way. Under the additional assumption of exponential equicontinuity this issue can be overcome.

\begin{proposition}[{cf.~\cite[Theorems C', D, Proposition 4.1]{Konishi1972}}]
\label{prop:Trotter-Kato_cosine_lkx}
    Let $(X,\topo)$ be a sequentially complete Hausdorff locally convex space and
    $(S_n(t))_{t\geq 0}$ a strongly continuous cosine family on $X$ with generator $A_n$ for each $n\in\N$ 
    such that $(S_n(t))_{n\in\N, t\geq 0}$ is uniformly locally equicontinuous. Let there be $\alpha\geq 0$ such that $(\euler^{-\alpha t}S_n(t))_{t\geq 0}$ is equicontinuous for each $n\in\N$ and let $\lambda >\alpha$.
    Consider the following assertions.
    \begin{enumerate}[label=\upshape(\alph*), leftmargin=*]
        \item\label{it:Trotter-Kato_cosine_lkx1} There exists a densely defined linear operator $(A,D(A))$ and a core $D\subseteq \bigcup_{k=1}^{\infty}\bigcap_{n=k}^{\infty} D(A_n)$ for $A$ such that $A_nx\to Ax$ for all $x\in D$ and $(\lambda^2-A)$ has dense range.
        \item\label{it:Trotter-Kato_cosine_lkx2} There exists $R\in\calL(X,\topo)$ such that $R(\lambda^2,A_n)x\to Rx$ for all $x\in X$ and $R$ is injective and has dense range.
        \item\label{it:Trotter-Kato_cosine_lkx3} There exists a strongly continuous cosine family $(S(t))_{t\geq 0}$ on $X$ such that $(\euler^{-\alpha t}S(t))_{t\geq 0}$ is equicontinuous and for all $x\in X$ we have $S_n(\cdot)x\to S(\cdot)x$ uniformly on compact subsets of $[0,\infty)$.
    \end{enumerate}
    Then \ref{it:Trotter-Kato_cosine_lkx1}$\Rightarrow$\ref{it:Trotter-Kato_cosine_lkx2}$\Leftrightarrow$\ref{it:Trotter-Kato_cosine_lkx3}. 
    In particular, if \ref{it:Trotter-Kato_cosine_lkx2} holds, then the generator $B$ of $(S(t))_{t\geq 0}$ fulfils $R(\lambda^2,B) = R$.
    If \ref{it:Trotter-Kato_cosine_lkx1} holds, then $A$ is closable and $B=\overline{A}$.
\end{proposition}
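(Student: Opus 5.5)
Following the approach used for \prettyref{prop:Trotter-Kato_sg_lkx}, I would deduce the statement from Konishi's Trotter--Kato theorem for cosine families \cite[Theorems C', D, Proposition 4.1]{Konishi1972}. The task is to rewrite his generalised resolvents as ordinary resolvents, which exponential equicontinuity allows.

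First I would show that $\lambda^2\in\rho_{\topo}(A_n)$ for every $n\in\N$ and that
\[
\lambda R(\lambda^2,A_n)x=\int_0^\infty \euler^{-\lambda t}S_n(t)x\,\d t \quad (n\in\N,\,x\in X),
\]
where the integral is an improper Riemann integral in the sequentially complete space $(X,\topo)$. This follows from \cite[Proposition 2]{LiShaw1993}, already used in \eqref{eq:resol_laplace_cosine}, because $(\euler^{-\alpha t}S_n(t))_{t\geq 0}$ is equicontinuous and $\lambda>\alpha$. As a consequence, Konishi's generalised resolvent, which is a Laplace transform of $S_n$ taken against a test function, coincides with this classical Laplace transform when $\lambda>\alpha$.

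The equivalence \ref{it:Trotter-Kato_cosine_lkx2}$\Leftrightarrow$\ref{it:Trotter-Kato_cosine_lkx3} and the implication \ref{it:Trotter-Kato_cosine_lkx1}$\Rightarrow$\ref{it:Trotter-Kato_cosine_lkx2} would then come directly from Konishi's theorems. For \ref{it:Trotter-Kato_cosine_lkx1} I would use the core condition $D\subseteq\bigcup_{k}\bigcap_{n\geq k}D(A_n)$, exactly as in \cite[Proposition 4.1]{Konishi1972} and \prettyref{rem:core_cond}. The identifications $R(\lambda^2,B)=R$ and $B=\overline{A}$ would be read off from the same results.

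For \ref{it:Trotter-Kato_cosine_lkx3}$\Rightarrow$\ref{it:Trotter-Kato_cosine_lkx2} I would argue directly. The limit family satisfies $\lambda R(\lambda^2,B)x=\int_0^\infty\euler^{-\lambda t}S(t)x\,\d t$. The uniform exponential bound, combined with the dominated splitting $\int_0^T+\int_T^\infty$ and locally uniform convergence on $[0,T]$, gives $R(\lambda^2,A_n)x\to R(\lambda^2,B)x$. The operator $R\coloneq R(\lambda^2,B)$ is injective, and its range $D(B)$ is dense by \prettyref{prop:seq_densely_defined}.

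The main obstacle is the step \ref{it:Trotter-Kato_cosine_lkx2}$\Rightarrow$\ref{it:Trotter-Kato_cosine_lkx3}, specifically showing that $(\euler^{-\alpha t}S(t))_{t\geq 0}$ is equicontinuous. Konishi's theorem only gives local equicontinuity of the limit. I would take the $n$-independent constants in the $\alpha$-estimate of the $S_n$ (I am reading the hypothesis as uniform in $n$, as in the semigroup case) and pass them to the limit. For fixed $t$ and $p$, $p(\euler^{-\alpha t}S(t)x)=\lim_n p(\euler^{-\alpha t}S_n(t)x)\leq Kq(x)$. An alternative route is to bound the resolvent derivatives $\frac{(\lambda-\alpha)^{m+1}}{m!}\frac{\d^m}{\d\lambda^m}\lambda R(\lambda^2,B)$ and apply \prettyref{prop:generation_cosine_lcx}.
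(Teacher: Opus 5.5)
Your overall route matches the one the paper intends. The paper gives no separate proof of this proposition, only the remark preceding it. As in its semigroup analogue \prettyref{prop:Trotter-Kato_sg_lkx}, the idea is to use the Laplace representation $\lambda R(\lambda^2,A_n)x=\int_0^\infty\euler^{-\lambda t}S_n(t)x\,\d t$ from \cite[Proposition 2]{LiShaw1993} to replace Konishi's generalised resolvents by genuine resolvents. The exponential bound for the limit is obtained, as in the semigroup case, from the generation theorem (here \prettyref{prop:generation_cosine_lcx}), which is your ``alternative route''. Passing the seminorm estimate directly to the limit, as you propose, is a more elementary way to get that bound.

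The genuine issue is the hypothesis you use for it. You assume that $q$ and $K$ can be chosen independently of $n$, i.e.\ that $(\euler^{-\alpha t}S_n(t))_{n\in\N,\,t\geq 0}$ is equicontinuous. The statement only assumes that $(\euler^{-\alpha t}S_n(t))_{t\geq 0}$ is equicontinuous for each $n$. \prettyref{prop:Trotter-Kato_sg_lkx} is phrased in exactly the same non-uniform way, so ``as in the semigroup case'' does not justify your reading. You use the uniformity twice: in the tail estimate for (c)$\Rightarrow$(b) and in the limit passage for (b)$\Rightarrow$(c).

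Neither use can be removed, because without uniformity the statement is false. Take $X=\C^2$, let $N$ be the nilpotent $2\times 2$ Jordan block, set $c_n=n/(n+1)$ and $A_n=c_n^2I+N$, and take $\alpha=1$. Then
\[
S_n(t)=\cosh(c_nt)I+\frac{t\sinh(c_nt)}{2c_n}N .
\]
Each $(\euler^{-t}S_n(t))_{t\geq 0}$ is bounded, since the $N$-coefficient of $\euler^{-t}S_n(t)$ is at most $\frac{t^2}{2}\euler^{-t/(n+1)}$, though not uniformly in $n$. The family is uniformly bounded on compact $t$-sets. For $\lambda>1$ we have $R(\lambda^2,A_n)\to R(\lambda^2,I+N)$, which is bijective, so (b) holds (and so does (a)). However, the only possible limit is $S(t)=\cosh(t)I+\frac{t\sinh t}{2}N$, and $\euler^{-t}\norm{S(t)}\to\infty$, so (c) fails. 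Block-diagonal examples on an $\ell^2$-sum, with nilpotent blocks of growing size, show in the same way that (c)$\Rightarrow$(b) fails: the resolvents at a fixed $\lambda>\alpha$ blow up.

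The generation-theorem route does not avoid this. It needs $n$-independent bounds on $\frac{(\mu-\alpha)^{m+1}}{m!}\frac{\d^m}{\d\mu^m}\mu R(\mu^2,A_n)$, which is the same uniformity. So do not present this as a reading of the hypothesis. State equicontinuity of $(\euler^{-\alpha t}S_n(t))_{n\in\N,\,t\geq 0}$ explicitly as an added hypothesis; it is a necessary correction of the proposition and of its semigroup analogue. Under that hypothesis your argument, including the direct proof of (c)$\Rightarrow$(b) and the limit passage, is fine.
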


Using this result, we can recover and extend the bi-continuous analogue in \cite[Theorem 1]{Budde2025}, which itself is a special case of 
\cite[Theorem 3.1]{LiSongZhao2010a}.

\begin{corollary}
\label{cor:Trotter-Kato_cosine_bicont}
    Let $(X,\norm{\cdot},\topo)$ be a sequentially complete C-sequential Saks space and
    $(S_n(t))_{t\geq 0}$ a locally $\topo$-bi-continuous cosine family on $X$ with bi-generator $A_n$ for each $n\in\N$ such that $(S_n(t))_{n\in\N, t\geq 0}$ is uniformly locally bi-equicontinuous. Let $\omega\geq 0$ and $M\geq 1$ such that 
    $\norm{S_n(t)}_{\calL(X)}\leq M \euler^{\omega t}$ for all $t\geq 0$ and $n\in\N$ and let $\lambda>\omega$.
    Consider the following assertions.
    \begin{enumerate}[label=\upshape(\alph*), leftmargin=*]
        \item\label{it:Trotter-Kato_cosine_bi1} There exists a bi-densely defined linear operator $(A,D(A))$ and a bi-core $D\subseteq \bigcup_{k=1}^{\infty}\bigcap_{n=k}^{\infty} D(A_n)$ for $A$ such that $(A_nx)_{n\in\N}$ is $\norm{\cdot}$-bounded 
        and $A_nx\to Ax$ w.r.t.~$\topo$ for all $x\in D$ and $(\lambda^2-A)$ has bi-dense range.
        \item\label{it:Trotter-Kato_cosine_bi1_1} There exists a $\gamma$-densely defined linear operator $(A,D(A))$ and a 
        $\gamma$-core $D\subseteq \bigcup_{k=1}^{\infty}\bigcap_{n=k}^{\infty} D(A_n)$ for $A$ such that $A_nx\to Ax$  w.r.t.~$\gamma$ 
        for all $x\in D$ and $(\lambda^2-A)$ has $\gamma$-dense range.
        \item\label{it:Trotter-Kato_cosine_bi2} There exists $R\in\calL(X)$ such that $R(\lambda^2,A_n)x\to Rx$ w.r.t.~$\topo$ for all $x\in X$ and $R$ is injective and has bi-dense range.
        \item\label{it:Trotter-Kato_cosine_bi2_1} There exists $R\in\calL(X,\gamma)$ such that $R(\lambda^2,A_n)x\to Rx$ w.r.t.~$\gamma$ for all $x\in X$ and $R$ is injective and has $\gamma$-dense range.        
        \item\label{it:Trotter-Kato_cosine_bi3} There exists a locally $\topo$-bi-continuous cosine family $(S(t))_{t\geq 0}$ on $X$ 
        such that $\norm{S(t)}_{\calL(X)}\leq M \euler^{\omega t}$ for all $t\geq 0$, and for all $x\in X$ we have $S_n(\cdot)x\to S(\cdot)x$ w.r.t.~$\topo$ uniformly on compact subsets of $[0,\infty)$.
    \end{enumerate}
    Then \ref{it:Trotter-Kato_cosine_bi1}$\Rightarrow$\ref{it:Trotter-Kato_cosine_bi1_1}$\Rightarrow$\ref{it:Trotter-Kato_cosine_bi2}$\Leftrightarrow$\ref{it:Trotter-Kato_cosine_bi2_1}$\Leftrightarrow$\ref{it:Trotter-Kato_cosine_bi3}. 
    In particular, if \ref{it:Trotter-Kato_cosine_bi2} holds, then the bi-generator $B$ of $(S(t))_{t\geq 0}$ fulfils $R(\lambda^2,B) = R$ 
    If \ref{it:Trotter-Kato_cosine_bi1_1} holds, then $A$ is $\gamma$-closable and $B=\overline{A}^{\gamma}$. 
    If \ref{it:Trotter-Kato_cosine_bi1} holds, then $B=\overline{A}^{\operatorname{bi}}$.
\end{corollary}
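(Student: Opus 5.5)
The plan is to mirror the proof of \prettyref{cor:Trotter-Kato_sg_bicont}, replacing the semigroup tools by their cosine counterparts: \prettyref{prop:Trotter-Kato_cosine_lkx} instead of \prettyref{prop:Trotter-Kato_sg_lkx}, and \prettyref{prop:generation_cosine_bi-continuous} together with \eqref{eq:resol_laplace_cosine} instead of \prettyref{thm:Hille_Yosida_sg}.

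First I record the preliminary observations. Since $(X,\gamma)$ is sequentially complete and C-sequential, \prettyref{rem:Saks_seq_complete_Banach} \ref{it:Saks_seq:compl_Banach_1} and \prettyref{cor:bi_cont_quasi_equicont_C_seq_sg_cosine} \ref{it:bi_cont_sg_cosine_rom1} give that each $(S_n(t))_{t\geq 0}$ is $\gamma$-strongly continuous and exponentially $\gamma$-equicontinuous, and that its bi-generator equals its $\gamma$-generator by \eqref{eq:bi_gen_cs}. Uniform local bi-equicontinuity becomes uniform local $\gamma$-equicontinuity by \prettyref{rem:unif_loc_bi-equicont_gamma_seq} and \prettyref{thm:seq_equicont_equicont_equiv}. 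The norm bound, combined with \prettyref{rem:count_conv} \ref{it:count_conv_1}, \prettyref{cor:type} and \prettyref{rem:Saks_bounded_sets}, gives the following: for all $\alpha>\omega$ and $p\in\calN_\gamma$ there is $q\in\calN_\gamma$ with $p(\euler^{-\alpha t}S_n(t)x)\leq Mq(x)$. I fix $\alpha$ with $\lambda>\alpha>\omega\geq 0$. By \eqref{eq:resol_laplace_cosine} we get $\lambda R(\lambda^2,A_n)x=\int_0^\infty\euler^{-\lambda t}S_n(t)x\,\d t$. This yields the two analogues of \eqref{eq:Hille-Yosida_sg_estimate_1} and \eqref{eq:Hille-Yosida_sg_estimate_2}, namely $p(R(\lambda^2,A_n)x)\leq \frac{M}{\lambda(\lambda-\alpha)}q(x)$ and $\norm{R(\lambda^2,A_n)x}\leq\frac{M}{\lambda(\lambda-\omega)}\norm{x}$. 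The second follows from the first by taking the supremum over $p\in\calN_\gamma$ (with $q\leq\norm{\cdot}$) and letting $\alpha\to\omega\rlim$, or alternatively from \eqref{eq:generation_cosine_bi-cont} with $m=0$.

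Next come the implications. The step (a)$\Rightarrow$(b) is purely a translation via \prettyref{rem:bi_notions_operators} and \cite[I.1.10 Proposition]{Cooper1978}, exactly as for semigroups. The step (b)$\Rightarrow$(d) is \prettyref{prop:Trotter-Kato_cosine_lkx} (a)$\Rightarrow$(b) applied in $(X,\gamma)$ with the chosen $\alpha$. For (d)$\Rightarrow$(e), \prettyref{prop:Trotter-Kato_cosine_lkx} (b)$\Rightarrow$(c) produces a $\gamma$-strongly continuous cosine family with $\euler^{-\alpha t}S(t)$ $\gamma$-equicontinuous and $S_n(\cdot)x\to S(\cdot)x$ locally uniformly in $\gamma$, hence also in $\topo$. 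The norm bound $\norm{S(t)}\leq M\euler^{\omega t}$ follows from the same $\varepsilon$-argument as in the semigroup case, using $p(\euler^{-\omega t}S_N(t)x)\leq M\norm{x}$ and \prettyref{rem:Saks_bounded_sets}. Local bi-continuity then follows from \prettyref{cor:bi_cont_quasi_equicont_C_seq_sg_cosine} \ref{it:bi_cont_sg_cosine_rom1}.

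For (e)$\Rightarrow$(c), I again use \prettyref{cor:bi_cont_quasi_equicont_C_seq_sg_cosine} to see that $(S(t))$ is exponentially $\gamma$-equicontinuous with $\gamma$-generator $B$ equal to the bi-generator. \prettyref{prop:generation_cosine_bi-continuous} then gives $\lambda^2\in\rho_\gamma(B)$. I set $R\coloneq R(\lambda^2,B)$, which lies in $\calL(X)$ by \prettyref{rem:gamma_seq_cont_norm_cont}, is injective, and has range $D(B)$; this range is sequentially $\gamma$-dense by \prettyref{prop:seq_densely_defined}, hence bi-dense. The convergence $R(\lambda^2,A_n)x\to Rx$ in $\gamma$, and thus in $\topo$, comes from the proof of \prettyref{prop:Trotter-Kato_cosine_lkx} (c)$\Rightarrow$(b). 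For (c)$\Rightarrow$(d), the uniform norm bound makes $(R(\lambda^2,A_n)x)_n$ norm bounded, so $\topo$-convergence upgrades to $\gamma$-convergence. Moreover, $R\in\calL(X,\gamma)$ follows by passing to the limit in the uniform seminorm estimate $p(R(\lambda^2,A_N)x)\leq\frac{M}{\lambda(\lambda-\alpha)}q(x)$.

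The addenda on $R(\lambda^2,B)=R$ and $B=\overline{A}^\gamma$ come from \prettyref{prop:Trotter-Kato_cosine_lkx}. The identity $B=\overline{A}^{\operatorname{bi}}$ under (a) is argued as in the semigroup corollary. The main obstacle is that the uniform exponential estimate is needed with respect to $\gamma$ and not merely with respect to the norm, in order to apply \prettyref{prop:Trotter-Kato_cosine_lkx} and to obtain the resolvent bound with a fixed $\alpha$ independent of $n$. \prettyref{cor:type} provides this with a constant $M$ independent of $n$, which resolves it. A secondary point is checking that the proof of \prettyref{prop:Trotter-Kato_cosine_lkx} (c)$\Rightarrow$(b), via the Laplace representation, really produces $R(\lambda^2,B)$ as the limit.
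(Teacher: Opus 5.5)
Your proposal is correct and follows the paper's proof essentially step by step. It uses the same reduction to $(X,\gamma)$ via \prettyref{cor:bi_cont_quasi_equicont_C_seq_sg_cosine}, the Laplace representation \eqref{eq:resol_laplace_cosine} with the same two resolvent estimates, and the implications transferred from \prettyref{cor:Trotter-Kato_sg_bicont} with \prettyref{prop:Trotter-Kato_cosine_lkx} in place of \prettyref{prop:Trotter-Kato_sg_lkx}.

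One step needs tightening; the paper is equally terse there. In (c)$\Rightarrow$(d) you need the seminorm $q$ in $p(R(\lambda^2,A_N)x)\leq\frac{M}{\lambda(\lambda-\alpha)}q(x)$ to be independent of $N$, and applying \prettyref{cor:type} to each $S_n$ separately only makes the constant $M$ uniform, not $q$. You can fix this by splitting the Laplace integral at some $T>0$ and testing on $\gamma$-null (hence norm-bounded) sequences. On $[0,T]$ use the uniform local $\gamma$-equicontinuity, and on $[T,\infty)$ use $\norm{S_n(t)}_{\calL(X)}\leq M\euler^{\omega t}$. This shows $(R(\lambda^2,A_n))_{n\in\N}$ is sequentially $\gamma$-equicontinuous, hence $\gamma$-equicontinuous by \prettyref{thm:seq_equicont_equicont_equiv}, so its pointwise limit $R$ lies in $\calL(X,\gamma)$.
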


\begin{proof}
    We start with some observations. Again, we note that $(X,\gamma)$ is sequentially complete and C-sequential by \prettyref{defi:seq_compl_C-seq_Saks}. 
    By the same reasoning as in the proof of \prettyref{cor:Trotter-Kato_sg_bicont}, we have that 
    $(S_n(t))_{t\geq 0}$ is $\gamma$-strongly continuous and exponentially $\gamma$-equicontinuous and its bi-generator 
    $A_n$ coincides with its generator w.r.t.~$\gamma$ for all $n\in\N$. 
    Furthermore, $(S_n(t))_{n\in\N, t\geq 0}$ is uniformly locally $\gamma$-equicontinuous and
    for all $n\in\N$, $\alpha>\omega$ and $p\in\calN_{\gamma}$ there exists $q\in\calN_{\gamma}$ such that 
    \[
    p(\euler^{-\alpha t}S_n(t)x)\leq Mq(x) \quad(t\geq 0,\, x\in X).
    \]
    By the proof of \prettyref{cor:bi_cont_quasi_equicont_C_seq_sg_cosine} \ref{it:bi_cont_sg_cosine_rom4} and \eqref{eq:resol_laplace_cosine}, 
    this implies that $\lambda^2\in\rho_{\gamma}(A_n)\subseteq\rho_{\norm{\cdot}}(A_n)$ for all $n\in\N$ and 
    \[
    \lambda R(\lambda^2,A_n)x = \int_0^\infty \euler^{-\lambda t} S_n(t)x\,\d t \quad (n\in\N,\,x\in X)
    \]
    where the integral is an improper Riemann integral in $(X,\gamma)$. 
    Furthermore, for all $n\in\N$, $\alpha\in\R$ such that $\lambda>\alpha>\omega$ and $p\in\calN_{\gamma}$ there exists $\widetilde{q}\in\calN_{\gamma}$ such 
    that 
    \begin{equation}\label{eq:Hille-Yosida_cosine_estimate_1}
    p(R(\lambda^2,A_n)x) \leq \frac{M}{|\lambda-\alpha||\lambda|}\widetilde{q}(x)\quad (x\in X)
    \end{equation}
    and 
    \begin{equation}\label{eq:Hille-Yosida_cosine_estimate_2}
    \norm{R(\lambda^2,A_n) x} \leq \frac{M}{|\lambda-\omega||\lambda|}\norm{x} \quad (x\in X)
    \end{equation}
    by \eqref{eq:resolv_estimate_cosine} and \prettyref{prop:generation_cosine_bi-continuous} with $m\coloneq 0$. 

    The rest of the proof is analogous to the one of \prettyref{cor:Trotter-Kato_sg_bicont}. 
    One just uses \prettyref{prop:Trotter-Kato_cosine_lkx}, \eqref{eq:Hille-Yosida_cosine_estimate_1} and \eqref{eq:Hille-Yosida_cosine_estimate_2} 
    instead of \prettyref{prop:Trotter-Kato_sg_lkx}, \eqref{eq:Hille-Yosida_sg_estimate_1} and \eqref{eq:Hille-Yosida_sg_estimate_2}, respectively. 
    Further, in the proof of ``\ref{it:Trotter-Kato_cosine_bi3}$\Rightarrow$\ref{it:Trotter-Kato_cosine_bi2}'' the fact that $\lambda^2\in\rho_{\gamma}(B)$ 
    follows from \prettyref{prop:generation_cosine_lcx} instead of \prettyref{thm:Hille_Yosida_sg}.
\end{proof}

Comparing \prettyref{cor:Trotter-Kato_cosine_bicont} with \cite[Theorem 1]{Budde2025}, we note again that the only additional assumption 
in \prettyref{cor:Trotter-Kato_sg_bicont} is that $(X,\norm{\cdot},\topo)$ is C-sequential, see \prettyref{rem:additional_assump}, 
which also allows us to add parts \ref{it:Trotter-Kato_cosine_bi1_1} and \ref{it:Trotter-Kato_cosine_bi2_1}. In the example of \cite[Section 4]{Budde2025} the space 
$(X,\norm{\cdot},\topo)$ is C-sequential by \cite[Remark 3.19 (a)]{KruseSchwenninger2022}. 
Moreover, in \cite[Theorem 1]{Budde2025} part \ref{it:Trotter-Kato_cosine_bi1} is not present and it is a-priori assumed that there is already 
a locally $\topo$-bi-continuous cosine family $(S(t))_{t\geq 0}$ with bi-generator $B$ and $R=R(\lambda^2,B)$.

\subsection{The Weierstra{\ss} formula}

Similarly as in \cite[Eq.~(3.102)]{ArendtBHN2011} we can deduce a Weierstra{\ss} formula for $2n$-times integrated locally $\topo$-bi-continuous and exponentially bounded $C$-cosine families. 

Let $n\in\N_0$, $(X,\norm{\cdot},\topo)$ be a sequentially complete Saks space, $C\in\calL(X,\gamma)$ and $(S(t))_{t\geq 0}$ 
an $n$-times integrated $\gamma$-strongly continuous exponentially $\gamma$-equicontinuous semigroup or cosine family on 
$X$ which is \emph{non-degenerate}, i.e.~if $x\in X$ is such that $S(t)x=0$ for all $t>0$, then $x=0$. 
Since $(S(t))_{t\geq 0}$ is exponentially $\gamma$-equicontinuous, there is $\omega\geq 0$ such that 
$(\euler^{-\omega t}S(t))_{t\geq 0}$ is $\gamma$-equicontinuous, and we set 
\[
R(\lambda)x\coloneq\int_{0}^{\infty}\lambda^n \euler^{-\lambda t}S(t)x\,\d t \quad (\lambda>\omega,\, x\in X)
\]
where the integral is an improper Riemann integral in the sequentially complete space $(X,\gamma)$.
$(S(t))_{t\geq 0}$ being $n$-times integrated and non-degenerate implies that $C$ is injective, and the uniqueness of the Laplace 
transform implies that $R(\lambda)$ is injective for $\lambda>\omega$. The closed linear operator $A$ defined by 
\begin{align*}
D(A)&\coloneq \{x\in X\;|\;Cx\in R(\lambda)(X)\},\\
Ax&\coloneq 
\begin{cases}
(\lambda-R(\lambda)^{-1}C)x & \quad (x\in D(A))\text{ if }(S(t))_{t\geq 0} \text{ is a semigroup},\\
\lambda(\lambda-R(\lambda)^{-1}C)x & \quad (x\in D(A))\text{ if }(S(t))_{t\geq 0} \text{ is a cosine family},
\end{cases}
\end{align*}
is called the \emph{generator} of $(S(t))_{t\geq 0}$ and independent of $\lambda>\omega$, see \cite[p.~30]{LiShaw1993}. If $n=0$ and $C=\id$, then this definition of the generator is equivalent to the standard definition given above \prettyref{cor:bi_cont_quasi_equicont_C_seq_sg_cosine}, see \cite[p.~30]{LiShaw1993}. 
Furthermore, it also coincides with the definition of the bi-generator of a non-degenerate 
$n$-times integrated locally $\topo$-bi-continuous exponentially bounded semigroup or cosine family on 
$X$ given in \cite[Definition 2]{ChangLiu2012} and \cite[Definition 2.4]{LiSongZhao2010a}, respectively, by \cite[Proposition 2.5 (b)]{KruseSchwenninger2024}, the fact that $\topo\subseteq\gamma$ and, in 
the case of cosine families, by \cite[Proposition 1.6]{ShawLi1995}.

\begin{proposition}
    Let $(X,\norm{\cdot},\topo)$ be a sequentially complete C-sequential Saks space, $C\in\calL(X,\gamma)$,
    $(C(t))_{t\geq 0}$ a non-degenerate $2n$-times integrated locally $\topo$-bi-continuous and exponentially bounded $C$-cosine family on $X$ for $n\in\N_0$ with bi-generator $A$. Define
    \[
    S(t)x\coloneq  \frac{(2n)!}{2^{2n-1} \cdot n!\cdot \Gamma(n+\frac{1}{2})} \int_0^\infty \euler^{-s^2} C(2t^{1/2}s)x\,\d s 
    \quad (t\geq 0,\, x\in X)
    \]
    where the integral above is an improper Riemann integral in $(X,\gamma)$.
    Then $(S(t))_{t\geq 0}$ is a non-degenerate $n$-times integrated locally $\topo$-bi-continuous and exponentially bounded $C$-semigroup on $X$ generated by $A$.
\end{proposition}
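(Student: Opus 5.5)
The plan is to transfer the problem to the sequentially complete, C-sequential space $(X,\gamma)$, invoke the Weierstra{\ss} formula there for $2n$-times integrated $C$-cosine families on sequentially complete Hausdorff locally convex spaces, and then return to the bi-continuous setting.

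\emph{Step 1: pass to $\gamma$.} By \prettyref{defi:seq_compl_C-seq_Saks}, $(X,\gamma)$ is sequentially complete and C-sequential. We then run the arguments of \prettyref{prop:bi_cont_mixed_seq_cont}, \prettyref{thm:bi_cont_quasi_equicont_C_seq} \ref{it:bi_quasi_equicont_rom1} and \prettyref{rem:standard_Fs_fine} with $\calF\coloneq\calF_{\exp}$. These apply verbatim to any family, not only to semigroups or cosine families. Hence the locally $\topo$-bi-continuous, exponentially bounded family $(C(t))_{t\geq 0}$ is $\gamma$-strongly continuous and exponentially $\gamma$-equicontinuous: there is $\omega\geq 0$ with $(\euler^{-\omega t}C(t))_{t\geq 0}$ $\gamma$-equicontinuous. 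By the paragraph preceding the statement, the bi-generator $A$ coincides with the generator defined through the Laplace transform $R(\lambda)$ in $(X,\gamma)$.

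\emph{Step 2: well-definedness and growth of $S$.} Fix $p\in\semis_\gamma$. Equicontinuity gives a continuous seminorm $q$ and $K\geq 0$ with $p(C(2t^{1/2}s)x)\leq K\euler^{2\omega t^{1/2}s}q(x)$. Since $\euler^{-s^2+2\omega t^{1/2}s}$ is integrable, the improper Riemann integral converges in the sequentially complete space $(X,\gamma)$. Moreover,
\[
p(S(t)x)\leq c_n K q(x)\int_0^\infty \euler^{-s^2+2\omega t^{1/2}s}\,\d s\leq c_n' K\euler^{\omega^2 t}q(x),
\]
so $(\euler^{-\omega^2 t}S(t))_{t\geq 0}$ is $\gamma$-equicontinuous, i.e.\ $S$ is exponentially $\gamma$-equicontinuous. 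Strong $\gamma$-continuity follows by dominated convergence for the integrand on compact $t$-intervals, using the same uniform Gaussian majorant.

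\emph{Step 3: the algebraic identity (the main obstacle).} We must show that $S$ is a non-degenerate $n$-times integrated $C$-semigroup with generator $A$. The cleanest route is via Laplace transforms rather than the functional equation. Compute $\int_0^\infty \lambda^n\euler^{-\lambda t}S(t)x\,\d t$ by exchanging the two improper integrals in $(X,\gamma)$; this is justified by the joint exponential estimate from Step 2, applied seminormwise after pairing with functionals and using that $(X,\gamma)'$ separates points. Substituting $r=2t^{1/2}s$ reduces this to the scalar identity behind \cite[Eq.~(3.102)]{ArendtBHN2011}, namely $\int_0^\infty \euler^{-\lambda t}\frac{\euler^{-r^2/(4t)}}{\sqrt{\pi t}}\,\d t=\lambda^{-1/2}\euler^{-\sqrt\lambda r}$. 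This expresses the transform of $S$ in terms of the cosine transform at $\mu=\sqrt\lambda$. The constant $\frac{(2n)!}{2^{2n-1}n!\Gamma(n+1/2)}$ is exactly what converts the factor $\mu^{2n}$ into $\lambda^n$ and yields
\[
\lambda^n\int_0^\infty \euler^{-\lambda t}S(t)x\,\d t=(\lambda-A)^{-1}Cx .
\]
Here $\mu(\mu^2-A)^{-1}C$ denotes the Laplace representation of the $C$-cosine family. Checking these constants is routine but delicate.

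By the Laplace-transform characterisation of exponentially equicontinuous $n$-times integrated $C$-semigroups on sequentially complete spaces \cite{LiShaw1993,XiaoLiang1998}, together with uniqueness of the Laplace transform, $S$ is an $n$-times integrated $C$-semigroup with generator $A$ in the sense of the $R(\lambda)$-definition. Non-degeneracy follows from the injectivity of $R(\lambda)$: if $S(t)x=0$ for all $t>0$, then its transform vanishes, so $(\lambda-A)^{-1}Cx=0$ and hence $x=0$ because $C$ is injective.

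\emph{Step 4: return to bi-continuity.} Using \prettyref{rem:Saks_seq_complete_Banach} and again \prettyref{thm:bi_cont_quasi_equicont_C_seq} \ref{it:bi_quasi_equicont_rom1} with $\calF_{\exp}$, $\gamma$-strong continuity together with exponential $\gamma$-equicontinuity gives that $S$ is locally $\topo$-bi-continuous and exponentially bounded. Finally, the identification of the generator with the bi-generator, recalled before the statement via \cite[Proposition 2.5 (b)]{KruseSchwenninger2024}, shows that $A$ is its bi-generator.
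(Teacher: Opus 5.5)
Your proposal is correct and follows the paper's proof: you pass to $(X,\gamma)$ via \prettyref{thm:bi_cont_quasi_equicont_C_seq} \ref{it:bi_quasi_equicont_rom1} with $\calF_{\exp}$, use the Weierstra{\ss} formula in the sequentially complete locally convex space $(X,\gamma)$, and return via the same theorem; the only difference is that the paper simply cites \cite[Theorem 1]{LiShaw1993} for the middle step, whereas your Steps 2 and 3 re-derive it by a Laplace transform argument. One correction to Step 3: by the duplication formula $\Gamma(n+\frac{1}{2})=\frac{(2n)!\sqrt{\pi}}{4^{n}n!}$ the prefactor equals $\frac{2}{\sqrt{\pi}}$ for every $n$, so it is only the Gaussian normalisation and plays no role in turning $\mu^{2n}$ into $\lambda^{n}$ (that is just $\mu^{2}=\lambda$), and there is nothing delicate to check there.
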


\begin{proof}
    By \prettyref{rem:standard_Fs_fine} and \prettyref{thm:bi_cont_quasi_equicont_C_seq} \ref{it:bi_quasi_equicont_rom1}, 
    $(C(t))_{t\geq 0}$ is $\gamma$-strongly continuous and $\calF_{\exp}$-$\gamma$-equicontinu\-ous. 
    By \cite[Theorem 1]{LiShaw1993}, $(S(t))_{t\geq 0}$ defines a non-degenerate $n$-times integrated $\gamma$-strongly continuous and $\calF_{\exp}$-$\gamma$-equi\-contin\-uous $C$-semigroup generated 
    by $A$.
    Another application of \prettyref{thm:bi_cont_quasi_equicont_C_seq} \ref{it:bi_quasi_equicont_rom1} yields that $(S(t))_{t\geq 0}$ is locally $\topo$-bi-continuous and exponentially bounded.
\end{proof}

\subsection{D'Alembert formula}

We now derive a D'Alembert formula for $n$-times integrated locally $\topo$-bi-continuous and exponentially bounded $C$-cosine families, which gives rise to construct such examples; for particular instances see \cite[p.~56]{DuanSun2006} and \cite[Example 1.7]{Budde2024} 
(which is the same example). They follow from \cite[Examples 6 (b)]{Kuehnemund2003} and \cite[Remark 3.19 (a)]{KruseSchwenninger2022} 
by the proposition below.

\begin{proposition}
    Let $(X,\norm{\cdot},\topo)$ be a sequentially complete C-sequential Saks space, $C\in\calL(X,\gamma)$,
    $(S_+(t))_{t\geq 0}$ and ($S_-(t))_{t\geq 0}$ non-degenerate $n$-times integrated locally $\topo$-bi-continuous and exponentially bounded $C$-semigroups on $X$ for $n\in\N_0$ 
    with bi-generators $A$ and $-A$, respectively.
    Define
    \[
    S(t)x\coloneq \frac{1}{2}(S_+(t)x + S_-(t)x) \quad (t\geq 0,\, x\in X).
    \]
    Then $(S(t))_{t\geq 0}$ is a non-degenerate $n$-times integrated locally $\topo$-bi-continuous and exponentially bounded $C$-cosine family on $X$ generated by $A^2$.
\end{proposition}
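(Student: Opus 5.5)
The plan follows the proof of the Weierstra{\ss} formula above. We transfer the problem to the locally convex space $(X,\gamma)$, apply the known D'Alembert-type result for $n$-times integrated $C$-semigroups and $C$-cosine families on sequentially complete Hausdorff locally convex spaces, and then transfer back. Since $(X,\norm{\cdot},\topo)$ is a sequentially complete C-sequential Saks space, $(X,\gamma)$ is sequentially complete and C-sequential by \prettyref{defi:seq_compl_C-seq_Saks}, and $(X,\norm{\cdot})$ is a Banach space by \prettyref{rem:Saks_seq_complete_Banach} \ref{it:Saks_seq:compl_Banach_1}. By \prettyref{rem:standard_Fs_fine} and \prettyref{thm:bi_cont_quasi_equicont_C_seq} \ref{it:bi_quasi_equicont_rom1} (with $I=[0,\infty)$ and $\calF=\calF_{\exp}$), both $(S_+(t))_{t\geq 0}$ and $(S_-(t))_{t\geq 0}$ are $\gamma$-strongly continuous and exponentially $\gamma$-equicontinuous. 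As stated before the Weierstra{\ss} proposition, their bi-generators $A$ and $-A$ are also their generators in the sense of \cite[p.~30]{LiShaw1993} with respect to $\gamma$.

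Next I would invoke the locally convex D'Alembert formula for $n$-times integrated $C$-semigroups, which I expect to be \cite{LiShaw1993} (or \cite{XiaoLiang1998}). It should say the following: if $A$ and $-A$ generate non-degenerate $n$-times integrated exponentially equicontinuous $C$-semigroups $S_\pm$, then $\frac12(S_++S_-)$ is a non-degenerate $n$-times integrated exponentially equicontinuous $C$-cosine family with generator $A^2$. If no such citation is available, I would verify the claim directly through Laplace transforms. For $\lambda$ large, $\int_0^\infty\lambda^n\euler^{-\lambda t}S_\pm(t)x\,\d t=(\lambda\mp A)^{-1}C x$ in the appropriate $C$-resolvent sense. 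Averaging gives
\[
\tfrac12\bigl((\lambda-A)^{-1}+(\lambda+A)^{-1}\bigr)C=\lambda(\lambda^2-A^2)^{-1}C,
\]
which is exactly the Laplace characterisation of an $n$-times integrated $C$-cosine family generated by $A^2$ in the sense used above, with $R(\lambda)$ as defined for cosine families. The cosine functional equation should then follow from the uniqueness of the Laplace transform. Strong continuity and exponential $\gamma$-equicontinuity of the average are immediate, since both properties are preserved under sums.

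Non-degeneracy is the main obstacle. Suppose $S_+(t)x+S_-(t)x=0$ for all $t>0$. Taking Laplace transforms gives $(\lambda^2-A^2)^{-1}Cx=0$, hence $Cx=0$ and so $x=0$, because non-degeneracy of $S_+$ forces $C$ to be injective. Making this rigorous requires care with domains: one has to check that $A^2$ is closed and that $\lambda^2-A^2$ is injective on the relevant range. I would use the factorisation $\lambda^2-A^2=(\lambda-A)(\lambda+A)$ on $D(A^2)$, together with injectivity of the two Laplace transforms $R_\pm(\lambda)$.

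Finally, I would apply \prettyref{thm:bi_cont_quasi_equicont_C_seq} \ref{it:bi_quasi_equicont_rom1} once more, using that exponential functions are locally bounded away from zero, to conclude that $(S(t))_{t\geq 0}$ is locally $\topo$-bi-continuous and exponentially bounded. Since the generator with respect to $\gamma$ coincides with the bi-generator (by \cite[Proposition 2.5 (b)]{KruseSchwenninger2024} and \cite[Proposition 1.6]{ShawLi1995}, as recalled above), the family is generated by $A^2$.
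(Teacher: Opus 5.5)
Your proposal is correct and is essentially the paper's proof. The paper transfers to $(X,\gamma)$ via \prettyref{rem:standard_Fs_fine} and \prettyref{thm:bi_cont_quasi_equicont_C_seq} \ref{it:bi_quasi_equicont_rom1}, applies the locally convex D'Alembert result \cite[Theorem 4]{LiShaw1993}, which already supplies non-degeneracy and the generator $A^2$, and transfers back with the same theorem. Your Laplace-transform fallback is therefore not needed, though it is consistent with that result.
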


\begin{proof}
    By \prettyref{rem:standard_Fs_fine} and \prettyref{thm:bi_cont_quasi_equicont_C_seq} \ref{it:bi_quasi_equicont_rom1}, $(S_+(t))_{t\geq 0}$ and $(S_-(t))_{t\geq 0}$ are $\gamma$-strongly continuous and $\calF_{\exp}$-$\gamma$-equicontinuous. 
    By \cite[Theorem 4]{LiShaw1993}, $(S(t))_{t\geq 0}$ defines a non-degenerate $n$-times integrated $\gamma$-strongly continuous and $\calF_{\exp}$-$\gamma$-equicontinuous $C$-cosine family generated by $A^2$.
    Another application of \prettyref{thm:bi_cont_quasi_equicont_C_seq} \ref{it:bi_quasi_equicont_rom1} 
    yields that $(S(t))_{t\geq 0}$ is locally $\topo$-bi-continuous and exponentially bounded.
\end{proof}

\printbibliography

\end{document}